\documentclass[11pt]{article}

\PassOptionsToPackage{dvipsnames}{xcolor}
\usepackage{amssymb,amsmath,bm}
\usepackage{amsmath}
\usepackage{mathrsfs}
\usepackage{slashed}
\usepackage{amssymb}
\usepackage{amsthm}
\usepackage{stmaryrd}
\usepackage{graphicx,overpic}
\usepackage{svg}
\usepackage{epstopdf}
\usepackage{enumerate}
\usepackage{comment}
\usepackage{indentfirst}
\usepackage{setspace}  
\usepackage{float}
\usepackage{tikz-cd}

\usepackage{textcomp}
\usepackage{enumerate}      
\usepackage{graphicx} 
\usepackage{caption}
\usepackage{subcaption}

\usepackage{tabu}

\usepackage[export]{adjustbox}

\usepackage{mathrsfs}

\usepackage{url}

\usepackage{hyperref}

\def\psl{\mathrm{PSL}(2,\mathbb{R})}
\def\pgl{\mathrm{PGL}(2,\mathbb{R})}

\def\SL{\mathrm{SL}(2,\mathbb{R})}
\def\GL{\mathrm{GL}(2,\mathbb{R})}
\def\sl{\mathfrak{sl}_2(\mathbb{R})}
\def\fund{\pi_1(\Sigma)}
\def\hom{\mathrm{Hom}\big(\fund, \psl\big)}

\def\univcover{\widetilde{\SL}}
\def\Hyp{\mathrm{Hyp}}
\def\Par{\mathrm{Par}}
\def\Parp{\mathrm{Par}^+}
\def\Parm{\mathrm{Par}^-}
\def\Ell{\mathrm{Ell}}

\def\R{\mathcal{R}(\Sigma)}

\def\HP{\mathrm{HP}}

\def\HPns{\mathrm{HP}_n^s}

\def\nonabel{\mathcal{NA}^s_n}

\def\ev{\mathrm{ev}}
\def\tr{Tr}

\def\parpp
{\begin{bmatrix}
    1 & 1 \\
    0 & 1
\end{bmatrix}}

\def\parmp
{\begin{bmatrix}
    1 & -1 \\
    0 & 1
\end{bmatrix}}

\def\part
{\begin{bmatrix}
    1 & t \\
    0 & 1
\end{bmatrix}}

\def\ell
{\begin{bmatrix}
    \cos\frac{\theta}{2} & \sin\frac{\theta}{2} \\
    -\sin\frac{\theta}{2} & \cos\frac{\theta}{2}
\end{bmatrix}}

\def\path{\{\phi_t\}\interval}
\def\interval{_{t\in [0,1]}}

\def\ELns{\mathcal{EL}^s_n}
\def\NHns{\mathcal{NH}^s_n}

\def\Mns{\mathcal{M}^s_n}
\def\N{\mathcal{N}^s_n}
\def\pgd{\mathrm{PGD}_\gamma}

\newtheorem{theorem}{Theorem}[section]
\newtheorem{proposition}[theorem]{Proposition}
\newtheorem{lemma}[theorem]{Lemma}

\newtheorem{remark}[theorem]{Remark}

\theoremstyle{definition}

\theoremstyle{definition} 

\begin{document}
	
\title{\textbf{From Bowditch's question to Goldman's conjecture for type-preserving representations}}
\author{Viraj Joshi and Inyoung Ryu}
\maketitle

\begin{abstract}
    For punctured surfaces $\Sigma_{g,p}$ of genus $g\geqslant 2$,
    we explore the dynamics of the mapping class group action on the relative $\psl$-character varieties of type-preserving representations. For such relative character varieties, Goldman's conjecture predicts that the mapping class group acts ergodically on their non-Teichm\"uller components. A related question of Bowditch asks whether every non-elementary type-preserving representation that is non-Fuchsian sends some non-peripheral simple closed curve to a non-hyperbolic element. We show that, on the components of the relative character variety indexed by fixed signs of images of peripheral elements and relative Euler classes non-extremal in the generalized Milnor-Wood inequality, an affirmative answer to Bowditch's question implies Goldman's conjecture.

\end{abstract}
\tableofcontents

\section{Introduction}\label{sec_intro}


For a connected, oriented and closed surface $\Sigma_g$ of genus $g\geqslant 2$ and a representation $\rho:\pi_1(\Sigma_g)\rightarrow \mathrm{PSL}(2,\mathbb R)$, the \emph{Euler class} $e(\rho)$ is the Euler class of the associated flat $\psl$-bundle over $\Sigma_g$, which is an integer under the identification $\mathrm{H}^2\big(\Sigma;\pi_1(\psl)\big)\cong \mathbb Z.$
It was proved in \cite{milnor, wood} that the Euler class satisfies the Milnor--Wood inequality $$2-2g\leqslant e(\rho)\leqslant  2g-2.$$  In \cite{goldman}, Goldman proved that the equality $|e(\rho)| = 2g-2$ holds if and only if $\rho$ is Fuchsian, i.e. discrete and faithful. Moreover, he proved in the same paper that the Euler class defines a one-to-one correspondence between the connected components of Hom$(\pi_1(\Sigma_g),\psl)$ and the integers $n$ with $|n|\leqslant 2g - 2$. Recall that the \emph{$\psl$-character variety} of $\Sigma_g$ is the GIT (geometric invariant theory) quotient $$\text{Hom}\big(\pi_1(\Sigma_g),\psl\big)\sslash \psl,$$ where $\psl$ acts by conjugation. Since the Euler class is invariant under $\psl$-conjugation, it descends to a well-defined invariant on the character variety, characterizing its connected components. We denote by \(\mathcal M(\Sigma_g)\) the space of conjugacy classes of non-elementary representations (representations with Zariski-dense image), which is the smooth locus of the character variety. 
\smallskip

The mapping class group $\operatorname{MCG}(\Sigma_g)$ is the group of isotopy classes of orientation-preserving homeomorphisms of $\Sigma_g$. By the Dehn--Nielsen--Baer theorem, it is isomorphic to the group of orientation-preserving outer automorphisms $\operatorname{Out}^+\big(\pi_1(\Sigma_g)\big)$, which acts on $\mathcal{M}(\Sigma_g)$ by precomposition and preserves the Euler class. For $k\in \mathbb{Z}$, we denote by
$\mathcal{M}_k(\Sigma_g)$ the space of conjugacy classes of non-elementary representations of Euler class $k$. By the results of Goldman \cite{goldman}, the components $\mathcal{M}_{2-2g}(\Sigma_g)$ and $\mathcal{M}_{2g-2}(\Sigma_g)$ are identified, respectively, with the Teichm\"uller space of $\Sigma_g$ and with that of $\Sigma_g$ endowed with the opposite orientation. It is well known (see e.g., \cite[Theorem 12.2]{farb_margalit}) that $\operatorname{MCG}(\Sigma_g)$ acts properly discontinuously on  these extremal components. 
On the non-extremal components $\mathcal{M}_k(\Sigma_g), |k|<2g-2$, 
Goldman conjectured in \cite{goldman_conjecture}
that $\operatorname{MCG}(\Sigma_g)$ acts ergodically on $\mathcal{M}_k(\Sigma_g)$ with respect to the measure induced by the Goldman symplectic form \cite{goldman_measure}. 
\smallskip

A related longstanding question of Bowditch \cite[Question C]{Bowditch} asks whether every non-elementary non-Fuchsian representation sends some simple closed curve to a non-hyperbolic element. 
In \cite{Marche-Wolff}, Marché and Wolff investigated the relation between these two problems. They proved that, apart from the component $\mathcal{M}_0(\Sigma_2)$, an affirmative answer to Bowditch's question implies that Goldman's conjecture is true. For the closed surface \(\Sigma_2\) of genus \(2\), they further showed that every class in $\mathcal M_{1}(\Sigma_2)$ and $\mathcal M_{-1}(\Sigma_2)$ has a representative sending some simple closed curve to a non-hyperbolic element. Together with the implication above, this proved the ergodicity of the \(\operatorname{MCG}(\Sigma_2)\)-action on these two components.\smallskip

The goal of this paper is to investigate this relation for punctured surfaces. Let $\Sigma = \Sigma_{g,p}$ be a punctured surface of genus $g\geqslant 2$ with $p\geqslant 1$ punctures, with Euler characteristic $\chi(\Sigma) < 0$. We study \emph{type-preserving} representations, i.e. representations $\rho: \pi_1(\Sigma)\to \psl$ sending all the \emph{peripheral elements} (elements represented by loops around a puncture) of $\pi_1(\Sigma)$ to parabolic elements of $\psl.$ Let $\mathcal{R}(\Sigma)$ be the space of all type-preserving representations.
For each $\rho\in \mathcal{R}(\Sigma)$, its \emph{relative Euler class} $e(\rho)$ is defined in \cite{goldman} as an integer-valued invariant, and its \emph{sign} $s(\rho)$ is defined in \cite{ryu} as an element in $\{\pm 1\}^p$. See Section \ref{sec_preliminary} for details. 
For $n\in \mathbb{Z}$ and $s\in \{\pm 1\}^p$, we denote by
    $\mathcal{R}^s_n$ the subspace of $\mathcal{R}(\Sigma)$ consisting of type-preserving representations with relative Euler class $n$ and sign $s$. Moreover, we let $p_+(s)$ be the number of $+1$'s and let $p_-(s)$ be the number of $-1$'s in the components of $s.$ Yang and the second author showed in \cite{ryu} that, each connected component of $\R$ is exactly $\mathcal{R}^s_n$ where the pair $(n,s)$ satisfies the \emph{generalized Milnor-Wood inequality} 
    $$\chi(\Sigma) + p_+(s)\leqslant n\leqslant -\chi(\Sigma) - p_-(s).$$
    Now let $\mathcal{M}(\Sigma)$ be the space of conjugacy classes of non-elementary type-preserving representations, and denote by $\mathcal{M}^s_n$ the subspace of $\mathcal{M}(\Sigma)$ consisting of elements with representatives in $\mathcal{R}^s_n$. Up to a measure zero set, this is the smooth locus of the component of type-preserving relative character variety of Euler class $n$.
    Then $\mathcal{M}^s_n$ consists of the classes of Fuchsian representations if and only if $|n| = -\chi(\Sigma)$ \cite{goldman}, yielding an identification of such two components in $\mathcal{M}(\Sigma)$ with the Teichm\"uller space of $\Sigma$. 
    \smallskip
    
    By the Dehn--Nielsen--Baer theorem, the pure mapping class
    group $\operatorname{MCG}(\Sigma)$ is isomorphic to the group $\operatorname{Out}_c^+(\pi_1(\Sigma))$ of orientation-preserving outer automorphisms fixing each conjugacy class of peripheral elements in $\pi_1(\Sigma)$. Consequently, $\operatorname{MCG}(\Sigma)$  acts naturally on $\mathcal{M}(\Sigma)$,
    preserving the relative Euler class and the sign; in particular, it acts on each $\mathcal{M}^s_n$. 
    The analogues of Goldman's conjecture on ergodicity and Bowditch's question can also be formulated in this relative setting. 
    In Theorem \ref{mainresult}, we prove that for a fixed sign $s$ and 
    a relative Euler class $n$ non-extremal in the generalized Milnor-Wood inequality, i.e., except $ n = -\chi(\Sigma) - p_-(s)$ and $ n = \chi(\Sigma) + p_+(s)$,
    an affirmative answer to Bowditch's question  implies that Goldman's conjecture is true on $\mathcal M_n^s$. To state the results, for $n\in \mathbb{Z}$ and $s\in \{\pm 1\}^p$,
    we denote by $\mathcal{NH}^s_n$ the set of $[\rho]\in \mathcal{M}^s_n$ that have a representative sending some non-peripheral simple closed curve to a non-hyperbolic element. Then Bowditch's question asks whether $\mathcal{NH}^s_n = \mathcal{M}^s_n$ for all $|n|<-\chi(\Sigma)$.

    \begin{theorem}\label{mainresult}
    Let $\Sigma = \Sigma_{g,p}$ with $g\geqslant 2$ and $p\geqslant 1$. For $s\in \{\pm 1\}^p$ and $n\in \big\{\chi(\Sigma) + p_+(s) + 1,\dots, -\chi(\Sigma) - p_-(s)- 1\big\}$,
    the mapping class group $\operatorname{MCG}(\Sigma)$ acts ergodically on $\NHns$.
\end{theorem}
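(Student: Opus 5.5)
The plan follows the Marché--Wolff strategy for closed surfaces and adapts it to the relative, type-preserving setting. Throughout we use the measure on $\Mns$ induced by the Goldman symplectic form.

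\textbf{Reduction.} It suffices to show the following. For almost every $[\rho]\in\NHns$, there is a non-peripheral simple closed curve $\gamma$ such that $\rho(\gamma)$ is elliptic of irrational angle. Moreover, the $\operatorname{MCG}(\Sigma)$-orbit of such $[\rho]$ should be governed by a single ergodic mechanism.

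First, $\NHns$ is an $\operatorname{MCG}(\Sigma)$-invariant open set. It is open because having some simple closed curve with $|\tr\rho(\gamma)|<2$ is an open condition, and the set where $|\tr|\leqslant 2$ differs from the set where $|\tr|<2$ only by a null set. The parabolic and identity cases have measure zero.

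Next, for a non-separating simple closed curve $\gamma$, the Goldman twist flow along $\gamma$ is the Hamiltonian flow of $\operatorname{arccos}(\tr/2)$. Where $\rho(\gamma)$ is elliptic, this flow is periodic. The Dehn twist $T_\gamma$ acts on the fiber of $\tr_\gamma$ as a rotation by the angle of $\rho(\gamma)$. By the standard Fubini/Mautner-type argument, an invariant function is then invariant under the full twist flow on the elliptic locus of $\gamma$.

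\textbf{Propagation across a cut surface.} Cut $\Sigma$ along $\gamma$, and let $\Sigma'$ be the resulting surface of genus $g-1$ with two boundary components and the $p$ punctures. The restriction $\rho|_{\Sigma'}$ is a representation whose boundary holonomies are elliptic and whose peripheral holonomies are parabolic. The relative Euler class $n$ splits with a contribution governed by the rotation angle.

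The non-extremality hypothesis $\chi(\Sigma)+p_+(s)<n<-\chi(\Sigma)-p_-(s)$ should guarantee that the restricted relative class on $\Sigma'$ is again non-extremal. We then use induction via pants decompositions containing $\gamma$, reducing to one-holed tori and four-holed spheres with mixed parabolic and elliptic boundary. On these pieces the relative character varieties are explicit cubic surfaces, and ergodicity of the relevant mapping class groups on the non-extremal level sets is known: Goldman for one-holed tori, and Previte--Xia/Marché--Wolff-type results for four-holed spheres.

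The twist flows along all curves in a pants decomposition, together with ergodicity on these small pieces, show that any invariant function is almost everywhere constant on the set of $[\rho]$ for which some curve in a chain of curves is elliptic. Such chains connect to one another because the curve complex is connected.

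\textbf{Covering $\NHns$.} We must show that almost every $[\rho]\in\NHns$ admits such elliptic curves in a connected family. Following Marché--Wolff, we will show that if $\rho(\gamma)$ is elliptic for one non-separating $\gamma$, then by twisting we can produce elliptic images along a sequence of curves filling the surface. The case where the non-hyperbolic curve is separating will be handled by finding a non-separating curve with elliptic image inside one side, using the one-holed torus and four-holed sphere dynamics.

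The main obstacle is the bookkeeping of relative Euler classes and signs under cutting. Specifically, we must verify that every intermediate piece has a non-extremal relative Euler class, so that its level sets are not Teichmüller-like and the small-piece ergodicity applies. I would first attempt this using additivity of the relative Euler class together with the generalized Milnor--Wood inequality on each piece, where the strictness of the inequalities is exactly what excludes the extremal values. I would also check the low-complexity cases, including a four-holed sphere with some parabolic boundary, where Marché--Wolff met the exceptional $\mathcal{M}_0(\Sigma_2)$ phenomenon. The hope is that the presence of punctures with fixed signs removes this exception.
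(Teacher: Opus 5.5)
Your top-level split is the paper's own: first show that a.e.\ point of $\NHns$ has a non-separating curve with elliptic image (Proposition~\ref{EfullNH}), then prove ergodicity on that locus (Theorem~\ref{ergE}). However, the mechanism you propose for the ergodicity half has a genuine gap.

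\emph{Twist flows along hyperbolic curves.} Dehn-twist invariance upgrades to twist-flow invariance only along curves whose image is elliptic of irrational angle. Along a pants curve with hyperbolic image the twist acts as a translation on a line. So ``twist flows along all curves in a pants decomposition'' give nothing there.

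\emph{Small-piece ergodicity is not available.} Ergodicity on four-holed spheres or other pieces with mixed parabolic/elliptic boundary is not known in the generality you need; the paper itself lists the remaining genus $0$ and $1$ cases as open. Moreover, the integer relative Euler class and Theorem~\ref{ryuhp} require the cut curves to have hyperbolic or parabolic image, so cutting along an elliptic curve does not give the bookkeeping you want.

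\emph{Missing connectedness.} Above all, you are missing the topological input that turns local a.e.-constancy into global constancy: connectedness of $\mathcal{EL}^s_n$ inside $\Mns$. Connectedness of the curve complex does not provide this. The paper gets local constancy from Proposition~\ref{U}: at every point of $\mathcal{EL}^s_n$, the $dF_\alpha$ of simple curves with $|\operatorname{Tr}\rho(\alpha)|<2$ span $T^*\Mns$. It then proves Proposition~\ref{connect_el} as follows.
\begin{enumerate}
\item Cut along $\partial T$, where $T$ is a one-holed torus carrying two non-commuting elliptic curves. Then $\rho(\partial T)$ is hyperbolic by Lemma~\ref{lem_torus}, and $e(\rho|_{\pi_1(T)})=0$.
\item Hence $e(\rho|_{\pi_1(\Sigma\setminus T)})=n$ must lie in the Milnor--Wood range of $\Sigma\setminus T$. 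This range is exactly the strict range in the theorem, which is where non-extremality first enters.
\item Connectedness comes from the strong path-lifting property of $\widetilde{e}_{s,n}$ (Lemma~\ref{plp_HP'}), combined with Proposition~\ref{unique}.
\end{enumerate}

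The reduction half is also left unproved. ``Finding a non-separating curve with elliptic image inside one side'' fails in exactly the case that is new for punctured surfaces. When $\gamma$ bounds a pair of pants containing two punctures (necessarily of the same sign, by Lemma~\ref{parplp}), that side contains no non-peripheral curve at all. The paper instead works on the positive-genus side and produces curves crossing $\gamma$:
\begin{enumerate}
\item Since $\rho(\gamma)$ is an irrational rotation about a point $x_0$, a sequence of curves $\delta_k$ with hyperbolic images, whose axes turn positively around $x_0$ and recede from it, yields an elliptic $\rho(\gamma^M\delta_k\gamma^{-M}c_1)$ for suitable $k$ and $M$ (Lemma~\ref{lem_ellnonsep}).
\item Such $\delta_k$ are extracted from a one-holed torus or pair of pants in a positive-genus decomposition with a suitable relative Euler class and sign (Lemma~\ref{lem_part3_pants}). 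The four-holed sphere adjacent to $\gamma$ is handled by the trace computation of Lemma~\ref{lem_fourpuncsphere}.
\item Non-extremality is used a second time here: if no piece satisfied conditions (a)--(d) of Lemma~\ref{lem_part3_pants}, additivity would force $n\geqslant -\chi(\Sigma)-p_-(s)$.
\end{enumerate}
Your proposal does not identify this mechanism, so the step ``$\mathcal{E}^s_n$ has full measure in $\NHns$'' remains open in your argument.
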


Thus, when $n$ lies in the range stated in Theorem \ref{mainresult}, showing ergodicity of the $\operatorname{MCG}(\Sigma)$-action on $\mathcal{M}^s_n$ is equivalent to showing $\mathcal{NH}^s_n$ is full measure in $\mathcal{M}^s_n$, hence an affirmative answer to Bowditch's question implies that Goldman's conjecture is true. The cases when $n = -\chi(\Sigma) -p_-(s)$ or $n = \chi(\Sigma) + p_+(s)$ (except when $p_-(s) = 0$ and $n = -\chi(\Sigma)$ or when $p_+(s) = 0$ and $n = \chi(\Sigma)$ where the corresponding components $\Mns$ are the copies of the Teichm\"uller space) remain to be explored.
\smallskip

Goldman's conjecture is investigated for several cases of genus 0 and 1. For $\Sigma=\Sigma_{1,1}$, see Goldman \cite{goldman_torus} and for $\Sigma= \Sigma_{0,4}$, see Yang \cite{yang}. 
For punctured spheres \(\Sigma=\Sigma_{0,p}\), certain components of the relative character varieties with elliptic boundary
holonomy are studied; see Maret ~\cite{maret}. The remaining cases of genus 0 and 1 deserve a further study.
\smallskip

\begin{remark}
Since punctured surfaces with at least two punctures have separating simple closed curves that bound a genus-zero subsurface of negative Euler characteristic, the techniques of Marché and Wolff do not directly apply. In this paper, we develop a new approach that overcomes this obstruction.
\end{remark}

\begin{remark}
In \cite{ryu2}, the second author identified non-Fuchsian type-preserving representations that send every non-peripheral simple closed curve to a hyperbolic element, and showed that the set of their conjugacy classes has full measure in certain components $\mathcal{M}^s_n$. The techniques developed in the proof of Theorem \ref{mainresult} also give an alternative proof of this result for punctured surfaces of genus $g\geqslant 2$. We state the result as Theorem \ref{thm_tothyp} and include its proof in Appendix for the interested readers.
\end{remark}


This paper is organized as follows. 
    In Section \ref{sec_preliminary}, we recall the definitions of the relative Euler class and the sign of representations with parabolic or hyperbolic boundary conditions, and state some results used throughout the paper.
    In Section \ref{sec_outline}, we introduce the required notation, and state the outline of the proof of Theorem \ref{mainresult}, which mainly follows from 
    Propositions \ref{EfullNH}, \ref{connect_el} and \ref{U}.
    In Sections \ref{sec_conn} and \ref{sec_U}, 
    we will prove Propositions \ref{connect_el} and \ref{U}, respectively. Finally, in Section \ref{sec_EfullNH}, we will prove Proposition \ref{EfullNH}.

\noindent\textbf{Acknowledgments.} 
The authors would like to thank their supervisor Tian Yang for introducing us to the works of March\'e, Wolff and Goldman. They are deeply grateful for his invaluable insights, numerous discussions, consistent guidance and support. The authors discussed the computations involved in the proof of Lemma \ref{lem_fourpuncsphere} with ChatGPT (GPT-5.5, OpenAI).
\section{Preliminaries}\label{sec_preliminary}

Let $\Sigma = \Sigma_{g,p}$ be an oriented punctured surface with genus $g$ and $p$ punctures, and let $\fund$ be the fundamental group of $\Sigma$ which has the presentation
\begin{equation*}
    \pi_1(\Sigma)=\big\langle a_1,b_1,...,a_g,b_g,c_1,...,c_p\ |\ [a_1,b_1][a_2,b_2]...[a_g,b_g]c_1c_2...c_p\big\rangle. 
    \end{equation*}
    A \emph{peripheral element} of $\fund$ 
    is an element represented by a closed curve freely homotopic to a circle around a puncture of $\Sigma$. Notice that each puncture of $\Sigma$ determines a conjugacy class of $\pi_1(\Sigma)$ consisting of the peripheral elements around it. Throughout this paper, we fix 
    peripheral elements $c_1,\dots,c_p$, one for each puncture of $\Sigma$, each represented by a simple loop around the corresponding puncture and oriented according to the surface orientation. We call these the \emph{preferred peripheral elements}.  Denote by $\mathrm{HP}(\Sigma)$ the subspace of $\hom$ consisting of representations $\rho:\pi_1(\Sigma) \rightarrow \psl$ such that for every $i\in \{1,\dots,p\}$, $\rho(c_i)$ is hyperbolic or parabolic.  
    \smallskip
    
    In this section, we recall the definition and basic properties of relative Euler classes and signs of $\psl$-representations in $\HP(\Sigma)$. We also review several previous results that will be used in the proof of our main result.
    In this paper, we will use the notation $g$ or $\pm A$ for $\psl$-elements, $A$ for $\SL$-elements, and $\widetilde{g}$ or $\widetilde{A}$ for the elements in the universal cover $\univcover$.

   \subsection{The relative Euler classes}\label{relE}

    For a closed surface $\Sigma$, each representation 
    $\rho\in \hom$ 
    determines  and is the holonomy representation of a flat principal $\psl-$bundle over $\Sigma$. 
    The Euler class of this bundle is an obstruction class in  $\mathrm{H}^2\big(\Sigma; \pi_1\big(\psl\big)\big)$ that measures the non-triviality of the principal bundle.
    This obstruction class defines the Euler class of the representation $\rho$, which can be considered as an integer under the isomorphism $\mathrm{H}^2\big(\Sigma; \pi_1\big(\psl\big)\big)\cong \mathbb{Z}.$ 
    For a punctured surface $\Sigma$, as $\mathrm{H}^2\big(\Sigma; \pi_1\big(\psl\big)\big) = 0,$ all flat principal bundles over $\Sigma$ are trivial. Therefore, boundary conditions are needed to obtain a nontrivial invariant of $\psl-$representations of $\pi_1(\Sigma).$
    For each representation $\rho \in \HP(\Sigma)$, 
    the corresponding flat bundle admits a well-defined special trivialization (see \cite[Section 3]{goldman}). The obstruction to extend this special trivialization on $\partial \Sigma$ to a trivialization on $\Sigma$ defines the \emph{relative Euler class} of $\rho$, which is an obstruction class in 
    $\mathrm{H}^2\big(\Sigma, \partial \Sigma ; \pi_1\big(\psl\big)\big)$ and can be considered as an integer under the isomorphism $\mathrm{H}^2\big(\Sigma, \partial \Sigma ; \pi_1\big(\psl\big)\big)\cong \mathbb{Z}$.
    \\

    For more details, let us recall that elements of $\psl$ are classified as hyperbolic, parabolic and elliptic as follows: For a non-trivial $\pm A$ in $\psl,$ let $A$ be one of its lifts in $\SL.$ Then $\pm A$ is hyperbolic if $|\tr(A)|>2,$ parabolic if $|\tr(A)|=2,$ and elliptic if $|\tr(A)|<2.$ 
We respectively let $\Hyp,$ $\Par$ and $\Ell$ be the spaces of hyperbolic, parabolic and elliptic elements of $\psl$. Then $\Hyp$ and $\Ell$ are connected subspaces of $\psl$, and $\Par$ has two connected components $\Parp$ and $\Parm$ which are respectively the $\psl$-conjugacy classes of $\pm \parpp$ and $\pm \parmp$. We call a parabolic element \emph{positive} if it lies in $\Par^+$, and \emph{negative} if it lies in $\Par^-$.
    
  Similarly, non-central elements of the universal covering $\univcover$ of $\psl$ are also classified as \emph{hyperbolic}, \emph{parabolic}, or 
    \emph{elliptic} according to the type of their projections to $\psl$. A parabolic element of $\univcover$ is \emph{positive} or \emph{negative} if its projection to $\psl$ is respectively so. As a convention, we define the \emph{trace} of an element in $\univcover$ to be the trace of its projection to $\SL.$ Then a non-central element in $\univcover$ is hyperbolic, parabolic, or elliptic if its trace is greater than, equal to, or less than $2$ in the absolute value.

    As $\psl$ is homeomorphic to the unit tangent bundle of the hyperbolic plane $\mathbb H^2$ which is an open solid torus, its universal covering 
    group $\univcover$ is  homeomorphic to $\mathbb{R}^3$, with the group of
    deck transformations 
    $\pi_1(\psl)\cong \mathbb{Z}$. 
The pre-image of each of $\Hyp$, $\Par^\pm$ and $\Ell$ in $\univcover$ have infinitely many connected components, indexed by the integers $\mathbb{Z}$. We denote these components as follows: Recall that the center of  $\univcover$ is  infinite cyclic consisting of the lifts of $\pm\mathrm I.$  Let $\widetilde{\exp}: \sl\to \univcover$ be the exponential map of $\univcover$, and let $z := \widetilde{\exp}\begin{bmatrix}
        0 & \pi \\
       -\pi & 0 
    \end{bmatrix}$. Then $z$ is a generator of the center $Z$ of $\univcover$ and for any $n\in\mathbb Z,$ the lift $ \widetilde{\exp}
    \begin{bmatrix}
        0 & n\pi \\
       -n\pi & 0 
    \end{bmatrix} $ of $\pm\mathrm{I}$  in $\univcover$ is equal to $z^n.$ 
(In \cite{goldman}, the generator $z$  is chosen as
    $\widetilde{\exp}
    \begin{bmatrix}
        0 & -\pi \\
       \pi & 0 
    \end{bmatrix}$, which is the inverse of ours.
    As a result, the relative Euler class defined there  
    is the negative of ours.) 
    Let  $\Hyp_0$ be the space of hyperbolic elements of $\univcover$ lying in the image  $\widetilde{\exp}(\sl)$ of the exponential map; 
    and for any $n\in\mathbb Z$, let  $\Hyp_n := z^n\Hyp_0$. 
    Then $\{\Hyp_n\}_{n\in \mathbb Z}$ are the connected components of the space of hyperbolic elements in $\univcover$; and two hyperbolic elements in $\univcover$ are conjugate if and only if they lie in the same connected component $\Hyp_n$  for some $n\in\mathbb Z$ and have the same trace.
    Similarly, let $\Par^+_0$ and $\Par^-_0$ respectively be the spaces of positive and negative parabolic elements of $\univcover$ lying in the image  of the exponential map, and let  $\Par_0
    = \Par^+_0\cup \Par^-_0.$ For any $n\in\mathbb Z,$ let $\Par^+_n = z^n\Par^+_0$ and $\Par^-_n = z^n\Par^-_0,$ and let  $\Par_n := \Par^+_n\cup \Par^-_n=z^n\Par_0.$ Then $\{\Par^+_n\}_{n\in \mathbb Z}$ and $\{\Par^-_n\}_{n\in \mathbb Z}$ are respectively the connected components of the space of positive and negative parabolic elements in $\univcover$; and two parabolic elements in $\univcover$ are conjugate if and only if they lie in the same connected component $\Par^+_n$ or $\Par^-_n$ for some $n\in\mathbb Z.$  Finally, for $n\in \mathbb{Z}$, we denote by $\overline{\Hyp_n}$ the closure of $\Hyp_n$ in $\univcover$, which is a union of 
    $\Hyp_n,$ $\Par_n$ and $\{z^n\}$.

Unlike hyperbolic and parabolic elements, all elliptic elements of $\univcover$ lie in the image of the exponential map; and we need to index their connected components differently. For $n>0$, let  $\Ell_n$ be the subspace of  $\univcover$  consisting of elements conjugate to $\widetilde{\exp}\begin{bmatrix}
        0 & \theta \\
       -\theta & 0 
    \end{bmatrix}$ for some $\theta$ in $((n-1)\pi, n\pi),$ and for $n<0$, let  $\Ell_n$ be the subspace of  $\univcover$ consisting of elements conjugate to $\widetilde{\exp}\begin{bmatrix}
        0 & \theta \\
       -\theta & 0 
    \end{bmatrix}$ for some $\theta$ in $(n\pi, (n+1)\pi).$
    Then $\{\Ell_n\}_{n\in \mathbb Z\setminus\{0\}}$ are the connected components of the space of elliptic  elements in $\univcover;$ and two elliptic elements in $\univcover$ are conjugate if and only if both lie in the same connected component $\Ell_n$ and have the same trace. See Figure \ref{fig: Universal_cover_Ell_colored2}.

\begin{figure}[hbt!]
        \centering
        \begin{overpic}[width=1.0\textwidth]{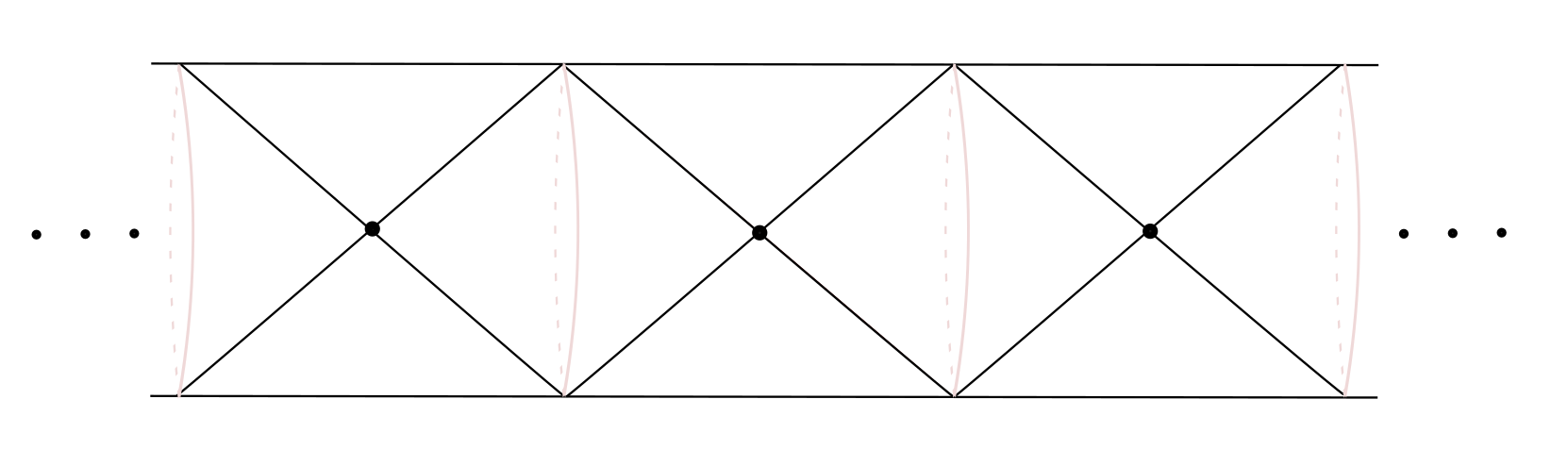}

            \put(22.0,17){\textcolor{black}{$z^{-1}$}}

            \put(48,17){\textcolor{black}{$\mathrm{I}$}}

            \put(72.5,17){\textcolor{black}{$z$}}

        \put(21,6){\textcolor{OliveGreen}{$\Hyp_{-1}$}}

            \put(46,6){\textcolor{OliveGreen}{$\Hyp_{0}$}}
            
            \put(71,6){\textcolor{OliveGreen}{$\Hyp_{1}$}}

            \put(14,23.5){\textcolor{blue}{$\Par^-_{-1}$}}

            \put(27,23.5){\textcolor{blue}{$\Par^+_{-1}$}}

            \put(39,23.5){\textcolor{blue}{$\Par^-_0$}}

            \put(53,23.5){\textcolor{blue}{$\Par^+_0$}}

            \put(64,23.5){\textcolor{blue}{$\Par^-_1$}}

            \put(78,23.5){\textcolor{blue}{$\Par^+_1$}}

        \put(10,14){\textcolor{red}{$\Ell_{-2}$}}
        
        \put(33,14){\textcolor{red}{$\Ell_{-1}$}}

        \put(59,14){\textcolor{red}{$\Ell_1$}}

        \put(84,14){\textcolor{red}{$\Ell_2$}}
        \end{overpic}
        \caption{\label{fig: Universal_cover_Ell_colored2} The universal covering $\univcover$}
    \end{figure}

    The relative Euler class of a representation $\rho$ in $\HP(\Sigma)$ can be defined as follows.  
    Since for each $i\in\{1,\dots, p\},$ the image $\rho(c_i)$ of the peripheral element $c_i$ is either hyperbolic or parabolic, there exists a unique lift 
    $\widetilde{\rho(c_i)}$  of $\rho(c_i)$ in $\univcover$ that lies in $\overline{\Hyp_0}$.  
 For $j \in \{1,\dots, g\}$, choose arbitrarily lifts 
    $\widetilde{\rho(a_j)}$ and $\widetilde{\rho(b_j)}$ of $\rho(a_j)$ and $\rho(b_j)$ 
    in $\univcover$; and 
    note that the commutator 
    $[\widetilde{\rho(a_j)}, \widetilde{\rho(b_j)}]$ is independent of the choice of the lifts. 
    Since $$[\rho(a_1), \rho(b_1)] \dots [\rho(a_g), \rho(b_g)] \rho(c_1)\dots\rho(c_p)=\pm\mathrm{I},$$ 
    the product $[\widetilde{\rho(a_1)}, \widetilde{\rho(b_1)}] \dots [\widetilde{\rho(a_g)}, \widetilde{\rho(b_g)}] \widetilde{\rho(c_1)}\dots\widetilde{\rho(c_p)}$ in $\univcover$ projects to 
    $\pm\mathrm{I}$ in $\psl$ under the covering map, hence lies in the center of $\univcover,$ i.e., there exists an $n\in\mathbb Z$ such that $$[\widetilde{\rho(a_1)}, \widetilde{\rho(b_1)}] \dots [\widetilde{\rho(a_g)}, \widetilde{\rho(b_g)}] \widetilde{\rho(c_1)}\dots\widetilde{\rho(c_p)} = z^n.$$
   The \emph{relative Euler class} $e(\rho)$ of the representation $\rho\in\mathrm{HP}(\Sigma)$ is defined as
   $e(\rho)=n.$
    \medskip
    
   The relative Euler class satisfies the following additivity property.
    
    \begin{proposition}\label{prop_additivity}\cite[Proposition 3.7]{goldman}
        Let $\Sigma = \Sigma_{g,p},$ let $\gamma$ be a separating simple closed curve on $\Sigma,$ and let $\Sigma_1$ and $\Sigma_2$ respectively be the two components of the complement $\Sigma\setminus \gamma.$ 
     Suppose a representation $\rho\in \HP(\Sigma)$ maps $[\gamma]\in \pi_1(\Sigma)$ to a hyperbolic or parabolic element of $\psl.$ Then $\rho|_{\pi_1(\Sigma_1)}\in \mathrm{HP}(\Sigma_1)$, $\rho|_{\pi_1(\Sigma_2)}\in \mathrm{HP}(\Sigma_2)$, and 
        $$e(\rho) = e\big(\rho|_{\pi_1(\Sigma_1)}\big) + e(\rho|_{\pi_1(\Sigma_2)}).$$
    \end{proposition}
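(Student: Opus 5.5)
The statement to prove is the additivity of the relative Euler class along a separating curve, Proposition \ref{prop_additivity}. The plan is to work entirely with the lifting definition of $e(\rho)$ in $\univcover$. First I choose a presentation of $\pi_1(\Sigma)$ adapted to $\gamma$. Cutting along $\gamma$ gives $\Sigma_1 = \Sigma_{g_1,p_1+1}$ and $\Sigma_2=\Sigma_{g_2,p_2+1}$, with $g_1+g_2=g$ and $p_1+p_2=p$. After applying a homeomorphism and relabeling the punctures, I may assume that $\gamma$ is represented by
$$c_\gamma := \big([a_1,b_1]\cdots[a_{g_1},b_{g_1}]\,c_1\cdots c_{p_1}\big)^{-1}.$$
Then $\pi_1(\Sigma_1)$ has generators $a_1,\dots,b_{g_1},c_1,\dots,c_{p_1},c_\gamma$, with the single relation $\prod[a_j,b_j]\,c_1\cdots c_{p_1}c_\gamma=1$. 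Similarly, $\pi_1(\Sigma_2)$ has the remaining generators together with $c_\gamma^{-1}$, with relation $c_\gamma^{-1}\prod_{j>g_1}[a_j,b_j]\,c_{p_1+1}\cdots c_p=1$.

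I need to make sure both of these are boundary-oriented preferred peripheral elements. By conjugating the second relation cyclically, I can put $c_\gamma^{-1}$ at the end, and $e$ is invariant under this conjugation. The restrictions lie in $\HP(\Sigma_i)$ because $\rho(\gamma)$ is hyperbolic or parabolic by hypothesis, while the other peripheral images are unchanged.

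The main computation is as follows. Let $\widetilde{h}$ be the unique lift of $\rho(c_\gamma)$ in $\overline{\Hyp_0}$, and let $P_1$ and $P_2$ be the lifted products coming from the two subsurfaces. By the definitions,
$$P_1\,\widetilde h = z^{e_1}, \qquad \widetilde{h^{-1}}\,P_2 = z^{e_2}.$$
Here $\widetilde{h^{-1}}$ denotes the $\overline{\Hyp_0}$-lift of $\rho(c_\gamma)^{-1}$, and the commutator lifts are the same ones used for $\Sigma$. The key observation is that $\overline{\Hyp_0}$ is closed under inversion. This holds because $\Hyp_0$ and $\Par_0$ lie in the image of $\widetilde{\exp}$ and $\widetilde{\exp}(-X)=\widetilde{\exp}(X)^{-1}$. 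Hence $\widetilde{h^{-1}}=\widetilde h^{-1}$. Multiplying the two equations and using that $z$ is central, I get
$$P_1 P_2 = z^{e_1}\widetilde h\,\widetilde h^{-1} z^{e_2} = z^{e_1+e_2},$$
so that $e(\rho)=e_1+e_2$.

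I expect the main obstacle to be bookkeeping, not any single hard step. I must check that the reduction to the adapted presentation preserves $e(\rho)$, since $e$ is invariant under the mapping class group and under conjugation of $\rho$. I also need the orientation conventions to be right, so that $c_\gamma$ and $c_\gamma^{-1}$ are indeed the surface-oriented boundary loops of $\Sigma_1$ and $\Sigma_2$. Finally, I must verify that the cyclic reordering of the relation does not shift $e$. This last point needs care because conjugating a lifted product by a lift leaves a central element unchanged.
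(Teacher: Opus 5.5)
Your argument is correct. The paper itself gives no proof of this proposition; it only cites Goldman, within the obstruction-theoretic description recalled in the preliminaries, where $e(\rho)$ is the obstruction to extending the special trivialization over $\partial\Sigma$. In that language, additivity is the splitting of the relative obstruction along $\gamma$. The only input is that the special trivialization over $\gamma$ is determined by $\rho(\gamma)$, so it is the same whether $\gamma$ is viewed as a boundary component of $\Sigma_1$ or of $\Sigma_2$.

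Your computation is the algebraic counterpart of exactly this fact. Since $\overline{\Hyp_0}^{-1}=\overline{\Hyp_0}$, you get $\widetilde{h^{-1}}=\widetilde h^{-1}$, and the two boundary contributions $P_1=z^{e_1}\widetilde h^{-1}$ and $P_2=\widetilde h\,z^{e_2}$ cancel.

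The trade-off is that the obstruction-theoretic route gives presentation-independence for free, while yours must import it. Your setup implicitly uses the relator $[a_1,b_1]\cdots[a_{g_1},b_{g_1}]\,c_1\cdots c_{p_1}\,[a_{g_1+1},b_{g_1+1}]\cdots[a_g,b_g]\,c_{p_1+1}\cdots c_p$, which is not the paper's fixed one. Also, $e(\rho|_{\pi_1(\Sigma_i)})$ has to be computed from a standard presentation of $\pi_1(\Sigma_i)$. So your ``bookkeeping'' step is really the fact that the lifted-product formula computes $e$ for every standard presentation (equivalently, invariance of $e$ under the mapping class group, which the paper asserts in the introduction). The inversion observation does not supply this, so you should state it explicitly as an input.

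For the particular curve $\gamma=(XY)^{-1}$, with $X=\prod_{j\leqslant g_1}[a_j,b_j]$ and $Y=c_1\cdots c_{p_1}$, you can stay inside the paper's fixed presentation. Use $Y^{-1}a_jY$ and $Y^{-1}b_jY$ ($j>g_1$) as the handle generators of $\Sigma_2$. Take the lift of $\rho(Y)$ to be the product of the preferred lifts of $\rho(c_1),\dots,\rho(c_{p_1})$. The two lifted products then multiply to exactly the official one. A general $\gamma$ still needs the invariance above to be brought to this form.

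Finally, the middle factor in your last display should be $\widetilde h^{-1}\widetilde h$, not $\widetilde h\,\widetilde h^{-1}$; this is harmless.
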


    Recall that a \emph{holonomy representation} is a representation $\rho\in \HP(\Sigma)$ that is Fuchsian, i.e., discrete and faithful, and that the convex core \textbf{core}$\big(\mathbb{H}^2/\rho\big(\pi_1(\Sigma)\big)\big)$ is homeomorphic to $\Sigma$. 
    The following proposition determines the relative Euler classes of holonomy representations. 
    In the original statement, $\rho$ is assumed to send all peripheral elements to hyperbolic elements of $\psl$; however, since type-preserving representations of $\pi_1(\Sigma)$ are holonomy if and only if they are Fuchsian (see \cite{yang}, Appendix A), the proof works verbatim here.
    \begin{proposition}\label{prop_holonomy}\cite[Theorem 3.4]{goldman}
        Let $\rho$ be a representation in $\HP(\Sigma)$. Then its relative Euler class $e(\rho)$ satisfies $|e(\rho)| = -\chi(\Sigma)$ if and only if $\rho$ is a holonomy representation.
    \end{proposition}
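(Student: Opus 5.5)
The plan follows Goldman's original strategy: a combinatorial computation for the forward direction, and a Milnor--Wood/area rigidity argument for the converse. Throughout, only the normalisation of $z$ differs from \cite{goldman}, so every sign below is read in our convention.

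\emph{Holonomy $\Rightarrow$ $|e(\rho)|=-\chi(\Sigma)$.} Let $\rho$ be a holonomy representation. We take a pants decomposition of $\Sigma$ by simple closed curves $\gamma_1,\dots,\gamma_{-\chi(\Sigma)/1}$. Because $\rho$ is discrete and faithful, each $\rho(\gamma_k)$ is hyperbolic. We then cut $\Sigma$ along the curves one at a time.
\begin{itemize}
\item For a separating curve we apply Proposition \ref{prop_additivity} directly.
\item For a non-separating curve we use the analogous additivity for the surface obtained by cutting along it. This follows from the same lifting computation: the two new boundary elements are inverse to each other, and both have lifts in $\overline{\Hyp_0}$.
\end{itemize}
This reduces the claim to showing that the restriction of $\rho$ to each pair of pants $P$ has relative Euler class $\pm 1=\pm(-\chi(P))$, with the same sign for every pants. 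The common sign is fixed by whether the developing map preserves or reverses orientation.

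For a pair of pants, $\pi_1(P)=\langle c_1,c_2,c_3\mid c_1c_2c_3\rangle$, and the holonomy is determined up to conjugacy by the three boundary traces. These boundaries are hyperbolic or parabolic, corresponding to geodesic or cusp ends. We normalise explicit matrices, for instance with $\rho(c_1)$ upper triangular and $\rho(c_2)$ lower triangular. We then take the lifts $\widetilde{\rho(c_i)}\in\overline{\Hyp_0}$ and compute in which component of $\univcover$ the product $\widetilde{\rho(c_1)}\widetilde{\rho(c_2)}$ lies. The product lands in $z^{\pm1}\overline{\Hyp_0}$, which gives $\widetilde{\rho(c_1)}\widetilde{\rho(c_2)}\widetilde{\rho(c_3)}=z^{\pm1}$. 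The computation can be organised by tracking rotation angles along a path of lifts, or by a continuity argument that deforms to a convenient triple; the value is constant on the connected space of Fuchsian pants representations. Summing over the $-\chi(\Sigma)$ pants gives $|e(\rho)|=-\chi(\Sigma)$.

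\emph{$|e(\rho)|=-\chi(\Sigma)$ $\Rightarrow$ holonomy.} We choose a $\rho$-equivariant smooth map $f:\widetilde\Sigma\to\mathbb H^2$ and write $\omega$ for the hyperbolic area form. Near each end, $f$ is chosen adapted to the boundary behaviour: it maps a neighbourhood of a parabolic end into a horoball-cusp region, and a hyperbolic end to a collar equivariant along the axis of $\rho(c_i)$. We then express the relative Euler class, measured against the special trivialisation, as $\pm\frac{1}{2\pi}\int_\Sigma f^*\omega$. This is a Gauss--Bonnet/Chern--Weil identity, and with these adapted ends the boundary terms vanish. Next we replace $f$ by a straightened (geodesic-simplex) map on an ideal or geodesic triangulation of $\Sigma$ with $-2\chi(\Sigma)$ triangles. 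This does not change the integral, since the integral is a cohomological invariant. Each straight triangle has area at most $\pi$, so $|e(\rho)|\le -\chi(\Sigma)$, which is the generalised Milnor--Wood bound. Equality forces every triangle to be ideal, or maximal in the limiting sense, and all triangles to have the same orientation. Hence the straightened map is an orientation-consistent local isometry gluing to a complete hyperbolic structure (with the appropriate geodesic boundary or cusps) on $\Sigma$ whose holonomy is $\rho$. Therefore $\rho$ is discrete and faithful and has convex core homeomorphic to $\Sigma$, i.e. $\rho$ is a holonomy representation.

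The main obstacle is the rigidity step in the converse, where equality must be shown to force a genuine hyperbolic structure. Two points need care: handling the non-compact and boundary ends so that the area formula carries no boundary terms, and showing that the equal-orientation ideal triangles glue without folding. I would first attempt this via the triangulation and straightening argument, using the parabolic/hyperbolic normalisation at each end to control the boundary terms.
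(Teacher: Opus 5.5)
Your route is genuinely different from the paper's. The paper gives no argument of its own here. It imports Goldman's Theorem 3.4, which is stated for hyperbolic peripheral images, and asserts that Goldman's proof goes through verbatim once one knows (Yang, Appendix A) that a type-preserving representation is a holonomy representation if and only if it is Fuchsian. You instead reprove the result from scratch:
\begin{itemize}
\item The forward direction goes by additivity along a pants decomposition. Your lifting check for a non-separating cut is correct, since $[\widetilde{\rho(a)},\widetilde{\rho(b)}]=\widetilde{\rho(a)}\cdot\big(\widetilde{\rho(b)}\widetilde{\rho(a)}^{-1}\widetilde{\rho(b)}^{-1}\big)$ is a product of two preferred lifts. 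This is combined with one pants computation, propagated by local constancy of $e$ on each orientation class of marked Fuchsian pants.
\item The converse goes by an area formula and straightening over an ideal triangulation with exactly $-2\chi(\Sigma)$ triangles. Then $|e(\rho)|=-\chi(\Sigma)$ forces every triangle to be non-degenerate and coherently oriented.
\end{itemize}
Your approach buys a self-contained proof that treats cusps and geodesic ends uniformly and builds the marked hyperbolic structure directly, so Yang's appendix is not needed. Rigidity is easy precisely because $p\geqslant 1$ makes the ideal triangle count sharp. The citation buys brevity, at the price of leaving to the reader why Goldman's argument is insensitive to parabolic ends.

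What you still owe is the analytic core of the converse.
\begin{itemize}
\item The identity $e(\rho)=\pm\frac{1}{2\pi}\int_\Sigma f^*\omega$ says the area (Toledo) invariant equals the relative Euler class when all boundary lifts have zero translation number, as lifts in $\overline{\Hyp_0}$ do. It cannot come from your pants computation: for a general $\rho$ you have no pants decomposition with non-elliptic curves to cut along. So it must be proved or cited directly. Once you have it, the forward direction also follows from Gauss--Bonnet on the convex core.
\item The straightening must place the ideal vertex of the $i$th puncture at a fixed point of $\rho(c_i)$. Equivariant truncations of the straightened map then converge to the axis (resp.\ go deep into the cusp), so the end contributions tend to zero. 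That estimate, not bare cohomological invariance, is why the integral is unchanged.
\item In the rigidity step, check the ends. A coherent fan around a parabolic fixed point closes up to a complete cusp. Around a hyperbolic fixed point it fills a half-plane bounded by the axis, so the completion has geodesic boundary and develops isometrically onto a convex domain.
\end{itemize}

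There are also two slips in the forward direction.
\begin{itemize}
\item A pants decomposition has $3g-3+p$ curves; $-\chi(\Sigma)$ is the number of pants.
\item Hyperbolicity of the $\rho(\gamma_k)$ does not follow from discreteness and faithfulness in $\HP(\Sigma)$, where accidental parabolics occur. It follows from $\rho$ being the holonomy of a \emph{marked} hyperbolic structure, which you also need for the common sign. The marking is essential: precomposing a Fuchsian pants representation with the automorphism $c_1\mapsto c_1c_2$, $c_2\mapsto c_2$ of $\pi_1(\Sigma_{0,3})$ gives a Fuchsian representation in $\HP(\Sigma_{0,3})$ with the same convex core but relative Euler class $0$.
\end{itemize}
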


    Recall that a $\psl$-representation is \emph{reducible} if there exists a line in $\mathbb{R}^2$ that is invariant under the action of  $\rho\big(\pi_1(\Sigma)\big)$, or equivalently, $\rho$ is conjugate to a representation into the 
    Borel subgroup of $\psl$ consisting of the projection of the upper-triangular matrices. A representation is \emph{irreducible} if it is not reducible. 
    The following proposition determines the relative Euler class of reducible representations in $\HP(\Sigma)$, where in the original statement $\rho$ is assumed to send all peripheral elements to hyperbolic elements,
    but the proof works verbatim here.
    \begin{proposition}\cite[Proposition 3.6]{goldman}\label{prop_reducible}
        If $\rho\in \HP(\Sigma)$ is reducible, then $e(\rho)=0$. In particular, if $\rho$ is abelian, then $e(\rho)=0.$
    \end{proposition}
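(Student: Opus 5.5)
The plan is to conjugate $\rho$ into the Borel subgroup and work inside a single simply connected lift of that subgroup in $\univcover$. There every relevant lift is canonical and the defining product is forced to be the identity. Since the relative Euler class is invariant under conjugation, we may assume $\rho\big(\pi_1(\Sigma)\big)\subset B$. Here $B\subset\psl$ is the image of the upper triangular matrices, which we normalize as $\pm\begin{bmatrix}\lambda & \mu\\ 0 & \lambda^{-1}\end{bmatrix}$ with $\lambda>0$. Then $B$ is homeomorphic to $\mathbb{R}_{>0}\times\mathbb{R}$, so it is connected and simply connected. Consequently the identity component $\widetilde B_0$ of its preimage in $\univcover$ maps isomorphically onto $B$, and the full preimage is $\widetilde B_0\times Z$.

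\textbf{Step 1: $\widetilde B_0\subset\overline{\Hyp_0}$.} Every element of $B$ has $|\tr|=\lambda+\lambda^{-1}\geqslant 2$, so $\widetilde B_0$ contains no elliptic elements. The only element of $B$ projecting to $\pm\mathrm{I}$ is the identity, so $\widetilde B_0\cap Z=\{1\}$. The complement of the elliptic elements in $\univcover$ is the disjoint union $\bigsqcup_{n\in\mathbb{Z}}\overline{\Hyp_n}$. These pieces are closed, pairwise disjoint and locally finite, hence they are the connected components of this complement. Since $\widetilde B_0$ is connected and contains $1=z^0$, it lies in $\overline{\Hyp_0}$. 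This is the step needing the most care: one must check that the $\overline{\Hyp_n}$ really separate, which follows from the description of $\univcover$ and its conjugacy classes recalled above.

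\textbf{Step 2: compute $e(\rho)$.} By Step 1, the distinguished lift $\widetilde{\rho(c_i)}\in\overline{\Hyp_0}$ is exactly the lift of $\rho(c_i)$ lying in $\widetilde B_0$. For the generators $a_j$ and $b_j$ we choose lifts in $\widetilde B_0$; this is allowed because the commutators do not depend on the choice of lifts. Then the product
$$[\widetilde{\rho(a_1)},\widetilde{\rho(b_1)}]\cdots[\widetilde{\rho(a_g)},\widetilde{\rho(b_g)}]\,\widetilde{\rho(c_1)}\cdots\widetilde{\rho(c_p)}$$
lies in the subgroup $\widetilde B_0$ and is central. Since $\widetilde B_0\cap Z=\{1\}$, the product equals $z^0$, and so $e(\rho)=0$.

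\textbf{Step 3: abelian case.} Let $\rho$ be abelian. If its image contains a hyperbolic or parabolic element, then every element of the image commutes with it and hence preserves its fixed point(s) on $\partial\mathbb{H}^2$. A common fixed point at infinity gives an invariant line, so $\rho$ is reducible and Steps 1–2 apply.

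Otherwise every nontrivial image is elliptic. Commuting elliptic elements of $\psl$ share a fixed point in $\mathbb{H}^2$, since two rotations about distinct points never commute. So the image lies in a conjugate of $\mathrm{PSO}(2)$. That would make each $\rho(c_i)$ elliptic or trivial, which contradicts $\rho\in\HP(\Sigma)$ when $p\geqslant1$. If one also wants to cover the closed case $p=0$: the preimage of $\mathrm{PSO}(2)$ in $\univcover$ is the abelian group $\widetilde{\exp}(\mathfrak{so}_2)\cong\mathbb{R}$. All commutators of lifts therefore vanish, and again $e(\rho)=0$.
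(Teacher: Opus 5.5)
Your proof is correct and is essentially the argument the paper relies on when it cites Goldman's Proposition 3.6. You conjugate into the Borel subgroup $B$, use that $B$ is simply connected so that it lifts isomorphically onto a subgroup $\widetilde{B}_0\subset\univcover$ containing the preferred lifts of the peripheral images, and conclude that the lifted relator is a central element of $\widetilde{B}_0$, hence trivial. The separation claim in your Step 1 can be avoided: every element of $B$ is $\exp(X)$ for an upper triangular traceless $X$, and $t\mapsto\widetilde{\exp}(tX)$ stays in the preimage of $B$, so the lift in $\widetilde{B}_0$ is $\widetilde{\exp}(X)$, which lies in $\overline{\Hyp_0}$ directly by the paper's definition of $\Hyp_0$ and $\Par_0$ as the hyperbolic, resp.\ parabolic, elements in the image of $\widetilde{\exp}$.
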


   \subsection{The signs}\label{Sign}

    Let $\Sigma = \Sigma_{g,p}$ with the preferred peripheral elements $c_1, \dots, c_p$ of $\fund$. The \emph{sign} of a representation $\rho\in\HP(\Sigma)$ is a $p$-tuple $s(\rho)=(s_1,\dots,s_p)$ in $\{-1,0,+1\}^p$, where for each $i\in\{1,\dots,p\},$ 
    \begin{equation*}
   s_i = \left\{
    \begin{array}{rl}
       +1,  &  \text{if } \rho(c_i) \text{ is positive parabolic}\\
        0, & \text{if } \rho(c_i) \text{ is hyperbolic}\\ 
        -1, &  \text{if } \rho(c_i) \text{ is negative parabolic}.
    \end{array}\right.
    \end{equation*}

    Since the type of an element of $\psl$ is invariant under $\psl-$conjugation, the sign of $\rho$ is independent of the choice of the preferred peripheral elements $c_i$'s. Denote by $\HPns(\Sigma)$ the subspace of $\HP(\Sigma)$ consisting of representations with relative Euler class $n$ and sign $s.$ In the rest of this paper, for an $s\in \{-1,0,+1\}^p$, we respectively let  $p_+(s),$ $p_0(s)$ and $p_-(s)$ be the number of $1$'s, $0$,s and $-1$'s in the components of  $s$.
    \smallskip

    The following Theorem \ref{ryuhp} gives a criterion for the non-emptiness and connectedness of $\HPns(\Sigma)$, which also characterizes the connected components of $\mathcal{R}(\Sigma)$.

 \begin{theorem}\label{ryuhp}\cite[Theorem 1.1, Theorem 1.8]{ryu} 
        Let $\Sigma = \Sigma_{g, p}$ with $g\geqslant 1$ and $p\geqslant 1$. Let $n\in \mathbb{Z}$ and $s\in \{+1,0, -1\}^p$. Then the space $\HP_n^s(\Sigma)$ is non-empty if and only if the pair $(n,s)$ satisfies the generalized Milnor-Wood inequality: 
        $$\chi(\Sigma) + p_+(s)\leqslant n\leqslant -\chi(\Sigma) - p_-(s).$$
        Each non-empty  $\HP_n^s(\Sigma)$ above is connected.
    \end{theorem}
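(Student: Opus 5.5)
The approach is to reduce both assertions to the basic building blocks of a pants decomposition (pairs of pants and one-holed tori), and to pass between blocks using the additivity of Proposition \ref{prop_additivity}. Since the case $p_0(s)>0$ is included, we may let some boundary images be hyperbolic. That is what makes the cutting-and-gluing argument possible: the cut curves will be hyperbolic.

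\textbf{Necessity of the inequality.} I would first prove the statement for a pair of pants $P$. Let $\rho\in\HP(P)$ with boundary images $g_1,g_2,g_3$ whose lifts $\widetilde{g_i}$ lie in $\overline{\Hyp_0}$, and suppose $\widetilde{g_1}\widetilde{g_2}\widetilde{g_3}=z^n$. The aim is $-1+p_+(s)\leqslant n\leqslant 1-p_-(s)$. I would use the rotation-number quasimorphism on $\univcover$. It vanishes on $\overline{\Hyp_0}$. For a product of two elements of $\overline{\Hyp_0}$, the product lies in $\overline{\Hyp_0}\cup \Ell_{\pm1}\cup\Hyp_{\pm1}\cup\Par_{\pm1}$, and I would record exactly which parabolic components are reachable. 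The point to check, by a direct matrix computation in upper-triangular form, is that a positive parabolic factor forbids reaching the $z^{-1}$ side, and a negative one forbids the $z^{+1}$ side. The sign-refined bound for $P$ follows.

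For general $\Sigma$, I would reduce to this case. Pick a pants decomposition in which every cut curve is sent to a hyperbolic element; such decompositions exist after a small deformation inside $\HP^s_n(\Sigma)$, which preserves $n$ and $s$. Then sum the pants bounds using Proposition \ref{prop_additivity}. Each cut curve carries sign $0$ in both adjacent pants, so the total is $\chi(\Sigma)+p_+(s)\leqslant n\leqslant -\chi(\Sigma)-p_-(s)$. If such a deformation is not available in general, the fallback is a direct estimate. Write $z^n\widetilde{\rho(c_p)}^{-1}\cdots\widetilde{\rho(c_1)}^{-1}$ as a product of $g$ commutators, and use the classical fact that commutator lifts lie in $\overline{\Hyp_{-1}}\cup\Ell_{\pm1}\cup\overline{\Hyp_0}\cup\overline{\Hyp_1}$. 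Iterate the product rule above, keeping track of the parabolic components.

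\textbf{Sufficiency.} For each admissible $(n,s)$, I would build a representation block by block. For a pair of pants, any triple $(e,s)$ allowed by the local bound can be realized by explicit matrices, with prescribed hyperbolic traces on the cut curves. For a one-holed torus with hyperbolic boundary, Euler classes $-1,0,1$ are realized. Next, distribute $n$ among the blocks so that each block's contribution lies in its allowed range; this is possible precisely when the global inequality holds, by a discrete intermediate-value argument. Finally, conjugate adjacent blocks so that the images of each shared curve agree, and glue. Proposition \ref{prop_additivity} gives total class $n$.

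\textbf{Connectedness.} This is the main obstacle. My plan is to fix a pants decomposition and consider the open dense subset $U\subset\HP^s_n(\Sigma)$ on which all cut curves map to hyperbolic elements. On $U$ there is a map to the tuple of local Euler classes and cut-curve traces. Each fiber is a product of the block spaces, twisted by the gluing conjugations (centralizers of hyperbolics, which are connected), so by induction each fiber is connected once the pants and one-holed torus pieces are known to be connected.

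Two gaps then remain. First, different distributions of local Euler classes must be connected. For this I would construct explicit paths across the boundary of $U$ that transfer one unit of Euler class between adjacent pants; along such a path a cut curve passes through parabolic, and in an intermediate configuration through elliptic (analogous to Goldman's argument for closed surfaces). Second, every component must meet $U$, which I would prove by a transversality and dimension count for the trace functions. I expect that carrying out these transfer moves while keeping the prescribed parabolic boundary signs fixed will be the hardest step.
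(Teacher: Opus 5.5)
The paper does not prove this theorem; it quotes it from Ryu--Yang. So I compare your plan with the machinery the paper uses for the analogous problem in Section~\ref{sec_conn}.

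Your sufficiency construction is fine. It is essentially the converse half of Lemma~\ref{ELnonempty}.

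The other two parts have genuine gaps, with a common source: you need the cut curves of a pants decomposition to be hyperbolic, but ellipticity is an open condition.

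\textbf{Necessity.} No small deformation inside $\HP^s_n(\Sigma)$ can turn an elliptic cut curve hyperbolic. You would therefore need, for every $\rho$ (elementary ones included), a $\rho$-dependent pants decomposition with all curves hyperbolic. That is a real analogue of the Gallo--Kapovich--Marden theorem, which you neither prove nor can cite.

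\textbf{Connectedness.} Your set $U$ is not dense. For non-extremal $(n,s)$, Proposition~\ref{connect_el} plus the mapping class group action give representations sending a prescribed non-separating cut curve to an elliptic element; every pants decomposition of a positive-genus surface contains such a curve. This persists on an open set, so no transversality or dimension count can show that every component meets $U$. The Euler-class transfer moves are also left unspecified.

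\textbf{The local lemma.} It is false as stated. For $s=(+1,+1,+1)$, Lemma~\ref{parplp} gives $\Par^-_1\subset\ev(\Parp\times\Parp)$. This realizes $n=1<-1+p_+(s)$: it is the Fuchsian thrice-punctured sphere. For $s=(+1,+1,-1)$, an abelian unipotent representation has $n=0$ by Proposition~\ref{prop_reducible}, while your bound reads $1\leqslant n\leqslant 0$. This is why $g\geqslant 1$ is assumed. The bound does hold when some boundary is hyperbolic, but then you need the exact images of Proposition~\ref{prop_evimage} and Lemma~\ref{parplp}. For example, $(0,+1,+1)$ forces $n=1$, which ``a positive factor forbids the $z^{-1}$ side'' does not give.

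Both gaps disappear if you never split $n$ into local pieces.

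\textbf{Fix for necessity.} Your component bookkeeping would need the full multiplication table of components of $\univcover$. Replace it by the Eisenbud--Hirsch--Neumann/Wood displacement functions. Let $\univcover$ act on the line covering $\partial\mathbb{H}^2$, with $z$ the unit translation, and set $\underline{d}(\widetilde f)=\min_x(\widetilde f(x)-x)$ and $\overline{d}(\widetilde f)=\max_x(\widetilde f(x)-x)$. Then:
\begin{itemize}
\item $\underline{d}$ is superadditive and $\overline{d}$ is subadditive;
\item any product of $g$ commutators has $\underline{d}<2g-1$ and $\overline{d}>1-2g$;
\item elements of $\overline{\Hyp_0}$ satisfy $-1<\underline{d}\leqslant 0\leqslant\overline{d}<1$;
\item an element of $\Par_0$ has $\overline{d}=0$ or $\underline{d}=0$, according to its sign.
\end{itemize}
Apply these to $\prod_i[\widetilde{\rho(a_i)},\widetilde{\rho(b_i)}]=z^n\widetilde{\rho(c_p)}^{-1}\cdots\widetilde{\rho(c_1)}^{-1}$. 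This gives $\chi(\Sigma)+p_+(s)\leqslant n\leqslant -\chi(\Sigma)-p_-(s)$ directly, with no hypothesis on any curve.

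\textbf{Fix for connectedness.} The missing idea is the one behind Lemma~\ref{plp_HP'}, and behind Goldman's closed case.
\begin{itemize}
\item View $\HP^s_n(\Sigma)$ as a fiber of $\rho\mapsto\prod_i\widetilde{R}(\rho(a_i),\rho(b_i))\prod_j\widetilde{\rho(c_j)}$.
\item Let the partial products, i.e.\ the lifted cut-curve images, range over intervals of $\univcover$ that include elliptic and parabolic components (Lemma~\ref{Prod_open}).
\item Use the strong path-lifting property with connected fibers for $P$ on $(I_1\times I_2)\setminus P^{-1}(Z)$ and on $I\times\Par^{\pm}_0$, for $\widetilde{R}$, and for $\ev$ on non-commuting pairs (Lemma~\ref{Prod_confib}, Proposition~\ref{prop_plpliftcommu}, Lemma~\ref{parplp}).
\end{itemize}
Inducting along the chain of partial products, as in the proof of Lemma~\ref{plp_HP'}, connects any two points of the generic locus. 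Euler class moves between pieces automatically when a cut curve passes through elliptic components, so no transfer moves are needed. What remains is to show that this generic locus is dense: non-abelian pieces, and no cut curve sent to $\pm\mathrm{I}$.
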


    The following proposition describes the behavior of the relative Euler class and the sign under the $\pgl\setminus\psl$-conjugations.
    \begin{proposition}\label{prop_pglpsl} 
    Let $h$ be an element of $\pgl\setminus \psl$; and for a representation $\rho\in\HP(\Sigma),$ let $h \rho h^{-1}:\pi_1(\Sigma)\to\psl$ be  defined by 
    $h \rho h^{-1}(c)=h \rho(c)h ^{-1}$
 for each $c\in\pi_1(\Sigma).$  Then $h \rho h^{-1}\in\HP(\Sigma)$ with the relative Euler class $e(h \rho h^{-1})=-e(\rho)$ and the sign $s(h \rho h^{-1})=-s(\rho).$    
    \end{proposition}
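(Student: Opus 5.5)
The statement is Proposition \ref{prop_pglpsl}: conjugation by $h\in\pgl\setminus\psl$ negates the relative Euler class and the sign. The plan is to reduce to the single element $h=\pm J$ with $J=\mathrm{diag}(1,-1)$ and check both claims directly on the defining formulas. Any $h\in\pgl\setminus\psl$ can be written $h=g\cdot(\pm J)$ with $g\in\psl$. Conjugation by $g$ preserves $e$ and $s$: it preserves the types of elements, and it lifts to an automorphism of $\univcover$ (conjugation by a lift $\widetilde g$) that fixes the center and preserves each component $\Hyp_n$, $\Par^\pm_n$. So it suffices to treat $h=\pm J$.

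\emph{Step 1: the sign and the type-preserving condition.} Conjugation by $J$ preserves traces up to sign, so $h\rho h^{-1}$ sends each $c_i$ to an element of the same type (hyperbolic or parabolic), and hence $h\rho h^{-1}\in\HP(\Sigma)$. Moreover
\[
J\begin{bmatrix}1&1\\0&1\end{bmatrix}J^{-1}=\begin{bmatrix}1&-1\\0&1\end{bmatrix},
\]
so conjugation by $J$ interchanges $\Parp$ and $\Parm$. Hyperbolic peripheral images stay hyperbolic, so sign entries $0$ stay $0$. Therefore $s(h\rho h^{-1})=-s(\rho)$.

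\emph{Step 2: the Euler class.} The automorphism $\sigma=\mathrm{Ad}_J$ of $\psl$ lifts uniquely to an automorphism $\widetilde\sigma$ of $\univcover$. Since $\sigma$ acts on $\sl$ by conjugation by $J$ and $\widetilde\sigma\circ\widetilde{\exp}=\widetilde{\exp}\circ\sigma$, we get
\[
\widetilde\sigma(z)=\widetilde{\exp}\Big(J\begin{bmatrix}0&\pi\\-\pi&0\end{bmatrix}J^{-1}\Big)=\widetilde{\exp}\begin{bmatrix}0&-\pi\\ \pi&0\end{bmatrix}=z^{-1}.
\]
Also, $\widetilde\sigma$ maps $\widetilde{\exp}(\sl)$ to itself and preserves hyperbolicity, so it maps $\overline{\Hyp_0}$ to itself. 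Hence the canonical lifts transform correctly: $\widetilde\sigma(\widetilde{\rho(c_i)})$ is the canonical lift of $h\rho(c_i)h^{-1}$ in $\overline{\Hyp_0}$. As lifts of $h\rho(a_j)h^{-1}$ and $h\rho(b_j)h^{-1}$ we may take $\widetilde\sigma(\widetilde{\rho(a_j)})$ and $\widetilde\sigma(\widetilde{\rho(b_j)})$. Applying the homomorphism $\widetilde\sigma$ to the relation
\[
\prod_{j}[\widetilde{\rho(a_j)},\widetilde{\rho(b_j)}]\prod_i\widetilde{\rho(c_i)}=z^{e(\rho)}
\]
gives the corresponding product for $h\rho h^{-1}$, which equals $z^{-e(\rho)}$. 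So $e(h\rho h^{-1})=-e(\rho)$.

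\emph{Main obstacle.} The only subtle point is justifying that $\widetilde\sigma$ preserves $\overline{\Hyp_0}$, which is needed so that the canonical boundary lifts are respected. I would argue it by connectedness. $\widetilde\sigma$ maps $\Hyp_0$, a connected component of the hyperbolic locus, onto some $\Hyp_m$. Since $\Hyp_0$ meets $\widetilde{\exp}(\sl)$, which $\widetilde\sigma$ preserves, and $\Hyp_0$ is the only hyperbolic component lying in $\widetilde{\exp}(\sl)$, we get $m=0$. Taking closures, $\widetilde\sigma$ maps $\overline{\Hyp_0}$ to itself.
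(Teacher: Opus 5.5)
Your proof is correct and complete. The paper does not prove this proposition; it records it in the preliminaries as a background fact. So there is no competing argument to compare against. Lifting $\mathrm{Ad}_J$ to an automorphism $\widetilde\sigma$ of $\univcover$ with $\widetilde\sigma(z)=z^{-1}$, and checking that $\widetilde\sigma$ carries preferred boundary lifts to preferred boundary lifts, is the natural proof.

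Two small remarks. First, conjugation by $J$ preserves traces exactly, not only up to sign. Second, the step you flag as the main obstacle is immediate from the definitions, with no connectedness argument needed:
\begin{itemize}
\item $\Hyp_0$ and $\Par_0^\pm$ are \emph{defined} as the hyperbolic and the positive/negative parabolic elements lying in $\widetilde{\exp}(\sl)$.
\item $\widetilde\sigma$ preserves $\widetilde{\exp}(\sl)$, by $\widetilde\sigma\circ\widetilde{\exp}=\widetilde{\exp}\circ\mathrm{Ad}_J$.
\item $\widetilde\sigma$ preserves the type of elements, because it covers $\mathrm{Ad}_J$.
\item $\widetilde\sigma$ fixes $\mathrm{I}$.
\end{itemize}
Hence $\widetilde\sigma(\overline{\Hyp_0})\subseteq\overline{\Hyp_0}$, and this inclusion is all you need, since the preferred lift is unique. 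In fact $\widetilde\sigma$ interchanges $\Par_0^+$ and $\Par_0^-$, which is consistent with your Step 1.
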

    Notice that 
    the relative Euler class and the sign are invariant under $\psl$-conjugations. Thus, we may denote by $\mathcal{M}^s_n$ the set of conjugacy classes of non-elementary type-preserving representations of sign $s\in \{\pm 1\}^p$ and relative Euler class $n$. 



\subsection{The path-lifting property and images of certain maps}\label{subsec_prelim}
We first review some results on the path-lifting property which will be used later in Section \ref{sec_conn}. 
 A smooth map $f: X\to Y$ between two smooth manifolds $X$ and $Y$ is said to satisfy the
    \emph{path-lifting property} if, 
    for every $x\in X$ and path $\{y_t\}_{t\in [0,1]}$ with $f(x) = y_0$, there exists a non-decreasing surjective map (a re-parametrization of the path) $\tau: [0,1]\to [0,1]$ and a path 
    $\{x_s\}_{s\in [0,1]}$ starting at $x_0 = x$ such that $f(x_s) = y_{\tau(s)}$ for $0\leqslant s\leqslant 1$. 
    We say a map $f:X\rightarrow Y$ satisfies the \textit{strong path-lifting property} if:
    \begin{enumerate}[(1)]
        \item For every path $\{y_t\}\interval \subset Y$ and every $x_0\in f^{-1}(y_0)$, we have a path $\{x_t\}\interval$ in $X$ starting at $x_0$ such that $f(x_t)=y_t$ for all $t\in [0,1]$, and
        \item 
        Every fiber of $f$ is path-connected.
    \end{enumerate}
    
    The following lemma provides a sufficient condition for a map to satisfy the strong path-lifting property.
    
    \begin{lemma}\cite[Lemma 1.4]{goldman}\label{lem_plp}
        Let $X, Y$ be smooth manifolds, and let $f: X\to Y$ be a smooth map. Suppose that 
        (a) $f$ is a submersion, and 
        (b) $f^{-1}(y)$ is path-connected for every $y\in Y$.
        Then $f$ satisfies the strong path-lifting property. Furthermore, 
        if $Y$ is path-connected, then $X$ is path-connected.
    \end{lemma}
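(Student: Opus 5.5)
The idea is to show that $f$ behaves locally like a projection, then lift paths by an open--closed argument, using path-connectedness of the fibers to overcome the failure of closedness. Throughout I assume, as in Goldman's setting, that $f$ is surjective, i.e. every fiber is non-empty. Some such hypothesis is needed: the inclusion of a punctured disc into $\mathbb{R}^2$ is a submersion whose fibers are empty or points, yet paths through the puncture do not lift. With this assumption, the final claim is immediate once (1) is proved. Given $x,x'\in X$, join $f(x)$ to $f(x')$ by a path in $Y$ and lift it starting at $x$. The endpoint of the lift lies in $f^{-1}(f(x'))$, which is path-connected, so it can be joined to $x'$ inside the fiber.

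\emph{Local structure.} Since $f$ is a submersion, the implicit function theorem gives, around every $x\in X$, charts in which $f$ is the projection $\mathbb{R}^m\times\mathbb{R}^k\to\mathbb{R}^m$. I will use this in a uniform form: for every compact $K\subset X$ there is $\delta>0$ with the following property. Whenever $q\in K$ and $\{y_t\}_{t\in[t_0,t_0+\delta]}$ starts at $f(q)$ and stays in a fixed small neighbourhood, there is a lift starting at $q$. Moreover, this lift can be chosen to depend continuously on $q$, by keeping the $\mathbb{R}^k$-coordinate constant in a product chart. This follows by covering $K$ with finitely many product charts and taking a Lebesgue number.

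\emph{Open--closed argument.} Fix a path $\{y_t\}_{t\in[0,1]}$ and a point $x_0\in f^{-1}(y_0)$. Let $T$ be the set of $t$ for which a lift of $y|_{[0,t]}$ starting at $x_0$ exists. The local structure shows that $T$ is open in $[0,1]$, and $0\in T$. The main step is closedness: let $t^*=\sup T$, and note that a lift on $[0,t)$ might escape every compact set as $t\to t^*$. I handle this as follows.
\begin{enumerate}[(i)]
\item Pick any $x^*\in f^{-1}(y_{t^*})$, which exists by surjectivity. The product chart at $x^*$ gives a lift $\{x'_t\}$ of $y$ on an interval $(t^*-\epsilon,t^*+\epsilon)$.
\item Fix $t_0\in(t^*-\epsilon,t^*)\cap T$, and let $\{x_t\}_{t\in[0,t_0]}$ be a lift starting at $x_0$. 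The points $x_{t_0}$ and $x'_{t_0}$ lie in the same fiber $F=f^{-1}(y_{t_0})$, so they are joined by a path $\sigma\colon[0,1]\to F$.
\item Cover the compact set $\sigma([0,1])$ by finitely many product charts. Chart by chart, the segment of $y$ on $[t_0,t^*+\epsilon']$ (shrinking $\epsilon'$ as needed) can be lifted starting at each point $\sigma(u)$, continuously in $u$. Equivalently, the set of $u\in[0,1]$ from which such a lift exists is open and closed in $[0,1]$ and contains $u=1$.
\item Hence there is a lift of $y$ on $[t_0,t^*+\epsilon']$ starting at $\sigma(0)=x_{t_0}$. Concatenating it with $\{x_t\}_{t\in[0,t_0]}$ shows $t^*+\epsilon'\in T$, or that $1\in T$ if $t^*=1$.
\end{enumerate}
Thus $T=[0,1]$, which proves (1). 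Property (2) is exactly the fiber hypothesis (b).

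\emph{Expected obstacle.} The delicate point is step (iii): transporting the lift along the fiber path $\sigma$ while keeping the exact parametrization $f(x_t)=y_t$, with no reparametrization allowed. This needs the lifts in product charts to be chosen continuously in the initial point, and the time-interval $\epsilon'$ to be chosen uniformly along the compact curve $\sigma$. If the chart-by-chart bookkeeping becomes awkward, my fallback is a different construction. I would choose a Riemannian metric on $X$ and take horizontal lifts with respect to the orthogonal complement of $\ker df$. These depend continuously on the initial point. For continuous rather than smooth paths, I would first treat piecewise-smooth paths chart by chart and then reduce the general case to these via the local product form.
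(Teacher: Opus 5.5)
Your remark that the fibres must be non-empty is correct and consistent with the paper's applications, where the maps are onto their images. The paper itself quotes this lemma from Goldman without proof. Your overall architecture is also right: local product form, an open--closed argument in $t$, and a path $\sigma$ in $f^{-1}(y_{t_0})$ used to carry the existing lift $\{x'_t\}$ back to $x_{t_0}$.

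The gap is step (iii), which is where all the content of the lemma lies. The mechanism you propose for it does not work.
\begin{itemize}
\item The product charts covering $\sigma([0,1])$ are chosen after $t_0$ is fixed, so nothing forces $y([t_0,t^*+\epsilon'])$ into their base neighbourhoods. Shrinking $\epsilon'$ does not shorten $[t_0,t^*]$, so you cannot lift the segment ``chart by chart'' from each $\sigma(u)$.
\item Continuous dependence of lifts on the initial point (constant fibre coordinates along a chain of charts, or horizontal lifts) shows at most that the set of admissible $u$ is open. The neighbourhood it gives depends on the particular lift, so it yields no closedness. Your ``Equivalently'' sentence states the right claim, but nothing in the proposal proves closedness.
\item Horizontal lifts are not a fallback for existence either. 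Take the projection $(u,v)\mapsto u$ on $\mathbb{R}^2\setminus(\{0\}\times[0,\infty))$, a surjective submersion with connected fibres. The Euclidean horizontal lift of $u_t=2t-1$ starting at $(-1,1)$ runs into the slit. This is exactly the escape that the fibre hypothesis has to defeat.
\end{itemize}

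The missing idea is a splicing step that needs no uniformity. Let $U\cong V\times W$ be a product chart, with $f|_U$ the projection to $V$ and $W$ a ball. Let $p,q\in f^{-1}(y_a)\cap U$, and let $\{z_t\}_{t\in[a,b]}$ be a lift of $y|_{[a,b]}$ with $z_a=p$. Choose $\tau>0$ with $z([a,a+\tau])\subset U$, write $z_t=(y_t,w(t))$ there and $q=(y_a,w_q)$, and put
\[
z'_t=\Big(y_t,\ \big(1-\tfrac{t-a}{\tau}\big)w_q+\tfrac{t-a}{\tau}\,w(t)\Big)\quad (a\leqslant t\leqslant a+\tau),\qquad z'_t=z_t\quad (a+\tau\leqslant t\leqslant b).
\]
This is a lift of $y|_{[a,b]}$ with exact parametrization starting at $q$. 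Consequently, the set of points of $f^{-1}(y_a)$ from which $y|_{[a,b]}$ lifts is open and closed in that fibre, hence empty or the whole fibre. With $a=t_0$ and $b=\min(t^*+\epsilon',1)$, pulling back along $\sigma$, this is exactly (iii).

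Once you have this, you can drop the supremum argument altogether. Using surjectivity and a Lebesgue number, choose $0=s_0<s_1<\dots<s_N=1$ with each $y([s_i,s_{i+1}])$ in the base of a product chart. Lift each piece from some point of $f^{-1}(y_{s_i})$ by freezing the fibre coordinate, then transfer that lift to the endpoint of the previous piece.
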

    
  One  example of a map that we need to satisfy the path-lifting property is the following  \emph{character map}
    $\chi: \SL\times\SL\to \mathbb{R}^3$  defined  by 
    $$
    \chi(A,B) := 
    \big(
    Tr(A), 
    Tr(B), 
    Tr(AB)\big).
    $$

    \begin{proposition}
    \cite[Proposition 4.1, Theorem 4.3]{goldman}
    \label{prop_char}
        Let $\chi: \SL\times\SL\to \mathbb{R}^3$ be the character map, and let $\kappa: \mathbb{R}^3 \to \mathbb{R}$ 
        be the polynomial map defined by
        $$
        \kappa(x,y,z) := x^2 + y^2 + z^2 - xyz - 2 .
        $$
        Then 
        \begin{enumerate}[(1)]
            \item 
        For a pair $(A,B)$ in $\SL\times\SL$, $\kappa\big(\chi(A,B)\big) = \tr[A,B]$. Furthermore, if $A$ and $B$ are non-elliptic, 
        then $\kappa\big(\chi(A,B)\big) = 2$ if and only if the representation $\phi:\pi_1(\Sigma_{0,3})\to \SL$ determined by the pair $(A,B)$ is reducible.
        \item  If 
        $\kappa(x,y,z)\neq 2$
        and $\chi^{-1}(x,y,z)$ is non-empty, 
        $\chi^{-1}(x,y,z)$ in $\SL\times \SL$ 
        consists of one orbit of the $\GL$-conjugation, which is the union of two orbits of the $\SL$-conjugation.
        \end{enumerate} 
    \end{proposition}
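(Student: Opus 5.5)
The proposal proves the two parts of Proposition~\ref{prop_char} separately, using only linear algebra in $\mathrm{SL}(2)$. The tools are the Cayley--Hamilton identity $X + X^{-1} = \tr(X)\,\mathrm I$ for $X\in\SL$, and the fact that two pairs of $2\times 2$ matrices generating all of $M_2(\mathbb C)$ are determined up to conjugacy by their traces.

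For the identity in (1), I would write $[A,B] = A B A^{-1} B^{-1}$ and substitute $B^{-1} = \tr(B)\mathrm I - B$. This gives $\tr[A,B] = \tr(B)\tr(ABA^{-1}) - \tr(ABA^{-1}B) = \tr(B)^2 - \tr(ABA^{-1}B)$. Next I would substitute $A^{-1} = \tr(A)\mathrm I - A$ to get $\tr(ABA^{-1}B) = \tr(A)\tr(AB^2) - \tr(ABAB)$. Then I apply Cayley--Hamilton once more: $B^2 = \tr(B)B - \mathrm I$ and $(AB)^2 = \tr(AB)AB - \mathrm I$. Collecting terms gives $x^2+y^2+z^2-xyz-2$ with $(x,y,z)=\chi(A,B)$. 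This is routine bookkeeping.

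For the reducibility criterion, I would argue the two directions separately.
\begin{itemize}
\item If the pair is reducible, conjugate it into upper triangular form. Then $[A,B]$ is unipotent upper triangular, so its trace is $2$.
\item Conversely, suppose $\tr[A,B]=2$. If $[A,B]=\mathrm I$, then $A$ and $B$ commute. Because they are non-elliptic, they have real eigenvalues, so they share a real eigenvector. (If one of them is $\pm \mathrm I$, use an eigenvector of the other.)
\item Otherwise $C=[A,B]$ is a nontrivial unipotent with a unique fixed line $L=\ker(C-\mathrm I)$. The standard argument is to take an eigenvector $v$ of $B$ and test whether $\langle v\rangle$ is $A$-invariant; if not, $\{v, Av\}$ is a basis, and an explicit computation shows $\tr[A,B]\ne 2$.
\item More cleanly, $\tr[A,B]=2$ holds if and only if $A,B$ have a common eigenvector in $\mathbb C^2$. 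This is the classical criterion; I would prove it by choosing a basis in which $B$ is triangular and computing $\tr[A,B]-2 = -c^2(\dots)$ in terms of the lower-left entry $c$ of $A$.
\item Since $A$ is non-elliptic, every complex eigenvector of $A$ is proportional to a real one. So the common line is real, and $\phi$ is reducible over $\mathbb R$.
\end{itemize}

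For (2), assume $\kappa\ne 2$. By (1), read over $\mathbb C$, the pair $(A,B)$ has no common complex eigenvector, so it is absolutely irreducible. By Burnside's theorem, $\mathrm I, A, B, AB$ span $M_2(\mathbb C)$. Since the traces of all words in $A,B$ are polynomials in $x,y,z$, two pairs $(A,B),(A',B')$ in the same fibre have equal trace forms. The linear map $A\mapsto A'$, $B\mapsto B'$, $AB\mapsto A'B'$ then extends to an algebra isomorphism $M_2(\mathbb C)\to M_2(\mathbb C)$, because the trace form is nondegenerate. By Skolem--Noether this isomorphism is conjugation by some $g\in \mathrm{GL}(2,\mathbb C)$.

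To descend to $\mathbb R$, note that the complex conjugate $\bar g$ also conjugates $(A,B)$ to $(A',B')$. Hence $g^{-1}\bar g$ centralizes an irreducible pair and equals a scalar $\lambda$ with $|\lambda|=1$. Writing $\lambda = \bar\mu/\mu$ and replacing $g$ by $\mu g$ makes $g$ real, so $g\in\GL$. This shows the fibre is a single $\GL$-orbit.

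Finally, the centralizer of the pair in $\GL$ is the real scalars, whose determinants are positive. Hence conjugation by $\mathrm{diag}(1,-1)$ cannot be realized inside $\SL$ times the centralizer, and the $\GL$-orbit is exactly two $\SL$-orbits.

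I expect the main obstacle to be the converse direction of the reducibility criterion in (1) in the degenerate cases: parabolic $A$ or $B$, and $A$ or $B$ equal to $\pm\mathrm I$. This is also where the non-elliptic hypothesis is genuinely needed, to pass from a complex common eigenvector to a real invariant line.
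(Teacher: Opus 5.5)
The paper itself gives no proof of this proposition; it simply cites Goldman. Your Fricke-identity computation and your treatment of (2) form a sound self-contained replacement: Burnside, the nondegenerate trace form with Skolem--Noether, descent to $\GL$ via Schur's lemma, and the determinant-sign argument giving exactly two $\SL$-orbits. There is one small slip in (2): to make $\mu g$ real you need $\lambda=\mu/\bar\mu$, not $\bar\mu/\mu$. Note also that (2) only uses the easy direction of (1), namely that a common complex eigenvector forces $\tr[A,B]=2$, so it is unaffected by the problem below.

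The genuine gap is in the converse direction of (1), which is the real content of the reducibility criterion. Both versions you sketch fail as written.

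\emph{Testing one eigenvector of $B$ is not enough.} Take $B=\mathrm{diag}(2,\tfrac12)$ and $A=\begin{pmatrix}1&0\\1&1\end{pmatrix}$, both non-elliptic. Then $[A,B]=\begin{pmatrix}1&0\\ 3/4&1\end{pmatrix}\neq\mathrm I$ has trace $2$, yet $\langle e_1\rangle$ is not $A$-invariant. The common line is the \emph{other} eigenline $\langle e_2\rangle$ of $B$.

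\emph{The triangular-form computation does not force $c=0$.} With $B=\begin{pmatrix}\lambda&\mu\\0&\lambda^{-1}\end{pmatrix}$ and $A=\begin{pmatrix}a&b\\c&d\end{pmatrix}$ one gets
\[
\tr[A,B]-2=c\bigl(c\mu^2+(a-d)(\lambda-\lambda^{-1})\mu-b(\lambda-\lambda^{-1})^2\bigr).
\]
This has a factor $c$, not $c^2$, and the second factor vanishes with $c\neq0$ in the example above.

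\emph{The repair is a case split on $B$.}
\begin{itemize}
\item If $\lambda\neq\pm1$, diagonalize $B$ (so $\mu=0$). The expression becomes $-bc(\lambda-\lambda^{-1})^2$, which vanishes iff $b=0$ or $c=0$, i.e.\ iff one of the two eigenlines of $B$ is $A$-invariant.
\item If $B$ is $\pm$ a nontrivial unipotent, then $\lambda=\pm1$ and the expression is $c^2\mu^2$, which vanishes iff $c=0$.
\item If $B=\pm\mathrm I$, there is nothing to prove.
\end{itemize}
So the delicate case is the generic one with $B$ hyperbolic, not the parabolic or central cases you anticipated.

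Finally, the claim that every complex eigenvector of a non-elliptic $A$ is proportional to a real one is false for $A=\pm\mathrm I$. This is harmless only because then $[A,B]=\mathrm I$ and your commuting case applies, so keep that case split explicit.
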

    
    \begin{lemma}\cite[Theorem 4.3, Corollary 4.5 (b)]{goldman}
    \label{lem_plpchar}
        Let 
        $$\Omega 
        :=
        \big\{(A,B)\in \SL\times\SL\ |\ 
        [A,B]\neq \mathrm I
        \big\}.$$ Let  $\Gamma = [-2,2]^3\cap \kappa^{-1}([-2,2])$. Then the character map
        $$\chi: \Omega\to \mathbb{R}^3
        \setminus \Gamma
        $$ is surjective, and satisfies the path-lifting property.
         
    \end{lemma}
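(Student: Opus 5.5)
The plan is to treat the statement in three parts: the image of $\chi|_\Omega$ avoids $\Gamma$, every point of $\mathbb{R}^3\setminus\Gamma$ is attained, and paths lift. For each part I would stratify $\mathbb{R}^3\setminus\Gamma$ by whether $\kappa\neq 2$ or $\kappa=2$. By Proposition \ref{prop_char}(1), this is the same as asking whether the pair $(A,B)$ is irreducible or reducible.

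\emph{Image avoids $\Gamma$.} Let $(A,B)\in\Omega$ and suppose $\chi(A,B)\in\Gamma$.
\begin{itemize}
\item If $\kappa(\chi(A,B))\neq 2$, I would use the classical Fricke fact that a real character in $[-2,2]^3\cap\kappa^{-1}([-2,2))$ is the character of an $\mathrm{SU}(2)$-representation. By Proposition \ref{prop_char}(2), an $\SL$-pair with the same character would then be $\mathrm{GL}(2,\mathbb C)$-conjugate into $\mathrm{SU}(2)$. A group conjugate into both $\SL$ and $\mathrm{SU}(2)$ must be abelian, which contradicts $[A,B]\neq\mathrm I$.
\item If $\kappa=2$, the pair is reducible with all traces in $[-2,2]$. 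An elliptic element of $\SL$ has no real eigenline, so $A$, $B$ and $AB$ are all $\pm$-unipotent upper triangular. Such matrices commute, again a contradiction.
\end{itemize}

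\emph{Surjectivity.} For $\kappa(x,y,z)\neq 2$ with $(x,y,z)\notin\Gamma$, I would write explicit normal forms. For instance, when $|x|>2$, take $A=\mathrm{diag}(\lambda,\lambda^{-1})$ and $B=\begin{bmatrix} a & b\\ c & d\end{bmatrix}$; solving for $a,d$ from $y,z$ leaves the condition $bc=ad-1$, which can be met over $\mathbb{R}$. When $|x|\leqslant 2$, the hypothesis $(x,y,z)\notin\Gamma$ forces some trace to exceed $2$ in absolute value or $\kappa>2$; I would permute the coordinates using the $\mathrm{Out}(F_2)$-action on $\mathbb{R}^3$, which preserves $\kappa$ and $\Gamma$, to reduce to the previous normal form, or else handle it with an elliptic $A$ and a direct computation. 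For $\kappa=2$ outside $\Gamma$, some trace, say $x$, satisfies $|x|>2$. I would take $A$ diagonal and $B$ upper triangular with $AB$ having the prescribed trace and nonzero off-diagonal entry, so that $[A,B]\neq\mathrm I$.

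\emph{Path lifting.} The first key step is that $d\chi$ has rank $3$ at every $(A,B)\in\Omega$. Using $d\,\mathrm{Tr}(W)(X)=\mathrm{Tr}(WX)$, the three differentials at $(A,B)$ are given by pairing against the traceless parts $A_0$, $B_0$, $(AB)_0$. These are linearly independent exactly when $A$ and $B$ do not generate an abelian subalgebra, i.e.\ when $[A,B]\neq\mathrm I$. Hence $\chi|_\Omega$ is a submersion, and local lifts of paths always exist.

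To pass from local to global lifts, I would lift along a horizontal distribution transverse to the $\SL$-orbits, say via a Riemannian metric invariant under the maximal compact subgroup. The only obstruction is that a lift escapes to infinity or leaves $\Omega$ in finite time.
\begin{itemize}
\item Over $\kappa\neq 2$, fibers are closed orbits (Proposition \ref{prop_char}(2)). I would control escape by moving along the orbit, i.e.\ re-choosing a conjugate at each step, using that the orbit map is proper on irreducible pairs.
\item Over $\kappa=2$, fibers contain non-closed orbits of reducible non-abelian pairs. Here I would use explicit upper-triangular lifts. Since the path stays off $\Gamma$, one trace stays bounded away from $[-2,2]$ locally, which pins down a hyperbolic diagonal normal form. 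Its entries then depend continuously on the path, so the lift is explicit.
\end{itemize}
Gluing local lifts over a cover of $[0,1]$ by finitely many such charts, and allowing the reparametrization $\tau$ where a lift must pause to move along a fiber between charts, yields the path-lifting property.

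I expect the main obstacle to be the passage across the wall $\kappa=2$, where irreducible and reducible fibers meet. There the fibers are not closed, and a horizontal lift coming from the irreducible side may run off to infinity in the orbit direction. I would handle this by conjugating the approaching pairs into upper-triangular-adapted coordinates, where the entries converge, and then continuing with the explicit reducible lift. This is precisely where the weaker, reparametrized form of path lifting is needed rather than the strong one.
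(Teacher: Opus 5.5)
The paper gives no proof of this lemma; it quotes Goldman. So I am judging your argument on its own.

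\textbf{Minor repairs in the first two parts.} These parts are sound once the following are fixed.
\begin{itemize}
\item When $\kappa=2$, Proposition \ref{prop_char}(1) gives a common \emph{real} eigenline only for non-elliptic pairs. If, say, $A$ is elliptic, argue instead that a common complex eigenvector $v$ forces $\bar v$ to be common too, so $A$ and $B$ commute.
\item The $\mathrm{SU}(2)$ step needs the $\mathrm{SL}(2,\mathbb{C})$ analogue of Proposition \ref{prop_char}(2).
\item The corners $(2,2,-2)$, $(-2,-2,-2)$ and their permutations are reached neither by a diagonal $A$ nor by an elliptic $A$. Use a parabolic pair there, or the move $(A,B)\mapsto(A,AB)$, i.e.\ $(x,y,z)\mapsto(x,z,xz-y)$.
\item The submersion claim is right, but the covectors are really $(A_0,0)$, $(0,B_0)$ and $((BA)_0,(AB)_0)$. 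A linear dependence among them forces $A$ and $B$ to commute.
\end{itemize}

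\textbf{The genuine gap: globalizing the lifts.} Your gluing step pauses to move along a fiber from the current lift to the next chart's section. This presupposes that both points lie in one path component of the fiber, but the fibers of $\chi|_\Omega$ are disconnected.
\begin{itemize}
\item Over $\kappa\neq 2$, a fiber is a single $\GL$-orbit made of two $\SL$-orbits.
\item Over $\kappa=2$ with, say, $|x|>2$, the fiber splits further: pairs whose common eigenline is the attracting eigenline of $A$, and pairs for which it is the repelling one.
\end{itemize}
A single family of upper-triangular lifts with a fixed ordering of the eigenvalues of $A$ never reaches the second stratum. No path inside the fiber joins the strata. The horizontal-lift and escape-to-infinity discussion does not address this. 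In fact, inside $\Omega$ the reducible orbits are closed: their extra limit points are the abelian pairs you removed.

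\textbf{The fix.} Make your ``re-choose a conjugate'' idea the gluing mechanism: translate sections by $\GL$ instead of moving inside fibers, and carry enough sections to meet every $\GL$-orbit. On $\{x>2\}$ or $\{x<-2\}$, let $\lambda$ be the root of $\lambda+\lambda^{-1}=x$ with $|\lambda|>1$. Solve $a+d=y$, $\lambda a+\lambda^{-1}d=z$, and set
\[
\sigma_+=\left(\begin{bmatrix}\lambda&0\\0&\lambda^{-1}\end{bmatrix},\begin{bmatrix}a&1\\ad-1&d\end{bmatrix}\right),\qquad
\sigma_-=\left(\begin{bmatrix}\lambda&0\\0&\lambda^{-1}\end{bmatrix},\begin{bmatrix}a&ad-1\\1&d\end{bmatrix}\right).
\]
These are continuous sections of $\chi$ with values in $\Omega$.
\begin{itemize}
\item For diagonal $A$ one has $\operatorname{Tr}[A,B]-2=-bc(\lambda-\lambda^{-1})^2$. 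So $\sigma_+$ and $\sigma_-$ are upper, resp.\ lower, triangular exactly over $\kappa=2$.
\item Over $\kappa=2$, the fiber in $\Omega$ is the union of the $\GL$-orbits of $\sigma_+$ and $\sigma_-$. This is because the centralizer of $A$ in $\GL$ rescales the nonzero off-diagonal entry of $B$ arbitrarily.
\item Over $\kappa\neq 2$, the fiber is the $\GL$-orbit of either section, by Proposition \ref{prop_char}(2).
\end{itemize}
The symmetries $(A,B)\mapsto(B,A)$ and $(A,B)\mapsto(AB,B^{-1})$ transport this to $\{|y|>2\}$ and $\{|z|>2\}$. On $\{\kappa\neq2\}\setminus\Gamma$, any local section supplied by the submersion meets each fiber, since that fiber is a single $\GL$-orbit.

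These open sets (with $\{\kappa\neq 2\}\setminus\Gamma$ covered by domains of local sections) cover $\mathbb{R}^3\setminus\Gamma$. Subdivide $[0,1]$ using a Lebesgue number. On each piece, continue the lift as $t\mapsto g_i\sigma_i(y_t)g_i^{-1}$, where:
\begin{itemize}
\item $\sigma_\pm$ is chosen according to the stratum of the current lift whenever the breakpoint lies on $\kappa=2$;
\item $g_i\in\GL$ is chosen so that consecutive pieces agree.
\end{itemize}
This lifts paths even without reparametrization. So your closing remark misplaces the difficulty: the strong property fails only because the fibers are disconnected, not because of the wall.
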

    
Another example of the maps satisfying the path-lifting property is the following \emph{lifted commutator} $\widetilde{R}: \psl \times \psl \to \univcover$
    defined for any pair of elements  $(\pm A,\pm B)$ of $\psl$ by 
    $$\widetilde{R}(\pm A, \pm B) = \widetilde{A} \widetilde{B} \widetilde{A}^{-1} \widetilde{B}^{-1},$$ where 
    $\widetilde{A}$ and $\widetilde{B}$ are respectively arbitrary lifts of $\pm A$ and $\pm B$ in $\univcover$.
    
        \begin{theorem}\cite[Theorem 7.1]{goldman}\label{thm_liftcommu}
            The image of the lifted commutator 
            $\widetilde{R}: \psl\times \psl \to \univcover$ equals 
            $$\mathcal{I} = 
            \{\mathrm I\}
             \cup
            \Hyp_0
            \cup 
            \mathrm{Par}_0
            \cup 
            \mathrm{Ell}_{-1}
            \cup
            \mathrm{Ell}_{1}
            \cup 
            \mathrm{Par}_{-1}^{+}
             \cup 
            \mathrm{Par}_{1}^{-}
            \cup
            \Hyp_{- 1}
            \cup
            \Hyp_{1}.
            $$
            Moreover, for 
            each $\widetilde{C}$ in the image $\mathcal{I}$, the fiber 
            $\widetilde{R}^{-1}(\widetilde{C})$ is connected.
        \end{theorem}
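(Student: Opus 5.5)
The plan is to reduce everything to the $\SL$-commutator and the character map $\chi$, and to locate the lifted commutator inside $\univcover$ by a translation-number estimate. The first step is to check that $\widetilde{R}$ is well defined and continuous. Two lifts of $\pm A$ differ by a central element of $\univcover$, and central factors cancel in $\widetilde{A}\widetilde{B}\widetilde{A}^{-1}\widetilde{B}^{-1}$. So $\widetilde{R}$ is a continuous lift of the $\SL$-commutator map $(A,B)\mapsto ABA^{-1}B^{-1}$ to $\univcover$. It is normalized by $\widetilde{R}(\pm\mathrm I,\pm\mathrm I)=\mathrm I$, and since $\psl\times\psl$ is connected this determines it. By Proposition \ref{prop_char}(1), the trace of $\widetilde{R}(\pm A,\pm B)$ equals $\kappa(\chi(A,B))$. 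This is independent of the signs of the lifts, because $\kappa$ is invariant under $(x,y,z)\mapsto(-x,y,-z)$ and the analogous sign changes.

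\emph{Upper bound on the image.} Let $\tau:\univcover\to\mathbb R$ be the translation number, a homogeneous quasimorphism with $\tau(z)=1$. On each component it takes a fixed value or range: $\tau=n$ on $\Hyp_n$, $\Par_n$ and at $z^n$, and $\tau\in(n-1,n)$ on $\Ell_n$ for $n>0$. Wood's commutator estimate gives $|\tau([\widetilde A,\widetilde B])|\le 1$. This confines the image to
\[
\Hyp_0,\ \Par_0,\ \Ell_{\pm1},\ \Hyp_{\pm1},\ \Par_{\pm1},\ \{\mathrm I, z^{\pm1}\}.
\]
It then remains to exclude three pieces.
\begin{enumerate}[(i)]
\item The points $z^{\pm1}$. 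These would give $ABA^{-1}B^{-1}=-\mathrm I$ in $\SL$. That forces $\kappa=-2$, hence $\chi(A,B)=(0,0,0)$, so $A$ and $B$ are anticommuting order-$4$ elements. They would generate a quaternion group inside $\SL$, which is impossible over $\mathbb R$.
\item The components $\Par^+_1$ and $\Par^-_{-1}$. Here I would use a continuity argument. A commutator lying in $\Par_1$ is a limit of commutators in $\overline{\Ell_1}\cup\Hyp_1$, approached as $\kappa\to 2$ along the fiber of $\chi$ (Lemma \ref{lem_plpchar}). The sign of the limiting parabolic is forced by the side of the boundary of $\Ell_1$ from which it is approached. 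Concretely, I would compute one explicit one-holed-torus family whose boundary degenerates to a cusp. Then $\Par^-_{1}$ is attained and $\Par^+_1$ is not, and Proposition \ref{prop_pglpsl} (conjugation by $\pgl\setminus\psl$) gives the mirror statement at index $-1$.
\end{enumerate}

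\emph{Surjectivity onto $\mathcal I$.} Commuting pairs give $\mathrm I$. Small perturbations of $\mathrm I$ give $\Hyp_0$, $\Par_0$ and elliptic classes near $\mathrm I$. Holonomies of hyperbolic one-holed tori of either orientation give $\Hyp_{\pm1}$, with every trace $>2$ realized. Cusped tori give $\Par^-_1$ and $\Par^+_{-1}$. For the elliptic components, Lemma \ref{lem_plpchar} shows that every trace in $(-2,2)$ is $\kappa$ of some point of $\mathbb R^3\setminus\Gamma$. A path in $\mathbb R^3\setminus\Gamma$ from the hyperbolic-torus region to such a point lifts by the path-lifting property, and along this path the lifted commutator moves continuously from $\Hyp_1$ across $\Par^-_1$ into $\Ell_1$. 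Symmetrically one reaches $\Ell_{-1}$. Since the trace on $\Ell_{\pm1}$ takes all values in $(-2,2)$, these components are filled.

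\emph{Connected fibers (main obstacle).} For $\widetilde C\neq\mathrm I$ with $t=\tr\widetilde C\neq 2$, the fiber $\widetilde{R}^{-1}(\widetilde C)$ maps to $\kappa^{-1}(t)\setminus\Gamma$.
\begin{itemize}
\item Step 1: show that the relevant level set, or the connected piece of it singled out by the component index of $\widetilde C$, is connected. For $t>2$ with index $0$ versus $\pm1$, the level surface has several sheets, and I would match them to the sign of $\tau$.
\item Step 2: by Proposition \ref{prop_char}(2), the fiber of $\chi$ over each point is a single $\GL$-orbit, which splits into two $\SL$-orbits. I would show that the two $\SL$-orbits land in different lifts ($\Hyp_1$ versus $\Hyp_{-1}$, or $\Ell_1$ versus $\Ell_{-1}$). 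Then, after fixing $\widetilde C$ exactly rather than just its conjugacy class, the fiber is a union of centralizer-orbits parametrized by a connected base. Combining this with the submersion criterion of Lemma \ref{lem_plp} gives connectedness.
\end{itemize}
The parabolic fibers ($t=2$, $\widetilde C\neq\mathrm I$) are treated by the same scheme using the reducible locus $\kappa=2$. The fiber over $\mathrm I$ is the commuting variety, which is connected because every commuting pair lies in a one-parameter subgroup together with $\pm\mathrm I$.

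I expect the sheet-by-sheet matching of the two $\SL$-orbits with the $\pm1$ lift index, and the parabolic boundary cases, to be the main obstacle.
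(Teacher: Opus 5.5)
The paper does not prove this theorem; it quotes it from Goldman. So your outline has to stand on its own. Your bound $|\tau(\widetilde R)|\leqslant 1$ is a sound start, but the fibre argument has a real hole at $\Hyp_0$.

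Conjugation by an element of $\pgl\setminus\psl$ sends index $n$ to $-n$, so it fixes index $0$. Hence when $t=\tr\widetilde C>2$, \emph{both} $\SL$-orbits in every fibre of $\chi$ over $\kappa^{-1}(t)$ are sent by $\widetilde R$ into the conjugacy class of $\widetilde C$. Your Step 2 (``the two $\SL$-orbits land in different lifts'') is false in this case. Your Step 1 also misplaces the sheets. For $t>2$ only index $0$ occurs and $\kappa^{-1}(t)$ is connected. The four sheets permuted by the sign changes $(x,y,z)\mapsto(\epsilon_1x,\epsilon_2y,\epsilon_1\epsilon_2z)$ occur for $t<-2$, and as the non-compact pieces for $-2<t<2$.

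Over $\Hyp_0$ your scheme therefore only shows that the preimage of the conjugacy class has at most two components. You need an extra argument joining the two $\SL$-orbits. For instance, $(A,B)\mapsto(A^{-1},B^{-1})$ preserves $\chi$ and is conjugation by $AB-BA$. Its determinant is $2-\tr[A,B]<0$, so it swaps the two orbits. Now take $J=\begin{bmatrix}0&1\\-1&0\end{bmatrix}$ and $B=\begin{bmatrix}a&b\\c&d\end{bmatrix}$. Then $\tr[J,B]=a^2+b^2+c^2+d^2$, so $\{B\in\SL:\tr[J,B]=t\}$ is a connected torus containing $B^{-1}$, and $\pm J^{-1}=\pm J$ in $\psl$. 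This gives a path between the two orbits.

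The fibres over $\Par_0$ (where $\kappa=2$) lie outside Proposition \ref{prop_char}(2). They need a separate argument; all such pairs are simultaneously upper triangular.

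Lemma \ref{lem_plp} cannot produce connectedness of the fibres of $\widetilde R$, since it takes connected fibres as a hypothesis. What you need has two parts. First, the preimage of the conjugacy class fibres over the connected level set modulo signs, with a single $\SL$-orbit as fibre. Second, centralizers in $\psl$ are connected, which lets you pass from the conjugacy class to $\widetilde C$ itself.

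The exclusion of $\Par^+_1$ and $\Par^-_{-1}$ in (ii) is also not an argument. One explicit cusped-torus family shows that $\Par^-_1$ is attained, but it cannot show that $\Par^+_1$ is not. Also, on $\Par_{\pm1}$ the trace is $-2$, so $\kappa=-2$ there, not $\kappa\to2$.

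The missing idea is openness. At a non-commuting pair, $\widetilde R$ is a submersion (Lemma \ref{lem_evTsubm}), hence open. Both $\Par^+_1$ and $z$ lie in $\overline{\Ell_2}$, and $\Par^-_{-1}$ and $z^{-1}$ lie in $\overline{\Ell_{-2}}$, where $|\tau|>1$. So your $\tau$-bound rules out all four pieces at once.

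In (i), the inference ``$\kappa=-2$, hence $\chi(A,B)=(0,0,0)$'' is wrong. The set $\kappa^{-1}(-2)$ also contains the Markov surface $x^2+y^2+z^2=xyz$, for example $(3,3,3)$, the characters of complete once-punctured tori. Obtain $\chi=(0,0,0)$ directly by taking traces in $AB=-BA$.

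Finally, traces on $\Hyp_{\pm1}$ are $<-2$, not $>2$.
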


\begin{figure}[hbt!]
        \centering
        \begin{overpic}[width=0.8\textwidth]{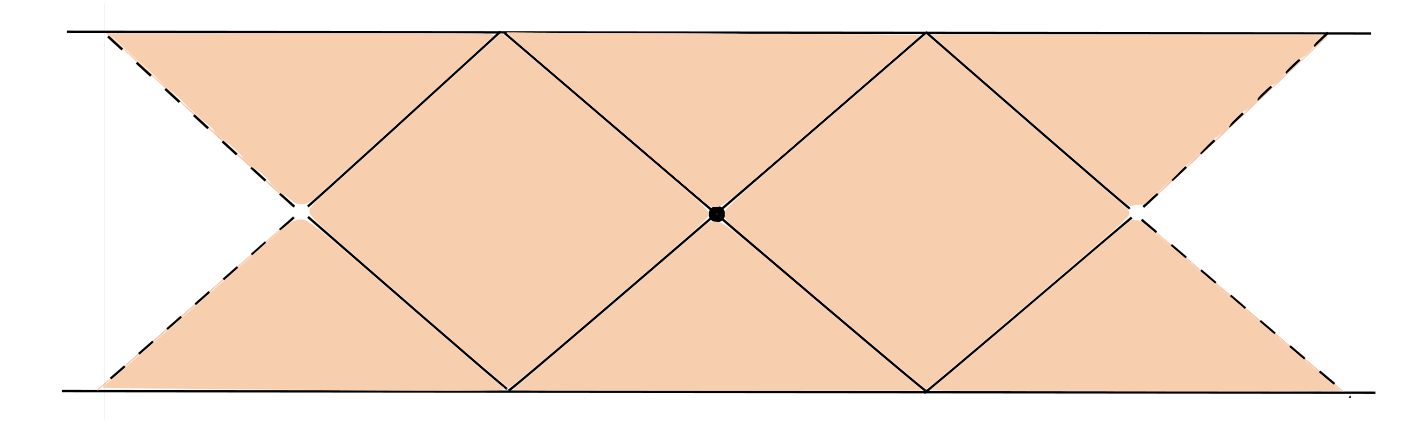}


            \put(49.8,17){\textcolor{black}{$\mathrm{I}$}}


        \put(19,4){\textcolor{OliveGreen}{$\Hyp_{-1}$}}

            \put(48,4){\textcolor{OliveGreen}{$\Hyp_{0}$}}
            
            \put(77,4){\textcolor{OliveGreen}{$\Hyp_{1}$}}


            \put(24,25){\textcolor{blue}{$\Par^+_{-1}$}}

            \put(39,25){\textcolor{blue}{$\Par^-_0$}}

            \put(56.5,25){\textcolor{blue}{$\Par^+_0$}}

            \put(69,25){\textcolor{blue}{$\Par^-_1$}}


        
        \put(33,14){\textcolor{red}{$\Ell_{-1}$}}

        \put(63,14){\textcolor{red}{$\Ell_1$}}

        \end{overpic}
        \caption{\label{fig: evimage} The image of the lifted commutator  and the lifted product maps}
    \end{figure}

\begin{lemma}\label{lem_evTsubm}\cite[Lemma 2.9]{ryu}
    Let 
    $R: \SL\times \SL
    \to \SL$ defined by $R(A,B) = ABA^{-1}B^{-1}$. Then its differential $dR$ is surjective at the point $(A,B)\in \SL\times \SL$ if and only if $A$ and $B$ do not commute. As a consequence, the restriction of the lifted commutator $$\widetilde{R}: \widetilde{R}^{-1}(\mathcal{I} \setminus \{\mathrm I\}) \to \mathcal{I} \setminus 
            \{\mathrm I\}$$ 
    is a submersion.
\end{lemma}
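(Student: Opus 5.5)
We prove the claim about $dR$ by computing the differential at $(A,B)$ after translating everything to the Lie algebra. Then we transfer the conclusion to $\widetilde{R}$ using the local diffeomorphisms given by the covering maps.

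\textbf{Computing the differential.} Write tangent vectors at $(A,B)$ as $(XA, YB)$ with $X,Y\in\sl$, and right-translate the image back to $\sl$. A direct computation gives
$$dR_{(A,B)}(XA,YB)\,R(A,B)^{-1} = X - (AB A^{-1})X(AB A^{-1})^{-1} + A\big(Y - BA^{-1} Y A B^{-1}\big)A^{-1}.$$
This is $(\mathrm{Id}-\mathrm{Ad}_{ABA^{-1}})X + \mathrm{Ad}_A(\mathrm{Id}-\mathrm{Ad}_{BA^{-1}})Y$, up to the exact placement of conjugations, which I would double check. Hence the image of $dR$, translated to $\sl$, is
$$\mathrm{Im}(\mathrm{Id}-\mathrm{Ad}_{g}) + \mathrm{Ad}_A\,\mathrm{Im}(\mathrm{Id}-\mathrm{Ad}_{h})$$
for suitable conjugates $g$ of $B$ and $h$ of $A^{-1}$.

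Use the Killing form to identify $\sl$ with its dual. For any $g$, the orthogonal complement of $\mathrm{Im}(\mathrm{Id}-\mathrm{Ad}_g)$ is $\ker(\mathrm{Id}-\mathrm{Ad}_{g^{-1}})$, which is the centralizer $\mathfrak z(g)$ of $g$ in $\sl$. Consequently the orthogonal complement of the image of $dR$ is, after unwinding the conjugations, $\mathfrak z(A)\cap\mathfrak z(B)$ transported by a fixed conjugation. It is cleaner to redo the computation with the left-invariant form of Goldman, where this complement is literally $\mathfrak z(A)\cap \mathfrak z(B)$.

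So $dR$ is surjective if and only if no nonzero $X\in\sl$ commutes with both $A$ and $B$.

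\textbf{The centralizer argument.} This is the main step. It suffices to show that $\mathfrak z(A)\cap\mathfrak z(B)\neq 0$ if and only if $AB=BA$.

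If $A=\pm\mathrm I$, then $\mathfrak z(A)=\sl$. In that case the intersection is $\mathfrak z(B)$, which is never zero, since it contains $\log$-type directions of $B$: it is one-dimensional for non-central $B$ and three-dimensional otherwise. Such $A$ and $B$ commute, consistent with the claim.

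If $A\neq\pm\mathrm I$, then $\mathfrak z(A)$ is one-dimensional, spanned by the traceless part $A_0 = A-\tfrac{\tr A}{2}\mathrm I$, because $A = \tfrac{\tr A}{2}\mathrm I + A_0$. A matrix commutes with $A$ if and only if it commutes with $A_0$. Thus the intersection is nonzero if and only if $A_0$ commutes with $B$, which holds if and only if $A$ commutes with $B$.

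This gives the "if and only if" for the differential of $R$.

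\textbf{Consequence for the lifted commutator.} The covering maps $\SL\times\SL\to\psl\times\psl$ and $\univcover\to\SL$ are local diffeomorphisms, and $\widetilde R$ is a local lift of $R$ through them. Therefore $d\widetilde R$ is surjective at $(\pm A,\pm B)$ exactly when $dR$ is surjective at $(A,B)$.

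Now suppose $\widetilde R(\pm A,\pm B)\neq\mathrm I$. If $A$ and $B$ commuted in $\SL$, then their lifts would commute in $\univcover$, because the commutator of lifts is a continuous lift of the constant path, and it would equal $\mathrm I$. A subtlety arises when $\pm A,\pm B$ commute in $\psl$ but $A$ and $B$ anticommute in $\SL$. In that case $AB=-BA$, and $R(A,B)=-\mathrm I$, whose lift would be $z^{\pm1}$. But $z^{\pm1}$ is not in $\mathcal I$ by Theorem \ref{thm_liftcommu}, so this case cannot occur.

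Hence on $\widetilde R^{-1}(\mathcal I\setminus\{\mathrm I\})$ the pairs never commute, $d\widetilde R$ is surjective everywhere, and the restriction is a submersion. The set $\mathcal I\setminus\{\mathrm I\}$ is open, being a union of open strata plus parabolic pieces; only the domain's openness is needed, and $\widetilde R^{-1}$ of the image minus a point is open in $\widetilde R^{-1}(\mathcal I)$, which is everything.
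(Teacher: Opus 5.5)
Your proposal is correct. It is the standard argument: Goldman's description of the image of $dR_{(A,B)}$ as the Killing-orthogonal complement of $\mathfrak z(A)\cap\mathfrak z(B)$, the fact that a non-central element of $\SL$ has centralizer in $\sl$ spanned by its traceless part, and the transfer to $\widetilde R$ through the local diffeomorphisms $\SL\times\SL\to\psl\times\psl$ and $\univcover\to\SL$. The paper itself gives no proof here and simply cites \cite[Lemma 2.9]{ryu}.

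Do fix the differential. The $Y$-term is $\mathrm{Ad}_A(\mathrm{Id}-\mathrm{Ad}_{BA^{-1}B^{-1}})Y$, so $h=BA^{-1}B^{-1}$, which is a genuine conjugate of $A^{-1}$; your $BA^{-1}$ is not. With this, the complement of the image is
$\mathrm{Ad}_A\mathfrak z(B)\cap\mathrm{Ad}_{AB}\mathfrak z(A)=\mathrm{Ad}_A\bigl(\mathfrak z(A)\cap\mathfrak z(B)\bigr)=\mathfrak z(A)\cap\mathfrak z(B)$
exactly, since $\mathrm{Ad}_A$ fixes $\mathfrak z(A)$ pointwise. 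Your anticommuting digression can be dropped: such a pair does not commute in $\SL$, so $dR$ is already surjective there.
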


As a consequence of Lemma \ref{lem_plp}, Theorem \ref{thm_liftcommu} and Lemma \ref{lem_evTsubm}, we have the following Proposition \ref{prop_plpliftcommu}.

        \begin{proposition}\label{prop_plpliftcommu}\cite[Proposition 2.10]{ryu}
            The lifted commutator 
            $\widetilde{R}: \widetilde{R}^{-1}(\mathcal{I} \setminus \{\mathrm I\}) \to \mathcal{I} \setminus 
            \{\mathrm I\}$ satisfies the strong path-lifting property.
        \end{proposition}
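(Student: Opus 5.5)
The plan is to apply Lemma \ref{lem_plp} with
$$X := \widetilde{R}^{-1}(\mathcal{I}\setminus\{\mathrm I\}) \subset \psl\times\psl, \qquad Y := \mathcal{I}\setminus\{\mathrm I\}\subset \univcover,$$
and $f := \widetilde{R}|_X$. The proof then reduces to three checks: $X$ and $Y$ are smooth manifolds, $f$ is a submersion, and every fiber of $f$ is path-connected. Only the first needs any real argument.

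First I show that $Y$ is an open subset of $\univcover$; being open, it is a smooth $3$-manifold. From the description of $\mathcal{I}$ in Theorem \ref{thm_liftcommu} and Figure \ref{fig: evimage}, I check each piece.
\begin{enumerate}[(1)]
\item $\Hyp_0$, $\Hyp_{\pm1}$ and $\Ell_{\pm 1}$ are open in $\univcover$, because they are defined by strict trace inequalities together with a choice of connected component.
\item A small neighborhood of a point of $\Par_0$ meets only $\Hyp_0$, $\Par_0$ and $\Ell_{\pm1}$.
\item A small neighborhood of a point of $\Par^+_{-1}$ meets only $\Hyp_{-1}$, $\Par^+_{-1}$ and $\Ell_{-1}$. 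The component $\Par^-_{-1}$, which is not in $\mathcal{I}$, lies on the far side of $\Hyp_{-1}$ next to $\Ell_{-2}$, so it is not adjacent. The case $\Par^-_{1}$ is symmetric.
\item A small neighborhood of $\mathrm I$ meets only $\Hyp_0$, $\Par_0$ and $\Ell_{\pm1}$, so $\mathcal{I}$ itself is open, and removing the point $\mathrm I$ keeps it open.
\end{enumerate}
Since $\widetilde{R}$ is continuous, $X$ is then open in $\psl\times\psl$, hence a smooth $6$-manifold. Smoothness of $\widetilde{R}$ comes from writing it locally as the composition of local inverses of the covering $\SL\times\SL\to\psl\times\psl$, the commutator map $R$, and a local section of the covering $\univcover\to\SL$.

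Next, $f$ is a submersion on $X$; this is exactly the consequence stated in Lemma \ref{lem_evTsubm}. Concretely, I verify that the point $\mathrm I$ was removed precisely so that the criterion of Lemma \ref{lem_evTsubm} applies. Suppose lifts $A,B\in\SL$ commute. Then their lifts in $\univcover$ commute: the commutator map on the universal cover is continuous and central-valued along a path contracting $B$ inside the connected centralizer, so it is identically $\mathrm I$. Hence $\widetilde{R}=\mathrm I$. Any pair in $X$ therefore has non-commuting lifts, $dR$ is surjective there, and composing with local diffeomorphisms keeps the differential surjective.

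Finally, each fiber $f^{-1}(\widetilde C)=\widetilde{R}^{-1}(\widetilde C)$ with $\widetilde C\in Y$ is connected by Theorem \ref{thm_liftcommu}. It is also a smooth submanifold, being a regular fiber of a submersion, so it is path-connected. Lemma \ref{lem_plp} then gives the strong path-lifting property.

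The main obstacle is the openness of $\mathcal{I}\setminus\{\mathrm I\}$, specifically at the parabolic strata $\Par^+_{-1}$ and $\Par^-_{1}$, where only one of the two parabolic components at that level belongs to $\mathcal{I}$. I would settle this by the local model of $\univcover$ near a parabolic element: a neighborhood meets exactly its own parabolic component, the adjacent hyperbolic component, and the adjacent elliptic component.
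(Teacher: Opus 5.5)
Your proposal is correct and follows the paper's own argument: the paper derives this proposition directly from Lemma \ref{lem_plp}, using Lemma \ref{lem_evTsubm} for the submersion property and Theorem \ref{thm_liftcommu} for connectedness of the fibers. Your additional checks are sound and fill in details the paper leaves implicit: that $\mathcal{I}\setminus\{\mathrm I\}$ is open, hence a manifold, and that the connected fibers are path-connected because they are regular fibers of a submersion.
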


    Finally, we define the lifted product map as follows.
    Let $\overline{\Hyp}=\Hyp\cup\Par\cup\{\pm\mathrm{I}\}\subset \psl,$ and for $n\in\mathbb Z$, let $\overline{\Hyp_n}=\Hyp_n\cup\Par_n\cup\{z^n\}\subset\univcover$.
    The \emph{lifted product map}  
    $$\ev: \overline{\Hyp}\times \overline{\Hyp}\to \widetilde{\SL}$$
   maps the pair $(g_1,g_2)$ to the product $\widetilde{g_1}
    \widetilde{g_2}$ of the unique lifts $\widetilde{g_1}$ and $
    \widetilde{g_2}$ of  $g_1$ and $g_2$ in $\overline{\Hyp_0}$ which we call \emph{preferred lifts}. 
    Recall that the product $\widetilde{g_1}\widetilde{g_2}$ in the universal cover $\widetilde{\SL}$ is defined as follows. For $i=1,2,$ let $\{g_{i,t}\}\interval$ be a path connecting $g_{i,0} = \pm\mathrm{I}$ and $g_{i,1} = g_i$ in the one-parameter subgroup of $\psl$ generated by $g_i,$ and let $\{\widetilde{g_{1,t}g_{2,t}}\}\interval$ be the lift of the path $\{g_{1,t}g_{2,t}\}\interval$ in $\psl$ to  $\widetilde{\SL}$ starting from $\mathrm I.$ Then  $\widetilde{g_1}\widetilde{g_2}$ is the endpoint $\widetilde{g_{1,1}g_{2,1}}$ of the lifted path.
    \smallskip
    
    For a representation $\rho$ in $\HP(\Sigma_{0,3})$ with  $e(\rho)=n$, 
     let $\widetilde{\rho(c_i)},$  $i \in\{ 1,2,3\},$  be the preferred lift of $\rho(c_i)$. Then the image of the lifted product $\ev\big(\rho(c_1), \rho(c_2)\big)=\widetilde{\rho(c_1)}\widetilde{\rho(c_2)}$ equals $z^n\widetilde{\rho(c_3)}^{-1}$, hence lies in $\overline{\Hyp_n}$.

     \begin{proposition}\label{prop_evimage}\cite[Proposition 3.4]{ryu}
            The image of the lifted product map $\ev: \overline{\Hyp}\times \overline{\Hyp}\to \univcover$ equals 

            $$\mathcal{I} = 
            \Hyp_{- 1}
            \cup 
            \mathrm{Par}_{-1}^{+}
            \cup
            \mathrm{Ell}_{-1}
            \cup
            \mathrm{Par}_0
            \cup 
            \{\mathrm I\}
            \cup 
            \Hyp_0
            \cup
            \mathrm{Ell}_{1}
            \cup 
            \mathrm{Par}_{1}^{-}
            \cup
            \Hyp_{1}.
            $$
        Moreover, we  have:
        \begin{enumerate}[(1)]

            \item $\Hyp_0\cup \Hyp_{s}\subset \ev(\Hyp \times \Par^{sgn(s)})$ for $s\in\{\pm 1\}$.
            
            \item $\Hyp_{-1}\cup  \Hyp_0\cup\Hyp_1\cup \{\mathrm I\}\subset \ev(\Hyp \times \Hyp)$.

        \end{enumerate}
        \end{proposition}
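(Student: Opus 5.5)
The plan is to read the lifted product through representations of the pair of pants $\Sigma_{0,3}$, and to treat separately the containment of the image in $\mathcal I$ and the surjectivity onto $\mathcal I$ (together with the refinements (1) and (2)).

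\textbf{Reduction to pants.} Given $(g_1,g_2)\in\overline{\Hyp}\times\overline{\Hyp}$, define $\rho$ on $\pi_1(\Sigma_{0,3})=\langle c_1,c_2,c_3\mid c_1c_2c_3\rangle$ by $\rho(c_1)=g_1$, $\rho(c_2)=g_2$, $\rho(c_3)=(g_1g_2)^{-1}$. The degenerate cases are handled first.
\begin{itemize}
\item If some $g_i=\pm\mathrm I$, then $\ev(g_1,g_2)$ is the preferred lift of the other element. It therefore lies in $\overline{\Hyp_0}\subset\mathcal I$.
\item If $g_1g_2=\pm\mathrm I$, then the pair is abelian. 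The path defining $\widetilde{g_1}\widetilde{g_2}$ stays in a one-parameter subgroup, so the product is $\mathrm I$. In particular $z^{\pm1}$ never occurs.
\end{itemize}

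\textbf{Containment $\ev(\overline{\Hyp}\times\overline{\Hyp})\subset\mathcal I$.} I would use the translation (rotation) number $\widetilde{\mathrm{rot}}$ on $\univcover$. It is a homogeneous quasimorphism of defect $1$, it vanishes on $\overline{\Hyp_0}$, it equals $n$ on $\overline{\Hyp_n}$, and it takes values in $(n-1,n)$ on $\Ell_n$ for $n>0$. Since both preferred lifts have rotation number $0$, the product has rotation number in $[-1,1]$. This already excludes $\Hyp_{n}$ and $\Par_n$ for $|n|\geqslant2$, and $\Ell_n$ for $|n|\geqslant2$.

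The remaining exclusions are $\Par^-_{-1}$, $\Par^+_{1}$ and $z^{\pm1}$ (the latter done above). For these I would invoke Goldman's computation of the relative Euler class of pants representations with non-elliptic boundary. When $g_3=\rho(c_3)$ is parabolic we have $\ev(g_1,g_2)=z^n\widetilde{g_3}^{-1}$, and inverting reverses the sign of a parabolic. So a value in $\Par^+_1$ would force $n=1$ with $\rho(c_3)$ negative parabolic, which violates the pants Milnor--Wood bound $n\leqslant-\chi-p_-=0$. The bound itself I would verify directly from the traces via Proposition~\ref{prop_char}; this is the step needing the most care, since Theorem~\ref{ryuhp} is stated only for $g\geqslant1$. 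The case $\Par^-_{-1}$ is symmetric by Proposition~\ref{prop_pglpsl}.

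\textbf{Surjectivity and parts (1), (2).} The target components are connected and conjugation-invariant, and hyperbolic and elliptic classes are determined by their component and trace. So it suffices to realize one element of each component with every admissible trace. For this I would:
\begin{itemize}
\item use Lemma~\ref{lem_plpchar} to produce $\SL$ pairs $(A,B)$ with prescribed $(\tr A,\tr B,\tr AB)$, with $A,B$ hyperbolic or parabolic of the prescribed type;
\item identify which component the product lands in by a continuity argument. Deform along a path of characters to a reducible (upper-triangular) configuration where the product is computed explicitly and lies in $\overline{\Hyp_0}$. Along the path, the component can change only when $AB$ crosses $\pm\mathrm I$, a parabolic locus, or the elliptic region, and the trace keeps track of these crossings.
\end{itemize}
Concretely, take $(A,B)$ with $\tr A,\tr B>2$ and $\tr AB$ ranging over $\mathbb R$. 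For $\tr AB>2$ one gets $\Hyp_0$ near reducible pairs and $\Hyp_{\pm1}$ on the other component of the character fibre (the two $\SL$-orbits in Proposition~\ref{prop_char}(2), which are swapped by $\pgl$). For $\tr AB<-2$ one gets the remaining hyperbolic classes, and the intermediate traces give $\Ell_{\pm1}$ and the parabolic classes. The same construction, with $B$ replaced by $\pm\parpp$ or $\pm\parmp$, gives part (1); there the sign of $B$ selects the side $s$ on which the value $\Hyp_s$ is reachable.

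\textbf{Main obstacle.} The hardest step is bookkeeping which lift, hence which index $n$, each configuration produces, i.e.\ turning the sign and trace data into the component label without Theorem~\ref{ryuhp}. I would first try to settle it by an explicit one-parameter family, for instance $A$ diagonal and $B$ a rotation-conjugate of a diagonal matrix, on which the lifted product path can be tracked by hand.
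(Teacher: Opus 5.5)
\textbf{Containment.} Your degenerate cases, your treatment of $z^{\pm1}$ and the rotation-number bound are correct. They reduce the containment to excluding $\Par^+_1$ and $\Par^-_{-1}$, but that is the real content of the containment, and the proposal does not supply it.

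The signed bound $n\leqslant-\chi-p_-$ for pants is not available. Theorem \ref{ryuhp} assumes $g\geqslant 1$, and for $\Sigma_{0,3}$ the identity $\widetilde{\rho(c_1)}\widetilde{\rho(c_2)}=z^{n}\widetilde{\rho(c_3)}^{-1}$ makes that bound a restatement of the very exclusion you want, so invoking it is circular. It also cannot be checked ``from the traces via Proposition \ref{prop_char}'': every element of $\Par^{\pm}_{\pm1}$ has trace $-2$, and $\chi$ cannot tell which lift or which sign occurs.

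One way to close this, which also makes the quasimorphism unnecessary, is a displacement argument. Let $\univcover$ act on the universal cover $\mathbb{R}$ of $\mathbb{RP}^1$, oriented so that $z$ acts by $x\mapsto x+1$.
\begin{itemize}
\item A nontrivial element of $\overline{\Hyp_0}$ is $\widetilde{\exp}(X)$ with $X$ hyperbolic or nilpotent. It fixes the lifts of the fixed points of $\exp(X)$, and therefore moves every point by an amount in $(-1,1)$.
\item Elements of $\Ell_1$ move every point by an amount in $(0,1)$. Their limits in $\Par^+_0$, which have fixed points, move every point by an amount in $[0,1)$.
\item Hence every element of $\Par^+_1=z\Par^+_0$ moves every point by at least $1$.
\end{itemize}
Taking $p$ fixed by $\widetilde{g_2}$ gives $\widetilde{g_1}\widetilde{g_2}(p)-p=\widetilde{g_1}(p)-p\in(-1,1)$. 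This rules out $\Par^+_1$, and in the same way $\Par^-_{-1}$, $z^{\pm1}$ and every other component outside $\mathcal{I}$.

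\textbf{Surjectivity and (1), (2).} Here the bookkeeping you flag as the main obstacle is left undone, and the one concrete claim you make is false.
\begin{itemize}
\item The preferred lifts of hyperbolic $g_1,g_2$ project to $A,B\in\SL$ with $\mathrm{Tr}\,A,\mathrm{Tr}\,B>2$, so $\widetilde{g_1}\widetilde{g_2}$ projects to $AB$. If $\mathrm{Tr}(AB)>2$, the product lies in some $\Hyp_k$ with $k$ even, hence (by your rotation bound) in $\Hyp_0$, never in $\Hyp_{\pm1}$.
\item The two $\SL$-orbits in a fibre of $\chi$ are exchanged by conjugation by an element of $\pgl\setminus\psl$, which carries $\Hyp_k$ to $\Hyp_{-k}$. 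So they cannot give $\Hyp_0$ on one orbit and $\Hyp_{\pm1}$ on the other.
\item More generally, a continuity argument that only tracks traces cannot distinguish $\Hyp_1$ from $\Hyp_{-1}$, or $\Ell_1$ from $\Ell_{-1}$. For (1) you only restate the claim.
\end{itemize}

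Once containment is known, the repair is short:
\begin{itemize}
\item $\ev(\pm\mathrm{I},g)=\widetilde{g}$ covers $\overline{\Hyp_0}$, and $\ev(g,g^{-1})=\mathrm{I}$ with $g$ hyperbolic gives $\mathrm{I}$ for (2).
\item For nondegenerate pairs, $\mathrm{Tr}(AB)$ determines the component up to $k\leftrightarrow-k$: trace $>2$ gives $\Hyp_0$, trace $<-2$ gives $\Hyp_{\pm1}$, trace in $(-2,2)$ gives $\Ell_{\pm1}$, and trace $-2$ gives $\Par^-_1\cup\Par^+_{-1}$.
\item With $A=\mathrm{diag}(\lambda,\lambda^{-1})$ and $B$ having diagonal $(a,\mathrm{Tr}\,B-a)$, one gets $\mathrm{Tr}(AB)=(\lambda-\lambda^{-1})a+\lambda^{-1}\mathrm{Tr}\,B$. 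So every real trace occurs, with $B$ hyperbolic or parabolic of a prescribed sign (Lemma \ref{lem_offdiag}).
\item An orientation-reversing conjugation swaps $k$ and $-k$.
\item The image is conjugation-invariant, and classes are determined by component and trace.
\end{itemize}
This yields the full image and (2).

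For (1), that swap also flips the sign of the parabolic, so use displacement instead. If $\widetilde{g_2}\in\Par^+_0$, then $\widetilde{g_1}\widetilde{g_2}$ moves every point by more than $-1$, whereas every element of $\Hyp_{-1}$ moves some point by less than $-1$. Hence trace $<-2$ forces $\Hyp_1$, and symmetrically it forces $\Hyp_{-1}$ when $\widetilde{g_2}\in\Par^-_0$.
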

Let us denote by $P:\univcover \times \univcover \rightarrow \univcover$ the map defined by group multiplication.
        \begin{proposition}
        \label{prop_prodimage}\cite[Theorem 2.6, Lemma 3.6]{ryu2}
        Let $P: \univcover\times \univcover\to \univcover$ be the product map sending $(\widetilde{A}, \widetilde{B})$ to $\widetilde{A}\widetilde{B}$, and let $s, s_1, s_2\in \{\pm 1\}$.
        Then the following holds:
        \begin{enumerate}[(1)] 



            \item $P\big(\Par_0\times\Ell_1 \big)\cap \big(\bigcup_{k\in \mathbb{Z}\setminus \{0\}}\Ell_k\big) = \Ell_1$.
            \item $P\big(\Par_0\times \Ell_{-1}\big)\cap \big(\bigcup_{k\in \mathbb{Z}\setminus \{0\}}\Ell_k\big)=\Ell_{-1} $
            \item $P\big(\Par^-_0\times\Par^-_0 \big)\cap \big(\bigcup_{k\in \mathbb{Z}\setminus \{0\}}\Ell_k\big) = \Ell_{-1}$.

            \item 
            $P\big(\Hyp_0 \times\Hyp_0\big)\cap \big(\bigcup_{k\in \mathbb{Z}\setminus \{0\}}\Ell_k\big) \subset \Ell_{-1}\cup \Ell_1$.

            \item $P\big(\Ell_{-1} \times\Ell_1\big)\cap \big(\bigcup_{k\in \mathbb{Z}\setminus \{0\}}\Ell_k\big) \subset \Ell_{-1}\cup \Ell_1$.
        \end{enumerate}
   \end{proposition}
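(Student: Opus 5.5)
The plan is to encode elements of $\univcover$ by how they move points of the line and to prove everything with one displacement inequality. Let $\univcover$ act on $\mathbb{R}$, the universal cover of $\mathbb{RP}^1$, normalized so that $z$ acts as translation by $1$. For $\widetilde{A}\in\univcover$ let $D_{\widetilde A}(x)=\widetilde A(x)-x$. This function is continuous and $1$-periodic, so its image is a compact interval $J(\widetilde A)$.

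First I will fix this dictionary by checking it on the one-parameter subgroups and using conjugation invariance:
\begin{itemize}
\item $\widetilde A\in\Hyp_0\cup\Par_0$ exactly when $\widetilde A$ has a fixed point and $J(\widetilde A)\subset(-1,1)$;
\item $\widetilde A\in \Par^+_0$ gives $J(\widetilde A)\subset[0,1)$, and $\widetilde A\in \Par^-_0$ gives $J(\widetilde A)\subset(-1,0]$;
\item $\widetilde A\in\Ell_k$ with $k>0$ exactly when $J(\widetilde A)\subset(k-1,k)$;
\item $\widetilde A\in\Ell_k$ with $k<0$ exactly when $J(\widetilde A)\subset(k,k+1)$.
\end{itemize}
In particular, $\widetilde A$ is elliptic if and only if $D_{\widetilde A}$ never takes an integer value, and then the elliptic component is read off from any single value of $D_{\widetilde A}$. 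Since $\widetilde A\widetilde B(x)-x=D_{\widetilde A}(\widetilde Bx)+D_{\widetilde B}(x)$, we get $J(\widetilde A\widetilde B)\subset J(\widetilde A)+J(\widetilde B)$.

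The key observation is the following. If $\widetilde A$ has a fixed point $x_0$, put $y=\widetilde B^{-1}x_0$. Then
$$D_{\widetilde A\widetilde B}(y)=D_{\widetilde A}(x_0)+D_{\widetilde B}(y)=D_{\widetilde B}(y),$$
so $D_{\widetilde A\widetilde B}$ takes one value of $D_{\widetilde B}$. If $\widetilde A\widetilde B$ is elliptic, this value determines its component. This proves the inclusions.
\begin{itemize}
\item For (1) and (2), parabolics in $\Par_0$ have fixed points and $D_{\widetilde B}$ takes values in $(0,1)$, respectively $(-1,0)$, so an elliptic product lies in $\Ell_1$, respectively $\Ell_{-1}$.
\item For (3), $D_{\widetilde B}$ takes values in $(-1,0]$. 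An elliptic product takes no integer value, so this value is in $(-1,0)$ and the product lies in $\Ell_{-1}$.
\item For (4), $\Hyp_0$ elements have fixed points and $J(\widetilde B)\subset(-1,1)$, so an elliptic product lies in $\Ell_{\pm1}$.
\item For (5), $J(\widetilde A)+J(\widetilde B)\subset(-1,0)+(0,1)=(-1,1)$, so an elliptic product lies in $\Ell_{\pm1}$.
\end{itemize}

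It remains to prove the reverse inclusions in (1)–(3). For (1), given $\widetilde C\in\Ell_1$, take $\widetilde P\in\Par_0$ close to $\mathrm I$, for instance on the one-parameter subgroup of $\parpp$. Then $\widetilde P^{-1}\widetilde C\in\Ell_1$ because $\Ell_1$ is open, and $\widetilde C=\widetilde P\cdot(\widetilde P^{-1}\widetilde C)$. Part (2) is identical. For (3), I will use an explicit family. Take the preferred lifts of $\pm\parmp$ and $\pm\begin{bmatrix}1&0\\ t&1\end{bmatrix}$; for the right sign of $t$ both are negative parabolics. Their product has trace $2-t$, which sweeps $(-2,2)$ as $t$ ranges over $(0,4)$. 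By the inclusion just proved, every elliptic product lies in $\Ell_{-1}$. Two elements of $\Ell_{-1}$ with the same trace are conjugate, and conjugating both factors preserves $\Par_0^-$, so every element of $\Ell_{-1}$ is attained.

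The main obstacle is the first step: pinning down the displacement intervals of each component, in particular the boundary cases $\Par^\pm_0$ with intervals $[0,1)$ and $(-1,0]$. I would prove this by computing on the standard representatives, namely $\widetilde{\exp}$ of rotations, of $\parpp$ and $\parmp$, and of diagonal matrices. I would then use that conjugation in $\univcover$ acts on $D$ by reparametrization, so it does not change $J$. The sign convention linking $\Par^+$ to $[0,1)$ rather than to $(-1,0]$ must be checked against the choice of $z$ made in this paper.
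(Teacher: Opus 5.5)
The paper gives no proof of this proposition; it imports it from \cite{ryu2}, so there is no in-text argument to compare against. Your proof is correct and takes its own self-contained route. You let $\univcover$ act on the universal cover $\mathbb{R}$ of $\partial\mathbb{H}^2$ and reduce all five inclusions to two facts. The first is the cocycle identity $D_{\widetilde A\widetilde B}(x)=D_{\widetilde A}(\widetilde Bx)+D_{\widetilde B}(x)$. The second is your observation that when $\widetilde A$ has a fixed point $x_0$, the displacement of the product at $\widetilde B^{-1}x_0$ is a displacement of $\widetilde B$. The fixed-point step is what excludes $\Ell_{\pm 2}$ in (4): subadditivity $J(\widetilde A\widetilde B)\subset J(\widetilde A)+J(\widetilde B)$ alone only gives $(-2,2)$. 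The reverse inclusions also work: in (1) and (2) you write $\widetilde C=\widetilde P\cdot(\widetilde P^{-1}\widetilde C)$ with $\widetilde P\in\Par_0$ near $\mathrm I$, and in (3) you use the family of trace $2-t$, $t\in(0,4)$, then conjugate within $\Ell_{-1}$. Elsewhere the paper identifies components via the off-diagonal sign criteria (Lemmas \ref{lem_offdiag} and \ref{lem_offdiag_Ell}) plus continuity of paths in $\univcover$, which means case-by-case sign bookkeeping on matrix entries. Your approach replaces that with a little setup about the action on $\mathbb{R}$, and then treats all cases uniformly.

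One justification in your dictionary step is false as written: conjugation does not preserve $J(\widetilde A)$. We have $D_{\widetilde g\widetilde A\widetilde g^{-1}}(\widetilde g y)=\widetilde g(\widetilde A y)-\widetilde g(y)$, which differs from $D_{\widetilde A}(y)$ unless $\widetilde g$ acts by a translation. For example, a lift of a rotation has constant displacement in the angular coordinate, but its conjugate by a hyperbolic element does not. What conjugation does preserve is where each value sits relative to the integers. Since $\widetilde g$ is increasing and commutes with $x\mapsto x+1$, $m<D_{\widetilde A}(y)<m+1$ holds if and only if $m<D_{\widetilde g\widetilde A\widetilde g^{-1}}(\widetilde g y)<m+1$, and likewise $D_{\widetilde A}(y)=m$ if and only if $D_{\widetilde g\widetilde A\widetilde g^{-1}}(\widetilde g y)=m$. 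Every condition in your dictionary ($J\subset(k-1,k)$, $J\subset[0,1)$, $J\subset(-1,0]$, existence of a fixed point) is of this form. So the dictionary does transfer from the standard representatives to whole components, and the rest of your argument is unaffected.

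The sign convention you flagged is correct. The flow $x\mapsto (x\cos\theta+\sin\theta)/(\cos\theta-x\sin\theta)$, whose lift at $\theta=\pi$ is $z$, has generator $(1+x^2)\partial_x$. Both it and $x\mapsto x+1$ move $\mathbb{R}\subset\mathbb{RP}^1$ in the increasing direction. Hence, with $z$ acting by $+1$, $\Par^+_0$ corresponds to $J\subset[0,1)$. In (3) the right sign is $t>0$, by Lemma \ref{lem_offdiag}.

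Finally, exclude $\mathrm I$ from your first dictionary item. This is harmless, since that item is only used in the direction that elements of $\Hyp_0\cup\Par_0$ have a fixed point.
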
  

 \begin{proposition}\label{gol4.6}
  Consider the product map $$P:(\Hyp_0 \times \Hyp_0) \cap P^{-1}(\cup_{i = -1}^1 \Hyp_i) \to \cup_{i = -1}^1 \Hyp_i.$$ Then for every $\widetilde{H} \in \cup_{i = -1}^1 \Hyp_i$, the fiber $P^{-1}(\widetilde{H})$ is path-connected. 
 \end{proposition}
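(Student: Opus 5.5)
The approach is the standard two-step argument for path-connected fibers of a product map, as in Goldman's treatment of the analogous statement for $\SL$ (his Section 4/Proposition 4.6). First, I will reduce the statement about $\univcover$ to a statement about pairs in $\SL$ (or $\psl$) with prescribed traces. Fix $\widetilde{H}\in \Hyp_i$ with $i\in\{-1,0,1\}$. A pair $(\widetilde{A},\widetilde{B})\in \Hyp_0\times\Hyp_0$ is determined by its projection to $\psl\times\psl$, since the lift in $\Hyp_0$ is unique. So $P^{-1}(\widetilde{H})$ is homeomorphic to the set of pairs $(g_1,g_2)$ of hyperbolic elements of $\psl$ with $g_1g_2=H$, where $H$ is the projection of $\widetilde{H}$, subject to the condition that the product of the preferred lifts lies in $\Hyp_i$. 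Writing $g_2=g_1^{-1}H$, this fiber is parametrized by $g_1$ alone: it is the set $F_i(H)$ of hyperbolic $g_1$ such that $g_1^{-1}H$ is hyperbolic and the lifted product lands in the correct component. The fiber is then $\bigcup_{x,y} F_i(H)\cap\{\tr g_1=x,\ \tr g_1^{-1}H=y\}$, taken over $|x|,|y|>2$ after choosing $\SL$ lifts.

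Second, I will organize the fiber by the trace map $g_1\mapsto (x,y)$. With $z_0=\tr H$ fixed, Proposition \ref{prop_char} (2) says that for $\kappa(x,y,z_0)\neq 2$, pairs $(A,B)$ with character $(x,y,z_0)$ form a single $\GL$-orbit, i.e. two $\SL$-orbits. Imposing $AB=H$ leaves a set acted on simply transitively, up to the finite sign choices, by the centralizer of $H$, which is a line since $H$ is hyperbolic. The component index $i$ of the lifted product should be locally constant in $(x,y)$, determined by the orientation data of the two $\SL$-orbits, equivalently by the sign of $\tr[A,B]-2$ and which $\GL$-coset is chosen. The plan is therefore the following. (a) Identify, for each $i$, the region $D_i\subset\{|x|>2,|y|>2\}$ and the orbit choice realizing $\Hyp_i$. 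Here I expect $i=\pm1$ to correspond to $\kappa<2$ with the two $\SL$-orbits distinguishing $+1$ from $-1$, and $i=0$ to correspond to $\kappa>2$ or to the reducible locus $\kappa=2$. (b) Show that each fiber over a point of $D_i$ is a single centralizer orbit, hence a line and connected. (c) Show that the trace map is a submersion from the relevant piece of the fiber onto $D_i$, using Lemma \ref{lem_plpchar}, or a direct computation that $d\tr$ restricted to the fiber is surjective when $A,B$ do not commute. Lemma \ref{lem_plp} then gives connectedness once each $D_i$ is shown to be connected.

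The main obstacle is step (a) together with the connectedness of $D_i$ and the treatment of the degenerate locus $\kappa=2$, where $A$ and $B$ share a fixed point. For $i=0$ the region $\{|x|,|y|>2,\ \kappa(x,y,z_0)\ge 2\}$ contains both the reducible pairs and the region $\kappa>2$. Connectedness has to be checked across the sign choices of $x$ and $y$, since $\SL$ lifts of hyperbolic elements in $\psl$ may be taken with positive trace, leaving the quadrant $x,y>2$. It also has to be checked across the curve $\kappa=2$, where the orbit structure jumps. To handle this, I would first normalize both traces positive, because the preferred lift in $\Hyp_0$ corresponds to the positive-trace $\SL$ lift. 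I would then analyze the conic $\kappa(x,y,z_0)=2$ in the quadrant $x,y>2$ explicitly. Finally, near reducible points I would verify by hand, using upper-triangular matrices, that the reducible pairs lie in the closure of the irreducible part and glue the pieces together.
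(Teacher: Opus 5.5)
The paper gives no proof of this statement; it only cites \cite[Proposition 4.6]{goldman}. Your framework (positive-trace lifts, trace coordinates, centralizer orbits, Lemma \ref{lem_plp}) is reasonable. However, the structure you predict in step (a) is wrong, and this breaks steps (b) and (c) exactly when $i=0$.

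After normalizing $\tr A,\tr B>2$, the index is read off from $z_0=\tr H$: $i=0$ iff $z_0>2$, and $i=\pm1$ iff $z_0<-2$. In the latter case $\kappa(x,y,z_0)=x^2+y^2+z_0^2-xyz_0-2>2$ automatically, so $i=\pm1$ lies entirely in $\kappa>2$, not $\kappa<2$. Conversely, pairs with crossing axes ($\kappa<2$, Lemma \ref{lem_torus}) have product of trace $>2$ and belong to $i=0$.

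More seriously, for $i=0$ both $\SL$-orbits in $\chi^{-1}(x,y,z_0)$ have lifted product in $\Hyp_0$, because $\Hyp_{\pm2}$ is not in the image of the lifted product (Proposition \ref{prop_evimage}). So over every $(x,y)\in(2,\infty)^2$ with $\kappa\neq 2$, the fiber consists of two centralizer orbits, not one. The trace map on $P^{-1}(\widetilde H)$ therefore has disconnected fibers, and it also fails to be a submersion at commuting pairs. Hence Lemma \ref{lem_plp} gives nothing, and connectedness of $D_0$ does not help. What must be shown is that the two sheets over the same base point are joined through the reducible locus. Your gluing step only addresses crossing the curve $\kappa=2$ between regions, which is not enough.

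One concrete repair is as follows.

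\emph{Setup.} Conjugate so that the $\SL$-projection of $\widetilde H$ is $\varepsilon\,\mathrm{diag}(\lambda,\lambda^{-1})$ with $\varepsilon=\pm1$ and $\lambda>1$. Write $A=\begin{bmatrix}a&b\\c&d\end{bmatrix}$ and $B=A^{-1}H$. Using Proposition \ref{prop_evimage}, $P^{-1}(\widetilde H)$ is identified with the set of $A\in\SL$ such that $(a,d)$ lies in the open convex region $R=\{a+d>2,\ \varepsilon(a\lambda^{-1}+d\lambda)>2\}$. When $\varepsilon=-1$, one additionally requires the lifted product to lie in $\Hyp_i$ rather than $\Hyp_{-i}$. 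A direct computation gives $\kappa-2=\tr[A,B]-2=-bc(\lambda-\lambda^{-1})^2$.

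\emph{Case $\varepsilon=+1$ ($i=0$).}
\begin{enumerate}
\item Each of the sets $\{b>0\}$, $\{c>0\}$, $\{b<0\}$, $\{c<0\}$ is homeomorphic to $R\times(0,\infty)$, by solving for the other off-diagonal entry.
\item Consecutive sets in this list intersect, because $R$ meets both $\{ad>1\}$ and $\{ad<1\}$.
\item The remaining diagonal points are joined to $\{b>0\}$ by the path $t\mapsto\begin{bmatrix}a&t\\0&a^{-1}\end{bmatrix}$, which stays in the set.
\end{enumerate}
So the fiber is path-connected.

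\emph{Case $\varepsilon=-1$ ($i=\pm1$).}
\begin{enumerate}
\item On $R$ one has $ad<0$, hence $bc<0$, so the set splits into the two branches $\{b>0\}$ and $\{b<0\}$, each homeomorphic to $R\times(0,\infty)$.
\item Conjugation by $\mathrm{diag}(1,-1)$ swaps the two branches and exchanges $\Hyp_1$ and $\Hyp_{-1}$ (cf.\ Proposition \ref{prop_pglpsl}).
\end{enumerate}
Hence $P^{-1}(\widetilde H)$ for $\widetilde H\in\Hyp_{\pm1}$ is exactly one branch. This confirms your one-orbit-per-point picture for $i=\pm1$, where the two $\SL$-orbits do distinguish $+1$ from $-1$. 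It also supplies the missing gluing for $i=0$.
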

 \noindent For the proof of Proposition \ref{gol4.6}, see \cite[Proposition 4.6]{goldman}.

    \begin{lemma}\label{lem_evPsubm}\cite[Lemma 2.11]{ryu}
    Let $m: \SL\times \SL
    \to \SL$ be defined by $m(A,B) = AB$.
    Let $U$ be an open subset of $\SL$, and let 
    $\Par$ be the subset of $\SL$ consisting of the parabolic matrices.
    \begin{enumerate}[(1)]
        \item The restrictions of $m$ to 
        $U\times U$, $\Par\times U$, and $U\times \Par$ are submersions.
        \item At $(A,B)\in \Par\times \Par$, the differential $dm_{(A,B)}$ of the restriction   
    $m: \Par\times \Par
    \to \SL$ is surjective if and only if $A$ and $B$ do not commute. 
    \end{enumerate}
\end{lemma}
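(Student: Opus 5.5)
The plan is to compute the differential of $m$ directly, using left and right translations to move everything into the Lie algebra $\sl$. Everything reduces to linear algebra on the adjoint action.

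\textbf{Part (1).} Tangent vectors to $\SL$ at $A$ can be written as $XA$, and tangent vectors at $B$ as $BY$, with $X,Y\in\sl$. Then
$$dm_{(A,B)}(XA,BY)=XAB+ABY.$$
If the first factor ranges over the open set $U$, then $X$ is arbitrary. Hence $\{XAB : X\in\sl\}=T_{AB}\SL$, and $dm$ is already surjective using only variations of $A$. This handles $U\times U$ and $U\times\Par$. Symmetrically, for $\Par\times U$ we vary only $B$: the vectors $ABY$ fill out $T_{AB}\SL$. So all three restrictions are submersions.

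\textbf{Part (2), computing the image.} The space $\Par$ of parabolic matrices is a union of conjugacy classes. Its tangent space at $A$ is $\{XA-AX : X\in\sl\}$, obtained by differentiating $t\mapsto e^{tX}Ae^{-tX}$. Hence the image of $dm_{(A,B)}$ on $\Par\times\Par$ is
$$\{(XA-AX)B+A(YB-BY) : X,Y\in\sl\}.$$
Multiplying on the left by $A^{-1}$ and on the right by $B^{-1}$ is a linear isomorphism onto $\sl$, so surjectivity is equivalent to
$$\operatorname{Im}\big(\mathrm{Ad}_{A^{-1}}-I\big)+\operatorname{Im}\big(I-\mathrm{Ad}_B\big)=\sl.$$

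\textbf{Part (2), identifying each summand.} Write $A=\pm\exp(a)$ with $a$ nonzero nilpotent. Then
$$\mathrm{Ad}_{A^{-1}}-I=-\mathrm{ad}_a+\tfrac12\mathrm{ad}_a^2,$$
since $\mathrm{ad}_a^3=0$. Its kernel is the centralizer $\mathbb R a$, so its image is $2$-dimensional. This image is contained in $\operatorname{Im}\mathrm{ad}_a$, because $\operatorname{Im}\mathrm{ad}_a^2\subset\operatorname{Im}\mathrm{ad}_a$, and $\operatorname{Im}\mathrm{ad}_a$ is also $2$-dimensional. Hence the two images coincide.

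By invariance of the trace form, $\operatorname{tr}([a,X]Z)=\operatorname{tr}(X[Z,a])$. Therefore $\operatorname{Im}\mathrm{ad}_a$ is contained in the orthogonal complement $a^\perp$, and by dimension it equals $a^\perp$. Likewise $\operatorname{Im}(I-\mathrm{Ad}_B)=b^\perp$, where $b$ is the nilpotent logarithm of $B$.

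\textbf{Conclusion.} The trace form is nondegenerate, so $a^\perp+b^\perp=\sl$ if and only if $a^\perp\neq b^\perp$, that is, if and only if $a$ and $b$ are linearly independent. Two parabolic elements commute exactly when they share their unique fixed line, which happens exactly when their nilpotent logarithms are proportional. This proves (2).

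The only point needing care is the identification of the image of $\mathrm{Ad}_{A^{-1}}-I$ with $a^\perp$. This relies on the nilpotency computation together with the dimension count above.
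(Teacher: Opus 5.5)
Your proof is correct and complete. The paper gives no proof of this lemma; it imports it from \cite[Lemma 2.11]{ryu}, so there is no in-paper argument to compare against.

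Your route is to translate to $\sl$ and identify the image of $dm$ on $T_{(A,B)}(\Par\times\Par)$ with $A\,(a^\perp+b^\perp)\,B$. Here $a^\perp+b^\perp=(\mathbb{R}a\cap\mathbb{R}b)^\perp$ is the orthogonal complement of the joint centralizer of $A$ and $B$. This is the standard computation, and it is the same principle that underlies the commutator statement in Lemma \ref{lem_evTsubm}.

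The one step you state without justification is $T_A\Par=\{XA-AX : X\in\sl\}$. Differentiating $t\mapsto e^{tX}Ae^{-tX}$ only gives the inclusion $\supseteq$. That inclusion is enough for the ``non-commuting implies surjective'' direction. The reverse inclusion is needed for the ``commuting implies not surjective'' direction, and it is easy to supply. At a non-central $A\in\SL$, $d\operatorname{tr}_A(XA)=\operatorname{tr}(XA)$ cannot vanish for all $X\in\sl$. Hence $\Par$, the non-central part of the level sets $\operatorname{tr}=\pm 2$, is a smooth surface. So $\dim T_A\Par=2$, which equals the dimension of $\{XA-AX\}=\{((I-\mathrm{Ad}_A)X)A\}$ that you already computed.
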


Lemma \ref{lem_plp} and Lemma \ref{lem_evPsubm} are used to show the path-lifting property of the lifted product map, which is stated in Proposition \ref{prop_plpliftcommu}.
The proof of Lemma \ref{parplp} is given in the Appendix.
    \begin{lemma}\label{parplp}
     The image of the lifted product map $\ev: \Par\times \Par\to \univcover$ is as follows:
     \[
        \begin{aligned}
        \ev(\Par^+\times \Par^+) &= \Par_0^+\cup \Ell_1 \cup \Par_1^-\cup \Hyp_1,\\
        \ev(\Par^+\times \Par^-) &= \Par_0\cup \Hyp_0 \cup \{\mathrm{I}\},\\
        \ev(\Par^-\times \Par^-) &= \Par_0^-\cup \Ell_{-1} \cup  \Par_{-1}^+\cup \Hyp_{-1}.
        \end{aligned}
    \]
 Two parabolic elements $\pm P_1$ and $\pm P_2$ commute if and only if $\ev(\pm P_1, \pm P_2)$ belongs to $\Par_0 \cup \{\mathrm{I}\}$.
 Moreover, for $s_1, s_2 \in \{\pm\}$, $\ev$ satisfies the strong path-lifting property on the set $$S_{s_1,s_2} = \big\{(\pm P_1,\pm P_2) \in \Par^{s_1}\times \Par^{s_2}| \pm P_1P_2 \neq \pm P_2P_1\big\}.$$
 
\end{lemma}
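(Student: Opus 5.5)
The plan is to reduce everything to an explicit normal form for a pair of parabolics, read off the image from one trace computation, and then invoke Lemma \ref{lem_plp}.

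\textbf{Normal form.} Let $\pm P_1\in\Par^{s_1}$ and $\pm P_2\in\Par^{s_2}$.

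If they commute, they share their fixed point at infinity. After conjugation, $P_1=\begin{bmatrix}1&a\\0&1\end{bmatrix}$ and $P_2=\begin{bmatrix}1&b\\0&1\end{bmatrix}$, with $\mathrm{sgn}(a)=s_1$ and $\mathrm{sgn}(b)=s_2$. Both preferred lifts lie on the one-parameter subgroup of upper unipotents. Hence $\ev(\pm P_1,\pm P_2)=\widetilde{\exp}$ of an upper nilpotent matrix, which lies in $\Par_0\cup\{\mathrm I\}$. It equals $\mathrm I$ only if $a+b=0$, which is possible only when $s_1\neq s_2$. For $s_1=s_2=\pm$ it lies in $\Par_0^{\pm}$.

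If they do not commute, conjugate so that $P_1=\begin{bmatrix}1&s_1\\0&1\end{bmatrix}$ and $P_2=\begin{bmatrix}1&0\\-s_2u&1\end{bmatrix}$ with $u>0$. The sign convention holds because $\begin{bmatrix}1&0\\-c&1\end{bmatrix}$ is conjugate in $\psl$, by rotation by $\pi/2$, to $\begin{bmatrix}1&c\\0&1\end{bmatrix}$. Diagonal conjugation rescales the two off-diagonal entries by $\lambda^{2}$ and $\lambda^{-2}$, so the product $s_1\cdot(-s_2u)$ is the only invariant. This gives
$$\operatorname{Tr}(P_1P_2)=2-s_1s_2u .$$
So noncommuting pairs of fixed signs form the orbits of a free $\psl$-action parametrized by $u\in(0,\infty)$. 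In particular $S_{s_1,s_2}$ is connected, and so is each subset where the trace is prescribed.

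\textbf{Identifying the image.} For $s_1\neq s_2$ the trace is $>2$, so the product is hyperbolic. As $u\to 0$ the pair tends to a commuting pair and $\ev\to\mathrm I$. By continuity of $\ev$ on the connected set $S_{s_1,s_2}$, the image is exactly $\Hyp_0$. Together with the commuting case this gives $\Par_0\cup\Hyp_0\cup\{\mathrm I\}$.

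For $s_1=s_2=+$ the trace sweeps $(-\infty,2)$. Small $u$ gives elliptic elements near $\mathrm I$. Trace $-2$ is the unique value giving a parabolic lift of $-\mathrm I$, and trace $<-2$ gives hyperbolics. By continuity along $u$, the image is $\Ell_k\cup\Par^{\pm}_{k'}\cup\Hyp_{k'}$ for one fixed chain of adjacent components.

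Deciding that this chain is $\Ell_1,\Par_1^-,\Hyp_1$ rather than $\Ell_{-1},\Par_{-1}^+,\Hyp_{-1}$ is the step I expect to be the main obstacle. It is purely a bookkeeping of orientation against the convention $z=\widetilde{\exp}\begin{bmatrix}0&\pi\\-\pi&0\end{bmatrix}$. I would settle it by a direct computation: write the path $t\mapsto P_{1,t}P_{2,t}$ for small $u$, and check that its rotation angle is positive, i.e.\ in the direction of $\begin{bmatrix}0&\theta\\-\theta&0\end{bmatrix}$ with $\theta>0$. As a cross-check, Proposition \ref{prop_evimage} allows only the options listed there. The $--$ case then follows by $\pgl\setminus\psl$-conjugation, which flips signs and indices as in Proposition \ref{prop_pglpsl}.

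The commuting criterion now follows immediately. Noncommuting pairs never land in $\Par_0\cup\{\mathrm I\}$, while commuting pairs always do.

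\textbf{Strong path-lifting.} I would apply Lemma \ref{lem_plp} with $X=S_{s_1,s_2}$ and $Y=\ev(S_{s_1,s_2})$. Here $X$ is open in the $4$-manifold $\Par^{s_1}\times\Par^{s_2}$. The target $Y$ is either $\Hyp_0$ or $\Ell_{\pm1}\cup\Par^{\mp}_{\pm1}\cup\Hyp_{\pm1}$. The latter is open in $\univcover$: every point of $\Par^\mp_{\pm1}$ has a neighbourhood inside $\Ell_{\pm1}\cup\Par^\mp_{\pm1}\cup\Hyp_{\pm1}$, since the deleted points $z^0$, $z^{\pm1}$ and $\Par_0^\pm$ are not in it.

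The submersion condition (a) is Lemma \ref{lem_evPsubm}(2), lifted to the cover.

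For the connected-fiber condition (b), fix $\widetilde C\in Y$ with trace $t$. The pairs with trace $t$ form a single free $\psl$-orbit $\cong\psl$. Those with $\ev=\widetilde C$ correspond, via $g\mapsto g(\cdot)g^{-1}$, to the centralizer of $C$ in $\psl$; the lift is automatically correct on this orbit because the component is constant. This centralizer is a connected one-parameter group in the hyperbolic, elliptic and parabolic cases, so each fiber is path-connected, and Lemma \ref{lem_plp} yields the strong path-lifting property.
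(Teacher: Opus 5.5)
Your argument is correct, but it takes a different route from the paper's. The paper does not compute the image; it imports it from \cite[Proposition 3.4]{ryu}. It proves the commuting criterion through Proposition \ref{prop_char}: trace $2$ forces $\kappa=2$, hence a common eigenvector. For connectedness of a fibre, it uses Proposition \ref{prop_char}(2) to make any two points of the fibre $\GL$-conjugate, and Lemma \ref{lem_offdiag} to exclude conjugators in $\GL\setminus\SL$. It then follows the one-parameter subgroup through the conjugator, which centralizes $C$.

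You instead derive all three parts from the explicit normal form $(\begin{bmatrix}1&s_1\\0&1\end{bmatrix},\begin{bmatrix}1&0\\-s_2u&1\end{bmatrix})$. The single invariant $u$ gives the trace $2-s_1s_2u$, and continuity plus conjugation invariance then gives the image. The commuting criterion falls out as a by-product. The free $\psl$-action on each trace level identifies each fibre with the centralizer of $C$. This makes the lemma self-contained and describes the fibres explicitly. The paper's version is shorter and reuses character-variety tools it needs elsewhere. The final step, through Lemmas \ref{lem_plp} and \ref{lem_evPsubm}, is common to both.

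One correction, which also disposes of the step you flagged as the main obstacle. In your normalization $P_1$ is fixed, so as $u\to 0$ you get $P_2\to\mathrm I$ and $\ev(\pm P_1,\pm P_2)\to\widetilde{P_1}\in\Par_0^{s_1}$, not $\mathrm I$. Likewise, the small-$u$ elliptics are near $\widetilde{P_1}$, not near $\mathrm I$.

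For $s_1\neq s_2$ the conclusion $\Hyp_0$ still holds, because hyperbolic elements near $\Par_0$ lie in $\Hyp_0$. For $s_1=s_2=+$ this limit settles the orientation directly:
\begin{enumerate}
\item The only elliptic components whose closures meet $\Par_0$ are $\Ell_{\pm 1}$.
\item The product $P_1P_2=\begin{bmatrix}1-u&1\\-u&1\end{bmatrix}$ has positive $(1,2)$-entry, so Lemma \ref{lem_offdiag_Ell} gives $\Ell_1$.
\item At $u=4$, Lemma \ref{lem_offdiag} gives $\Par_1^-$, and trace $<-2$ then gives $\Hyp_1$.
\end{enumerate}
Proposition \ref{prop_evimage} could not have served as your cross-check, since its image contains both candidate chains.
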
 
        
\subsection{Auxiliary lemmas}\label{subsec_others}

Finally, we collect here several technical lemmas that will be used in the sequel.

    \begin{lemma}\label{lem_offdiag}\cite[Lemma 2.13]{ryu}
        For $n\in \mathbb{Z}$ and a parabolic element $\widetilde{A}\in \mathrm{Par}_n$ of $\univcover,$ let  $s(\widetilde{A})\in\{\pm\}$ be its sign, i.e., $s(\widetilde{A})=+$ if $\widetilde{A}\in \mathrm{Par}^+_n,$ and $s(\widetilde{A})=-$ if $\widetilde{A}\in \mathrm{Par}^-_n.$ Let $A$ be the projection of $\widetilde{A}$ to $\SL$, and for $i,j\in\{1,2\}$ let $a_{ij}$ be the $(i,j)$-entry of $A.$ Then either $a_{12}\neq0$ or $a_{21}\neq 0.$  Furthermore,
        
        \begin{enumerate}[(1)]
       \item If $n$ is even, then $s(\widetilde{A})=sgn(a_{12})$ if $a_{12}\neq 0,$ and $s(\widetilde{A})=-sgn(a_{21})$ if $a_{21}\neq 0.$
       
    \item If $n$ is odd, then $s(\widetilde{A})=-sgn(a_{12})$ if $a_{12}\neq 0,$  and $s(\widetilde{A})=sgn(a_{21})$ if $a_{21}\neq 0.$
    \end{enumerate}
    \end{lemma}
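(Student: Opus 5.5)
The plan is to reduce to the case $n=0$ using the central element $z$. In $\Par_0$, the projections to $\SL$ are unipotent matrices $\mathrm I+X$ with $X$ nilpotent, and the sign of $X$ can be read off from a direct conjugation computation.

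\emph{Step 1: nonvanishing.} Let $A$ be the projection of $\widetilde A\in\Par_n$ to $\SL$. Then $|\tr(A)|=2$ and $A\neq\pm\mathrm I$, since $\widetilde A$ is non-central. Suppose $a_{12}=a_{21}=0$. Then $A$ is diagonal with $a_{11}a_{22}=1$ and $a_{11}+a_{22}=\pm2$, which forces $a_{11}=a_{22}=\pm1$, i.e.\ $A=\pm\mathrm I$. This is a contradiction, so $a_{12}\neq0$ or $a_{21}\neq0$.

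\emph{Step 2: the case $n=0$.} By definition, $\widetilde A\in\Par_0$ lies in $\widetilde{\exp}(\sl)$, and its projection is non-central parabolic, so $\widetilde A=\widetilde{\exp}(X)$ for some $X\in\sl$.

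\begin{itemize}
\item If $X$ were hyperbolic or elliptic (nonzero determinant), then $\exp(X)$ would be diagonalizable over $\mathbb C$, hence not parabolic unless it is $\pm\mathrm I$. Elliptic $X$ can give $-\mathrm I$, which is excluded as before. So $X$ is nonzero nilpotent and $A=\exp(X)=\mathrm I+X$.
\item Every nonzero nilpotent $X$ is $\SL$-conjugate to $\pm N$, where $N=\begin{bmatrix}0&1\\0&0\end{bmatrix}$.
\item Conjugation by $g=\begin{bmatrix}p&q\\ r&s\end{bmatrix}\in\SL$ gives
$$gNg^{-1}=\begin{bmatrix}-pr&p^2\\-r^2&pr\end{bmatrix}.$$
\end{itemize}

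Hence if $\pm A$ is $\psl$-conjugate to $\pm\parpp$ (positive), then $a_{12}=p^2\geqslant0$ and $a_{21}=-r^2\leqslant0$. If $\pm A$ is negative, both signs reverse. Conjugation in $\psl$ is induced by conjugation in $\SL$, and $\mathrm I+X$ is the unique lift of trace $2$. Therefore, whenever $a_{12}\neq0$ we get $s(\widetilde A)=sgn(a_{12})$, and whenever $a_{21}\neq0$ we get $s(\widetilde A)=-sgn(a_{21})$.

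\emph{Step 3: general $n$.} Write $\widetilde A=z^n\widetilde A_0$ with $\widetilde A_0\in\Par_0$. Since $z=\widetilde{\exp}\begin{bmatrix}0&\pi\\-\pi&0\end{bmatrix}$ projects to $-\mathrm I\in\SL$, the matrix $A$ equals $(-1)^nA_0$. The sign depends only on the image in $\psl$, so $s(\widetilde A)=s(\widetilde A_0)$.

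\begin{itemize}
\item For $n$ even, the off-diagonal entries of $A$ and $A_0$ coincide, so conclusion (1) follows from Step 2.
\item For $n$ odd, the off-diagonal entries of $A$ are the negatives of those of $A_0$, so every sign flips, giving (2).
\end{itemize}

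The only point needing care is Step 2: one must check that elements of $\Par_0$ are exactly exponentials of nilpotents, so that their projections have trace $+2$. Once that is settled, the rest is the explicit conjugation formula.
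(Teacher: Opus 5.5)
Your proof is correct. Elements of $\Par_0$ are $\widetilde{\exp}$ of nonzero nilpotents, so their projection to $\mathrm{SL}(2,\mathbb{R})$ is the trace-$2$ lift $\mathrm{I}\pm gNg^{-1}$, whose off-diagonal entries are $\pm p^2$ and $\mp r^2$ and are not both zero since $\det g=1$. Passing to $\Par_n=z^n\Par_0$ multiplies that projection by $(-1)^n$ and leaves the image in $\mathrm{PSL}(2,\mathbb{R})$, hence the sign, unchanged.

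The paper does not prove this lemma itself; it imports it from \cite{ryu}, so there is no in-paper argument to compare against. Your argument is the natural direct conjugation verification and needs no changes.
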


\begin{lemma}\label{lem_offdiag_Ell}\cite[Lemma 7.4]{ryu} 
        For $n\in \mathbb{Z}$ and an elliptic element $\widetilde{A}\in \mathrm{Ell}_n$ of $\univcover,$ let $A$ be its projection to $\SL$, and for $i,j\in\{1,2\}$ let $a_{ij}$ be the $(i,j)$-entry of $A.$ 
        Then $a_{12}\neq0$ and $a_{21}\neq 0.$  Moreover:
        \begin{enumerate}[(1)]
            \item 
            If $n$ is odd, then $sgn(n)=sgn(a_{12}) = -sgn(a_{21})$.
            
            \item 
            If $n$ is even, then $sgn(n)=-sgn(a_{12}) = sgn(a_{21})$.
        \end{enumerate}
    \end{lemma}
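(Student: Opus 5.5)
The plan is to combine a determinant computation, which shows the off-diagonal entries never vanish on elliptic matrices, with a connectedness argument, which reduces the sign question to one explicit representative of each $\Ell_n$.

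\emph{Non-vanishing and opposite signs.} Let $A=(a_{ij})$ be the projection of $\widetilde{A}\in\Ell_n$ to $\SL$, so that $|\tr(A)|<2$. If $a_{12}=0$, then $A$ is triangular with $a_{11}a_{22}=1$. Its eigenvalues $a_{11},a_{22}$ are then real, so $|\tr(A)|=|a_{11}+a_{22}|\geqslant 2$, a contradiction. The same argument rules out $a_{21}=0$. Moreover, $\det A=1$ together with AM--GM gives
$$a_{12}a_{21}=a_{11}a_{22}-1\leqslant \Big(\tfrac{a_{11}+a_{22}}{2}\Big)^2-1<0 .$$
Hence $a_{12}$ and $a_{21}$ have opposite signs, which yields the equalities $sgn(a_{12})=-sgn(a_{21})$ in both (1) and (2).

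\emph{Constancy of the sign on $\Ell_n$.} The composite $\Ell_n\to\SL\to\mathbb{R}$, given by projection to $\SL$ followed by taking the $(1,2)$-entry, is continuous. By the previous step it never vanishes. Since $\Ell_n$ is connected (it is a connected component of the elliptic elements of $\univcover$), the sign of $a_{12}$ is constant on $\Ell_n$. It therefore suffices to compute this sign at a single element.

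\emph{Evaluation at a representative.} Take $\widetilde{\exp}\begin{bmatrix}0&\theta\\-\theta&0\end{bmatrix}$, whose projection to $\SL$ is $\begin{bmatrix}\cos\theta&\sin\theta\\-\sin\theta&\cos\theta\end{bmatrix}$, so that $a_{12}=\sin\theta$.

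For $n>0$ we have $\theta\in((n-1)\pi,n\pi)$. Then $\sin\theta>0$ when $n-1$ is even, i.e. $n$ is odd, and $\sin\theta<0$ when $n$ is even. This gives $sgn(a_{12})=sgn(n)$ for odd $n$ and $sgn(a_{12})=-sgn(n)$ for even $n$.

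For $n<0$ we have $\theta\in(n\pi,(n+1)\pi)$. Then $\sin\theta<0=\dots$ more precisely, $\sin\theta<0$ when $n$ is odd, since $n+1$ is even, so $sgn(a_{12})=-1=sgn(n)$. When $n$ is even, $\sin\theta>0$, so $sgn(a_{12})=-sgn(n)$.

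Combining these cases with the first step gives (1) and (2).

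The only point requiring care is the bookkeeping in the last step. The projection to $\SL$ of this element is exactly the matrix with entries $\cos\theta$ and $\sin\theta$ (not $\theta/2$), and the interval conventions for $\Ell_n$ differ for positive and negative $n$. No step is a genuine obstacle.
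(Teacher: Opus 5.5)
Your proof is correct. The paper gives no argument of its own here; it imports the lemma from \cite[Lemma 7.4]{ryu}. The natural direct proof is an explicit conjugation formula rather than your connectedness argument.

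Write $\widetilde{A}=\widetilde{g}\,\widetilde{\exp}(\theta J)\,\widetilde{g}^{-1}$, where $J=\begin{bmatrix}0&1\\-1&0\end{bmatrix}$, $\theta$ lies in the interval defining $\Ell_n$, and $g=\begin{bmatrix}a&b\\c&d\end{bmatrix}\in\SL$ is the projection of $\widetilde{g}$. Then $A=\cos\theta\,\mathrm{I}+\sin\theta\,gJg^{-1}$, so $a_{12}=(a^2+b^2)\sin\theta$ and $a_{21}=-(c^2+d^2)\sin\theta$. Non-vanishing, opposite signs and the parity rule all follow from the same bookkeeping on $\sin\theta$ that you did.

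Both routes rest on the same point: conjugators project into $\SL$, not merely $\GL$, and a conjugator of determinant $-1$ would flip both signs. In the explicit formula this is the identity $\det g=1$. In your argument it is encoded in the connectedness of $\Ell_n$. You may quote that from the paper, or see it directly: $\Ell_n$ is the continuous image of the connected space $\univcover\times((n-1)\pi,n\pi)$ (resp.\ $\univcover\times(n\pi,(n+1)\pi)$) under $(\widetilde{g},\theta)\mapsto\widetilde{g}\,\widetilde{\exp}(\theta J)\,\widetilde{g}^{-1}$.

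The formula is more self-contained and gives the entries quantitatively; your version needs only one matrix evaluation at a representative. Two cosmetic points: your triangular-matrix step is redundant, since $a_{12}a_{21}<0$ already gives non-vanishing, and the stray fragment ``$\sin\theta<0=\dots$ more precisely'' should be deleted.
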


 \begin{lemma}\label{lem_conjugacypath_const}\cite[Lemma 2.19]{ryu}
        Let $\pm A$ be an element of $\psl\setminus \{\pm \mathrm I\}$,
        and let $\{\pm A_t\}\interval$ 
        be a continuous path in $\psl$ with $\pm A_0 = \pm A$. 
        Suppose that for each $t\in [0,1]$, the elements $\pm A_t$ and $\pm A$ are conjugate. 
        Then there exists a continuous path 
        $\{g_t\}_{t\in [0,1]}$ in $\psl$ with $g_0 = \pm \mathrm{I}$ such that $\pm A_t = \pm g_t Ag_t^{-1}$. 
    \end{lemma}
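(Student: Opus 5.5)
We will realize the conjugacy class of $\pm A$ as a homogeneous space of $\psl$ and then use that the quotient map from a Lie group to its quotient by a closed subgroup is a locally trivial fiber bundle, so that paths lift.

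\textbf{Step 1: the orbit map.} Let $\mathcal{C}\subset \psl$ be the conjugacy class of $\pm A$, and let $Z(\pm A)$ be its centralizer, a closed subgroup of $\psl$. The orbit map
$$\pi:\psl\to \mathcal{C},\qquad g\mapsto g(\pm A)g^{-1},$$
is surjective and constant exactly on the cosets $gZ(\pm A)$. It therefore induces a continuous bijection $\psl/Z(\pm A)\to\mathcal{C}$.

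\textbf{Step 2: $\mathcal{C}$ is an embedded submanifold.} We need this bijection to be a homeomorphism when $\mathcal{C}$ carries the subspace topology of $\psl$. This is the point where the hypothesis $\pm A\neq \pm\mathrm I$ enters, and it is the step I expect to need the most care. The case analysis goes as follows. Let $f=\tr^2$, which is well defined on $\psl$.
\begin{itemize}
\item If $\pm A$ is hyperbolic or elliptic, then $\mathcal{C}$ is a level set $\{f=c\}$ with $c\neq 4$, intersected with the relevant connected component in the elliptic case (elements with rotation angle $\theta$ versus $-\theta$).
\item If $\pm A$ is parabolic, then $\mathcal{C}$ is $\Par^+$ or $\Par^-$. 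Each of these is a connected component of $\{f=4\}\setminus\{\pm\mathrm I\}$, hence open in that set.
\end{itemize}
In all cases, $d\tr$ does not vanish on $\SL$ away from $\pm\mathrm I$. So each of these sets is a $2$-dimensional embedded submanifold, and is locally closed. Its dimension agrees with $\dim \psl-\dim Z(\pm A)=3-1$. Consequently the injective immersion $\psl/Z(\pm A)\to\mathcal{C}$ is a bijective local diffeomorphism, hence a diffeomorphism. As a fallback, one can invoke the general fact that a locally closed orbit of a smooth Lie group action is embedded, by the Baire category argument.

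\textbf{Step 3: lifting the path.} It follows that $\pi$ is identified with the quotient map $\psl\to\psl/Z(\pm A)$. This is a principal $Z(\pm A)$-bundle, in particular locally trivial, and hence a Serre fibration. By hypothesis, $\{\pm A_t\}\interval$ is a continuous path in $\psl$ lying in $\mathcal{C}$. By Step 2 it is also continuous as a path in $\mathcal{C}\cong\psl/Z(\pm A)$. The path lifting property of the bundle, applied with the initial lift $g_0=\pm\mathrm I$ over $\pm A_0=\pm A$, gives a continuous path $\{g_t\}\interval$ with $\pi(g_t)=\pm A_t$. That is, $\pm A_t=\pm g_tAg_t^{-1}$.

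Concretely, the lift can also be built by hand. Cover $[0,1]$ by finitely many intervals whose images lie in trivializing neighborhoods, that is, neighborhoods carrying local sections $\sigma$ of $\pi$. On each interval set $g_t=\sigma(\pm A_t)h$, and choose the constant $h\in Z(\pm A)$ so that $g_t$ matches the previous piece at the left endpoint.
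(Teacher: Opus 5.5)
Your proof is correct. The paper gives no argument for this lemma (it is quoted from Ryu--Yang), so there is no in-text proof to match, and your homogeneous-space argument is a complete, self-contained proof.

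The closest route within this paper's own toolkit would be to apply Lemma~\ref{lem_plp} to the conjugation map $g\mapsto g(\pm A)g^{-1}$, viewed as a submersion from $\psl$ onto the $2$-dimensional submanifold $\mathcal{C}$. Its fibers are the cosets $gZ(\pm A)$, so that route also requires checking that every centralizer $Z(\pm A)$ with $\pm A\neq\pm\mathrm I$ is connected. It is: in each case it is a one-parameter subgroup, though in the order-two elliptic case one must rule out $gAg^{-1}=-A$ for $g\in\SL$. Your principal-bundle argument avoids this check. Local sections of $\psl\to\psl/Z(\pm A)$ give exact lifts, with no reparametrization and no connectivity assumption on the fibers, which is exactly what the statement asks for.

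Two minor remarks on Step~2:
\begin{itemize}
\item For the order-two elliptic class, $f=\tr^2$ is critical along $\{f=0\}$. So the submanifold claim has to be made for $\tr$ on $\SL$ and pushed down through the covering $\SL\to\psl$, which is what your remark about $d\tr$ amounts to.
\item You do not need to identify $\mathcal{C}$ with a full level-set component. The immersion $\psl/Z(\pm A)\to L$ into the $2$-dimensional level set $L$ is a local diffeomorphism, so its image $\mathcal{C}$ is automatically open in $L$. That already gives embeddedness and $\psl/Z(\pm A)\cong\mathcal{C}$.
\end{itemize}
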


    \begin{lemma}\label{lem_torus}\cite[Lemma 3.4.5]{goldman_torus} 
        Let $A, B\in \SL$. The following conditions are equivalent:
        \begin{enumerate}[(1)]
       \item $\tr[A,B]<2$;
       \item $A, B$ are hyperbolic elements and their invariant axes cross.
       \end{enumerate}
    \end{lemma}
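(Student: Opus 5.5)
We follow the Marché--Wolff strategy, adapted to the punctured setting. The statement to prove is Lemma \ref{lem_torus}, but it is quoted from \cite{goldman_torus}. We nevertheless sketch a direct argument, since it is short.

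Write $x=\tr A$, $y=\tr B$, $z=\tr AB$. By Proposition \ref{prop_char}(1), $\tr[A,B]=\kappa(x,y,z)=x^2+y^2+z^2-xyz-2$.

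\emph{(2) implies (1).} Conjugate so that $A=\mathrm{diag}(\lambda,\lambda^{-1})$ with $|\lambda|>1$. The axis of $A$ is then the geodesic from $0$ to $\infty$. Write $B=\begin{bmatrix} a&b\\ c&d\end{bmatrix}$. A direct computation gives
\[
\tr[A,B]=2+bc\,(\lambda-\lambda^{-1})^2 .
\]
The axis of $B$ crosses the axis of $A$ exactly when the two fixed points of $B$ on $\partial\mathbb H^2$ separate $0$ and $\infty$. These fixed points are the roots of $c\xi^2+(d-a)\xi-b=0$, so they have opposite signs exactly when $-b/c<0$, that is, when $bc>0$. (The case $c=0$ is excluded, since then $B$ would fix $\infty$ and the axes would share an endpoint rather than cross.) With $bc>0$ we obtain $\tr[A,B]>2$.

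This sign is wrong for (1), so we must check the formula. We have $ad-bc=1$, and
\[
\tr(ABA^{-1}B^{-1})=ad+da-bc\lambda^2-cb\lambda^{-2}=2ad-bc(\lambda^2+\lambda^{-2})=2+2bc-bc(\lambda^2+\lambda^{-2})=2-bc(\lambda-\lambda^{-1})^2 .
\]
Hence crossing axes, i.e.\ $bc>0$, gives $\tr[A,B]<2$, as required.

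\emph{(1) implies (2).} Suppose $\kappa<2$.

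First, neither $A$ nor $B$ can be $\pm\mathrm I$, since then $[A,B]=\mathrm I$ and $\tr[A,B]=2$.

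Next, suppose one of them, say $A$, is parabolic. Conjugate so that $A=\pm\begin{bmatrix}1&1\\0&1\end{bmatrix}$. A direct computation gives $\tr[A,B]=2+c^2\geqslant 2$, a contradiction.

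Now suppose $A$ is elliptic. Conjugate so that $A$ is a rotation by angle $\theta$. Then $\tr[A,B]=2+\sin^2\theta\,(\text{sum of squares})\geqslant 2$; concretely, in the form $\kappa$ this is the classical fact that $\kappa\geqslant 2$ whenever $|x|<2$. The cleanest route is to view $\kappa(x,y,z)-2$ as a quadratic in $z$ with discriminant $(x^2-4)(y^2-4)$. For real $z$ to satisfy $\kappa<2$, this discriminant must be positive. Moreover, if $|x|<2$, the quadratic is always $\geqslant 0$. Hence $\kappa<2$ forces $|x|,|y|>2$, so both $A$ and $B$ are hyperbolic.

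Finally, with $A$ hyperbolic and diagonalized as above, the formula $\tr[A,B]=2-bc(\lambda-\lambda^{-1})^2<2$ forces $bc>0$. In particular $c\neq0$, and $B$ is hyperbolic with fixed points of opposite sign. So its axis crosses the axis of $A$.

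\emph{Main obstacle.} The only delicate step is the elliptic case in the converse direction. This is handled by the discriminant argument, or equivalently by conjugating $A$ into $\mathrm{SO}(2)$ and computing $\tr[A,B]-2=\sin^2\theta\,\big((a-d)^2+(b+c)^2\big)\geqslant 0$.
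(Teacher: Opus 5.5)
The paper gives no proof of this lemma. It imports it from Goldman's one-holed torus paper, so your write-up is an independent, self-contained route rather than a reconstruction of the paper's argument.

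Your route is a normal-form computation, and the formulas check out:
\begin{itemize}
\item With $A$ diagonal, $\tr[A,B]=2-bc(\lambda-\lambda^{-1})^2$. This is the same computation the paper performs for $ABA^{-1}B^{-1}$ in the proof of Lemma \ref{lem_part3_pants}.
\item The fixed points of $B$ are the roots of $c\xi^2+(d-a)\xi-b=0$, with product $-b/c$.
\item A parabolic $A=\begin{bmatrix}1&t\\0&1\end{bmatrix}$ gives $\tr[A,B]=2+t^2c^2$.
\item A rotation $A$ gives $\tr[A,B]-2=\sin^2\theta\,\big((a-d)^2+(b+c)^2\big)$.
\end{itemize}
This gives a proof using nothing beyond ``axes cross iff their endpoints interleave'', at the cost of a few lines the paper saves by citing. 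You can shorten it further. Once $A$ is known to be hyperbolic, $bc>0$ gives $(a+d)^2-4=(a-d)^2+4bc>0$. So $B$ is automatically hyperbolic, and no symmetry argument is needed.

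One step is false and must be removed: the ``cleanest route'' through the discriminant.
\begin{itemize}
\item The discriminant $(x^2-4)(y^2-4)$ is also positive when $|x|,|y|<2$. In that case $\kappa-2$ is negative between its two real roots.
\item For instance, $\kappa(0,0,z)-2=z^2-4<0$ for $|z|<2$. These points lie in the region $\Gamma$ of Lemma \ref{lem_plpchar}; they are characters of $\mathrm{SU}(2)$-pairs, not of real pairs.
\item So ``$\kappa\geqslant 2$ whenever $|x|<2$'' is not a property of the polynomial. It holds only on characters of pairs in $\SL$, and proving that is exactly what your $\mathrm{SO}(2)$ computation does.
\item The discriminant only handles the subcase $|x|<2\leqslant|y|$. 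The two arguments are therefore not ``equivalent''; rest the elliptic case on the $\mathrm{SO}(2)$ computation alone.
\end{itemize}

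Some presentation fixes:
\begin{itemize}
\item State the correct sign of $\tr[A,B]-2$ from the start instead of the false start with the wrong sign.
\item Use $\begin{bmatrix}1&t\\0&1\end{bmatrix}$ as the parabolic normal form, since not every parabolic in $\SL$ is conjugate to $\pm\begin{bmatrix}1&1\\0&1\end{bmatrix}$.
\item Drop the opening sentence about the March\'e--Wolff strategy, which has nothing to do with this lemma.
\end{itemize}
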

    \begin{lemma}\cite[Lemma 6.8]{Marche-Wolff}\label{BAn}
    Let $A,B \in P\SL$. Suppose that $A$ is elliptic of infinite order. Then there exists $n\in \mathbb{Z}$ such that $BA^n$ is elliptic, not of order 2.
    \end{lemma}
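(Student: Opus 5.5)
The plan is to conjugate $A$ into the rotation subgroup and write the trace of $BA^n$ as a sinusoid in the rotation angle. Density of the orbit $\{A^n\}$ then lets us choose $n$ so that this trace lands in the open set $(0,2)$, which gives both conditions at once.

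\emph{Reduction.} Both the hypothesis and the conclusion are invariant under simultaneous conjugation of $A$ and $B$ in $\psl$. Since $A$ is elliptic, we may therefore assume that $A$ is the image of
$$R_\varphi=\begin{bmatrix}\cos\varphi & \sin\varphi\\ -\sin\varphi & \cos\varphi\end{bmatrix}$$
for some $\varphi\in\mathbb{R}$. Then $A^n$ is the image of $R_{n\varphi}$. Since $R_\psi$ represents $\pm\mathrm I$ exactly when $\psi\in\pi\mathbb{Z}$, the element $A$ has infinite order exactly when $\varphi/\pi\notin\mathbb{Q}$. By Weyl/Kronecker, the set $\{n\varphi \bmod \pi : n\in\mathbb{Z}\}$ is then dense in $\mathbb{R}/\pi\mathbb{Z}$.

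\emph{Trace computation.} Fix a lift $B=\begin{bmatrix}a&b\\c&d\end{bmatrix}\in\SL$. A direct computation gives
$$\tr(BR_\psi)=(a+d)\cos\psi+(c-b)\sin\psi=r\cos(\psi-\psi_0),$$
where $r=\sqrt{(a+d)^2+(c-b)^2}$ and $\psi_0$ is a suitable phase. We check that $r>0$. If $r=0$, then $d=-a$ and $c=b$, so $\det B=-a^2-b^2\leqslant 0$, contradicting $\det B=1$.

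The function $\psi\mapsto r\cos(\psi-\psi_0)$ is continuous and $\pi$-antiperiodic, so its absolute value is well defined on $\mathbb{R}/\pi\mathbb{Z}$. This matches the fact that the trace in $\psl$ is only defined up to sign. The image of this function is $[-r,r]$.

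\emph{Choosing $n$.} The set $J=\{\psi : 0<r\cos(\psi-\psi_0)<\min(r,2)\}$ is a nonempty open subset of $\mathbb{R}$, and it is invariant under translation by $2\pi$. Its image in $\mathbb{R}/\pi\mathbb{Z}$ is therefore a nonempty open set. By density there is an $n$ with $n\varphi$ congruent modulo $\pi$ to a point of $J$.

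For this $n$ we have $0<|\tr(BA^n)|<2$. The bound $|\tr|<2$ means $BA^n$ is elliptic; in particular it is not $\pm\mathrm I$, whose trace is $\pm 2$. An element of $\psl$ has order $2$ exactly when it is elliptic with trace $0$. Since $\tr(BA^n)\neq 0$, the element $BA^n$ is elliptic and not of order $2$.

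\emph{Main obstacle.} There is no serious obstacle. The only points needing care are verifying $r>0$ and checking that the sign ambiguity of the trace in $\psl$ does not affect the argument; both are handled above.
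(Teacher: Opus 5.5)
Your proof is correct and complete. The key steps all check out: the formula $\operatorname{Tr}(BR_\psi)=(a+d)\cos\psi+(c-b)\sin\psi$ with $r>0$, density of $\{n\varphi \bmod \pi\}$, and the fact that order-$2$ elements of $\psl$ are exactly the trace-zero elliptics together yield some $n$ with $0<|\operatorname{Tr}(BA^n)|<2$. The paper does not prove this lemma itself but quotes it from March\'e--Wolff, and your argument is the natural one: conjugate $A$ to an irrational rotation, then use density to place the sinusoidal trace in $(0,2)$.
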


\section{Outline of the proof of Theorem \ref{mainresult}}\label{sec_outline}
We begin by fixing the notation used throughout the article. Let $\Sigma = \Sigma_{g,p},\  p\geqslant 1$. 
Let $\mathcal{S}$ denote the set of non-peripheral simple closed curves in $\Sigma$ which can be considered as a subset of conjugacy classes in $\pi_1(\Sigma)$. Recall that $\mathcal{M}^s_n$ is the set of the $\psl$-conjugacy classes of non-elementary type-preserving representations $\rho$ with $s(\rho)=s$ and $e(\rho)=n$.  Let \[\mathcal{NH}^s_n=\big\{[\rho]\in\mathcal{M}^s_n\big|\text{ } \exists [\gamma]\in \mathcal{S} \text{ such that } |Tr(\rho(\gamma))|\leqslant 2\big\}\]
be the set of the classes of non-elementary representations that map at least one non-peripheral simple closed curve to a non-hyperbolic element.
Let us define $\mathcal{S}^{ns}$ as the subset of $\mathcal{S}$ consisting of non-separating simple closed curves. We define 
\[\mathcal{E}^s_n= \big\{[\rho]\in \mathcal{M}^s_n\big| \text{ }\exists [\gamma]\in \mathcal{S}^{ns} \text{ such that } \rho(\gamma) \text{ is elliptic}\big\},\] which is a $\operatorname{MCG}(\Sigma)$-invariant open subset of $\mathcal{M}^s_n$. We also consider the set $\mathcal{EL}^s_n$ defined as follows
\[\mathcal{EL}^s_n = \big\{ [\rho] \in \mathcal{M}^s_n\big| \ \exists[\gamma]\in \mathcal{S}^{ns} \text{ such that } \rho(\gamma) \text{ is elliptic of infinite order}\big\}.\]
\smallskip

Theorem \ref{mainresult} follows from Theorem \ref{ergE} and Proposition \ref{EfullNH} below.
    
    \begin{theorem}\label{ergE}
     Let $\Sigma=\Sigma_{g,p}$ with $g \geqslant 2$ and $p\geqslant1$. For $s \in \{\pm 1\}^p$ and $n\in \big\{\chi(\Sigma) + p_+(s) + 1,\dots, -\chi(\Sigma) - p_-(s)- 1\big\}$, the space $\mathcal{EL}^s_n$ is non-empty and connected, and the action of $\operatorname{MCG}(\Sigma)$ on $\mathcal{E}^s_n$ is ergodic.  
    \end{theorem}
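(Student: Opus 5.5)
We follow the strategy of Marché and Wolff for closed surfaces, adapting the ingredients to the relative setting. The argument has three parts: non-emptiness and connectedness of $\ELns$ (Proposition \ref{connect_el}); a local ergodicity statement near each class in $\ELns$ (Proposition \ref{U}); and a transfer from $\ELns$ to $\mathcal{E}^s_n$.

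\emph{Local ergodicity.} Fix $[\rho]\in\ELns$ and a non-separating curve $\gamma\in\mathcal{S}^{ns}$ with $\rho(\gamma)$ elliptic of infinite order.
\begin{enumerate}[(1)]
\item Extend $\gamma$ to a chain of non-separating curves $\gamma=\gamma_1,\gamma_2,\dots$ whose Dehn twists generate enough of $\operatorname{MCG}(\Sigma)$. At points where the rotation angles $\vartheta(\rho(\gamma_i))$ are irrational, the twist along $\gamma_i$ acts on the fibres of the angle coordinate as an irrational rotation of a Hamiltonian circle flow.
\item Using Lemma \ref{BAn}, replace a neighbouring curve $\delta$ by $T_\gamma^k\delta$, which is still non-separating, so that it is also sent to an elliptic of infinite order. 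Propagating this, we get an open neighbourhood $U_\rho\subset\ELns$ on which the functions $\tr\rho(\gamma_i)$, for a family of curves forming a ``pants-like'' coordinate system, are simultaneously elliptic.
\item The Goldman twist flows of these trace functions then generate a locally transitive action, so almost every orbit closure contains an open set. Hence any $\operatorname{MCG}(\Sigma)$-invariant measurable function is a.e. constant on $U_\rho$. This is Proposition \ref{U}.
\end{enumerate}

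\emph{Main obstacle.} The key difficulty is step (2) for punctured surfaces. The curves in the coordinate system must all be \emph{non-separating}, because a separating curve cutting off a genus-zero piece with punctures can have forced hyperbolic image. We would handle this with Proposition \ref{prop_additivity} together with Theorem \ref{ryuhp} applied to the complementary subsurfaces. The non-extremality of $n$ should guarantee that the relative Euler class of a one-holed torus piece can be taken to be $\pm1$, and hence, via Theorem \ref{thm_liftcommu}, that elliptic values occur.

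\emph{Connectedness.} Next we show $\ELns$ is non-empty and connected. We cut $\Sigma$ along a non-separating curve into a one-holed torus part and a remainder, with relative Euler classes distributed so that the torus commutator lies in $\Ell_{\pm1}$ or $\Par$. The path-lifting results (Proposition \ref{prop_plpliftcommu}, Lemma \ref{parplp}, Proposition \ref{gol4.6}) together with the range condition on $n$ let us deform any two elements of $\ELns$ into a common normal form, as in Theorem \ref{ryuhp}. The non-extremality $n\neq\chi(\Sigma)+p_+(s)$ and $n\neq-\chi(\Sigma)-p_-(s)$ is exactly what allows a non-separating curve to become elliptic; Proposition \ref{prop_prodimage} controls which $\Ell_k$ can occur.

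\emph{Transfer to $\mathcal{E}^s_n$.} Since $\ELns$ is connected and covered by the sets $U_\rho$, local a.e.-constancy propagates, and ergodicity holds on the invariant set $\operatorname{MCG}(\Sigma)\cdot\bigcup U_\rho\supseteq\ELns$. Finally, $\mathcal{E}^s_n\setminus\ELns$ has measure zero. It consists of classes for which every non-separating elliptic image has finite order, so $\tr\rho(\gamma)$ takes values in the countable set of traces of finite-order elliptics for each of countably many $\gamma$. This gives ergodicity on $\mathcal{E}^s_n$.
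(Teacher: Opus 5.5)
Your final transfer step (local a.e.\ constancy on a connected open set containing $\ELns$, plus $\mathcal{E}^s_n\setminus\ELns$ being null) matches the paper. However, both ingredients it rests on have genuine gaps, and these are exactly where the paper's new work lies.

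\textbf{Connectedness of $\ELns$.} Your sketch does not give a working mechanism.

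Every class in $\ELns$ lies in some $\ELns(a,b)$ with $i(a,b)=1$ and $\rho(a),\rho(b)$ non-commuting elliptics (Lemma \ref{BAn}). Lemma \ref{lem_torus} then makes $\rho([a,b])$ hyperbolic, and Theorem \ref{ryuhp} together with Proposition \ref{prop_holonomy} give $e(\rho|_{\pi_1(T)})=0$. So in the relevant configuration the torus commutator is not in $\Ell_{\pm1}$ or $\Par$, and all of $n$ sits on $\Sigma\setminus T$. That is also where the range of $n$ comes from, via Theorem \ref{ryuhp} applied to $\Sigma\setminus T$.

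Deforming to ``a common normal form as in Theorem \ref{ryuhp}'' does not help. Paths in $\mathrm{HP}^s_n(\Sigma)$ need not stay in $\ELns$, and different classes in $\ELns$ may have different elliptic curves.

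The paper proceeds in two steps:
\begin{enumerate}
\item It uses March\'e--Wolff's result that the overlap relation on $\mathcal{C}$ has a single class (Proposition \ref{unique}).
\item It proves each $\ELns(a,b)$ connected by joining the torus restrictions through non-commuting elliptic pairs, then lifting the resulting path of boundary holonomies in $\Hyp_n$ to $\Sigma\setminus T$.
\end{enumerate}
That lift is the strong path-lifting property of $\widetilde e_{s,n}$ on $HP^{s,0}_n(\Sigma\setminus T)'$ (Lemma \ref{plp_HP'}). Because intermediate decomposition curves may have elliptic or parabolic image, it needs connected fibres of the product map over intervals of $\univcover$ that mix hyperbolic, elliptic and parabolic blocks. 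The tools you list do not cover this: Proposition \ref{gol4.6} only treats $\Hyp_0\times\Hyp_0$, and Proposition \ref{prop_plpliftcommu} and Lemma \ref{parplp} only handle the torus and cusp pieces.

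\textbf{Local ergodicity.} If your ``pants-like'' family is a pants decomposition, its twist flows Poisson-commute and preserve all its trace functions. Their orbits therefore stay in $(3g-3+p)$-dimensional level sets and are not locally transitive. What you need is that the span $D_{[\rho]}$ of the $dF_\alpha$ with $|Tr(\rho(\alpha))|<2$ is all of $T^*_{[\rho]}\Mns$.

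Your Euler-class remedy does not work, for three reasons:
\begin{itemize}
\item At a fixed $[\rho]$ the Euler classes of the pieces are determined by $\rho$; they cannot be chosen.
\item A one-holed torus piece with hyperbolic boundary and relative Euler class $\pm1$ is Fuchsian by Proposition \ref{prop_holonomy}, so all its curves are hyperbolic.
\item Non-extremality of $n$ plays no role in Proposition \ref{U}.
\end{itemize}

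The missing idea is that a curve need not itself be elliptic; only its differential must lie in $D_{[\rho]}$, and trace identities provide this.
\begin{itemize}
\item For curves meeting $\gamma$ once, use the twists $T^k_\gamma\delta$ (Lemma \ref{int_1_drho}).
\item The genuinely new case is a separating $\delta$ disjoint from $\gamma$ that cuts off a punctured sphere $\Sigma_{0,p_2}$. This is handled by induction on $p_2$, using the trace identity for $\zeta=\zeta_1\zeta_2$ with $\zeta_2$ peripheral, $i(\zeta,\gamma)=1$ and $F_\zeta(\rho)\neq0$ (Lemma \ref{int0}).
\end{itemize}
Restricting to $\Sigma\setminus\gamma$ and using $\{f_\alpha,f_\gamma\}\neq0$ (Lemma \ref{Poisson}) then forces $D_{[\rho]}=T^*_{[\rho]}\Mns$.
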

    \begin{proposition}\label{EfullNH}
    Let $\Sigma = \Sigma_{g,p}$ with $g\geqslant 2$ and $p\geqslant 1$. For $s\in \{\pm 1\}^p$ and $n\in \big\{\chi(\Sigma) + p_+(s) + 1,\dots, -\chi(\Sigma) - p_-(s)- 1\big\}$, $\mathcal{E}^s_n$ has full measure in $\NHns$. 
     \end{proposition}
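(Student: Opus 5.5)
The plan is to show that $\NHns\setminus\mathcal{E}^s_n$ has measure zero. We split it into countably many pieces according to which non-peripheral simple closed curve is sent to a non-hyperbolic element and of what type, and show each piece is null.

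\textbf{Step 1 (trace $\pm 2$ and finite order are null).} Fix $[\gamma]\in\mathcal S$. The function $[\rho]\mapsto \tr\rho(\gamma)^2$ is real-analytic on the connected smooth manifold $\Mns$, which is connected by Theorem \ref{ryuhp}. It is non-constant, since the trace of a non-peripheral curve can be deformed within $\Mns$, for instance along bending or twist flows. So for each fixed value $c$ the level set $\{\tr\rho(\gamma)^2=c\}$ has measure zero.

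Apply this with $c=4$ to dispose of $\rho(\gamma)$ parabolic or trivial. Apply it with $c=4\cos^2(\pi q)$, $q\in\mathbb Q$, to dispose of $\rho(\gamma)$ elliptic of finite order. Since $\mathcal S$ is countable, up to a null set every $[\rho]\in\NHns$ sends some $\gamma\in\mathcal S$ to an elliptic element of infinite order. If $\gamma$ can be taken non-separating, then $[\rho]\in\mathcal{E}^s_n$ and we are done.

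\textbf{Step 2 (separating elliptic curves).} It remains to treat $[\rho]$ for which some separating $\gamma$ has $\rho(\gamma)$ elliptic of infinite order. Write $\Sigma\setminus\gamma=\Sigma_1\sqcup\Sigma_2$. Since $g\geqslant 2$, we may choose $\Sigma_1$ of genus $h\geqslant 1$.

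Inside $\Sigma_1$, choose a separating curve $\eta$ cutting off a one-holed torus $T$ with $\pi_1(T)=\langle a,b\rangle$, chosen so that $a,b$ are non-separating in $\Sigma$.

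\emph{Case A: $\rho(\eta)$ is non-hyperbolic.} By Step 1 we may assume $\rho(\eta)$ is elliptic of infinite order. Consider the one-holed torus with elliptic boundary $[\rho(a),\rho(b)]$. By Lemma \ref{lem_torus} and Goldman's analysis of the one-holed torus \cite{goldman_torus}, for almost every such pair some simple curve of $T$ is elliptic. Failing that, I would apply Lemma \ref{BAn} with $A=\rho(\eta)$ along the Dehn twist orbit of a non-separating curve meeting $\eta$ once in a suitable pants decomposition. Every simple curve of $T$ other than $\eta$ is non-separating in $\Sigma$, so in either way we land in $\mathcal E^s_n$.

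\emph{Case B: $\rho(\eta)$ is hyperbolic.} Then $\rho|_{\pi_1(T)}\in\HP(T)$, and by Proposition \ref{prop_additivity} the relative Euler classes of the pieces add up. Use the lifted elements in $\univcover$ together with the index bookkeeping of Propositions \ref{prop_evimage} and \ref{prop_prodimage}. The lift of the elliptic $\rho(\gamma)$ lies in some $\Ell_k$, and this should force the commutator $[\widetilde{\rho(a)},\widetilde{\rho(b)}]$ of some one-holed torus in $\Sigma_1$ to lie outside $\Hyp_0$. Concretely, consider the product $\widetilde{\rho(\gamma)}=\prod[\cdot,\cdot]\prod\widetilde{\rho(c_i)}$ restricted to $\Sigma_1$. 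If all commutators and peripheral pieces lay in $\overline{\Hyp_0}$ with pairwise non-crossing configurations, Proposition \ref{prop_prodimage}(1)--(5) would confine the product to indices incompatible with $\Ell_k$. Hence some pair $\rho(a'),\rho(b')$ of non-separating curves has $\tr[\rho(a'),\rho(b')]<2$, or some commutator lies in $\Ell_{\pm1}$. The first alternative gives crossing hyperbolic axes by Lemma \ref{lem_torus}; taking products $\rho(a')\rho(b')^m$, which are represented by non-separating simple curves in that torus, then yields an elliptic element for suitable $m$ outside a null set. The second alternative reduces to Case A.

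\textbf{Main obstacle.} The hardest step is Case B when the genus-$\geqslant1$ side decomposes entirely into hyperbolic pieces. There the elliptic $\rho(\gamma)$ must be traced back to a non-separating elliptic curve purely through the $\univcover$-index bookkeeping. I would try first to induct on the genus $h$ of $\Sigma_1$, peeling off one-holed tori one at a time and at each stage using Proposition \ref{prop_prodimage} to locate which factor carries the elliptic index.

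If $\Sigma_1$ happens to be the genus-$0$ side, I would switch to the other side, which has genus $\geqslant1$. The non-extremality of $n$ would enter only to guarantee, via the generalized Milnor--Wood inequality for the pieces, that the relevant $\Mns$ is not Teichm\"uller-like. That ensures the deformations used in Step 1 exist, so the trace functions are non-constant.
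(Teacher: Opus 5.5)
\paragraph{Verdict.} The proposal has a genuine gap in Step 2; Step 1 is fine.

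\paragraph{Case A.} Step 1 matches the paper's reduction to the full-measure set $X^s_n$. The first claim of Case A contradicts the lemma you cite. If $\rho(\eta)$ is elliptic, then $\tr[\rho(a'),\rho(b')]<2$ for every basis $(a',b')$ of $\pi_1(T)$, because that commutator is conjugate to $[a,b]^{\pm1}$. So by Lemma \ref{lem_torus}, \emph{every} simple curve of $T$ is hyperbolic. The fallback is impossible too: no closed curve meets the separating curve $\eta$ exactly once. Case A is rescued only by March\'e--Wolff's argument mixing curves from both sides of $\eta$ (Lemma \ref{lem_MW_posgenus}(1)), which applies because $\eta$ cuts $\Sigma$ into two pieces of positive genus.

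\paragraph{Case B.} This is where the real gap lies. The step ``$\tr[\rho(a'),\rho(b')]<2$, so the axes cross, so $\rho(a')\rho(b')^m$ is elliptic for suitable $m$'' is false. A Fuchsian one-holed torus (relative Euler class $\pm1$, $\tr[A,B]<-2$) has crossing axes and sends every simple closed curve to a hyperbolic element. Lemma \ref{BAn} needs an elliptic element of infinite order, and the only one available to you is $\rho(\gamma)$. The index bookkeeping does not help either: products of two elements of $\Hyp_0$ do land in $\Ell_{\pm1}$. So an elliptic $\rho(\gamma)$ is compatible with every piece of $\Sigma_1$ being hyperbolic, even Fuchsian.

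\paragraph{Where non-extremality must enter.} You place it in the wrong step. Lemma \ref{Nfull} holds for every $(n,s)$, so Step 1 does not need it. It must be used in Step 2, because the statement is false at the extremal value. For example, take $\Sigma_{2,2}$, $s=(-1,-1)$, $n=2=-\chi(\Sigma)-p_-(s)$. Choose $\rho(c_1),\rho(c_2)$ negative parabolic with $\ev(\rho(c_1),\rho(c_2))\in\Ell_{-1}$. Choose two Fuchsian tori whose lifted commutators lie in $\Hyp_1$ with product $z^2\ev(\rho(c_1),\rho(c_2))^{-1}\in\Ell_3$. This gives a non-empty open set of classes with $\rho(c_1c_2)$ elliptic, so $\NHns$ has positive measure. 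Yet $\mathcal{E}^s_n$ is empty: it is open, and by Proposition \ref{connect_el} and Lemma \ref{Nfull} its full-measure subset $\mathcal{EL}^s_n$ is empty. This example is exactly the all-Fuchsian situation your Case B cannot distinguish from the non-extremal one.

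\paragraph{What the paper does instead.}
\begin{enumerate}
\item Lemma \ref{lem_MW_posgenus} disposes of the case where both sides have positive genus, and of hyperbolic curves in the genus-$0$ side.
\item It then reduces to $\gamma=c_1c_2$ with $c_1,c_2$ negative parabolic, using Lemma \ref{parplp} and $\pgl$-symmetry.
\item It takes a positive-genus decomposition adapted to $\gamma$. Combining $e\geqslant-1$ on the four-holed sphere around $\gamma$ with additivity and $n\leqslant-\chi(\Sigma)-p_-(s)-1$ forces a non-maximal piece, namely one of conditions (a)--(d) of Lemma \ref{lem_part3_pants}.
\item Non-maximality pins a hyperbolic axis to a definite side, giving axes that turn positively around the fixed point of $\rho(\gamma)$ and escape to infinity.
\item Lemma \ref{lem_ellnonsep} rotates these axes by powers of $\rho(\gamma)$ and composes with the parabolic $\rho(c_1)$ via reflections, producing an elliptic non-separating curve. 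Case (d) uses Lemma \ref{lem_fourpuncsphere} instead.
\end{enumerate}
None of this appears in your proposal.
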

    
    The proof of Proposition \ref{EfullNH} is given in Section \ref{sec_EfullNH}.

    In order to prove Theorem \ref{ergE}, we define the set $\mathcal{U}^s_n$ as follows. 
    Recall from Section \ref{sec_preliminary} that $c_1,\dots,c_p$ denote the preferred peripheral elements of
$\pi_1(\Sigma)$.
Let $\overline{\mathcal{M}}_{s,n}$ consist of conjugacy classes of non-elementary representations ${\phi}:\pi_1(\Sigma)\rightarrow \SL$ such that ${\phi}(c_i)$ is in the conjugacy class of $\begin{bmatrix}
        1 & s_i \\
        0 & 1
\end{bmatrix}$ for each $i\in \{1,\dots,p-1\}$, and ${\phi}(c_p)$ is in the conjugacy class of $(-1)^n\begin{bmatrix}
        1 & s_p\\
        0 & 1
\end{bmatrix}$. For every representation ${\phi}: \pi_1(\Sigma)\rightarrow \SL$, we denote by $\pi({\phi})$ the induced representation into $\psl$. For $[\rho]\in \mathcal{M}^s_n$, choose $[\overline{\rho}]\in \overline{\mathcal{M}}_{s,n}$ such that $[\pi(\overline{\rho})]=[\rho]$.
Then there exist neighbourhoods $W^{[\rho]}_{s,n} \subset \mathcal{M}^s_n$ of $[\rho]$ and $\overline{W}^{[\overline{\rho}]}_{s,n}\subset \overline{\mathcal{M}}_{s,n}$ of $[\overline{\rho}]$ with a diffeomorphism $\Pi:\overline{W}^{[\overline{\rho}]}_{s,n}\rightarrow  W^{[\rho]}_{s,n}$ defined by $\Pi([{\phi}])=[\pi({\phi})]$. 
     For every $\gamma \in \pi_1(\Sigma)$, define $F_{\gamma}:W^{[\rho]}_{s,n}\rightarrow \mathbb{R}$ by $F_{\gamma}([\phi]) = Tr\big(\Pi^{-1}([\phi])(\gamma)\big)$
     . Finally, recall that for $[\rho]\in \mathcal{M}^s_n$, the cotangent space $T^*_{[\rho]}\mathcal{M}^s_n$ has dimension $6g-6 +2p$. Let $\mathcal{U}^s_n$ be
 

    \[
      \mathcal{U}^s_n
    =
    \left\{
    [\rho]\in \mathcal{E}^s_n \;\middle|\;
    \exists\big ([\gamma_i]\big)_{i=1}^{6g-6+2p}\ \text{simple}:
    \ \langle dF_{\gamma_i}\rangle = T^*_{[\rho]}\mathcal{M}^s_n,\ 
    |\operatorname{Tr}(\rho(\gamma_i))|<2
    \right\}.
    \]

    To prove Theorem \ref{ergE}, we need the following Proposition \ref{connect_el}, Proposition \ref{U} and Lemma \ref{Nfull}.

\begin{proposition}\label{connect_el}
   Let $\Sigma=\Sigma_{g,p}$ with $g \geqslant 2$ and $p\geqslant 1$.  Let $n\in \mathbb{Z}$ and $s\in \{\pm 1\}^p.$ Then the space $\mathcal{EL}^s_n$ is non-empty if and only if the pair $(n,s)$ satisfies the following inequality: 
    $$\chi(\Sigma) + p_+(s)+1\leqslant n\leqslant -\chi(\Sigma) - p_-(s)-1.$$
    Moreover, each non-empty $\mathcal{EL}^s_n$ above is connected. 
\end{proposition}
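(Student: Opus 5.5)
We plan to cut $\Sigma$ along a non-separating simple closed curve $\gamma$, say $\gamma=a_1$ after applying a mapping class. Let $\Sigma'=\Sigma\setminus\gamma$, a surface of genus $g-1$ with $p+2$ boundary components. A representation $\rho$ with $\rho(\gamma)$ elliptic is then described by two pieces of data: the restriction of $\rho$ to the one-holed torus $T$ containing $a_1,b_1$ (boundary $[a_1,b_1]$), and the restriction to the complementary subsurface $\Sigma_{g-1,p+1}$. The relative Euler class is not directly additive when the gluing curve is elliptic, so we work with the lifted product in $\univcover$. Write
\[
z^n=[\widetilde{\rho(a_1)},\widetilde{\rho(b_1)}]\cdot X,
\qquad X=\prod_{j\ge2}[\widetilde{\rho(a_j)},\widetilde{\rho(b_j)}]\,\widetilde{\rho(c_1)}\cdots\widetilde{\rho(c_p)} .
\]
Since $\rho(a_1)$ is elliptic, $\rho(a_1)$ and $\rho(b_1)\rho(a_1)^{-1}\rho(b_1)^{-1}$ are conjugate elliptics. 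Hence the commutator $\widetilde C=[\widetilde{\rho(a_1)},\widetilde{\rho(b_1)}]$ is a product of two elliptic elements in $\Ell_{1}\times\Ell_{-1}$ (up to lift conventions). Its possible positions in $\univcover$ are constrained by Theorem \ref{thm_liftcommu} and Proposition \ref{prop_prodimage}(5).

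\textbf{Necessity.} Suppose $[\rho]\in\ELns$. We show that extremal $n$ is impossible. If $n=-\chi(\Sigma)-p_-(s)$, then $X$ is a product over a surface of genus $g-1$ with $p$ parabolic punctures. Using the additive bound behind Theorem \ref{ryuhp}, applied to the element $X\in\univcover$ rather than to a surface with a boundary condition, we get $X\in\overline{\Hyp_k}\cup\bigcup\Ell$ with $k\le -\chi(\Sigma_{g-1,p+1})-p_-(s)$. Combined with $\widetilde C\in\mathcal I$, equality forces $\widetilde C\in\Hyp_1\cup\Par_1^-$ or similar extremal pieces. These correspond to $\rho(a_1),\rho(b_1)$ hyperbolic with crossing axes, by Lemma \ref{lem_torus} and Goldman's description. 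This contradicts $\rho(a_1)$ being elliptic.

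We will make this precise by showing the following. If $\rho(a_1)$ is elliptic, then $\widetilde C\in \Ell_{-1}\cup\Ell_1\cup\Par_0\cup\Hyp_0\cup\{\mathrm I\}$, i.e. $\widetilde C$ never lies in $\Hyp_{\pm1}\cup\Par^{+}_{-1}\cup\Par^-_1$; this follows from Proposition \ref{prop_prodimage}(5). The generalized Milnor--Wood count then gives the strict inequalities. The lower bound follows by the $\pgl$-symmetry of Proposition \ref{prop_pglpsl}.

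\textbf{Existence.} For each $n$ in the stated range, we construct $\rho$ explicitly. Choose $\widetilde C\in\Ell_{\pm1}$, or $\Hyp_0$, realized by a pair with $\rho(a_1)$ elliptic of irrational rotation angle. Choose a type-preserving representation of $\Sigma_{g-1,p+1}$ whose boundary holonomy is $\widetilde C^{-1}z^n$. Such a representation exists by Theorem \ref{ryuhp}, extended to one elliptic boundary via Proposition \ref{prop_prodimage} and Lemma \ref{parplp}. Gluing gives $\rho$; a small perturbation makes it non-elementary.

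\textbf{Connectedness.} We follow the strategy of Goldman/Marché--Wolff. Fix the curve $a_1$ and consider
\[
\mathcal{E}_{a_1}=\{[\rho]: \rho(a_1)\ \text{elliptic of infinite order}\}.
\]
Step 1: show $\mathcal E_{a_1}\cap\Mns$ is connected. The relevant map sends $(\rho(a_1),\rho(b_1))$ to the commutator, and this map has the strong path-lifting property by Proposition \ref{prop_plpliftcommu}. The rest of the surface gives a space fibering over the possible values of $X$; that space is connected by Theorem \ref{ryuhp} together with Lemmas \ref{lem_plp} and \ref{parplp} and Proposition \ref{gol4.6}. We then use the path-lifting property along the middle factor, and Lemma \ref{lem_conjugacypath_const} to fix conjugations, to connect any two points. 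The range of $\widetilde C$ that is compatible with $\rho(a_1)$ elliptic is itself connected within the image, which is a union of adjacent pieces of Figure \ref{fig: evimage}. The irrationality condition removes only a countable union of codimension-one sets, so connectedness survives.

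Step 2: $\ELns=\bigcup_{\gamma}\mathrm{MCG}\cdot\mathcal E_{a_1}$, since all non-separating curves are $\operatorname{MCG}$-equivalent. To connect $\mathcal E_{a_1}$ with $\phi\cdot\mathcal E_{a_1}$, it suffices to treat the Humphries/Lickorish generators $\phi$. For each such generator $\phi$, we exhibit a representation in $\Mns$ with both $\rho(a_1)$ and $\rho(\phi(a_1))$ elliptic of infinite order. For Dehn twists disjoint from $a_1$ this is immediate. For a twist along a curve $\beta$ meeting $a_1$ once, we use Lemma \ref{BAn}: starting from $\rho$ with $\rho(a_1)$ elliptic of infinite order, some $\rho(b_1a_1^m)$ is elliptic, and we perturb to infinite order. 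Since $b_1a_1^m$ is again a non-separating simple closed curve, this chains the pieces together.

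\textbf{Main obstacle.} We expect the hardest step to be Step 1: proving connectedness of the fibered space when the boundary curve of the one-holed torus is allowed to pass through several types ($\Ell$, $\Par_0$, $\Hyp_0$). This needs careful verification that the fibers over each type are connected and that the submersion hypotheses of Lemma \ref{lem_plp} hold away from a small bad set. The most likely failure points are the reducible or commuting loci, which must be shown to have codimension at least two.
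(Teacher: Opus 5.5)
Your Step~1 asserts something false, and Step~2 depends on it. The set $\mathcal{E}_{a_1}=\{[\rho]\in\Mns : \rho(a_1)\text{ elliptic of infinite order}\}$ is \emph{not} connected.
\begin{itemize}
\item The function $f_{a_1}([\rho])=\tr(\rho(a_1))^2$ is continuous on $\Mns$.
\item On $\mathcal{E}_{a_1}$ it takes values in $\{2+2\cos(2\pi x) : x\in\mathbb{R}\setminus\mathbb{Q}\}$, a set containing no interval. So every connected component of $\mathcal{E}_{a_1}$ lies in a single level set of $f_{a_1}$.
\item But $f_{a_1}$ is not constant on $\mathcal{E}_{a_1}$; for instance $df_{a_1}\neq 0$ there by Lemma \ref{poisson}.
\end{itemize}
So removing this countable union of codimension-one sets does destroy connectedness. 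The translates $\phi\cdot\mathcal{E}_{a_1}$ you want to chain together in Step~2 are therefore disconnected themselves.

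The missing idea is the paper's device. It works with $\ELns(a,b)$ for $i(a,b)=1$, where $\rho(a),\rho(b)$ are non-commuting elliptics and only \emph{one} of them must have infinite order. Along a path, one of them can keep a fixed irrational angle while the other varies (March\'e--Wolff, Lemma~6.6); this is what makes each piece connected. The pieces are then linked by Proposition \ref{unique}. Requiring both curves to be elliptic also gives, by Lemma \ref{lem_torus}, that $\rho([a,b])$ is hyperbolic. Hence the torus has Euler class $0$, and $\Sigma\setminus T$ has hyperbolic boundary with fixed relative Euler class $n$.

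Even with the right pieces, a second gap remains on the complementary side. You need the strong path-lifting property of the boundary map $\widetilde e_{s,n}$ on $\Sigma\setminus T$, in particular path-connected fibers. This does not follow from Theorem \ref{ryuhp}, since a connected total space does not give connected fibers. Nor does it follow from Lemma \ref{parplp} or Proposition \ref{gol4.6}. It is Lemma \ref{plp_HP'}, and its proof is the technical core of the argument: images of products of intervals, fiber connectedness within blocks, and gluing across edges and vertices. Your proposal flags this as the main obstacle but does not supply it.

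Your necessity argument also has a faulty step.
\begin{itemize}
\item Proposition \ref{prop_prodimage}(5) only constrains products that are elliptic, so it does not exclude $\Hyp_{\pm1}$.
\item You invoke a Milnor--Wood bound for an elliptic boundary, which is not available.
\end{itemize}
Neither is needed. If $\rho(a_1)$ is elliptic, Lemma \ref{lem_torus} gives $\tr[\rho(a_1),\rho(b_1)]\geqslant 2$. Trace $2$ forces $\rho(b_1)$ to preserve both complex eigenlines of $\rho(a_1)$, hence to commute with it. So the lifted commutator is $\mathrm{I}$ or lies in $\Hyp_0$, and is never elliptic. Propositions \ref{prop_additivity} and \ref{prop_holonomy}, together with Theorem \ref{ryuhp}, then give the strict bounds. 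The paper instead chooses $b$ with $\rho(b)$ elliptic via Lemma \ref{BAn}.
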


    \begin{proposition}\label{U}
        The space $\mathcal{U}^s_n$ is an open subspace of $\mathcal{M}^s_n$ containing $\mathcal{EL}^s_n$.
    \end{proposition}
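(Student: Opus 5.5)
We prove the two assertions of Proposition \ref{U} separately. The containment $\mathcal{EL}^s_n\subset\mathcal{U}^s_n$ is the substantive part; openness is essentially formal.

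\emph{Openness.} Fix $[\rho]\in\mathcal{U}^s_n$ with witnessing simple curves $\gamma_1,\dots,\gamma_{6g-6+2p}$. In the chart $\Pi:\overline{W}^{[\overline{\rho}]}_{s,n}\to W^{[\rho]}_{s,n}$ the functions $F_{\gamma_i}$ are smooth. Three conditions are each open in $[\rho']$:
\begin{itemize}
\item the linear independence of the differentials $dF_{\gamma_i}$, since a nonvanishing determinant stays nonvanishing nearby;
\item the strict inequalities $|\operatorname{Tr}(\rho'(\gamma_i))|<2$;
\item membership in $\mathcal{E}^s_n$, which is already known to be open.
\end{itemize}
Hence $[\rho]$ has a neighbourhood on which the same curves witness membership, so $\mathcal{U}^s_n$ is open. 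The only care needed is that the charts $\Pi$ glue consistently, so that $F_\gamma$ is well defined up to sign; this does not affect independence of differentials or $|\operatorname{Tr}|$.

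\emph{Containment.} Let $[\rho]\in\mathcal{EL}^s_n$, and let $\gamma$ be a non-separating simple closed curve with $\rho(\gamma)$ elliptic of infinite order. We follow the strategy of Marché--Wolff.

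First, for any simple curve $\delta$ meeting $\gamma$ once, the curves $\delta\gamma^k$ (images of $\delta$ under powers of the Dehn twist $T_\gamma$) are simple. Since $\rho(\gamma)$ is an irrational rotation, Lemma \ref{BAn} gives, for generic $k$, that $\rho(\delta\gamma^k)$ is elliptic, and in fact for infinitely many $k$. We exploit this with the Goldman twist-flow formalism. The Hamiltonian flow of $F_\gamma$ (or of an angle function $\theta_\gamma$) acts on the character variety by twisting along $\gamma$. Its differential pairings with other trace functions are given by Goldman's product formula
\[
\{F_\gamma,F_\delta\}=\sum_{x\in\gamma\cap\delta}\epsilon_x\bigl(\operatorname{Tr}(\rho(\gamma_x\delta_x))-\tfrac12F_\gamma F_\delta\bigr).
\]

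Second, we build a pants-type collection of curves: a maximal family of $3g-3+p$ disjoint non-peripheral simple curves together with dual curves, $6g-6+2p$ in total. Their trace functions give local coordinates (Fenchel--Nielsen-type) at generic points. We modify each curve $\alpha$ in this collection by twisting along $\gamma$, or along auxiliary elliptic non-separating curves produced from $\gamma$, to make every image elliptic. Each auxiliary curve $\gamma'$ of this kind meets $\gamma$ once, so Lemma \ref{BAn} makes $\rho(\gamma')$ elliptic, generically of infinite order; this lets us propagate ellipticity across the surface. By analyticity, the differentials $d(F_{T_\gamma^k\alpha})$ depend on $k$ through trigonometric polynomials in $k\theta$, where $\theta$ is the rotation angle of $\rho(\gamma)$, which is irrational. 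Equidistribution of $k\theta$ then shows that the conditions ``elliptic'' and ``spanning'' hold simultaneously for suitable $k$: a nonzero real-analytic determinant cannot vanish on a set of $k$ that is dense in angle.

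\emph{Main obstacle.} The hardest part is the separating curves and those bounding genus-zero subsurfaces with several punctures, which the Remark highlights. Non-separating twists cannot directly make a separating curve elliptic. Relative Euler class constraints (Proposition \ref{prop_additivity}, Proposition \ref{prop_prodimage}) may even force such curves to be hyperbolic. We therefore choose the spanning family to consist \emph{only} of non-separating simple curves, for example curves passing through the handles around each puncture. We then verify that their trace differentials span $T^*_{[\rho]}\mathcal{M}^s_n$, which has dimension $6g-6+2p$. To do this, we first check nondegeneracy at a single point of the connected set $\mathcal{EL}^s_n$, using Proposition \ref{connect_el}. We then propagate it using analyticity together with the twist argument, and pass to the appropriate $\operatorname{MCG}$-translates at each point.
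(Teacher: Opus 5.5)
Your openness argument is fine; the paper also treats that part as immediate. The containment $\mathcal{EL}^s_n\subset\mathcal{U}^s_n$, however, is not established.

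\textbf{The proposed mechanism does not produce an elliptic spanning family.} $T_\gamma$ fixes every curve disjoint from $\gamma$, so twisting along $\gamma$ cannot affect the pants curves disjoint from it. The ``auxiliary curves'' meant to handle those are never specified. For curves meeting $\gamma$ once, $\operatorname{Tr}\rho(T_\gamma^k\alpha)=R_\alpha\cos(k\theta-\phi_\alpha)$. When $R_\alpha$ is large, ellipticity confines $k\theta$ to short arcs near $\phi_\alpha\pm\pi/2$, and these arcs can be disjoint for different $\alpha$. So a common power $k$ need not exist. Independent powers for each curve remove this particular problem, but not the next one.

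\textbf{The determinant argument assumes what must be proved.} To find admissible twists with nonzero determinant, you must already know the determinant is not identically zero at the given $[\rho]$. Checking nondegeneracy at one point of the connected set $\mathcal{EL}^s_n$ and propagating by analyticity only gives nondegeneracy off a proper analytic subset. That is an open dense part of $\mathcal{EL}^s_n$, not every point. The same limitation applies to your claim that pants-plus-dual traces are coordinates only ``at generic points''. Passing to MCG-translates does not repair this: a translated family need not be elliptic at the same $[\rho]$, and nothing shows that some translate avoids the bad locus.

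The pointwise statement matters. The chain $\mathcal{EL}^s_n\subset\mathcal{U}^s_n\subset\overline{\mathcal{EL}^s_n}$ is what gives connectedness of $\mathcal{U}^s_n$ in the proof of Theorem \ref{ergE}. For the same reason, restricting to non-separating curves only sidesteps the separating-curve difficulty if you show their differentials span at every point, and you do not.

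\textbf{The missing idea.} Do not look for a fixed elliptic coordinate system. Instead prove, at each $[\rho]$, that $D_{[\rho]}$ equals $T^*_{[\rho]}\mathcal{M}^s_n$, where $D_{[\rho]}$ is the span of $dF_\alpha$ over all simple $\alpha$ with $|\operatorname{Tr}\rho(\alpha)|<2$. This has three steps.

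First, curves meeting $\gamma$ once. Your trigonometric dependence on $k\theta$ is the right tool, but applied modulo $D_{[\rho]}$ rather than to a determinant. For $i(\delta,\gamma)=1$, differentiate $F_{\delta\gamma^{k+1}}+F_{\delta\gamma^{k-1}}=F_\gamma F_{\delta\gamma^k}$. The classes of $dF_{\delta\gamma^k}$ in $T^*_{[\rho]}\mathcal{M}^s_n/D_{[\rho]}$ then have the form $A\cos(k\theta)+B\sin(k\theta)$. They vanish whenever $\rho(\delta\gamma^k)$ is elliptic, which happens for $k\theta$ dense in an open arc. Hence $A=B=0$; this is Lemma \ref{int_1_drho}.

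Second, curves disjoint from $\gamma$. These are put into $D_{[\rho]}$ through trace identities, without ever being elliptic. A separating $\delta$ cutting off a punctured sphere is handled by induction on the number of punctures. The induction uses the trace identity for $\zeta=\zeta_1\ast\zeta_2$ with $i(\zeta,\gamma)=1$ and $i(\zeta,\delta)=2$ (Lemma \ref{int0}). This is exactly how the paper overcomes the obstacle you flagged.

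Third, the symplectic step. Any $v$ annihilated by $D_{[\rho]}$ is tangent to the level set of $F_\gamma$. It is also killed by restriction to $\Sigma\setminus\gamma$, whose cotangent space is spanned by the $dF_\delta$ with $\delta$ disjoint from $\gamma$. Hence $v=\lambda X_\gamma$. Pairing with $df_\alpha$ for some $\alpha$ with $i(\alpha,\gamma)=1$ and $\{f_\alpha,f_\gamma\}\neq 0$ (Lemma \ref{Poisson}) forces $\lambda=0$.
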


   Let $\N:=\big\{[\rho]\in \mathcal{M}^s_n\big| \text{ } \forall\gamma\in \mathcal{S}, \rho(\gamma) \text{ is not } \pm \mathrm{I}, \text{parabolic, nor elliptic of finite order}\big\}.$
    \begin{lemma}\label{Nfull}
     $\N$ has full measure in $\mathcal{M}^s_n$. In particular $\mathcal{EL}^s_n$ has full measure in $\mathcal{E}^s_n$.
    \end{lemma}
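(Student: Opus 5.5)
We prove that $\N$ has full measure by showing that its complement is a countable union of measure-zero sets; the second assertion then follows from the first. The complement $\mathcal{M}^s_n\setminus\N$ is the union, over the countably many classes $[\gamma]\in\mathcal{S}$, of the sets
\[
Z_\gamma=\big\{[\rho]\in\mathcal{M}^s_n \;\big|\; \rho(\gamma)\ \text{is } \pm\mathrm{I},\ \text{parabolic, or elliptic of finite order}\big\}.
\]
In a local chart $W^{[\rho]}_{s,n}$, this condition says that $F_\gamma$ takes a value in the countable set
\[
\{\pm 2\}\cup\{2\cos(\pi q) \;|\; q\in\mathbb{Q}\}.
\]
So $Z_\gamma$ is locally contained in the union of countably many level sets $F_\gamma^{-1}(c)$. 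Since $\mathcal{M}^s_n$ is second countable, it is covered by countably many such charts. It therefore suffices to show that, for each fixed $\gamma\in\mathcal{S}$ and each constant $c$, the level set $F_\gamma^{-1}(c)$ has measure zero in $\mathcal{M}^s_n$.

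For this we use analyticity. The relative character variety restricted to the smooth locus $\mathcal{M}^s_n$ is a real analytic manifold, and $F_\gamma$ is a real analytic (indeed regular) function on it. The function $F_\gamma - c$ is analytic on the connected manifold $\mathcal{M}^s_n$; connectedness comes from Theorem \ref{ryuhp} together with the smooth-locus remark. An analytic function that is not identically zero on a connected manifold has a zero set of measure zero. So we only need to check that $F_\gamma$ is non-constant on $\mathcal{M}^s_n$.

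This non-constancy is the main step. We plan to prove it using the Goldman twist flow along $\gamma$ combined with a deformation. Concretely, $\gamma$ is non-peripheral, so there is another simple closed curve $\delta$ with $i(\gamma,\delta)\neq0$, and the Goldman bracket $\{F_\gamma,F_\delta\}$ is then generically nonzero. If $F_\gamma$ were constant, its Hamiltonian vector field would vanish and $\{F_\gamma,F_\delta\}=0$ everywhere. This should contradict Goldman's product formula at some representation of $\mathcal{M}^s_n$.

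Rather than evaluating brackets, we can use a more hands-on argument. It is enough to exhibit two points of the connected manifold $\mathcal{M}^s_n$ where $|F_\gamma|$ differs, for instance one point with $|F_\gamma|>2$ and one with a different value. We cut $\Sigma$ along $\gamma$ and use the additivity of Proposition \ref{prop_additivity}. For non-separating $\gamma$, we take $\gamma=a_1$ up to the mapping class group. Consider representations that keep $\rho(c_i)$ and the remaining generators fixed and vary $\rho(a_1)$ along a path in $\SL$ through the character map of Lemma \ref{lem_plpchar}. We adjust $\rho(b_1)$ so that $[\rho(a_1),\rho(b_1)]$ stays fixed; the path-lifting property of $\chi$ (Lemma \ref{lem_plpchar}), with $\kappa$ held constant, allows $\tr\rho(a_1)$ to vary. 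This keeps the relation, the sign, the Euler class and non-elementarity for small perturbations.

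For separating $\gamma$, the complement has two components with nonempty boundary. We perform the analogous move on the genus of one side, or on a pair of boundary parabolics using Lemma \ref{parplp}, so that the lifted product representing $\rho(\gamma)$ moves in its component $\overline{\Hyp_k}$ or $\Ell_k$ while everything else is held fixed; this changes the trace. The hardest subcase is when one side of $\gamma$ is a genus-zero piece bounded only by punctures. There, we use the strong path-lifting property of $\ev$ on $S_{s_1,s_2}$ from Lemma \ref{parplp} to vary the product of two non-commuting parabolics.

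Finally, the second statement follows from the first. By definition, $\mathcal{E}^s_n\cap\N\subset\mathcal{EL}^s_n$, because an elliptic image $\rho(\gamma)$ with $[\rho]\in\N$ cannot have finite order. Hence
\[
\mathcal{E}^s_n\setminus\mathcal{EL}^s_n\subset\mathcal{M}^s_n\setminus\N,
\]
which is null.
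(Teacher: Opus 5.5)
Your main line is the paper's own. The complement of $\mathcal{N}^s_n$ is covered by the countably many level sets $f_\gamma^{-1}(c)$, with $\gamma\in\mathcal{S}$ and $c\in 4\cos^2(\pi\mathbb{Q})$, of the real-analytic functions $f_\gamma=\operatorname{Tr}(\phi(\gamma))^2$. Each of these is null by \cite{Mityagin}, and $\mathcal{E}^s_n\setminus\mathcal{EL}^s_n\subset\mathcal{M}^s_n\setminus\mathcal{N}^s_n$. Work with $f_\gamma$, as the paper does, rather than $F_\gamma$: the latter is only defined chartwise on $W^{[\rho]}_{s,n}$ and up to sign.

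The paper takes for granted that these level sets are proper. Your attempt to prove non-constancy goes beyond the paper, but the separating case fails as written. If $\gamma$ separates $\Sigma$ into $\Sigma_1$ and $\Sigma_2$, then $\rho(\gamma)$ is determined by $\rho|_{\pi_1(\Sigma_1)}$ and also by $\rho|_{\pi_1(\Sigma_2)}$. So no deformation on one side can change $\operatorname{Tr}\rho(\gamma)$ ``while everything else is held fixed''. The relation forces the other side to move too, and it must reproduce exactly the same new $\rho(\gamma)$, not merely a conjugate.

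What is missing is a compatible deformation of both sides. Since $\gamma$ is non-peripheral, each side contains a handle or at least two punctures. At a point where the corresponding pair does not commute, the map to $\rho(\gamma)$ on each side is a submersion (Lemmas \ref{lem_evTsubm} and \ref{lem_evPsubm}). Hence a short path $C_t$ through $\rho(\gamma)$ with varying trace lifts on both sides via local sections. Signs and relative Euler class are preserved by continuity, and non-elementarity is an open condition.

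There are two smaller points. First, in the non-separating case, holding $\kappa$ fixed only keeps $\operatorname{Tr}[\rho(a_1),\rho(b_1)]$ constant, not the commutator itself. You must conjugate the lifted pair back using Lemma \ref{lem_conjugacypath_const}, as in the proof of Lemma \ref{Hypfiber}, so that the commutator stays fixed.

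Second, Theorem \ref{ryuhp} gives connectedness of $\mathcal{M}^s_n$ directly only for $n\neq 0$. There every type-preserving representation is non-elementary, because elementary ones are reducible and so have $e=0$ by Proposition \ref{prop_reducible}. For $n=0$ the reducible locus has been removed, so you must either prove connectedness or establish non-constancy on every connected component.
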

    \begin{proof}
    Let $[\rho] \in \mathcal{M}^s_n \setminus \N$. Then $\rho$ sends a simple closed curve $\gamma$ to either elliptic of finite order, parabolic, or the identity. Consider the map $f_\gamma: \mathcal{M}^s_n\rightarrow \mathbb{R}$ defined by $f_\gamma([\phi])= Tr(\phi(\gamma))^2$. Then $f_\gamma\big([\rho]\big)$ is contained in a countable set $4\cos^2(\mathbb{Q}\pi)$. This shows $\mathcal{M}^s_n \setminus  \N$ is a countable union of real-analytic subset of $\Mns$ which implies it has measure zero \cite{Mityagin}. Further, the complement of $\mathcal{EL}^s_n$ in $\mathcal{E}^s_n$ is a subset of $\mathcal{M}^s_n \setminus \N$ and is hence of measure zero which implies that $\mathcal{EL}^s_n$ has full measure in $\mathcal{E}^s_n$.
    \end{proof}
    
    The proof of Theorem \ref{ergE} below is adapted from that of \cite[Theorem 6.1]{Marche-Wolff}.
    {\begin{proof}[\textbf{Proof of Theorem \ref{ergE}}]Let $f:\mathcal{E}^s_n\to \mathbb{R}$ be a measurable function invariant under $\operatorname{MCG}(\Sigma)$ action. We show that $f$ is almost everywhere\ constant. By Proposition \ref{U},
     $\mathcal{EL}^s_n\subset \mathcal{U}^s_n\subset \mathcal{E}^s_n$, and by Lemma \ref{Nfull}, $\mathcal{EL}^s_n$ has full measure in $\mathcal{E}^s_n$. Therefore, the set $\mathcal{U}^s_n$ also has full measure in $\mathcal{E}^s_n$. Hence it suffices to prove that $f$ is almost everywhere\ constant on $\mathcal{U}^s_n$.
     
    We see $f$ is locally almost everywhere\ constant on $\mathcal{U}^s_n$, that is, for every $[\rho]\in \mathcal{U}^s_n$, there exists a neighbourhood $V_{[\rho]}$ such that $f$ is almost everywhere\ constant on $V_{[\rho]}$. It suffices to show $f\circ \Pi:\overline{W}^{[\overline{\rho}]}_{s,n}\rightarrow \mathbb{R}$ is locally almost everywhere constant. See \cite[Proposition~6.5] {Marche-Wolff} for the proof.  
    By Proposition \ref{connect_el}, $\mathcal{EL}^s_n$ is connected;  and since $\mathcal{EL}^s_n\subset \mathcal{U}^s_n$ has full measure in $\mathcal{U}^s_n$, $\mathcal{U}^s_n\subset \overline{\mathcal{EL}}^s_n$, and hence $\mathcal{U}^s_n$ is connected. 
    Define $\bar{f}:\mathcal{U}^s_n\rightarrow \mathbb{R}$ as follows: for every $[\rho] \in \mathcal{U}^s_n$, since $f$ is almost everywhere constant on $V_{[\rho]}$, we may define $\bar{f}\big([\rho]\big)$ as this constant. 
    Since $\bar{f}$ is locally constant and $\mathcal{U}^s_n$ is connected, $\bar{f}$ is constant; and since $f$ is equal to $\bar{f}$ almost everywhere, $f$ is almost everywhere constant on $\mathcal{U}^s_n$, as desired. 
    \end{proof}

\section{Non-emptiness criterion and connectedness of $\mathcal{EL}^s_n$}\label{sec_conn}

The main purpose of this section is to prove 
Proposition \ref{connect_el}. Let $\mathcal{S}$ be a set of non-peripheral simple closed curves in $\Sigma$, considered as a subset of the set of conjugacy classes of $\pi_1(\Sigma)$. Let $i:\mathcal{S}\times\mathcal{S} \rightarrow \mathbb{N}$ denote the minimal geometric intersection number, and let $\mathcal{C}$ denote the set of pairs $(a,b) \in \mathcal{S}\times\mathcal{S}$ with $i(a,b)=1$. Note that if $i(a,b)=1$, then $a$ and $b$ are non-separating, and a regular neighbourhood of $a\cup b$ is an embedded subsurface of $\Sigma$ homeomorphic to a one-holed torus.
\smallskip

For each $n\in \mathbb{Z}, s\in \{\pm 1\}^p$ and $(a,b)\in \mathcal{C}$, denote by $\mathcal{EL}^s_n(a,b)$ the set of $\psl$-conjugacy classes of representations $\rho$ of relative Euler class $n$ and sign $s$, such that $\rho(a)$ and $\rho(b)$ are elliptic, do not commute with each other, and at least one of them has infinite order. Notice that for each $[\rho]\in \ELns(a,b)$, $\rho\big([a,b]\big)$ is hyperbolic (See Lemma \ref{lem_torus}). 
\medskip


We first observe $\mathcal{EL}^s_n=\displaystyle\cup_{(a,b)\in \mathcal{C}}\mathcal{EL}^s_n(a,b)$. The reverse inclusion is by definition. Conversely, if $[\rho] \in \mathcal{EL}^s_n$, then $\rho$ sends some non-separating curve $a$ to an elliptic element of infinite order. Assume every curve $b$ with $(a,b)\in \mathcal{C}$ maps to an element that commutes with $\rho(a)$. Since the set of such simple closed curves $b$ along with $a$ generate $\pi_1(\Sigma)$, the peripheral elements map to elliptic elements, which contradicts that $\rho$ is type-preserving. Hence, there exists a curve $b$ satisfying $(a,b)\in \mathcal{C}$ such that $\rho(a)$ and $\rho(b)$ do not commute. Then by Lemma \ref{BAn}, there exists $k\in \mathbb{Z}$ such that $\rho \in \mathcal{EL}^s_n(a,ba^k)$. \\

 Define an equivalence relation $\sim$ on $\mathcal{C}$ generated by $(a,b)\sim(a',b')$ if $\mathcal{EL}^s_n(a,b)\cap \mathcal{EL}^s_n(a',b')\neq \emptyset$. \\

Since $\mathcal{EL}^s_n=\displaystyle\cup_{(a,b)\in \mathcal{C}}\mathcal{EL}^s_n(a,b)$, 
 Proposition \ref{connect_el} follows from Propositions \ref{unique} and \ref{connect_elab}.

 \begin{proposition}\cite[Proposition 6.7]{Marche-Wolff}\label{unique}
  The equivalence relation $\sim$ has a unique equivalence class in $\mathcal{C}$.   
 \end{proposition}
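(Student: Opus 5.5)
The statement is a purely combinatorial fact about the set $\mathcal{C}$ of pairs of simple closed curves meeting once, together with the elementary observation that $\mathcal{EL}^s_n(a,b)$ is invariant under certain moves. Since it is cited from \cite[Proposition 6.7]{Marche-Wolff}, the plan is to reproduce their argument and check that nothing in it depends on $\Sigma$ being closed.

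\textbf{Step 1: elementary equivalences.} Fix $(a,b)\in\mathcal{C}$ and $[\rho]\in\mathcal{EL}^s_n(a,b)$. Since $\rho(a)$ and $\rho(b)$ are non-commuting elliptics and at least one has infinite order, Lemma \ref{BAn} gives $k$ with $\rho(ba^k)$ elliptic and not of order $2$. Symmetrically there is $l$ with $\rho(ab^l)$ elliptic. After a further small perturbation inside the open set $\mathcal{EL}^s_n(a,b)$ we may assume the relevant elements have infinite order. Because $(a,ba^k)\in\mathcal{C}$, this yields $(a,b)\sim(a,ba^k)$, and similarly $(a,b)\sim(b,a)$. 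From these I want the following two elementary moves:
\begin{enumerate}[(i)]
\item $(a,b)\sim(a,ba^k)$ for \emph{every} $k$ (Dehn twist along $a$);
\item $(a,b)\sim(a,b')$ whenever $b'$ is disjoint from $b$ and $i(a,b')=1$.
\end{enumerate}
For (i), I would use that $\mathcal{EL}^s_n(a,b)$ is nonempty, open, and contains points where $\rho(a)$ is an irrational rotation. Rotating the angle of $\rho(a)$ then shows that the set of good $k$ is unbounded, and chaining gives all $k$. For (ii), I would construct $\rho$ with $\rho(a),\rho(b),\rho(b')$ all elliptic by prescribing $\rho$ on the one-holed torus through $a,b$, and on the one through $a,b'$, compatibly. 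This should be possible because $b\cup b'$ cut along $a$ leaves room.

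\textbf{Step 2: connectivity of the move graph.} Consider the graph on $\mathcal{C}$ generated by the moves (i), (ii) and the swap $(a,b)\mapsto(b,a)$. I would show it is connected using connectivity of the complex of non-separating curves (or of the curve complex restricted to $\mathcal{S}^{ns}$). This holds for genus $g\geqslant2$ with punctures as well, since punctures do not affect connectivity of the non-separating curve graph when $g\geqslant2$. Given two pairs $(a,b)$ and $(a',b')$, connect $a$ to $a'$ by a path of disjoint non-separating curves $a=a_0,a_1,\dots,a_m=a'$. At each step, choose $b_j$ meeting both $a_j$ and $a_{j+1}$ once; such a $b_j$ exists when $a_j\cup a_{j+1}$ is non-separating, and the path can be refined to ensure this. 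Then $(a_j,b_j)\sim(b_j,a_j)\sim(b_j,a_{j+1})\sim(a_{j+1},b_j)$, where the middle relation is move (ii). Finally, pairs sharing the first curve $a$ are related by (i) and (ii), since any two curves meeting $a$ once are related by twists about $a$ and disjoint replacements. This uses connectivity of the arc complex in $\Sigma\setminus a$.

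\textbf{Main obstacle.} The hardest part is Step 1(ii), and making sure the realization is possible with fixed Euler class $n$ and sign $s$. Marché–Wolff handle the closed case, but here I must check that the non-emptiness range (which Proposition \ref{connect_el} will supply) permits elliptic values on the needed curves simultaneously. My first attempt would be to reduce to a genus-two subsurface containing $a,b,b'$, whose complement absorbs the peripheral data and the Euler class via Proposition \ref{prop_additivity}. On the genus-two piece I would use the explicit flexibility of the character map from Lemma \ref{lem_plpchar} to prescribe elliptic traces.
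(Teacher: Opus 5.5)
\paragraph{Assessment.} There is a genuine gap. Your Step~2 would be fine if you knew that any two pairs sharing a curve are equivalent, but Step~1 does not establish this.

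\paragraph{Move (ii) is unproved.} Both the link $(b_j,a_j)\sim(b_j,a_{j+1})$ and your ``same first curve'' step rest on move (ii), and you leave it as the unresolved ``main obstacle.'' It requires a single $[\rho]$ with the prescribed $(n,s)$ and with $\rho(a),\rho(b),\rho(b')$ simultaneously elliptic. Your genus-two/character-map sketch does not engage with the actual constraints.

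For example, take $n=\chi(\Sigma)+p_+(s)+1$, and let $N\supset a\cup b\cup b'$ be the two-holed torus. Suppose $\rho$ maps $\partial N$ to hyperbolic elements. Then the generalized Milnor--Wood inequality on $\Sigma\setminus N$, together with Proposition \ref{prop_additivity}, forces $e(\rho|_{\pi_1(N)})=-1$. By Proposition \ref{prop_holonomy}, both pants $N\setminus T$ and $N\setminus T'$ must then carry holonomy representations while $\rho(b)$ and $\rho(b')$ stay elliptic. Nothing in your plan shows this can be arranged.

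\paragraph{A slip in move (i).} An unbounded set of good $k$ does not generate $\mathbb{Z}$ under chaining. You need two consecutive values for one $\rho$. You can get them by taking $\rho(a)$ to be a rotation by a small irrational angle, so that $\rho(b)$ and $\rho(ba^{\pm1})$ are both elliptic. This is allowed because the torus restriction in the construction of Lemma \ref{ELnonempty} is arbitrary. You also need the equivariance $\phi\cdot\ELns(a,b)=\ELns(\phi(a),\phi(b))$ for $\phi\in\operatorname{MCG}(\Sigma)$, which makes $\sim$ invariant; you use this implicitly but never state it.

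\paragraph{The missing idea.} Apply Lemma \ref{BAn} to a \emph{third} curve rather than to $b$.
\begin{itemize}
\item Pick $[\rho]\in\ELns(a,b)\cap\N$. This exists because $\ELns(a,b)$ is non-empty (Lemma \ref{ELnonempty}) and has positive measure (proof of Lemma \ref{density}).
\item Then $\rho(a)$ is elliptic of infinite order. For every $c$ with $i(a,c)=1$, the elements $\rho(a)$ and $\rho(c)$ do not commute: $[a,c]$ bounds a one-holed torus, so it lies in $\mathcal{S}$, and hence $\rho([a,c])\neq\pm\mathrm I$.
\item Lemma \ref{BAn} gives $k$ with $\rho(ca^k)$ elliptic. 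So $[\rho]\in\ELns(a,T_a^{\pm k}c)$, and $(a,b)\sim(a,T_a^{\pm k}c)$.
\item Applying $T_a^{\mp k}$ and the corrected twist invariance from (i) gives $(a,b)\sim(a,c)$ for \emph{every} $c$ meeting $a$ once, whether or not $c$ is disjoint from $b$.
\end{itemize}
Hence pairs sharing a curve are equivalent. Combined with $(a,b)\sim(b,a)$, your chain $a_0,\dots,a_m$ with auxiliary curves $b_j$ finishes the proof; equivalently, one can use connectivity of the graph on $\mathcal{S}^{ns}$ whose edges are pairs meeting once. Move (ii) and the arc-complex argument become unnecessary.

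\paragraph{Relation to the paper.} The paper applies Lemma \ref{BAn} in exactly this way just before stating the proposition, and this mechanism presumably underlies the March\'e--Wolff proof it imports verbatim. It also explains why the transfer to punctured surfaces is painless. Only Lemma \ref{BAn}, non-emptiness of each $\ELns(a,b)$, and surface topology enter. No representation with three prescribed elliptic curves is ever needed.
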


\begin{proposition}\label{connect_elab}
  Let $\Sigma=\Sigma_{g,p}$ 
  with genus 
  $g \geqslant 2$ and $p\geqslant 1$ punctures. 
  Let $n\in \mathbb{Z},\ s\in \{\pm 1\}^p,$ and $(a,b)\in \mathcal{C}$. The space $\mathcal{EL}^s_n(a,b)$ is non-empty if and only if the pair $(n,s)$ satisfies the following inequality: 
    $$\chi(\Sigma) + p_+(s)+1\leqslant n\leqslant -\chi(\Sigma) - p_-(s)-1.$$
    Moreover, each non-empty $\mathcal{EL}^s_n(a,b)$ above is connected.   
\end{proposition}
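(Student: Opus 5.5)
Since $\mathcal{MCG}(\Sigma)$ acts transitively on $\mathcal{C}$ and preserves $n$ and $s$, it suffices to treat one fixed pair $(a,b)=(a_1,b_1)$ in the standard presentation. The plan is to cut $\Sigma$ along the separating curve $\delta$ representing $[a_1,b_1]$. This splits $\Sigma$ into a one-holed torus $T$ containing $a,b$ and a complementary surface $\Sigma'=\Sigma_{g-1,p+1}$.

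For $[\rho]\in\ELns(a,b)$, the element $\rho(\delta)$ is hyperbolic by Lemma \ref{lem_torus}. By Proposition \ref{prop_additivity},
\[
e(\rho)=e(\rho|_T)+e(\rho|_{\Sigma'}).
\]
By Theorem \ref{thm_liftcommu}, the lifted commutator of two non-commuting elliptics lies in $\Hyp_{\pm1}$. Hence $e(\rho|_T)=\pm1$: the torus contributes an Euler class one step short of its extremal value. Note that $\Hyp_0$ is excluded here, because $\mathrm{Tr}[A,B]<2$ for crossing axes forces a $\Hyp_{\pm1}$-lift. I would double-check this point with the explicit description in Theorem \ref{thm_liftcommu} together with Lemma \ref{lem_offdiag_Ell}. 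The restriction $\rho|_{\Sigma'}$ then lies in $\HP^{(s,0)}_{n\mp1}(\Sigma')$, where the new boundary $\delta$ carries sign $0$.

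Apply Theorem \ref{ryuhp} to $\Sigma'$, using $\chi(\Sigma')=\chi(\Sigma)+1$. The condition $\chi(\Sigma)+1+p_+(s)\le n\mp1\le-\chi(\Sigma)-1-p_-(s)$ combined over both signs gives exactly
\[
\chi(\Sigma)+p_+(s)+1\le n\le -\chi(\Sigma)-p_-(s)-1 .
\]
The containment $\subseteq$ is immediate, which proves necessity. For sufficiency, pick $n_T=\pm1$ so that $n-n_T$ lies in the admissible range for $\Sigma'$ (both are possible in the interior, and at least one always is). Then:
\begin{enumerate}[(1)]
\item take any $\rho'\in\HP^{(s,0)}_{n-n_T}(\Sigma')$;
\item realize $\widetilde{\rho'(\delta)}^{-1}$, shifted appropriately by $z^{n_T}$, as a lifted commutator of two non-commuting elliptics of infinite order, using surjectivity onto $\Hyp_{\pm1}$ in Theorem \ref{thm_liftcommu} and then conjugating to match;
\item glue the two pieces.
\end{enumerate}
Non-elementarity of the result should follow after a small generic perturbation.

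For connectedness, I would consider the fibration-like map $\ELns(a,b)\to\Hyp$ (modulo conjugation) recording $\rho(\delta)$, or better, work upstairs in representation space. The space is the union over $n_T\in\{\pm1\}$ of pieces $X_{n_T}$, where
\[
X_{n_T}=\{(A,B,\rho')\ :\ \widetilde R(A,B)=\widetilde{\rho'(\delta)}^{-1}z^{n_T}\},
\]
with $A,B$ elliptic and not both of finite order. For each $n_T$, the piece $\HP^{(s,0)}_{n-n_T}(\Sigma')$ is connected by Theorem \ref{ryuhp}. Over $\Hyp_{n_T}$, the commutator map restricted to elliptic pairs is a submersion (Lemma \ref{lem_evTsubm}), so strong path-lifting (Proposition \ref{prop_plpliftcommu}) lifts paths of $\rho'$. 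It remains to show that the fibers $\{(A,B)\text{ elliptic}:\widetilde R(A,B)=\widetilde C\}$ are connected. The full fiber is connected by Theorem \ref{thm_liftcommu}. Restricting to elliptic pairs needs an argument via the character map and Lemma \ref{lem_plpchar}: for fixed trace of the commutator, the set $\{(x,y,z): |x|,|y|<2,\ \kappa=\text{const}\}$ is connected. Removing the measure-zero set where both elements have finite order does not disconnect it, since that set has codimension $\ge1$ in each coordinate direction and is countable in the traces.

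The main obstacle is joining the two pieces $X_{+1}$ and $X_{-1}$ when both are nonempty. They cannot be connected inside the fixed decomposition, since $e(\rho|_T)$ is locally constant. I would instead use a different pair $(a,b')$ with $b'=ba^k$, or a mapping class in the stabilizer of $a$, together with $\pgl$-type symmetry from Proposition \ref{prop_pglpsl} applied to the torus part. The aim is to show that some class in $X_{+1}$ is $\psl$-conjugate to, or path-connected through $\ELns(a,b)$ to, a class in $X_{-1}$. A natural route is to vary the rotation angle of $\rho(a)$ through the elliptic range: as the angle crosses $\pi$ (order $2$), the lift of $A$ jumps between $\Ell_1$ and $\Ell_{-1}$ in the sense of the rotation direction. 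I expect this crossing to interchange the two components for the commutator while keeping both elements elliptic, which would keep the path inside $\ELns(a,b)$. This requires an explicit check with Lemma \ref{lem_offdiag_Ell}, which I expect to be the hardest computation.
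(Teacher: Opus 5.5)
\textbf{The central claim $e(\rho|_{\pi_1(T)})=\pm1$ is false; it is always $0$, and this breaks all three parts of your argument.} You applied Lemma~\ref{lem_torus} to the pair $(\rho(a),\rho(b))$. That lemma says $\mathrm{Tr}[A,B]<2$ occurs \emph{only} when $A,B$ are hyperbolic with crossing axes, and elliptic elements have no axes. Take non-commuting elliptic $A,B$, namely rotations by $\theta_A,\theta_B$ about points at distance $d>0$. Then $\mathrm{Tr}[A,B]=2+4\sin^2(\theta_A/2)\sin^2(\theta_B/2)\sinh^2 d>2$. The only component of trace $>2$ hyperbolics in the image of Theorem~\ref{thm_liftcommu} is $\Hyp_0$, since elements of $\Hyp_{\pm1}$ have trace $<-2$. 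More conceptually, $|e(\rho|_{\pi_1(T)})|=1=-\chi(T)$ would make $\rho|_{\pi_1(T)}$ a holonomy representation by Proposition~\ref{prop_holonomy}, which is impossible when $\rho(a)$ is elliptic. This is exactly how the paper obtains $e(\rho|_{\pi_1(T)})=0$.

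The consequences are as follows.
\begin{enumerate}
\item Your necessity arithmetic does not even give the stated range on its own terms. The union over $n_T=\pm1$ of the conditions $n-n_T\in[\chi(\Sigma)+1+p_+(s),\,-\chi(\Sigma)-1-p_-(s)]$ is the \emph{full} generalized Milnor--Wood range $[\chi(\Sigma)+p_+(s),\,-\chi(\Sigma)-p_-(s)]$. So your sufficiency construction would ``produce'' classes at the extremal values, contradicting the statement.
\item Step (2) asks for non-commuting elliptics with lifted commutator in $\Hyp_{\pm1}$, and no such pair exists.
\item The ``main obstacle'' of joining $X_{+1}$ and $X_{-1}$ is an artifact. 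Rotating $\rho(a)$ through angle $\pi$ could not fix it anyway, because $e(\rho|_{\pi_1(T)})$ is locally constant while $\rho([a,b])$ stays hyperbolic.
\end{enumerate}

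With $e(\rho|_{\pi_1(T)})=0$, Proposition~\ref{prop_additivity} and Theorem~\ref{ryuhp} applied to $\Sigma'$ (with $\chi(\Sigma')=\chi(\Sigma)+1$) give exactly the stated range. Existence follows by taking any $\rho'\in\mathrm{HP}^{(s,0)}_n(\Sigma')$ and realizing the $\Hyp_0$-lift of its boundary as a commutator of non-commuting elliptics; by the formula above, every trace in $(2,\infty)$ occurs.

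Your connectedness scheme then has a single piece, and it is a genuinely different route from the paper's. The paper moves freely on $T$, by the argument of March\'e--Wolff's Lemma~6.6, and lifts the boundary path into $\Sigma\setminus T$ via the new strong path-lifting Lemma~\ref{plp_HP'}; that is what all of Section~\ref{plphp'} is for. You instead move freely in the connected space $\mathrm{HP}^{(s,0)}_n(\Sigma')$ and lift on the torus side, which would bypass that machinery.

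For this you need the lifted commutator, restricted to non-commuting elliptic pairs, to have connected fibers, and your justification is wrong as written. For $c>2$ the set $\{|x|,|y|<2,\ \kappa(x,y,z)=c\}$ consists of \emph{two} sheets $z=xy/2\pm\sqrt{(4-x^2)(4-y^2)/4+c-2}$, exchanged by $A\mapsto -A$. So connectedness must be argued in $\psl$. One way: by monotonicity in $d$ of the trace formula, the level set $\mathrm{Tr}[A,B]=c$ is a graph over $(\theta_A,\theta_B)\in(0,2\pi)^2$, and the fiber is this graph times the connected centralizer of the commutator. Similarly, discarding pairs where both elements have finite order works because that locus has codimension two in the connected fiber-product manifold; codimension one would not suffice.
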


The proof of Proposition \ref{unique} follows verbatim that of \cite[Proposition 6.7]{Marche-Wolff}. We prove Proposition \ref{connect_elab} in the following subsection \ref{sec_pfof4.3}.

\subsection{Proof of Proposition \ref{connect_elab}}\label{sec_pfof4.3}

Throughout this section, we fix $(a,b)\in \mathcal{C}$, and denote by $T\subset \Sigma$ a regular neighbourhood of $a\cup b$ homeomorphic to a one-holed torus. 
By abuse of notation, we consider $a$ and $b$ as the generators of $\pi_1(T)\leqslant \pi_1(\Sigma)$.
Let $\gamma = [a,b]^{-1}$ represented by the boundary component of $T$.
Note that the complement $\Sigma\setminus T$ is a subsurface of $\Sigma$ of genus $g-1$, with $p$ punctures corresponding to the preferred peripheral elements $c_1,\dots, c_p\in \pi_1(\Sigma)$, and one boundary component representing $\gamma^{-1}$. 
\smallskip

 We fix a pants decomposition of $\Sigma = \Sigma_{g,p}$ as follows. Let $m = \frac{p}{2}$ if $p$ is even, and $m = \frac{p-1}{2}$ if $p$ is odd. For $i\in \{1,\cdots, m\}$, we denote by $P_i$ the pair of pants whose fundamental group $\pi_1(P_i)$ contains $c_{2i-1}$ and $c_{2i}$. 
Moreover, if $p$ is odd, then we denote by $P_\gamma$ the pair of pants such that $\pi_1(P_\gamma)$ contains $c_p = c_{2m+1}$ and $\gamma$. Let $T_j$ for $j\in \{1,\cdots, g-1\}$ denote the $g-1$ tori in $\Sigma\setminus T$. Next, decompose the complement of $\big(\bigcup_{j = 1}^{g-1} T_j\big)\cup \big(\bigcup_{i = 1}^m P_i\big)\cup P_\gamma$ (when $p$ is odd) or $\big(\bigcup_{j = 1}^{g-1} T_j\big)\cup \big(\bigcup_{i = 1}^m P_i\big)$ (when $p$ is even) by $g+m-2$ pairs of pants $\{P'_k\}_{k\in \{1,\dots, g+m-2\}}$, so that each of $T_j$ and $P_i$ is adjacent to some $P'_k$.
Together with $T$, we obtain a decomposition of $\Sigma$. See Figure \ref{even} and Figure \ref{odd} for an illustration of this pants decomposition.
\begin{figure}[h!]
    \centering

        \vspace{-1.7cm}
        \begin{overpic}[width=0.9\textwidth,height=13.5cm]{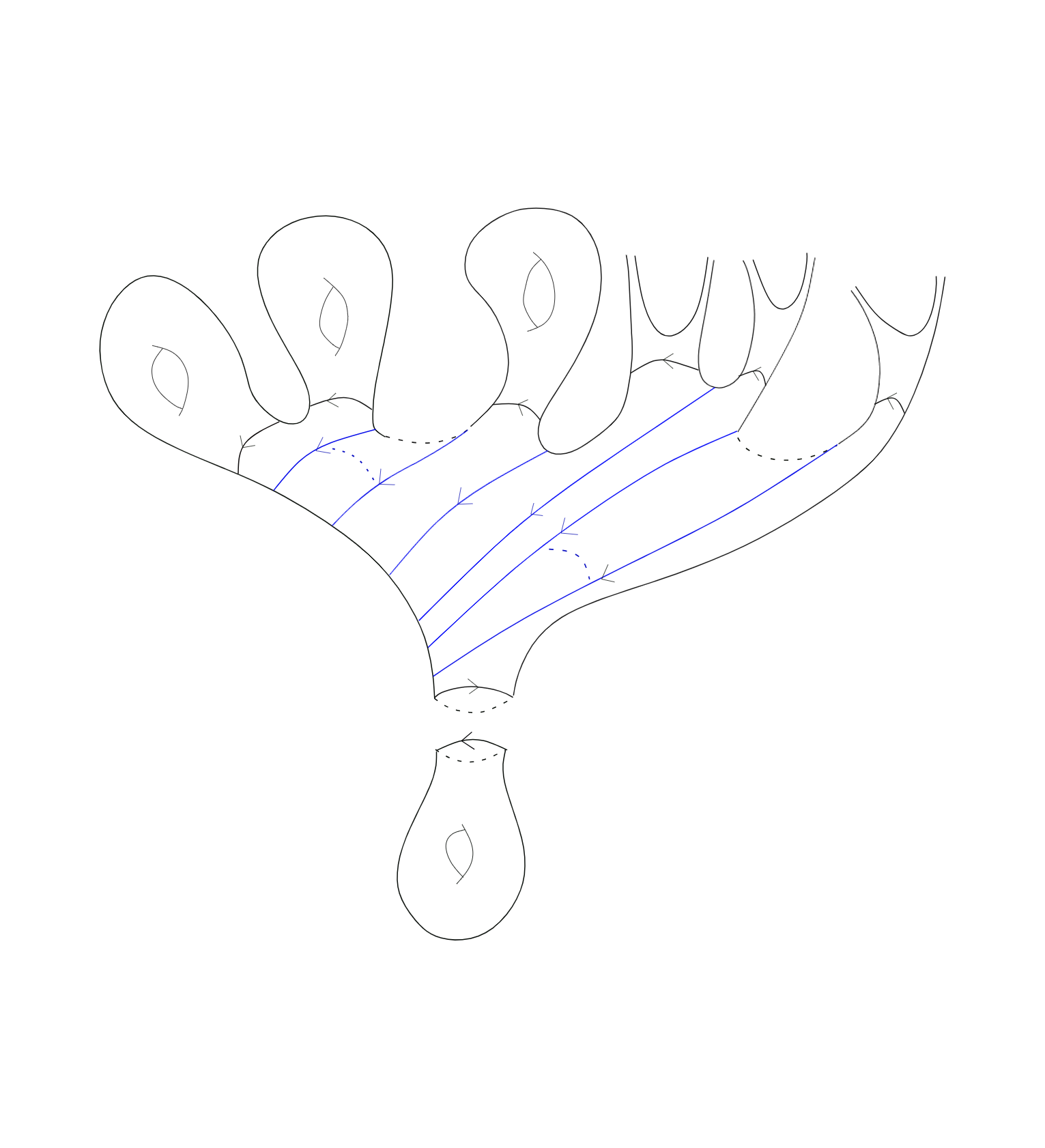}
            \put(35,20){\small $T$}
            \put(12,70){\small $T_1$}
            \put(25,74){\small $T_2$}
            \put(45,75){\small $T_{g-1}$}

            \put(26,55){\small $P'_1$}

            \put(36,50){\small $P'_{g-2}$}

            \put(59,72){\scriptsize $c_1$}
            \put(67,72){\scriptsize $c_2$}
            \put(70,71.5){\scriptsize $c_3$}
            \put(77,71.5){\scriptsize $c_4$}
            \put(80,69){\scriptsize $c_{p-1}$}
            \put(90,70){\scriptsize $c_p$}
            \put(22,57){\scriptsize $\alpha_1$}
            \put(31,60){\scriptsize $\alpha_2$}
            \put(42.5,59){\scriptsize $\alpha_{g-1}$}
            \put(57.5,61){\scriptsize $\alpha_{g}$}

            \put(63,67){\small{$P_1$}}
            \put(74,60){\scriptsize $\alpha_{g+1}$}
            
            \put(76.5,63){\small{$P_{2}$}}
            \put(87,57){\scriptsize $\alpha_{g+m-1}$}

            \put(90,62){
            \small{$P_{m}$}}
            \put(24,50){\scriptsize \textcolor{blue}{$\beta_1$}}
            \put(28,47){\scriptsize \textcolor{blue}{$\beta_{g-3}$}}
            \put(32.5,45){\scriptsize \textcolor{blue}{$\beta_{g-2}$}}
            \put(36,41){\scriptsize \textcolor{blue}{$\beta_{g-1}$}}
            \put(38.5,38.5){\scriptsize \textcolor{blue}{$\beta_{g}$}}
            \put(34.5,36){\scriptsize \textcolor{blue}{$\beta_{g+m-3}$}}
            \put(50,35){\scriptsize $\gamma^{-1}$}
            \put(49.5,31){\scriptsize $\gamma$}
        \end{overpic}
        \vspace{-1.2cm}
        \caption{The pants decomposition of $\Sigma$ when $p$ is even}
        \label{even}
\end{figure}

    For each $j\in\{1,\dots,g-1\}$, let $a_j, b_j$ be the generators of $\pi_1(T_j)$. The fundamental group of $\Sigma$ has the presentation
    $$\pi_1(\Sigma) = \big\langle 
     a,b, a_1, b_1,a_2, b_2,\cdots, a_{g-1}, b_{g-1},
     c_1, \cdots, c_p\ \big|\ [a,b][a_1, b_1][a_2, b_2]\cdots [a_{g-1}, b_{g-1}]\cdot c_1
     \cdots c_p  \big\rangle.$$ 
    For each $j\in\{1,\dots,g-1\}$,  let 
    $\alpha_j =  [a_j, b_j]$
    be the element of $\pi_1(\Sigma)$ represented by a boundary component $T_j$; and for $i\in\{1,\dots,m\}$, let $\alpha_{g+i-1} = c_{2i-1}c_{2i}$ represented by a boundary component of $P_i$. Let $\beta_1 = \alpha_1\alpha_2$, and
    for each $k\in \{2,\dots, g+m-3\}$, let $\beta_k = \beta_{k-1}\alpha_{k+1}$ be the element of $\pi_1(\Sigma)$ represented by the common boundary component of $P'_k$ and $P'_{k+1}$. Finally, if $p$ is odd, we let $\beta_{g+m-2} = \beta_{g+m-3}\alpha_{g+m-1}$ represented by the common boundary component of $P'_{g+m-2}$ and $P_\gamma$; and if $p$ is even, we let $\beta_{g+m-2} = \gamma = \beta_{g+m-3}\alpha_{g+m-1}$. 
    By abuse of notation, we let these $\alpha_l$ and $\beta_k$ be both the elements of $\pi_1(\Sigma)$ and their representatives, and call them the \emph{decomposition curves}.
\medskip

To prove Proposition \ref{connect_elab}, we introduce the subspace $\ELns(a,b)'\subset \ELns(a,b)$ consisting of classes of representations $[\rho]$ such that the restrictions $\rho|_{\pi_1(P_i)}$ for $i\in \{1,\dots, m\}$,  $\rho|_{\pi_1(T_j)}$ for $j\in \{1,\dots, g-1\}$ and $\rho|_{\pi_1(T)}$ are all non-abelian, and for each $k\in \{1,\dots, g+m-2\}$, none of the boundary components of $P'_k$ maps to the identity.

\begin{lemma}\label{density}
For $n\in \mathbb{Z}$ and $s\in \{\pm 1\}^p$, $\mathcal{EL}^s_n(a,b)'$ is dense in $\mathcal{EL}^s_n(a,b)$.
\end{lemma}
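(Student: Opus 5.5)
The set $\mathcal{EL}^s_n(a,b)$ is open in $\mathcal{M}^s_n$: ellipticity of $\rho(a)$, $\rho(b)$ and non-commutation are open conditions. Infinite order fails only on a countable union of proper real-analytic subsets where $Tr\,\rho(a)^2, Tr\,\rho(b)^2\in 4\cos^2(\mathbb{Q}\pi)$. So I will show that the complement of $\mathcal{EL}^s_n(a,b)'$ inside $\mathcal{EL}^s_n(a,b)$ is contained in a finite union of proper real-analytic subvarieties of the open set $\mathcal{EL}^s_n(a,b)$. Such a union has empty interior, so its complement is dense.

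The failure conditions are finitely many. Each is either a restriction $\rho|_{\pi_1(T_j)}$, $\rho|_{\pi_1(P_i)}$ or $\rho|_{\pi_1(T)}$ being abelian, or some decomposition curve $\alpha_l$, $\beta_k$ or $\gamma$ being sent to $\pm\mathrm{I}$. For $\rho|_{\pi_1(T)}$ there is nothing to do, since $\rho(a),\rho(b)$ do not commute by definition. For the others, abelianness of a two-generator group $\langle \rho(x),\rho(y)\rangle$ is equivalent to $Tr[\rho(x),\rho(y)]=2$ on a lift to $\SL$, and $\rho(\delta)=\pm\mathrm{I}$ forces $Tr(\rho(\delta))^2=4$. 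Both are real-analytic (polynomial in traces) conditions on the smooth manifold $\mathcal{M}^s_n$. Using the local lifts $\Pi$ and the functions $F_\gamma$ of Section \ref{sec_outline}, these become vanishing loci of analytic functions such as $F_{[a_j,b_j]}-2$ or $F_\delta^2-4$.

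It remains to check that none of these functions vanishes identically on any connected component of the open set $\mathcal{EL}^s_n(a,b)$. Since the components are not yet known, I would argue locally. At any $[\rho]\in\mathcal{EL}^s_n(a,b)$ I deform $\rho$ so that it stays in $\mathcal{EL}^s_n(a,b)$ while each bad condition breaks, and I do this one subsurface at a time by bending or twisting. The key tool is the following. For a decomposition curve $\delta$ splitting $\Sigma$, the representation can be modified on one side by conjugating by a path $g_t$; more usefully, one can change the representation of a subsurface $S\subset\Sigma\setminus T$ while keeping the image of its boundary fixed. For a torus $T_j$ with abelian restriction, $\rho(\alpha_j)=\mathrm{I}$, and I perturb $(\rho(a_j),\rho(b_j))$ to a non-commuting pair with commutator near $\mathrm{I}$. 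This is possible by Theorem \ref{thm_liftcommu} and Lemma \ref{lem_evTsubm}, since near a commuting pair the commutator map hits a neighbourhood of $\mathrm{I}$ in $\mathcal{I}$. I then compensate the resulting change of $\rho(\alpha_j)$ in the adjacent pants $P'_k$ using the submersion property of multiplication (Lemma \ref{lem_evPsubm}(1)), keeping all parabolic boundary conjugacy classes fixed. For $P_i$ with $\rho(c_{2i-1}),\rho(c_{2i})$ commuting parabolics, I use Lemma \ref{parplp}: near a commuting pair one can move to non-commuting pairs of the same signs with product close to the original. Curves $\beta_k$ mapped to $\pm\mathrm{I}$ are broken in the same way by perturbing a neighbouring factor. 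Because $\rho|_{\pi_1(T)}$ is untouched, or only conjugated, the elliptic non-commuting condition on $a,b$ persists. Relative Euler class and sign are locally constant, so the deformation stays in $\mathcal{EL}^s_n(a,b)$.

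The main obstacle is carrying out the compensation step globally. A perturbation on one subsurface changes a boundary element, and this must be absorbed through the chain of pants $P'_k$ back to $\gamma$ while preserving the relation and type-preservation. I would handle it by solving for the remaining generators via the submersion Lemma \ref{lem_evPsubm} and the implicit function theorem at the nearby point. Elementary points (dimension drops) must also be avoided. Since each bad set then has empty interior and is analytic, its finite union is nowhere dense, which proves the density of $\mathcal{EL}^s_n(a,b)'$.
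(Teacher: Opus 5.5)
Your route (local deformations showing each bad locus has empty interior) differs from the paper's, and as written it has a gap at exactly the step you flag. Everything rests on showing that each bad locus has empty interior near every point of $\ELns(a,b)$. The bad loci are: abelian $\rho|_{\pi_1(T_j)}$, abelian $\rho|_{\pi_1(P_i)}$, or a boundary curve of some $P'_k$ sent to $\pm\mathrm{I}$. Your deformation does not establish this. You keep $\rho|_{\pi_1(T)}$ fixed up to conjugation, so $\rho(\gamma)$ is fixed. Moreover the pants $P'_k$ carry no generators of their own, since each $\rho(\beta_k)$ is a product of the $\rho(\alpha_l)$. So ``compensating in $P'_k$'' really means re-solving $\prod_l\rho(\alpha_l)=\rho(\gamma)$ (with an extra factor $\rho(c_p)$ when $p$ is odd) by moving other torus pairs or parabolic pairs. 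That needs the boundary map of $\Sigma\setminus T$ to be a submersion at precisely the degenerate configurations you are trying to leave. Lemma \ref{lem_evTsubm} and Lemma \ref{lem_evPsubm}(2) give surjectivity only at non-commuting pairs. Lemma \ref{lem_evPsubm}(1) only helps once some factor is already known to sweep out an open set. No rank computation is supplied for the case where the other pairs are degenerate too.

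If you want a deformation proof, compensate through $T$ instead. Since $\rho(a),\rho(b)$ do not commute, the commutator map is a submersion at $(\rho(a),\rho(b))$. Hence, near $\rho$, the remaining generators $\rho(a_j),\rho(b_j),\rho(c_i)$ (the $\rho(c_i)$ in their fixed parabolic conjugacy classes) are free local coordinates on the representation space, and ellipticity and non-commutation of $\rho(a),\rho(b)$ persist. Each bad condition is then visibly a closed condition with empty interior in these coordinates. Openness of the quotient map carries this down to $\Mns$.

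Two further remarks. First, $\ELns(a,b)$ is not open in $\Mns$. It is the open set of classes with $\rho(a),\rho(b)$ elliptic and non-commuting, minus the countable union of analytic sets where both have finite order, and that union accumulates on it. So nowhere density has to be proved in that open set, followed by a Baire or measure argument to get density \emph{inside} $\ELns(a,b)$. This same step is what lets you tolerate a perturbation of $\rho(a),\rho(b)$ in the fix above.

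Second, none of the deformation work is needed. The paper observes that every class in $\ELns(a,b)\setminus\ELns(a,b)'$ sends a non-peripheral simple closed curve ($\alpha_j$, $c_{2i-1}c_{2i}$, or a boundary curve of some $P'_k$) to $\pm\mathrm{I}$ or to a parabolic. Hence it lies in the null set $\Mns\setminus\N$ of Lemma \ref{Nfull}. On the other hand, $\ELns(a,b)$ has full measure in the open set $\mathcal{E}^s_n(a,b)$ of classes with $\rho(a),\rho(b)$ elliptic, so every nonempty relatively open subset of $\ELns(a,b)$ has positive measure. That short measure argument handles both the non-vanishing issue and the non-openness of $\ELns(a,b)$ at once.
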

\begin{proof}
    For the sake of contradiction, suppose that there is a non-empty subset $V\subset \ELns(a,b)\setminus \ELns(a,b)'$ that is open in $\ELns(a,b)$. 
    Notice that each $[\rho]\in \ELns(a,b)\setminus \ELns(a,b)'$ sends some simple closed curve to either the identity or a parabolic element, hence is contained in $\Mns\setminus \N$. By Lemma \ref{Nfull}, 
    $\Mns\setminus \N$ has measure zero, hence its subset
    $\ELns(a,b)\setminus \ELns(a,b)'$ also has measure zero. We show $V$ has positive measure, which leads to a contradiction. Let $\mathcal{E}^s_n(a,b)$ be the subspace of $\Mns$ consisting of the conjugacy classes $[\rho]$ such that $\rho(a)$ and $\rho(b)$ are elliptic. Since $\ELns(a,b)\subset \mathcal{E}^s_n(a,b)$, $\mathcal{E}^s_n(a,b)$ is non-empty; and since $\mathcal{E}^s_n(a,b)$ is open in $\Mns$, it has positive measure. 
    Moreover, since the complement
    $\mathcal{E}^s_n(a,b)\setminus \ELns(a,b)$ is contained in $\Mns\setminus \N$, the set $\ELns(a,b)$ has full measure in $\mathcal{E}^s_n(a,b)$. 
    Let $U$ be an open subset of $\mathcal{E}^s_n(a,b)$, which is also open in $\Mns$, such that $V = U\cap \mathcal{EL}^s_n(a,b)$. As an open subset of $\Mns$, $U$ has positive measure. Since $\ELns(a,b)$ has full measure in $\mathcal{E}^s_n(a,b)$, 
    $V = U\cap \ELns(a,b)$ also has positive measure, contradicting that it is contained in a null set.
    \end{proof}

\begin{figure}[H]
    \centering

        \vspace{-1.7cm}
        \begin{overpic}[width=0.9\textwidth,height=13.5cm]{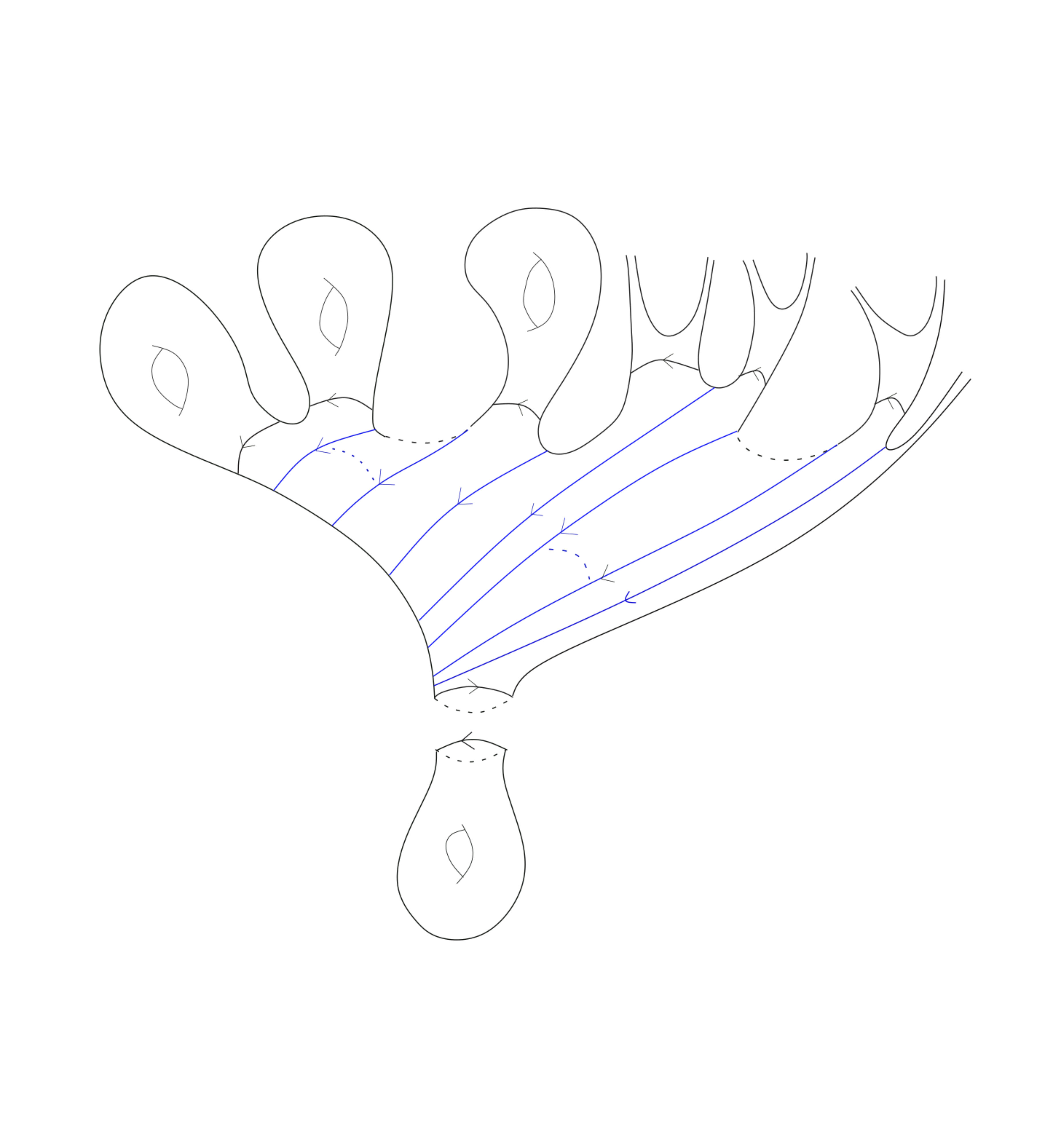}
            \put(35,20){\small $T$}
            \put(12,70){\small $T_1$}
            \put(25,74){\small $T_2$}
            \put(45,75){\small $T_{g-1}$}

            \put(26,55){\small $P'_1$}

            \put(36,50){\small $P'_{g-2}$}

            \put(55.5,38){\small $P_{\gamma}$}

            \put(59,72){\scriptsize $c_1$}
            \put(67,72){\scriptsize $c_2$}
            \put(70,71.5){\scriptsize $c_3$}
            \put(77,71.5){\scriptsize $c_4$}
            \put(80,69){\scriptsize $c_{p-2}$}
            \put(90,70){\scriptsize $c_{p-1}$}
            \put(93,62){\scriptsize $c_p$}
            \put(22,57){\tiny $\alpha_1$}
            \put(31,60){\tiny $\alpha_2$}
            \put(43.5,60){\tiny $\alpha_{g-1}$}
            \put(58,61){\tiny $\alpha_{g}$}

            \put(63,67){\small{$P_1$}}
            \put(75,61){\tiny $\alpha_{g+1}$}
            
            \put(76.5,63){\small{$P_{2}$}}
            \put(76.5,58){\tiny $\alpha_{g+m-1}$}
            \put(91,65){
            \small{$P_{m}$}}
            
            \put(24.5,51){\tiny \textcolor{blue}{$\beta_1$}}
            \put(28,48){\tiny \textcolor{blue}{$\beta_{g-3}$}}
            \put(33,44.5){\tiny \textcolor{blue}{$\beta_{g-2}$}}
            \put(36,41){\tiny \textcolor{blue}{$\beta_{g-1}$}}
            \put(38.8,38.8){\tiny \textcolor{blue}{$\beta_{g}$}}
            \put(34.8,37.5){\tiny \textcolor{blue}{$\beta_{g+m-3}$}}
            \put(34.8,36){\tiny \textcolor{blue}{$\beta_{g+m-2}$}}
            \put(49.5,35){\tiny $\gamma^{-1}$}
            \put(49,31){\tiny $\gamma$}
        \end{overpic}
        \vspace{-1.2cm}
        \caption{The pants decomposition of $\Sigma$ when $p$ is odd}
        \label{odd}

\end{figure}
The following  Lemma \ref{ELnonempty} provides a necessary and sufficient condition for $\mathcal{EL}^s_n(a,b)'$ to be non-empty. 
\begin{lemma}\label{ELnonempty}
    Let $\Sigma=\Sigma_{g,p}$ 
      with genus 
      $g \geqslant 2$ and $p\geqslant 1$ punctures. 
      Let $n\in \mathbb{Z},\ s\in \{\pm 1\}^p,$ and $(a,b)\in \mathcal{C}$. The set $\mathcal{EL}^s_n(a,b)'$ is nonempty if and only if the pair $(n,s)$ satisfies the following inequality: 
    $$\chi(\Sigma) + p_+(s)+1\leqslant n\leqslant -\chi(\Sigma) - p_-(s)-1.$$
\end{lemma}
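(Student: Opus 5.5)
We prove both directions by cutting $\Sigma$ along the boundary curve $\gamma=[a,b]^{-1}$ of $T$ and using the additivity of the relative Euler class (Proposition \ref{prop_additivity}). For $[\rho]\in\ELns(a,b)'$, Lemma \ref{lem_torus} shows that $\rho(\gamma)$ is hyperbolic, so Proposition \ref{prop_additivity} applies and gives
$$n=e(\rho|_{\pi_1(T)})+e(\rho|_{\pi_1(\Sigma\setminus T)}).$$

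\textbf{The torus contribution.} We first compute $e(\rho|_{\pi_1(T)})$ when $\rho(a),\rho(b)$ are non-commuting elliptics. The lifted commutator $\widetilde{R}(\rho(a),\rho(b))$ lies in $\Hyp_0\cup\Hyp_{\pm1}$ by Theorem \ref{thm_liftcommu}. We claim it must lie in $\Hyp_0$. An elliptic pair can be deformed, keeping it elliptic and non-commuting with hyperbolic commutator, to a pair with very small rotation angles, whose commutator is close to $\mathrm I$ and hence lies in $\Hyp_0$. Alternatively, Goldman's classification of one-holed tori shows that $\Hyp_{\pm1}$ commutators come only from Fuchsian pairs, which have no elliptic generators. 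Either way $e(\rho|_{\pi_1(T)})=0$. Conversely, for any point of $\Hyp_0$ near $\mathrm I$ we can realise it as such a commutator with $\rho(a)$ elliptic of infinite order: we use the path-lifting of $\widetilde{R}$ (Proposition \ref{prop_plpliftcommu}) together with the character map (Lemma \ref{lem_plpchar}) with traces $x,y\in(-2,2)$ and $\kappa>2$. We then use the $\GL$-conjugation in Proposition \ref{prop_char} to choose the $\psl$-orbit matching a given hyperbolic boundary element.

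\textbf{The complement and necessity.} The complement $\Sigma' = \Sigma\setminus T$ has genus $g-1$, $p$ punctures with sign $s$, and one hyperbolic boundary, so the sign of $\rho|_{\pi_1(\Sigma')}$ is $(s,0)$. Theorem \ref{ryuhp} gives
$$\chi(\Sigma')+p_+(s)\le e(\rho|_{\pi_1(\Sigma')})\le -\chi(\Sigma')-p_-(s),$$
with $\chi(\Sigma')=\chi(\Sigma)+1$. Since $n=e(\rho|_{\pi_1(\Sigma')})$, this is exactly the required inequality, which proves necessity.

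\textbf{Sufficiency.} Given $(n,s)$ in the range, Theorem \ref{ryuhp} yields $\rho'\in\HP^{(s,0)}_n(\Sigma')$. We want to glue it to a torus representation with elliptic generators along $\gamma$. The obstacle is that the boundary element $\rho'(\gamma^{-1})$ must match the torus commutator. Since $\HP^{(s,0)}_n(\Sigma')$ is connected and open conditions are flexible, we instead construct $\rho'$ directly along the pants decomposition, choosing representations on each $T_j$, $P_i$, $P'_k$ (and $P_\gamma$) with prescribed Euler classes summing to $n$. Proposition \ref{prop_evimage} and Lemma \ref{parplp} let us choose non-abelian restrictions, and Proposition \ref{prop_evimage}(2) with $\ev(\Hyp\times\Hyp)\supset\Hyp_0$ gives freedom in the hyperbolic boundary holonomy. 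This lets us arrange that the preferred lift of $\rho'(\gamma^{-1})$ lies in $\Hyp_0$ with trace arbitrarily close to $2$. Conjugating, we match it with the torus piece from the previous step and glue.

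Finally, the result must lie in $\ELns(a,b)'$. The restrictions to each $P_i$, $T_j$ and $T$ are non-abelian, and no boundary of any $P'_k$ is trivial, since the ev-choices give hyperbolic or parabolic non-identity values. Non-elementarity follows from non-abelian, non-reducible pieces; we can perturb if needed, using that the relevant conditions are open and dense. The main obstacle is the Euler class bookkeeping across the pieces. We must distribute $n$ among them so that each piece's class is achievable with non-abelian restrictions and non-identity decomposition curves, with the extremal allotments on the torus pieces forced to be Fuchsian or to have hyperbolic commutator in $\Hyp_{\pm1}$. This is where the strict inequality, the extra $\pm1$ compared with Theorem \ref{ryuhp}, enters: it comes from the torus $T$ being forced to contribute $0$.
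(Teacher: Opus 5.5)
Your necessity half is correct and is the paper's argument. Additivity across $\gamma$ gives the splitting; $e(\rho|_{\pi_1(T)})=0$ because $|e|=1$ would make $\rho|_{\pi_1(T)}$ a holonomy representation (Proposition \ref{prop_holonomy}); and Theorem \ref{ryuhp} on $\Sigma\setminus T$ with sign $(s,0)$ finishes it. Drop the deformation argument for the torus term, since it needs a connectedness statement you do not prove. Alternatively, $\mathrm{Tr}[A,B]>2$ for a non-commuting pair with an elliptic entry already forces the lifted commutator into $\Hyp_0$.

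The gap is sufficiency: no representation is actually produced. You set aside the $\rho'$ given by Theorem \ref{ryuhp} because of a supposed matching problem. You then sketch a pants-by-pants construction whose crux you explicitly defer (``the main obstacle is the Euler class bookkeeping''), and whose direction is unclear. Built inside-out, you would need to control the conjugacy class of $\rho'(\gamma)$ as a product of two elements whose classes are already fixed. Proposition \ref{prop_evimage}(2) does not give this; it only says \emph{some} preimage exists (and you cite only $\Hyp_0$, whereas pieces of class $\pm1$ need $\Hyp_{\pm1}$).

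The paper builds outside-in. It fixes $\rho|_{\pi_1(T)}$ first and takes the lift of the hyperbolic element $\rho(\gamma)$ in $\Hyp_{n_{P'_{g+m-2}}}$ (or in $\Hyp_{n_{P_\gamma}}$ via Proposition \ref{prop_evimage}(1) when $p$ is odd). It splits this lift by Proposition \ref{prop_evimage}(2), proceeds inward, and ends with Theorem \ref{thm_liftcommu} on each $T_j$ and Lemma \ref{parplp} on each $P_i$. The bookkeeping is an explicit count:
\begin{itemize}
\item $n_{P_i}=\frac12(s_{2i-1}+s_{2i})$ is forced;
\item the $2g+m-3$ numbers $n_{T_j},n_{P'_k}$ range freely over $\{-1,0,1\}$, and $n_{P_\gamma}$ over $\{0,s_p\}$;
\item the attainable sums are exactly $\chi(\Sigma)+p_+(s)+1\leqslant n\leqslant -\chi(\Sigma)-p_-(s)-1$.
\end{itemize}
Every decomposition curve then has hyperbolic image, so the primed conditions hold automatically.

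Alternatively, your first idea already works, because the matching obstacle is illusory. The torus piece realizes every hyperbolic class, not only those near $\mathrm I$. Take $x=2\cos(\pi\theta)$ with $\theta$ irrational and any $y\in(-2,2)$. Then $z\mapsto\kappa(x,y,z)$ is an upward parabola with minimum $2-\frac14(4-x^2)(4-y^2)<2$, so it attains every $c>2$. Lemma \ref{lem_plpchar} then yields a non-commuting elliptic pair $(A,B)$ with $\mathrm{Tr}[A,B]=c$.

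Hyperbolic elements of $\psl$ with equal $|\mathrm{Tr}|$ are $\psl$-conjugate. So any $\rho'\in HP^{s,0}_n(\Sigma\setminus T)$ from Theorem \ref{ryuhp} glues along $\gamma$ to a conjugate of this torus piece. Additivity gives relative Euler class $0+n=n$. Non-elementarity is automatic from two non-commuting elliptics, one of infinite order. Hence $\ELns(a,b)\neq\emptyset$. Lemma \ref{density}, which precedes and does not use the present lemma, then gives $\ELns(a,b)'\neq\emptyset$. This is the precise form of your ``perturb if needed'', and it bypasses the pants-by-pants construction entirely.
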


\begin{proof}
First, we assume that $\mathcal{EL}^s_n(a,b)'$ is non-empty. Let $\rho \in \mathcal{EL}^s_n(a,b)'$. Then $\rho$ maps the non-separating curves $a,b$ to elliptic elements that do not commute. 
Let $T\subset \Sigma$ be a regular neighbourhood of $a\cup b$ homeomorphic to a one-holed torus, whose boundary component represents the commutator 
$[a,b]$. By Lemma \ref{lem_torus}, its boundary image $\rho\big([a,b]\big)$ is hyperbolic, hence the relative Euler classes of the restrictions $\rho|_{\pi_1(T)}$ and $\rho|_{\pi_1(\Sigma\setminus T)}$ are well-defined.
By Theorem \ref{ryuhp}, we have 
$$-1\leqslant   e\big(\rho|_{\pi_1(T)}\big) \leqslant 1$$
and 
$$\chi(\Sigma) + p_+(s) + 1\leqslant e\big(\rho|_{\pi_1(\Sigma \backslash T)}\big) \leqslant -\chi(\Sigma) - p_-(s) - 1.$$
As $\rho(a)$ is elliptic, $\rho|_{\pi_1(T)}$ is not a holonomy representation; and by Proposition \ref{prop_holonomy}, we have $e\big(\rho|_{\pi_1(T)}\big) = 0$.
Finally, Proposition \ref{prop_additivity} gives the desired inequality.
\\
 Conversely, for $(n,s)$ satisfying $\chi(\Sigma) + p_+(s) + 1\leqslant n \leqslant -\chi(\Sigma) - p_-(s) - 1,$
we will construct a representation in $\mathcal{EL}^s_n(a,b)'$. 
Let $\rho$ be a representation on $\pi_1(T)$ such that $\rho(a)$ is elliptic of infinite order, and $\rho(b)$ is elliptic and does not commute with $\rho(a)$. 
Let $\gamma = [a,b]^{-1}$ be represented by the boundary of $T$. As in the previous paragraph, since $\rho(a)$ and $\rho(b)$ are non-commuting elliptic elements, $\rho(\gamma)$ is hyperbolic and $e(\rho)=0$. 
We will extend $\rho$ to $\pi_1(\Sigma)$ inductively along the pants decomposition, considering separately the cases when $p$ is even and when $p$ is odd.
\medskip

 We first assume that $p$ is even, in which case the pants decomposition is as in Figure \ref{even}. In this case, we
 let $m = \frac{p}{2}$.
 Let $n_{P_i}: = \frac{1}{2}(s_{2i-1}+s_{2i})$ for $i\in \{1,\cdots, m\}$. 
  Since we have
  $$3-2g - m + \sum_{i = 1}^m n_{P_i}\leqslant n   \leqslant 2g + m - 3+\sum_{i = 1}^m n_{P_i},$$
 for each $j\in \{1,\dots, g-1\}$ and $k\in \{1,\dots, g+m-2\}$, we may choose any numbers $n_{T_j}$ and $n_{P'_k}$ in $\{-1,0,1\}$ that together satisfy 
 $ \sum_{j = 1}^{g-1} n_{T_j} + \sum_{k = 1}^{g+m-2} n_{P'_k} + \sum_{i = 1}^m n_{P_i}
 = n $.
\\

First, we extend $\rho$ to $\pi_1(P'_{k})$ for all $k\in \{1,\dots, g+m-2\}$ by induction. We start with the case where $k = g+m-2$. In this case, $P'_k$ shares a boundary component $\gamma = \beta_{g+m-3}\alpha_{g+m-1}$ with $T$. Let $\widetilde{\rho(\gamma)}$ be the lift of $\rho(\gamma)$ in $\Hyp_{n_{P_k'}}$. Since $n_{P_k'}\in \{-1,0,1\}$, by Proposition \ref{prop_evimage}, there is a pair $(\pm A, \pm B)$ of hyperbolic elements such that the lifted product $\ev(\pm A, \pm B) = \widetilde{\rho(\gamma)}$. Then letting $\rho(\beta_{g+m-3}) = \pm A$ and $\rho(\alpha_{g+m-1}) = \pm B$ defines $\rho$ on $\pi_1(P'_k)$. 
Now let $k\in \{2,\dots, g+m-3\}$, and assume that $\rho$ is defined on $\pi_1(P'_{k+1})$. Then $P'_k$ shares a boundary component $\beta_k = \beta_{k-1}\alpha_{k+1}$ with $P'_{k+1}$. Let $\widetilde{\rho(\beta_k)}$ be the lift of $\rho(\beta_k)$ in $\Hyp_{n_{P'_k}}$. As in the case $k = g+m-2$, by Proposition \ref{prop_evimage}, we can find a pair of hyperbolic elements whose lifted product equals $\widetilde{\rho(\beta_k)}$, which defines $\rho(\beta_{k-1})$ and $\rho(\alpha_{k+1})$. This defines $\rho$ on $\pi_1(P'_k)$ for all $k\in \{2,\dots, g+m-2\}$. Finally, for $k = 1$, we have $\beta_1 = \alpha_1\alpha_2$. Letting $\widetilde{\rho(\beta_1)}$ be the lift of $\rho(\beta_1)$ in $\Hyp_{n_{P'_1}}$, we can define $\rho$ similarly using Proposition \ref{prop_evimage}.
This defines $\rho$ on $\pi_1(P'_k)$ for all $k\in \{1,\dots, g+m-2\}$, with the
relative Euler class $e\big(\rho|_{\pi_1(P'_k)}\big) = n_{P'_k}$.
\smallskip

Next, we extend $\rho$ to $\pi_1(T_j)$ and $\pi_1(P_i)$ for all $j\in \{1,\cdots, g-1\}$ and $i\in \{1,\dots, m\}$. For $l\in \{1,\dots, g+m-1\}$, the image $\rho(\alpha_l)$ is defined in the previous paragraph.
For $j\in \{1,\dots, g-1\}$, we have $\alpha_j = [a_j, b_j]$.
Let
$\widetilde{\rho(\alpha_j)}$ be the lift of $\rho(\alpha_j)$ in $\Hyp_{n_{T_{j}}}$. Since $n_{T_j}\in \{-1,0,1\}$, by Theorem \ref{thm_liftcommu}, there is a $\psl$-pair $(\pm A, \pm B)$ such that the lifted commutator $\widetilde{R}(\pm A,\pm B)=\widetilde{\rho(\alpha_j)}.$ Then letting $\rho(a_j) = \pm A$ and $\rho(b_j) = \pm B$ extends $\rho$ to $\pi_1(T_j)$, with the relative Euler class $e\big(\rho|_{\pi_1(T_j)}\big) = n_{T_j}$.
For $i\in \{1,\dots, m\}$, we have $\alpha_{g+i-1} = c_{2i-1}c_{2i}$.
let
$\widetilde{\rho(\alpha_{g+i-1})}$ be the lift of $\rho(\alpha_{g+i-1})$ in $\Hyp_{n_{P_i}}$. Since $n_{P_i} = \frac{1}{2}(s_{2i-1}+s_{2i})$, by Proposition \ref{parplp}, there is a pair $(\pm C_{2i-1}, \pm C_{2i})\in \Par^{sgn(s_{2i-1})}\times \Par^{sgn(s_{2i})}$ such that the lifted product $\ev(\pm A, \pm B) = \widetilde{\rho(\alpha_{g+i-1})}$. Then letting $\rho(c_{2i-1}) = \pm C_{2i-1}$ and $\rho(c_{2i}) = \pm C_{2i}$ defines $\rho$ on $\pi_1(P_i)$, 
with the relative Euler class  $e\big(\rho|_{\pi_1(P_i)}\big) = n_{P_i}$. This completes the construction of $\rho$ on $\pi_1(\Sigma)$ when $p$ is even.
\medskip

 We now assume that $p$ is odd, in which case the pants decomposition is as in Figure \ref{odd}. In this case, we
 let $m = \frac{p-1}{2}$.
 Let $n_{P_i}: = \frac{1}{2}(s_{2i-1}+s_{2i})$ for $i\in \{1,\cdots, m\}$.
In this case, we can choose $n_{P_\gamma}\in \{0, s_p\}$, $n_{T_j}\in \{-1,0,1\}$ for $j\in \{1,\dots, g-1\}$, and $n_{P'_k}\in \{-1,0,1\}$ for $k\in \{1,\dots, g+m-2\}$, which together satisfy $
 \sum_{j = 1}^{g-1} n_{T_j} + \sum_{k = 1}^{g+m-2} n_{P'_k} + \sum_{i = 1}^m n_{P_i} + n_{P_\gamma}
 = n  
 $. Note that $P_\gamma$ shares a boundary component $\gamma = \beta_{g+m-2}c_p$ with $T$. Let $\widetilde{\rho(\gamma)}$ be the lift of $\rho(\gamma)$ in $\Hyp_{n_{P_\gamma}}$. 
  Since $n_{P_{\gamma}}\in \{0,s_p\}$, by Proposition \ref{prop_evimage}, there is a pair $(\pm H,\pm C)\in \Hyp\times \Par^{sgn(s_p)}$ such that the lifted product $\ev(\pm H, \pm C)=\widetilde{\rho(\gamma)}$. Letting $\rho(\beta_{g+m-2}) = \pm H$ and $\rho(c_p) = \pm C$ defines $\rho$ on $\pi_1(P_\gamma)$ with the relative Euler class  $e\big(\rho|_{\pi_1(P_\gamma)}\big) = n_{P_\gamma}$. The extension of $\rho$ to $\pi_1(P'_k)$, $k\in \{1,\dots, g+m-2\}$, $\pi_1(T_j), j\in \{1,\cdots, g-1\}$, and $\pi_1(P_i), i\in \{1,\dots, m\}$, then follows verbatim from the case where $p$ is even.
\smallskip

Finally, we show that the constructed $\rho$ is in $\mathcal{EL}^s_n(a,b)'$, concluding that $\mathcal{EL}^s_n(a,b)'$ is non-empty. 
Since $\rho(c_i)\in \Par^{sgn(s_i)}$ for all $i\in \{1,\dots, p\}$, we have $s(\rho) = s$; and by Proposition \ref{prop_additivity}, we have 
$e(\rho) = \sum_{j = 1}^{g-1} n_{T_j} + \sum_{k = 1}^{g+m-2} n_{P'_k}
 + \sum_{i = 1}^m n_{P_i} + 0 = n$ when $p$ is even; and $e(\rho) = \sum_{j = 1}^{g-1} n_{T_j} + \sum_{k = 1}^{g+m-2} n_{P'_k}
 + \sum_{i = 1}^m n_{P_i} + n_{P_\gamma} + 0 = n$ when $p$ is odd. By the definition of $\rho(a)$ and $\rho(b)$, we have $\rho\in \ELns(a,b)$. Finally, since all decomposition curves $\alpha_l$ and $\beta_k$ have hyperbolic image, 
 each of $\rho|_{\pi_1(T_j)}$ and $\rho|_{\pi_1(P_i)}$ is non-abelian and each $\rho(\beta_k)$ is non-trivial, i.e.,
 $\rho$ is contained in $\ELns(a,b)'$.
\end{proof}

To show the connectedness of $\mathcal{EL}^s_n(a,b)'$, we first introduce the following notation. For $n\in \mathbb{Z}$ and $s \in \{\pm1\}^p$, denote by $HP^{s,0}_n(\Sigma\setminus T)$ the space of representations $\rho: \pi_1(\Sigma\setminus T)\to \psl$ of relative Euler class $n$ 
such that $\rho(\gamma^{-1})$ is hyperbolic, and for $i\in \{1,\dots, p\},\ \rho(c_i)$ is in $\Par^{sgn(s_i)}$. 
Let $HP^{s,0}_n(\Sigma\setminus T)'$ be the subspace of $ HP^{s,0}_n(\Sigma\setminus T)$ consisting of $\rho$ such that 
the restrictions $\rho|_{\pi_1(P_i)}$ for $i\in \{1,\dots, m\}$,  $\rho|_{\pi_1(T_j)}$ for $j\in \{1,\dots, g-1\}$ are non-abelian, and for each $k\in \{1,\dots, g+m-2\}$, none of the boundary components of $P'_k$ maps to the identity. Observe that for each $[\rho]\in \ELns(a,b)'$, the restriction $\rho|_{\pi_1(\Sigma\setminus T)}$ lies in $HP^{s,0}_n(\Sigma\setminus T)'$.

For 
each $j\in \{1,\dots, g-1\}$, let $\widetilde{\rho(a_j)}$ and $\widetilde{\rho(b_j)}$ be arbitrary lifts of $\rho(a_j)$ and $\rho(b_j)$ in $\univcover$; and for each $i\in \{1,\dots, p\}$, let $\widetilde{\rho(c_i)}$ be the lift of $\rho(c_i)$ in $\Par_0$.
For $n \in \{\chi(\Sigma\setminus T)+ p_+(s) ,\dots, -\chi(\Sigma\setminus T) - p_-(s)\}$, we define a map  $\widetilde{e}_{s,n}:HP^{s,0}_n(\Sigma\setminus T)'\rightarrow \Hyp_n$ by 
$$\widetilde{e}_{s,n}(\rho) = 
\bigg(
\prod_{i = 1}^{g-1}\widetilde{\rho(a_i)}\widetilde{\rho(b_i)}\widetilde{\rho(a_i)^{-1}}\widetilde{\rho(b_i)^{-1}}
\bigg)
\cdot\widetilde{\rho(c_1)}\cdots\widetilde{\rho(c_p)}.$$
Indeed, by the definition of the relative Euler class, $e(\rho) = n$ implies $\widetilde{e}_{s,n}(\rho) = z^n\widetilde{\rho(\gamma)}$, where $\widetilde{\rho(\gamma)}$ is the lift of $\rho(\gamma)$ in $\Hyp_0$; hence $\widetilde{e}_{s,n}$ lies in $\Hyp_n$.
\\

The following Lemma \ref{plp_HP'} gives a main tool to show the connectedness of $\mathcal{EL}^s_n(a,b)'$.
\begin{lemma}\label{plp_HP'}
    Let $\Sigma = \Sigma_{g,p}$ with $g\geqslant 2$ and $p\geqslant 1$, and $T$ be a subsurface of $\Sigma$ homeomorphic to a one-holed torus.
    Let $s\in \{\pm1\}^{p}$ and $n \in \{\chi(\Sigma\setminus T)+ p_+(s), \dots, -\chi(\Sigma\setminus T) - p_-(s)\}$. Then the map $\widetilde{e}_{s,n}:HP^{s,0}_n(\Sigma\setminus T)'\rightarrow \Hyp_n$ satisfies the strong path-lifting property. 
\end{lemma}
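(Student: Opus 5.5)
We will prove the strong path-lifting property by decomposing $\widetilde{e}_{s,n}$ along the pants decomposition of $\Sigma\setminus T$ fixed above, and lifting paths one piece at a time. The image $\widetilde{e}_{s,n}(\rho)$ is the lift $z^n\widetilde{\rho(\gamma)}$ of $\rho(\gamma)=\rho(\beta_{g+m-3}\alpha_{g+m-1})$ (or of $\rho(\beta_{g+m-2}c_p)$ when $p$ is odd). Write $\widetilde{e}_{s,n}$ as an iterated composition of lifted products and lifted commutators. At the level of $\univcover$, the element $\widetilde{e}_{s,n}(\rho)$ equals $\widetilde{\rho(\beta_{g+m-3})}\,\widetilde{\rho(\alpha_{g+m-1})}$, where each factor is itself a product of the lifts attached to the subtree of pieces below it: commutator lifts for the $T_j$, products of $\Par_0$-lifts for the $P_i$, and their products along the chain $\beta_1,\dots,\beta_{g+m-3}$. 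These lifts lie in $\Hyp_{n_\bullet}\cup\Par_{n_\bullet}\cup\Ell_{\pm1}$ or similar components. The condition defining $HP^{s,0}_n(\Sigma\setminus T)'$ (non-abelian restrictions to $T_j$ and $P_i$, no decomposition curve sent to the identity) is exactly what places each intermediate map in the regime where the earlier submersion results apply.

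\emph{Step 1 (fibers are path-connected).} Fix $\widetilde H\in\Hyp_n$. A point of the fiber is determined by the values on the decomposition curves together with the choices on each piece. We will show connectedness by induction along the chain of pants $P'_{g+m-2},\dots,P'_1$, peeling off one piece at a time. For the top pair of pants, the set of pairs $(\widetilde{\rho(\beta_{g+m-3})},\widetilde{\rho(\alpha_{g+m-1})})$ with product $\widetilde H$ is handled by Proposition~\ref{gol4.6} when both factors are hyperbolic. The fiber over a fixed pair is then a product of fibers of the lower pieces. For a torus $T_j$, this lower fiber is connected by Theorem~\ref{thm_liftcommu}. For a pants $P_i$, it is connected by Lemma~\ref{parplp}. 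For the chain, it is connected by induction.

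The difficulty is that the intermediate curves $\beta_k,\alpha_l$ need not be hyperbolic on $HP'$; they may be elliptic or parabolic. The fiber therefore stratifies according to the types and $\univcover$-components of these intermediate values. We must show that the union of strata is connected. For this we use Proposition~\ref{prop_prodimage} to restrict which elliptic components can occur (only $\Ell_{\pm1}$), and Lemma~\ref{lem_evPsubm} together with Lemma~\ref{lem_plp} to connect each elliptic or parabolic stratum to the hyperbolic stratum by moving the intermediate value continuously while keeping the total product fixed.

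\emph{Step 2 (path lifting).} Because every point of $HP'$ has non-commuting pairs on the $T_j$ and $P_i$ and non-identity boundaries on the $P'_k$, the maps in the chain are submersions at every point. For the commutator maps this follows from Lemma~\ref{lem_evTsubm}; for the product maps on the $P'_k$ it follows from Lemma~\ref{lem_evPsubm}(1), since hyperbolic, elliptic and mixed factors lie in open sets; for the product maps on the $P_i$ it follows from Lemma~\ref{lem_evPsubm}(2). Their composite $\widetilde{e}_{s,n}$ is then a submersion of the manifold $HP^{s,0}_n(\Sigma\setminus T)'$ onto the open set $\Hyp_n$. Combined with Step 1, Lemma~\ref{lem_plp} gives the strong path-lifting property. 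Alternatively, a given path can be lifted directly: lift it through the top product using Proposition~\ref{prop_plpliftcommu} and Lemma~\ref{parplp}, then lift the resulting paths through each lower piece.

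\emph{Main obstacle.} The hard part is Step 1, namely connectedness of the fiber across the strata where intermediate decomposition curves change type. Proposition~\ref{gol4.6} only covers hyperbolic factors. The plan is to show that every point of the fiber can be deformed within the fiber to one where all $\beta_k$ and $\alpha_l$ are hyperbolic. The deformation uses the submersion property to push an elliptic or parabolic intermediate value into $\Hyp_{n_\bullet}$, while the compensating factor stays in its allowed image (Proposition~\ref{prop_evimage}, Theorem~\ref{thm_liftcommu}). Once all intermediate values are hyperbolic, the hyperbolic stratum, with the Euler numbers $n_\bullet$ redistributed, is connected by the inductive argument. We will additionally check that different distributions $(n_{T_j},n_{P'_k})$ summing to $n$ are connected through such type changes, for instance through $\Ell_{\pm1}$ intermediate values.
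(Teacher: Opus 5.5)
Your architecture is sound: you write $\widetilde{e}_{s,n}$ as an iterated product along the chain $\beta_1,\dots,\beta_{g+m-2}$, get submersivity from Lemmas \ref{lem_evTsubm} and \ref{lem_evPsubm}, and conclude with Lemma \ref{lem_plp} once the fibers are path-connected. The submersion part is fine, since a single non-commuting torus factor already makes the composite submersive.

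The gap is your Step 1, which carries essentially all the content of the lemma; the proposal describes the obstacle but does not overcome it.
\begin{itemize}
\item Proposition \ref{gol4.6}, after central translation, only handles fibers inside one hyperbolic block $\Hyp_a\times\Hyp_b$. On $HP^{s,0}_n(\Sigma\setminus T)'$, however, the intermediate values $\widetilde{\rho}_{\beta_k},\widetilde{\rho}_{\alpha_l}$ range over whole intervals of $\univcover$ that contain elliptic and parabolic components.
\item Your remedy, ``use the submersion property to push an elliptic or parabolic intermediate value into $\Hyp_{n_\bullet}$'', does not work. Submersivity is local: it gives openness, not a path \emph{inside the fiber} from a point with $\widetilde{\rho}_{\beta_k}$ elliptic to one with $\widetilde{\rho}_{\beta_k}$ hyperbolic while the other factors stay admissible.
\item It also does not connect the all-hyperbolic strata for different distributions $(n_{T_j},n_{P'_k})$, which sit in different blocks $\Hyp_a\times\Hyp_b$. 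Your sentence ``we will additionally check\dots'' is exactly the unproved claim.
\item The restriction ``only $\Ell_{\pm1}$'' is false for intermediate curves. Proposition \ref{prop_prodimage} says nothing about $\Ell_1\times\Ell_1$. Two commuting commutator values of $T_1,T_2$ in $\Ell_1$ (allowed in $HP'$) can multiply into $\Ell_2$, so higher elliptic components occur along the chain.
\end{itemize}

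The missing ingredient is a connectivity statement for two-factor products over arbitrary intervals. For intervals $I_1,I_2$ (open connected unions of the components $\Hyp_k,\Ell_k,\Par^\pm_k$), the map $P\colon (I_1\times I_2)\setminus P^{-1}(Z)\to P(I_1\times I_2)\setminus Z$ must have path-connected fibers and the strong path-lifting property (Lemma \ref{Prod_confib}). For odd $p$ you also need its variant for $I\times\Par^s_0$. You also need that $P(I_1\times I_2)\setminus Z$ is again an interval (Lemma \ref{Prod_open}). This last fact supplies the surjectivity onto $\Hyp_n$, which you assert without justification and which is needed to lift arbitrary paths.

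Proving Lemma \ref{Prod_confib} takes genuinely new work in two stages.
\begin{itemize}
\item \emph{Within blocks.} Show connectedness of fibers in each block $\Hyp\times\Hyp$, $\Hyp\times\Ell$, $\Ell\times\Ell$, over hyperbolic, elliptic and parabolic targets. The tools are path-lifting for the character map (Lemma \ref{lem_plpchar}), and resolving the $\GL$-versus-$\SL$ conjugacy ambiguity by signs of off-diagonal entries (Lemma \ref{lem_offdiag_Ell}). For $\Ell\times\Ell$ blocks you must prove that all fiber points have characters in a single one of the four components of the slice $[-2,2]^2\times\{z\}\setminus\Gamma$.
\item \emph{Across blocks.} Show, via connectedness of $P(\{\widetilde{C}\}\times\mathbb{B})$, that the fiber meets the common edge or vertex of any adjacent blocks it meets. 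Then push from edges and vertices into blocks, and prove that the adjacency graph of blocks met by the fiber is connected (Lemmas \ref{glue1}--\ref{gconn}).
\end{itemize}
With this in hand, the induction along the chain goes as you outline. In the top step use Lemma \ref{Prod_confib}, not Proposition \ref{prop_plpliftcommu}, which only concerns lifted commutators.
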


The proof of Lemma \ref{plp_HP'} can be found in Section \ref{plphp'}. Using Lemma \ref{plp_HP'}, we now prove Proposition \ref{connect_elab}.

\begin{proof}[Proof of Proposition \ref{connect_elab}]
As shown in the proof of Lemma \ref{density}, every non-empty $\mathcal{EL}^s_n(a,b)$ has positive measure, and $\ELns(a,b)'$ is a full-measure subset of $\ELns(a,b)$. Hence $\mathcal{EL}^s_n(a,b)$ is non-empty if and only if $\mathcal{EL}^s_n(a,b)'$ is non-empty. Therefore, the non-emptiness criterion directly follows from Lemma \ref{ELnonempty}.
\smallskip

By Lemma \ref{density}, it remains to show the connectedness of $\ELns(a,b)'$. Let $[\rho]$ and $[\rho']$ be arbitrary elements of $\ELns(a,b)'$. To construct a path in $\ELns(a,b)'$ connecting $[\rho]$ and $[\rho']$, we construct a path $\{\rho_t\}_{t\in [0,1]}$ of representations on $\pi_1(T)$ connecting $\rho_0 = \rho|_{\pi_1(T)}$ and $\rho_1 = \rho'|_{\pi_1(T)}$, then extend $\{\rho_t\}\interval$ to the entire $\pi_1(\Sigma)$. By the argument in the proof of
\cite[Lemma~6.6]{Marche-Wolff}, the restrictions
$\rho|_{\pi_1(T)}$ and $\rho'|_{\pi_1(T)}$
can be joined by a path $\{\rho_t\}_{t\in[0,1]}$ in the 
space of representations of \(\pi_1(T)\), where for all $t\in [0,1]$, $\rho_t(a)$ and $\rho_t(b)$ are non-commuting elliptic elements, and with at least one of them of infinite
order. By Lemma \ref{lem_torus}, the path $A_t=\rho_t(\gamma)$ lies in $\Hyp$. For each $t\in [0,1]$, let $\widetilde{A^n_t}$ be a lift of $A_t$ in $\Hyp_n$. Since the restrictions $\rho|_{\pi_1(\Sigma\setminus T)}, \rho'|_{\pi_1(\Sigma\setminus T)} \in HP^{s,0}_n(\Sigma\setminus T)'$, by Lemma \ref{plp_HP'}, we may lift $\{\widetilde{A^n_t}\}\interval$ to a path $\{\phi_t\}_{t\in[0,1]}\in HP^{s,0}_n(\Sigma\setminus T)'$ such that $\phi_0=\rho|_{\pi_1(\Sigma\setminus T)}$ and $\phi_1=\rho'|_{\pi_1(\Sigma\setminus T)}$. Since $\rho_t$ and $\phi_t$ agree on $\gamma$, by letting $\rho_t|_{\pi_1(\Sigma\setminus T)} = \phi_t$, we may extend $\rho_t$ to a path of representation on $\pi_1(\Sigma)$ which, by abuse of notation, we denote as $\rho_t$ again. Then $[\rho_t]\in \mathcal{EL}^s_n(a,b)'$ connecting $[\rho]$ and $[\rho']$.
\end{proof}

\subsection{Proof of Lemma \ref{plp_HP'}}\label{plphp'}

We first fix the terminology used in the proof. Let $P:\univcover\times \univcover \rightarrow\univcover$ be the product map on the universal cover $\univcover$ of $\psl$. Let $Z$ denote the center of $\univcover$. Denote by 
$$\mathscr{L} := \{\Hyp_k,\Ell_k,\Par_k^+,\Par_k^- \mid k\in\mathbb Z\}.$$
We call a non-empty open connected subset of $\univcover\setminus Z$ an \emph{interval in $\univcover$} if it is a union of elements of $\mathscr{L}$. The set $\mathcal{I}\setminus \{\mathrm{I}\}$ in Theorem \ref{thm_liftcommu} provides an example of an interval in $\univcover$.
\medskip

Recall from Section \ref{sec_preliminary} that the character map $\chi: \SL\times\SL\to \mathbb{R}^3$ is given by
    $$
    \chi(A,B) =
    \big(
    Tr(A), 
    Tr(B), 
    Tr(AB)\big).
    $$
By Proposition \ref{prop_char}, the composition $\kappa\circ \chi$ with the polynomial map $\kappa(x,y,z) := x^2 + y^2 + z^2 - xyz - 2$ gives the trace of the commutator $\tr[A,B]$. By abuse of notation, we also consider $\chi$ as a map defined on $\univcover\times \univcover$ by letting
$$
    \chi(\widetilde{A},\widetilde{B}) =
    \big(
    Tr(\widetilde{A}), 
    Tr(\widetilde{B}), 
    Tr(\widetilde{A}\widetilde{B})\big).
    $$
    
 \begin{lemma}\label{Prod_open}   
     Let $I_1$ and $I_2$ be intervals in $\univcover$, and let $D := (I_1\times I_2)\setminus P^{-1}(Z)$. Then the image $P(D)$ is also an interval in $\univcover$.
\end{lemma}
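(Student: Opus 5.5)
We need to show that $P(D)$ is a non-empty open connected subset of $\univcover\setminus Z$ that is a union of members of $\mathscr{L}$. We check four properties: membership in $\univcover\setminus Z$, openness, saturation by $\mathscr{L}$, and connectedness (together with non-emptiness).

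\emph{Image avoids the center, and openness.} The first property is immediate, since $D$ excludes $P^{-1}(Z)$. For openness, note that $I_1\times I_2$ is open in $\univcover\times\univcover$ and $P^{-1}(Z)$ is closed, so $D$ is open. The product map $P$ is a submersion everywhere, because its differential in the second variable alone is already onto; compare Lemma \ref{lem_evPsubm}(1) with $U=\SL$, lifted through the covering. A submersion is an open map, so $P(D)$ is open.

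\emph{Saturation by $\mathscr{L}$.} The sets $I_1$ and $I_2$ are unions of elements of $\mathscr{L}$. Each element of $\mathscr{L}$ is a union of conjugacy classes of $\univcover$, and in fact conjugation-invariant, since $\univcover$ is connected. Hence $D$ is invariant under simultaneous conjugation, and $P(D)$ is conjugation-invariant.

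To get full members of $\mathscr{L}$, fix $L\in\mathscr{L}$ with $L\cap P(D)\neq\emptyset$. Since $L$ is connected, it suffices to show that $L\cap P(D)$ is closed in $L$; it is already open. Each $L$ is a union of conjugacy classes, parametrized within $L$ by the trace (a single class for parabolic $L$). So the question reduces to whether the set of traces realized in $L\cap P(D)$ is closed in the trace range of $L$.

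I will argue this with a fiber-moving argument. Take $\widetilde C\in L\cap P(D)$, say $\widetilde C=\widetilde A\widetilde B$, and let $\widetilde C_t$ be a path in $L$ starting at $\widetilde C$. Consider the path $(\widetilde A,\widetilde A^{-1}\widetilde C_t)$. It stays in $D$ as long as $\widetilde A^{-1}\widetilde C_t$ stays in $I_2$, but that can fail. I would try instead the standard path-lifting for the product restricted to the open set $D$, using the character map $\chi$. The idea is to move the traces of $\widetilde A$ and $\widetilde B$ within $I_1$ and $I_2$ while keeping $Tr(\widetilde A\widetilde B)$ prescribed, invoking Lemma \ref{lem_plpchar} to lift trace paths to pairs. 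Because $I_1$ and $I_2$ contain whole components of the trace level sets, the lifted pairs stay in $I_1\times I_2$. The product then stays in $L$ by continuity, since its trace never hits $\pm2$ inappropriately within the chosen component.

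\emph{Connectedness and non-emptiness.} Non-emptiness is clear: $I_1\times I_2$ is open and non-empty, while $P^{-1}(Z)$ is a proper closed analytic subset. For connectedness, write $D=\bigcup_{\widetilde A\in I_1}\{\widetilde A\}\times\big(I_2\setminus \widetilde A^{-1}Z\big)$. For fixed $\widetilde A$, the image $\widetilde A\cdot(I_2\setminus\widetilde A^{-1}Z)=\widetilde A I_2\setminus Z$. Now $\widetilde A I_2$ is connected, and removing the discrete set $Z$ from a three-dimensional region leaves it connected. These connected sets vary continuously with $\widetilde A$ over the connected set $I_1$, and consecutive ones overlap because the sets are open and $P$ is continuous. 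Hence their union $P(D)$ is connected.

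\emph{Main obstacle.} The hard step is the saturation claim: showing that the image contains whole elements of $\mathscr{L}$, especially the full trace range for hyperbolic and elliptic components near their boundary parabolic strata. If the fiber-moving argument via Lemma \ref{lem_plpchar} gets messy, I would fall back on a case-free argument. The boundary of $P(D)$ inside $L$ would be a conjugation-invariant closed set of trace values. I would exclude such boundary points by showing that any limit of products $\widetilde A_k\widetilde B_k\to\widetilde C\in L$ can be realized with the factors staying in a compact part of $I_1\times I_2$, after conjugating to normalize. This requires a properness estimate on the character map away from the reducible locus, and it is where I expect most of the work.
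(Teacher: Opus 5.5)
\textbf{Gap: the saturation step.} Your arguments for openness, non-emptiness and connectedness are fine and essentially match the paper. The paper simply notes that $P(I_1\times I_2)$ is a connected open subset of the $3$-manifold $\univcover$, so removing the discrete set $Z$ keeps it connected; your slice-by-slice argument reaches the same conclusion with more work.

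The gap is saturation, which is the real content of the lemma. You leave it as a sketch plus an unexecuted fallback, and the sketch as written does not work. You propose to move the traces of $\widetilde A$ and $\widetilde B$, and you justify that the lifted pair stays in $I_1\times I_2$ because $I_1,I_2$ contain whole trace level sets of their components. That justification only covers paths along which the traces of the factors are \emph{constant}. Once $Tr(\widetilde A_t)$ varies it can cross $\pm2$, and $\widetilde A_t$ can pass from, say, $\Hyp_0$ through $\Par^{\pm}_0$ into an elliptic component, leaving $I_1$ if $I_1=\Hyp_0$. You also do not address the two hypotheses of Lemma \ref{lem_plpchar}:
\begin{itemize}
\item the starting pair must lie in $\Omega$, i.e.\ have non-commuting projections;
\item the path of characters must avoid $\Gamma=[-2,2]^3\cap\kappa^{-1}([-2,2])$.
\end{itemize}

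\textbf{What the paper does instead.} Its argument supplies exactly the missing pieces.
\begin{enumerate}
\item Conjugation invariance reduces the problem to hitting every trace value in $L$. Parabolic $L$ is a single conjugacy class, so that case is immediate.
\item If $[A,B]=\mathrm I$, one first perturbs $\widetilde B$ along a path $\widetilde B_t\subset I_2$ with $\widetilde C\widetilde B_t^{-1}\in I_1$. This is possible by openness, keeps the product $\widetilde C$ fixed, and ends at a pair whose projections do not commute.
\item For $L=\Hyp_k$, and for $L=\Ell_k$ when $|x|>2$ or $|y|>2$, one keeps $x=Tr(A)$ and $y=Tr(B)$ fixed and moves only the third coordinate linearly from $z$ to the target $z'$. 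This path avoids $\Gamma$, and its lift stays in $\Omega$. So $\widetilde A_t,\widetilde B_t$ are non-central with constant trace and remain in their original elements of $\mathscr{L}$; this is precisely why the factor traces must not move. The product's trace stays in the open trace range of $L$, so the product stays in $L$.
\item In the remaining elliptic case $|x|,|y|\leqslant 2$, the straight $z$-path can run into $\Gamma$. Here one uses that $[-2,2]^3\setminus\kappa^{-1}([-2,2])$ has four components. For every $z'\in(-2,2)$, the component containing $(x,y,z)$ contains a point $(x',y',z')$, and one lifts a path inside that component.
\end{enumerate}
None of this needs your properness estimate or an open--closed argument. Direct path-lifting realizes every trace in $L$ outright, so closedness of $L\cap P(D)$ in $L$ never has to be proved.
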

\begin{proof}
    As the restriction of the product map to the open set $D$, the map $P:D\rightarrow P(D)$ is a submersion. 
    Thus, the map $P$ is open, hence the image $P(D) = P(I_1\times I_2)\setminus Z$ is open. 
    Moreover, since both $I_1$ and $I_2$ are open and connected, the image $P(I_1\times I_2)$ is a connected open subset of the $3$-manifold $\univcover$. Then, since $Z$ is discrete in $\univcover$, the complement $P(D) = P(I_1\times I_2)\setminus Z$ is connected.
    It remains to show that $P(D)$ is a union of elements of $\mathscr{L}$. To this end, for any $S\in \mathscr{L}$, we show that the intersection $S\cap P(D)$ is either an empty set or $S$ itself. For given $k\in \mathbb{Z}$, we will consider separately the cases $S = \Hyp_k$ or $\Ell_k$  and $S = \Par^+_k$ or $\Par^-_k$.
    \medskip
    
    We first assume that $\Hyp_k\cap P(D)$ is non-empty. Note that if $\Hyp_k\cap P(D)$ contains an element $\widetilde{H}$, then for any $\widetilde{g}\in \univcover$, the conjugate $\widetilde{g}\widetilde{H}\widetilde{g}^{-1}$ of $\widetilde{H}$ is also contained in $\Hyp_k\cap P(D)$. 
    Indeed, if $(\widetilde{A}, \widetilde{B})\in I_1\times I_2$ maps to $\widetilde{H}$ under $P$, then since every interval is invariant under conjugation,
    we have $\widetilde{g}\widetilde{A}\widetilde{g}^{-1}\in I_1$ and $\widetilde{g}\widetilde{B}\widetilde{g}^{-1}\in I_2$; and we have $P(\widetilde{g}\widetilde{A}\widetilde{g}^{-1}, 
    \widetilde{g}\widetilde{B}\widetilde{g}^{-1}) = \widetilde{g}\widetilde{H}\widetilde{g}^{-1}$. Therefore, it suffices to show that every conjugacy class in $\Hyp_k$ has a
    representative lying in $P(D)$.
    Since the conjugacy classes in $\Hyp_k$ are exactly the fibers of the trace map $Tr: \Hyp_k \to \mathbb{R}$, we show that for each $z'\in Tr(\Hyp_k)$, there exists an $\widetilde{H}'\in \Hyp_k\cap P(D)$ such that $\tr(\widetilde{H}')=z'$.
    \smallskip
    
    
    
    Let $\widetilde{H}\in \Hyp_k \cap P(D)$, and let $\widetilde{A}\in I_1,\widetilde{B}\in I_2$ such that $P(\widetilde{A},\widetilde{B})=\widetilde{H}$. 
    Let $A,B,H$ be the projection of $\widetilde{A},\widetilde{B},\widetilde{H}$, respectively, to $\SL$. Without loss of generality, we can assume $[A,B]\neq \mathrm{I}$. Indeed, if $[A,B] = \mathrm{I}$, then since $I_1$ and $I_2$ are open in $\univcover$, we can choose a path $\{\widetilde{B_t}\}\interval$ in $I_2$ starting from $\widetilde{B_0} = \widetilde{B}$ such that the path $\{\widetilde{H}\widetilde{B_t}^{-1}\}\interval$ is contained in $I_1$,  and the projection $B_1$ of $\widetilde{B_1}$ satisfies $[HB_1^{-1}, B_1]\neq \mathrm I$; and we can replace $(\widetilde{A}, \widetilde{B})$ with $(\widetilde{H}\widetilde{B_1}^{-1}, \widetilde{B_1})$. 
    Then, as the pair $(A,B)$ is contained in $\Omega$ in Lemma \ref{lem_plpchar}, we will use Lemma \ref{lem_plpchar} to construct a path $\{(\widetilde{A_t}, \widetilde{B_t})\}\interval$ in $D$ such that the product $P(\widetilde{A_1}, \widetilde{B_1})$ has trace $z'$. 
    \smallskip
    
    If $k$ is even, then we have $Tr(\Hyp_k) = (2,\infty)$. Let $z'\in (2,\infty)$, and $\chi(A,B)=(x,y,z)$ where $z = Tr(H) \in (2,\infty)$. By letting $\chi_t := (x,y,(1-t)z+ tz')$ for $t\in [0,1]$,
    we define a path $\{\chi_t\}\interval$ in $\mathbb{R}^3$ connecting $\chi_0 = (x,y,z)$ and $\chi_1 = (x,y,z')$.  Since $(1-t)z+ tz' \in (2,\infty)$ for all $t\in [0,1]$, the path $\{\chi_t\}\interval$ is contained in $ \mathbb{R}^3\setminus \big([-2,2]^3 \cap \kappa^{-1}([-2,2])\big)$. As $[A,B]\neq \mathrm I$, by Lemma \ref{lem_plpchar}, there is a path $\{(A_t,B_t)\}_{t\in[0,1]}$ in $\SL\times\SL$ starting at $(A_0,B_0) =(A,B)$ such that for all $t\in [0,1]$, $\chi(A_t,B_t) = \chi_t$. Let $\{(\widetilde{A_t},\widetilde{B_t})\}_{t\in[0,1]}$ be a lift of the path $\{(A_t,B_t)\}_{t\in[0,1]}$ to $\univcover\times \univcover$ starting at $(\widetilde{A_0},\widetilde{B_0}) =(\widetilde{A},\widetilde{B})$.
    Since $\{\widetilde{A_t}\}\interval$ is a path of non-central elements of the same trace $x$, both $\widetilde{A_1}$ and $\widetilde{A_0} = \widetilde{A}$ are contained in the same element in $\mathscr{L}$. Similarly, $\widetilde{B_1}$ and $\widetilde{B}$ are in the same element in $\mathscr{L}$.
    Therefore, the pair $(\widetilde{A_1},\widetilde{B_1})$ is also in $I_1\times I_2$. Moreover, since $Tr(\widetilde{A_t}\widetilde{B_t})>2$ for all $t\in [0,1]$, we have $\widetilde{A_1}\widetilde{B_1}\in \Hyp_k$. Since $Tr(\widetilde{A_1}\widetilde{B_1})=z'$,
    setting $\widetilde{H}' = \widetilde{A_1}\widetilde{B_1}$ gives us the desired result. 
    If $k$ is odd, then we have $Tr(\Hyp_k) = (-\infty, -2)$, and the proof follows verbatim from the case when $k$ is even.
    \medskip

    Next, we assume that $\Ell_k\cap P(D)$ contains an element $P(\widetilde{A},\widetilde{B})$ where $\widetilde{A}\in I_1,\widetilde{B}\in I_2$. 
    As in the case $S = \Hyp_k$, it suffices to show that every conjugacy class in $\Ell_k$ intersects $P(D)$, which we will show using Lemma \ref{lem_plpchar} assuming $[A,B] \neq \mathrm I$.
    Let $(x,y,z) = \big(Tr(\widetilde{A}), Tr(\widetilde{B}),  Tr(\widetilde{A}\widetilde{B})\big)$, and let $z'\in Tr(\Ell_k) = (-2,2)$. 
    If 
    $|x|>2$ or $|y|>2$, then we can define the path $\{\chi_t\}\interval$ in $ \mathbb{R}^3\setminus \big([-2,2]^3 \cap \kappa^{-1}([-2,2])\big)$ as in the previous paragraph, and the proof follows verbatim. If $|x|\leqslant 2$ and $|y|\leqslant 2$, then
    $\chi(A,B) = (x,y,z)$ lies in $[-2,2]^3 \setminus \kappa^{-1}([-2,2])$, 
    which has four connected components (see \cite[Theorem 4.3 and Figure 2]{goldman}).
    Note that for every $z' \in (-2,2)$, there exist $x', y'\in (-2,2)$ such that the point $(x',y',z')$ lies in the same connected component as $(x,y,z)$.
    Then there is a path $\{\chi_t\}_{t\in[0,1]}$ in  
    the connected component  connecting $\chi_0=(x,y,z)$ and $\chi_1=(x',y',z')$. 
    As in the previous paragraph, by Lemma \ref{lem_plpchar}, we can lift $\{\chi_t\}\interval
    $ to a path $\{(\widetilde{A_t}, \widetilde{B_t})\}\interval$ which stays in $I_1\times I_2$; and the rest of the proof follows verbatim.
     \medskip
     
     Finally, for $s\in \{\pm\}$, we assume that $\Par^s_k\cap P(D)$ contains an element $\widetilde{P} = P(\widetilde{A},\widetilde{B})$ where $\widetilde{A}\in I_1,\widetilde{B}\in I_2$. 
     Any other element $\widetilde{P'}\in \Par_k^s$ is conjugate to $\widetilde{P}$, i.e., there is a $\widetilde{g}\in \univcover$ such that $\widetilde{P}=\widetilde{g}\widetilde{P'}\widetilde{g}^{-1}$. 
     Then 
     $\widetilde{P'} = P\big(\widetilde{g}^{-1}\widetilde{A}\widetilde{g},\widetilde{g}^{-1}\widetilde{B}\widetilde{g})$ is also in $P(D)$.
     This completes the proof.
\end{proof}

    \begin{lemma}\label{Prod_confib}
    Let $P: \univcover\times\univcover\to\univcover$ be the product map.
    \begin{enumerate}[(a)]
        \item 
         Let $I_1$ and $I_2$ be intervals in $\univcover$, and let $D := (I_1\times I_2)\setminus P^{-1}(Z)$. Then the product map $P:D\rightarrow P(D)$  has path-connected fibers and satisfies the strong path-lifting property.
    \item 
        Let $I$ be an interval in
        $\univcover$. For $l\in \mathbb{Z}$ and $s\in \{\pm\}$, let 
        $D := (I\times \Par_0^s )\cap P^{-1}(\Hyp_l)$.
        Then the product map $P:D \rightarrow P(D)$ has path-connected fibers and satisfies the strong path-lifting property.
    \end{enumerate}
    \end{lemma}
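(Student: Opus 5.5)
For both parts the plan is to apply Lemma \ref{lem_plp}. It reduces the strong path-lifting property to two facts: that $P$ is a submersion on the domain, and that every fiber is path-connected. The submersion part is immediate. In (a), $D$ is open in $\univcover\times\univcover$ and the product map is a submersion there by Lemma \ref{lem_evPsubm}(1), lifted to the cover. In (b), $D$ is an open subset of $I\times\Par^s_0$, which is a manifold, and Lemma \ref{lem_evPsubm}(1) (the case $U\times\Par$) again gives a submersion. The target $P(D)$ is open: in (a) it is an interval by Lemma \ref{Prod_open}, and in (b) it is open in $\Hyp_l$ by the same openness argument. So the real work is connectedness of the fibers.

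\textbf{Fibers in (a).} Fix $\widetilde{C}\in P(D)$, which is non-central. The fiber is
$$\{(\widetilde{A},\widetilde{C}\widetilde{A}^{-1}) : \widetilde{A}\in I_1,\ \widetilde{C}\widetilde{A}^{-1}\in I_2\},$$
which is homeomorphic to $I_1\cap \widetilde{C}I_2^{-1}$. I will show this set is path-connected in two steps.

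First, connect points of the fiber that have the same characters. By Proposition \ref{prop_char}(2), pairs $(A,B)$ with prescribed character and $[A,B]\neq \mathrm I$ form a single $\GL$-orbit. Those with the additional constraint $AB=C$ are then related by conjugation by the centralizer of $C$ in $\GL$, up to the disconnected part of that orbit. The centralizer of a non-central element is a one-parameter group, possibly times $\pm$, so its identity component acts by paths. The non-identity components would need a sign/orientation check that uses the fact that $I_1,I_2$ are fixed components of $\mathscr L$.

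Second, move the character. Along the fiber, $Tr(\widetilde{A}\widetilde{B})$ is fixed and equal to $z_0$, so the characters $(x,y)$ range over the slice $\{(x,y,z_0)\}\setminus\Gamma$. Intersected with the trace ranges allowed by $I_1,I_2$ (unions of intervals of $\mathscr L$, hence unions of trace intervals), this region is connected. The connectedness is the step I expect to be the main obstacle: one has to use the geometry of $\kappa^{-1}([-2,2])\cap[-2,2]^3$ from \cite[Theorem 4.3]{goldman}, as in the proof of Lemma \ref{Prod_open}, to see that removing $\Gamma$ does not disconnect the slice within the allowed strip. Given a path of characters, Lemma \ref{lem_plpchar} lifts it to a path of pairs. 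The product then has constant trace and stays in the same $\mathscr L$-component, so by Lemma \ref{lem_conjugacypath_const} we can conjugate back to keep the product exactly $\widetilde{C}$. Commuting pairs ($[A,B]=\mathrm I$) should be handled by a small perturbation inside the fiber, as in the proof of Lemma \ref{Prod_open}.

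\textbf{Fibers in (b).} Write the fiber over $\widetilde{H}\in\Hyp_l$ as $\{\widetilde{P}\in\Par^s_0 : \widetilde{H}\widetilde{P}^{-1}\in I\}$. I would parametrize $\Par^s_0$ explicitly, conjugating $H$ to be diagonal, as a cone minus a point, i.e.\ a punctured plane-like surface with coordinates given by the fixed point at infinity and the translation length. In these coordinates $Tr(HP^{-1})$ is a linear function of an off-diagonal product, similar to the computation behind Lemma \ref{parplp}. The condition $\widetilde{H}\widetilde{P}^{-1}\in I$ then becomes a condition on that trace together with the component index. Lemma \ref{lem_offdiag} and Lemma \ref{lem_offdiag_Ell} determine the component from the signs of the off-diagonal entries, and this identifies the fiber as a connected region, namely a half-plane or sector in these coordinates. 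Path-connectedness of the fibers together with Lemma \ref{lem_plp} then gives the strong path-lifting property in both cases.
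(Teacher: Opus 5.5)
Your reduction to path-connectedness of fibers via Lemma \ref{lem_plp} matches the paper. So does the mechanism you use at fixed character: lift a path of characters by Lemma \ref{lem_plpchar}, then conjugate back to $\widetilde C$ by Lemma \ref{lem_conjugacypath_const}.

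\textbf{The gap in (a).} It lies in your second step. The character $\big(Tr(\widetilde A),Tr(\widetilde B),Tr(\widetilde A\widetilde B)\big)$ does not see which element of $\mathscr L$ a point lies in. For example, $\Hyp_0$ and $\Hyp_2$ have the same trace range, as do $\Ell_1$, $\Ell_{-1}$ and $\Ell_2$. Also, when $Tr(\widetilde A)$ crosses $\pm2$ along a lifted path, $\widetilde A$ may pass through $\Par^+_k$ or through $\Par^-_k$, and so enter either neighbouring component. Hence the set of characters of the fiber is not ``the slice intersected with the trace ranges allowed by $I_1,I_2$'', and a lift of a path in that region need not stay in $I_1\times I_2$.

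Moreover, the region you claim is connected is not. Take $I_1=I_2=\Ell_1$ and $\widetilde C\in\Ell_1$. Elliptic pairs have $\kappa\geqslant 2$ by Lemma \ref{lem_torus}, so the region is $(-2,2)^2\times\{z_0\}\setminus\Gamma=\{\kappa>2\}$, which has four corner components. What is actually needed is that all fiber points in a fixed block $\Ell_m\times\Ell_n$ have characters in a single corner. The paper proves this by an explicit sign computation with Lemma \ref{lem_offdiag_Ell} (Figure \ref{ellfib}), including a contradiction argument for $z\leqslant 0$.

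Your first step also leaves the $\GL\setminus\SL$ ambiguity open. The paper settles it by Lemma \ref{lem_offdiag_Ell} for blocks with an elliptic factor, and cites Proposition \ref{gol4.6} for $\Hyp_m\times\Hyp_n$ over hyperbolic $\widetilde C$. A small slip as well: the fiber is $\{(\widetilde A,\widetilde A^{-1}\widetilde C)\}$, not $\{(\widetilde A,\widetilde C\widetilde A^{-1})\}$.

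\textbf{Passing between blocks.} Even granting connectivity inside each block, your plan has no mechanism for moving between blocks, and this is the heart of the paper's proof. The set $I_1\times I_2$ is a union of blocks $\Hyp_m\times\Hyp_n$, $\Hyp_m\times\Ell_n$ and $\Ell_m\times\Ell_n$, glued along edges (one factor parabolic) and vertices (both factors parabolic). A priori the fiber could meet two blocks in pieces that never join. The paper closes this in three steps:
\begin{enumerate}
\item If the fiber meets two adjacent blocks, it meets their common edge, or the common vertex or an incident edge (Lemma \ref{glue1}). This holds because $P(\{\widetilde C\}\times\mathbb{B}_{-n})\setminus Z$ is connected, while each $\Par^{s}_k$ separates $\univcover\setminus Z$.
\item Fiber points on edges and vertices can be pushed into the adjacent blocks within the fiber (Lemmas \ref{glue2}, \ref{glue3}).
\item The graph of blocks meeting the fiber is connected (Lemma \ref{gconn}), again by the separation argument.
\end{enumerate}

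\textbf{Part (b).} By contrast, your route for (b) can be made to work, and it is a genuine alternative to the paper's edge argument (Lemmas \ref{edgecon1}--\ref{edgecon3}). Write $\Par^s_0\cong(\mathbb{R}^2\setminus\{0\})/\pm$ via fixed vectors $(a,b)$, with $H$ diagonal. Then $Tr(HP^{-1})$ is affine in $ab$ with nonzero slope, and the level sets $Tr=\pm2$ are connected. So $\widetilde H(\Par^s_0)^{-1}$ is cut into five consecutive pieces, each lying in one element of $\mathscr L$. Its intersection with an interval is therefore $\{ab\in J\}$ for an interval $J$, which is connected. Note that what drives this is the connectivity of these level and sublevel sets. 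It is not Lemmas \ref{lem_offdiag} and \ref{lem_offdiag_Ell}, which cannot detect the index of $\Hyp_k$.
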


For the proof of Lemma \ref{Prod_confib}, 
we refer to open subsets of $\univcover \times \univcover$ of the form $$\Hyp_m\times \Hyp_n,\Ell_m \times \Ell_n,\text{ or }\Hyp_m\times \Ell_n \text{ for }n, m\in \mathbb{Z}$$ as \emph{blocks}. For $\widetilde{C}\in P(D)$, we will first show that the fiber $P^{-1}(\widetilde{C})$ is path-connected in the blocks in $D$. Letting $l\in \mathbb{Z}$ and $s\in \{\pm\}$, the proof for the cases $\widetilde{C}\in \Hyp_l, \widetilde{C}\in \Ell_l,$ and $\widetilde{C}\in \Par^s_l$ will be given in 
Subsections \ref{hypfibsec}, \ref{ellfibsec} and \ref{parfibsec} respectively. In Subsection \ref{sec_glue}, we will construct a path in $P^{-1}(\widetilde{C})$ that connects two points in different blocks, which together will imply the connectedness of  $P^{-1}(\widetilde{C})$ in $D$.

\subsubsection{Fibers of Hyperbolic Elements}\label{hypfibsec}

The following Lemma \ref{Hypfiber} shows the path-connectedness of $P^{-1}(\widetilde{C})$ in the blocks for hyperbolic $\widetilde{C}$.
\begin{lemma}\label{Hypfiber}
Let $P: \univcover\times \univcover\to \univcover$ be a product map, and let $m, n, l\in \mathbb{Z}$.
Consider the following restrictions of $P$:
\begin{align*}
P_1:&\ (\Hyp_m\times \Hyp_n) \cap P^{-1}(\Hyp_l) \to \Hyp_l,
\\
P_2: &\ (\Hyp_m\times \Ell_n) \cap P^{-1}(\Hyp_l) \to \Hyp_l, \text{ and} 
\\
P_3: &\ (\Ell_m \times \Ell_n) \cap P^{-1}(\Hyp_l) \to \Hyp_l.
\end{align*}
For each $i\in \{1,2,3\}$, every non-empty fiber of $P_i$ is path-connected.
\end{lemma}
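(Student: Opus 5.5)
Fix $\widetilde{C}\in\Hyp_l$ and two points $(\widetilde{A}_0,\widetilde{B}_0)$, $(\widetilde{A}_1,\widetilde{B}_1)$ in the fiber $P_i^{-1}(\widetilde{C})$. The plan is to reduce the connectedness question to the character map $\chi$, projecting to $\SL\times\SL$ and using Proposition \ref{prop_char} and Lemma \ref{lem_plpchar}.

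\textbf{Parametrizing the fiber.} A point of the fiber is determined by $\widetilde{A}$, since $\widetilde{B}=\widetilde{A}^{-1}\widetilde{C}$. Projecting to $\SL$, the fiber therefore maps injectively to the set of pairs $(A,B)$ with $AB=C$, $A$ in the projection of the given component, and $B$ in the projection of the other given component. The component labels are recovered by continuity along any path in which the traces never reach $\pm2$.

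\textbf{Reduction to a space of characters.} Since $C$ is hyperbolic and the two factors do not commute with $C$ generically, $\kappa(\chi(A,B))=\tr[A,B]\neq 2$ on an open dense part of the fiber. I will first treat the commuting locus, where $A$ and $B$ lie in the one-parameter group of $C$. For $P_1$ this locus is an arc. For $P_2$ and $P_3$ it is empty, because an elliptic element cannot commute with a hyperbolic $C$. I will then show that this locus can be pushed off by a small deformation.

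On the noncommuting part, Proposition \ref{prop_char}(2) says that the fiber of $\chi$ over $(x,y,z)$ is one $\GL$-orbit, i.e.\ two $\SL$-orbits. Imposing $AB=C$ with $C$ fixed leaves the stabilizer of $C$, a one-parameter group, which is connected, acting on each orbit. So the fiber of $P_i$ fibers over the set
\[
Q_i=\{(x,y): \text{the lifts lie in the prescribed components},\ z=\tr C\}
\]
with fiber consisting of one or two circle-like pieces. The two $\SL$-orbits differ by conjugation by an element of $\GL\setminus\SL$.

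\textbf{Steps.}
\begin{enumerate}[(1)]
\item Show that $Q_i$ is connected. For $P_1$ it is $\{|x|>2,|y|>2\}$ with signs fixed by the parities of $m,n$. For $P_2$ it is that set crossed with $(-2,2)$. For $P_3$ it is $(-2,2)^2$. In each case the condition that $\widetilde{A}\widetilde{B}$ lands in $\Hyp_l$ cuts out a subregion, described by which component of $\mathbb{R}^3\setminus\Gamma$ one is in, as in Goldman's Figure 2. Its connectedness should be checked directly from the shape of $\kappa$ near $z=\tr C$ fixed.
\item Show that the two $\SL$-orbits are joined inside the fiber. I expect them to be joined where the orbits degenerate. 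Alternatively, the component data ($\Hyp_l$ versus $\Hyp_{l\pm1}$, or $\Ell_n$ versus $\Ell_{-n}$) may distinguish them, in which case only one orbit occurs and there is nothing to join. For $P_3$, Lemma \ref{lem_offdiag_Ell} fixes the signs of the off-diagonal entries, and so selects a single orbit.
\item Lift paths in $Q_i$ to the fiber using the path-lifting property of Lemma \ref{lem_plpchar}. For each point, conjugate by the stabilizer of $C$ to restore $AB=C$; the conjugating elements vary continuously by Lemma \ref{lem_conjugacypath_const}.
\end{enumerate}

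\textbf{Expected obstacle.} The main difficulty is step (2), together with the exact description of $Q_i$ for $P_3$, where $x,y$ lie in $(-2,2)$ and the components of $[-2,2]^3\setminus\kappa^{-1}([-2,2])$ interact with the index $l$. My first approach would be to use Lemma \ref{lem_offdiag_Ell} and Lemma \ref{lem_offdiag} to show that the lift data force a unique $\SL$-orbit, making the fiber a principal bundle over a connected base with connected fiber. For $P_1$, I would alternatively cite Proposition \ref{gol4.6} in the case $m=n=0$, and translate by central elements $z^{m},z^{n}$ to handle general $m,n$, since $P(z^m\widetilde{A},z^n\widetilde{B})=z^{m+n}\widetilde{A}\widetilde{B}$.
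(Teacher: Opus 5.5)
This is essentially the paper's proof. For $P_1$ the paper uses exactly your fallback: the fiber is homeomorphic to the $\Hyp_0\times\Hyp_0$ fiber over $z^{-m-n}\widetilde{C}$, and then Proposition~\ref{gol4.6} applies. For $P_2,P_3$ it lifts the straight segment from $(x_1,y_1,z)$ to $(x_2,y_2,z)$ by Lemma~\ref{lem_plpchar}, and uses Lemma~\ref{lem_offdiag_Ell} on the elliptic factor to exclude the $\GL\setminus\SL$-orbit. It then restores the product to $\widetilde{C}$ by a conjugating path from Lemma~\ref{lem_conjugacypath_const}. That path is not in the stabilizer of $C$, as you wrote; the stabilizer only enters at the end, as a path in the one-parameter subgroup through the leftover element commuting with $C$.

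The difficulty you anticipate for $P_2,P_3$ does not arise. Since $|z|>2$, the whole segment lies outside $[-2,2]^3\supset\Gamma$, and the constant trace keeps the product in $\Hyp_l$. By contrast, your character-only sketch would not settle $P_1$: both $\SL$-orbits can occur there, and the locus $\kappa=2$ contains non-commuting reducible pairs, not just the commuting arc. So the citation of Proposition~\ref{gol4.6} is genuinely needed.
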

\begin{proof}
 We first prove the lemma for $P_1:(\Hyp_m \times \Hyp_n) \cap P^{-1}(\Hyp_l) \to \Hyp_l$.
 Let $\widetilde{C} \in \Hyp_l$ 
 such that $P_1^{-1}(\widetilde{C})$ is non-empty.
 Since $P^{-1}(\widetilde{C}) \cap (\Hyp_m\times \Hyp_n)$ is homeomorphic to $P^{-1}(z^{-m-n}\widetilde{C})\cap (\Hyp_0\times \Hyp_0)$, by Proposition \ref{gol4.6}, $P_1^{-1}(\widetilde{C})$ is path-connected.
 \smallskip

  We now show the lemma for $P_2$; and the proof for $P_3$ follows verbatim. Let $\widetilde{C}\in \Hyp_l$, and let $(\widetilde{A_1},\widetilde{B_1})$ and $(\widetilde{A_2},\widetilde{B_2})$ be in the fiber $P_2^{-1}(\widetilde{C})$. Let $(A_1,B_1)$ and $(A_2,B_2)$ respectively be their projections to $\SL$.

 \smallskip

 First, we construct a path connecting $(\widetilde{A_1}, \widetilde{B_1})$ to $(\widetilde{A'_2}, \widetilde{B'_2})$ in $\Hyp_m\times \Ell_n$ such that $\chi(\widetilde{A'_2}, \widetilde{B_2'})=\chi(\widetilde{A_2}, \widetilde{B_2})$, where $\chi$ is the character map. To this end, we use the path-lifting property of $\chi$ in Lemma \ref{lem_plpchar}.
 Let $\chi(A_1,B_1) = (x_1,y_1,z)$ and $\chi(A_2,B_2) = (x_2,y_2,z)$, where $z = Tr(\widetilde{C})$. For $t\in [0,1]$, let $x^t = x_2t+(1-t)x_1$, $y^t = y_2t + (1-t)y_1$, $z^t =z$, and let $\chi_t=(x^t,y^t,z)$. This defines a path $\{\chi_t\}_{t\in[0,1]}$ connecting $(x_1,y_1,z)$ and $(x_2,y_2,z)$. 
 Note that both $x_1$ and $x_2$ lie in the interval which is either $(-\infty,-2)$ or $(2,\infty)$; and we have $y_1, y_2 \in (-2,2)$ and $|z|>2$. Therefore, $(x_i,y_i,z) \in \mathbb{R}^3\backslash ([-2,2]^3 \cap \kappa^{-1}[-2,2])$ for $i\in \{1,2\}$, and the path $\{\chi_t\}\interval$ is contained in $ \mathbb{R}^3\backslash ([-2,2]^3 \cap \kappa^{-1}[-2,2])$. Notice that $A_1$ is hyperbolic and $B_1$ is elliptic, thus $[A_1,B_1]\neq \mathrm I$. By Lemma \ref{lem_plpchar}, we may lift $\{\chi_t\}\interval$ starting at $({A_1},{B_1})$, which we can further lift to a path $\{\widetilde{\chi_t}\}\interval$ in $ \univcover\times \univcover$ starting at $(\widetilde{A_1},\widetilde{B_1})$, where we denote by $(\widetilde{A_2'}, \widetilde{B_2'})$ as its endpoint. Since $x_t$ lies in $(-\infty,-2)$ or $(2,\infty)$ and $y_t \in (-2,2)$ for every $t\in [0,1]$, the path $\{\widetilde{\chi_t}\}\interval$ is contained in $\Hyp_m \times \Ell_n$.
 \smallskip
 
 Next, we construct a path connecting $(\widetilde{A_2'}, \widetilde{B_2'})$ to $(\widetilde{A_2}, \widetilde{B_2})$ in $\Hyp_m\times \Ell_n$.
 Note that $\kappa(x_2,y_2,z)\neq 2$, as otherwise $y_2\in (-2,2)$ implies $x_2\in [-2,2]$. Then by proposition \ref{prop_char},  $(A_2',B_2')$ is $\GL$-conjugate to $(A_2,B_2)$. 
 Let $B_2(1,2)$ and $B_2'(1,2)$ denote the (1,2)-entries of the matrices $B_2$ and $B'_2$, respectively. As (1,2)-entries of elliptic elements, $B_2(1,2)$ and $B_2'(1,2)$ are nonzero. 
 If $B_2$ and $B_2'$ are $\GL\backslash \SL$-conjugate, then $sgn(B_2(1,2)) = -sgn(B_2'(1,2))$. Then by Lemma \ref{lem_offdiag_Ell}, this contradicts that they are both projections of elements in $\Ell_n$. Hence, they must be $\SL$-conjugate. Let $g \in \SL$ such that $g(A_2',B_2')g^{-1} = (A_2,B_2)$ and let $\{g_t\}\interval$ be a path in $\SL$ from $\mathrm{I}$ to $g$. Let $\{\widetilde{g_t}\}\interval$ be the lift of $\{g_t\}\interval$ in $\univcover$ starting from $\widetilde{g_0} = \mathrm{I} \in \univcover$. Then $\big\{\widetilde{g_t}(\widetilde{A_2}',\widetilde{B_2}')\widetilde{g_t}^{-1}\big\}\interval$ is a path in $\Hyp_m \times \Ell_n$ from $(\widetilde{A_2}',\widetilde{B_2}')$ to $(\widetilde{A_2},\widetilde{B_2})$. Composing this path with $\widetilde{\chi_t}$, we get a path $\{(\widetilde{P_t}^1,\widetilde{P_t}^2)\}\interval$ from $(\widetilde{A_1},\widetilde{B_1})$ to $(\widetilde{A_2},\widetilde{B_2})$. 
 \smallskip

Finally, using the path $\big\{(\widetilde{P_t}^1,\widetilde{P_t}^2)\big\}\interval$, we construct a path in $P^{-1}(\widetilde{C})$ connecting $(\widetilde{A_1}, \widetilde{B_1})$ and $(\widetilde{A_2}, \widetilde{B_2})$.
Let $\widetilde{C_t} = \widetilde{P_t}^1\widetilde{P_t}^2$for $t\in [0,1]$ which defines a path starting from $\widetilde{C}$. From the construction above, $Tr(\widetilde{C}_t) =z$ for all $t\in [0,1]$, thus $\{\widetilde{C_t}\}\interval$ lies in the same conjugacy class of $\widetilde{C}\in \Hyp_l$. Then by Lemma \ref{lem_conjugacypath_const}, we have a path $\{h_t\}\interval \subset \psl$ such that $h_0 = \pm \mathrm{I}$, and $\pm C_t = \pm h_tCh_t^{-1}$ for all $t\in [0,1]$. 
 Letting $\{\widetilde{h}_t\}\interval$ be the lift of $\{h_t\}\interval$ starting at $\mathrm{I}\in \univcover$,  $\{\widetilde{h_t}^{-1}\widetilde{P}_t^1\widetilde{P}_t^2\widetilde{h}_t\}\interval$ is a path from $\widetilde{C}$ to $\widetilde{h_1}^{-1}\widetilde{C}\widetilde{h_1}$. Notice that this is a lift of the constant path $\pm C$ starting at $\widetilde{C}$ and is hence equal to $\widetilde{C}$ for all $t\in [0,1]$. 
 Then the path $\{\widetilde{h_t}^{-1}(\widetilde{P_t}^1,\widetilde{P_t}^2)\widetilde{h_t}\}\interval$ connects $(\widetilde{A_1},\widetilde{B_1})$ and $\widetilde{h_1}^{-1}(\widetilde{A_2},\widetilde{B_2})\widetilde{h_1}$ in $P_2^{-1}(\widetilde{C})$. Since $\widetilde{h_1}^{-1}\widetilde{A_2}\widetilde{B_2}\widetilde{h_1} = \widetilde{h_1}^{-1}\widetilde{C}\widetilde{h_1}=\widetilde{C}$, we have $\pm h_1^{-1}Ch_1 = \pm C$. Thus, $\pm h_1$ commutes with $\pm C$. 
 Let $\widetilde{h'_1}$ be the lift of $h_1$ in $\Hyp_0$, and let $\{\widetilde{h'_t}\}\interval$ be a path connecting $\mathrm{I}$ and $\widetilde{h'_1}$ in the one-parameter subgroup of $\univcover$ generated by $\widetilde{h'_1}$.
 Since $\widetilde{h'_1} = z^k\widetilde{h_1}$ for some $k\in \mathbb{Z}$, 
 $\widetilde{h'_1}^{-1}(\widetilde{A_2},\widetilde{B_2})\widetilde{h'_1} = \widetilde{h_1}^{-1}(\widetilde{A_2},\widetilde{B_2})\widetilde{h_1}$; and the path $\{\widetilde{h'_{1-t}}^{-1}(\widetilde{A_2},\widetilde{B_2})\widetilde{h'_{1-t}}\}\interval$ lies in $P_2^{-1}(\widetilde{C})$, connecting $\widetilde{h_1}^{-1}(\widetilde{A_2},\widetilde{B_2})\widetilde{h_1}$ to $(\widetilde{A_2},\widetilde{B_2})$. Composing this path with the path $\{\widetilde{h_t}^{-1}(\widetilde{P_t}^1,\widetilde{P_t}^2)\widetilde{h_t}\}\interval$ we get the desired path from $(\widetilde{A_1},\widetilde{B_1})$ to $(\widetilde{A_2},\widetilde{B_2})$ in  $P_2^{-1}(\widetilde{C})$. 
 \end{proof}

\subsubsection{Fibers of Elliptic Elements}\label{ellfibsec}
The following Lemma \ref{ellfiber} shows the path-connectedness of $P^{-1}(\widetilde{C})$ in the blocks for elliptic $\widetilde{C}$.
\begin{lemma}\label{ellfiber}
Let $P: \univcover\times \univcover\to \univcover$ be a product map, and let $m, n, l\in \mathbb{Z}$.
Consider the following restrictions of $P$:
\begin{align*}
P_1:\ & (\Hyp_m\times \Hyp_n) \cap P^{-1}(\Ell_l) \to \Ell_l,
\\
P_2:\ & (\Hyp_m\times \Ell_n) \cap P^{-1}(\Ell_l) \to \Ell_l, \text{ and}
\\
P_3:\ & (\Ell_m \times \Ell_n) \cap P^{-1}(\Ell_l) \to \Ell_l,
\end{align*}
For each $i\in \{1,2,3\}$, every non-empty fiber of $P_i$ is path-connected.
\end{lemma}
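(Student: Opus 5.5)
I would follow the same three-step scheme as in the proof of Lemma \ref{Hypfiber} for $P_2$. Fix $\widetilde{C}\in \Ell_l$ with $z:=Tr(\widetilde{C})\in(-2,2)$, and two points $(\widetilde{A_1},\widetilde{B_1})$ and $(\widetilde{A_2},\widetilde{B_2})$ in the same fiber $P_i^{-1}(\widetilde{C})$. Write $(x_j,y_j,z)=\chi(A_j,B_j)$ for $j=1,2$.

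\textbf{Step 1: connect the characters.} The aim is a path $\{\chi_t\}\interval$ from $(x_1,y_1,z)$ to $(x_2,y_2,z)$ with third coordinate constant, staying in $\mathbb{R}^3\setminus\Gamma$ and inside the region where the first two coordinates keep the trace type of the block.
\begin{enumerate}[(i)]
\item For $P_1$: $x_t,y_t$ range over a fixed half-line $(2,\infty)$ or $(-\infty,-2)$, determined by the parity of $m$ and of $n$. The straight-line path works, since $|x_t|>2$ keeps it off $[-2,2]^3$.
\item For $P_2$: $|x_t|>2$ and $|y_t|<2$, and the straight-line path works again.
\item For $P_3$: all coordinates lie in $(-2,2)$. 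A commutator of two elliptics with elliptic product has $\kappa>2$ (if $\kappa<2$ the axes would cross, contradicting Lemma \ref{lem_torus}). So both points lie in $[-2,2]^3\setminus\kappa^{-1}([-2,2])$, which has four components. I need the two points in the same component, and connected inside the slice $\{z\}$ with $x,y\in(-2,2)$.
\end{enumerate}
In each case I lift the path by Lemma \ref{lem_plpchar}, using $[A_1,B_1]\neq \mathrm{I}$. This holds automatically in (ii); in (i) and (iii) the commuting case would force $\kappa=2$, which I exclude or perturb away. Constant trace type along the path keeps the lifted path in the same block.

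\textbf{Step 2: conjugate to the target.} Once $\chi(A_2',B_2')=\chi(A_2,B_2)$ and $\kappa\neq2$, Proposition \ref{prop_char} makes the two pairs $\GL$-conjugate. I rule out $\GL\setminus\SL$-conjugation by a sign argument. When a component is elliptic, use the sign of the $(1,2)$-entry and Lemma \ref{lem_offdiag_Ell}. When both are hyperbolic (case $P_1$), instead apply Lemma \ref{lem_offdiag_Ell} to the elliptic product $A_2B_2 = C$: both products project to the same $\Ell_l$, so an orientation-reversing conjugation would flip the sign of $C(1,2)$, which is impossible. Then connect by an $\SL$-path of conjugations lifted to $\univcover$.

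\textbf{Step 3: return to the fiber.} Correct the resulting path back into $P^{-1}(\widetilde{C})$ exactly as in Lemma \ref{Hypfiber}, via Lemma \ref{lem_conjugacypath_const}. Its stabilizer-correction step now uses that the centralizer of an elliptic $\pm C$ is a connected one-parameter (rotation) subgroup, taking $\widetilde{h'_1}$ in the exponential image.

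\textbf{Main obstacle.} I expect the hardest part to be Step 1 for $P_3$, and the analogous sign subtlety in $P_1$. One must show that the fiber's character points, for fixed $z$, lie in a single connected piece of the appropriate component of $\{(x,y,z):|x|,|y|<2,\ \kappa>2\}$. The component should be singled out by the indices $m,n,l$ (the signs of $x,y$ relative to parity, akin to the description in \cite[Theorem 4.3]{goldman}). The slice $\{\kappa(x,y,z)>2\}\cap(-2,2)^2$ is the exterior of an ellipse-like conic in the square, split into corner regions, so I would check that each corner region is connected and corresponds to one choice of lifts.
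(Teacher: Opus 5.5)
Your treatment of $P_1$ and $P_2$ is sound. For $P_1$, your use of Lemma \ref{lem_offdiag_Ell} on the elliptic products $A_2'B_2'$ and $C$ (both projections of elements of $\Ell_l$) to exclude a $\GL\setminus\SL$-conjugation is in fact what is needed to make the paper's ``verbatim from Lemma \ref{Hypfiber}'' precise, since in that case neither factor is elliptic. Your scheme for $P_3$ (put the characters in one corner, lift, conjugate, correct) is also the paper's.

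The gap is the step you defer as the main obstacle. You never show that the characters of the non-commuting points of a fiber $P_3^{-1}(\widetilde{C})$ all lie in a single one of the four corner regions of $([-2,2]^2\times\{z\})\setminus\Gamma$. Connectedness of each corner is trivial. The whole content of the $P_3$ case is that the corner is determined by $(m,n,l)$, and ``should be singled out by the indices'' is an expectation, not an argument. Without it Step 1(iii) cannot be carried out: at fixed $z$ with $|x|,|y|<2$, any character path between two different corners must pass through $\Gamma$, where Lemma \ref{lem_plpchar} gives nothing.

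The missing idea is an explicit sign computation.
\begin{itemize}
\item Translate by central elements to reduce to $\Ell_1\times\Ell_1$, where Proposition \ref{prop_prodimage} leaves only $l\in\{1,2\}$.
\item Conjugate so that $A=\begin{bmatrix}\alpha&\beta\\-\beta&\alpha\end{bmatrix}$, and write $B=\begin{bmatrix}a&b\\c&d\end{bmatrix}$.
\item Lemma \ref{lem_offdiag_Ell} for $A$ and $B$ gives $\beta>0$ and $b>0>c$. Hence $z=xy/2+\beta(c-b)<xy/2$, which confines $(x,y)$ to the corners at $(2,2)$ and $(-2,-2)$.
\item For $AB\in\Ell_1$, the same lemma gives $(b-c)\alpha+(a+d)\beta=(AB)_{12}-(AB)_{21}>0$, so $x$ and $y$ are not both negative.
\end{itemize}
When $z\geqslant 0$ this forces the $(2,2)$-corner. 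When $z<0$ it does not, because the $(-2,-2)$-corner then contains points with a non-negative coordinate. There the paper needs one more step: slide the character inside that corner to a point with both coordinates negative, and lift by Lemma \ref{lem_plpchar} (the lift stays in $\Ell_1\times\Ell_1$ with product in $\Ell_1$). Then conjugate back into the fiber and contradict the sign constraint. The case $l=2$ is analogous and yields the opposite corner.

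Finally, your assertion that two elliptics with elliptic product have $\kappa>2$ fails for commuting pairs; Lemma \ref{lem_torus} only gives $\kappa\geqslant 2$. Such pairs genuinely occur in $P_3$-fibers (e.g.\ factors on the one-parameter subgroup through $\widetilde{C}$), so they cannot be excluded. They must be pushed off inside the fiber, for instance along $(\widetilde{C}\widetilde{B_t}^{-1},\widetilde{B_t})$.
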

\begin{proof}

The proof of the lemma for $P_1$ and $P_2$ follows verbatim from Lemma \ref{Hypfiber}. 
Here we show the lemma for 
$P_3:\Ell_m \times \Ell_n \cap P^{-1}(\Ell_l) \rightarrow \Ell_l$.
\medskip

Since $(\Ell_m \times \Ell_n) \cap P^{-1}(\Ell_l)$ is homeomorphic to $(\Ell_1\times \Ell_1)\cap P^{-1}(\Ell_{l+k})$ for some $k\in \mathbb{Z}$, it suffices to show the lemma for $P_3: (\Ell_1 \times \Ell_1) \cap P^{-1}(\Ell_l)\to \Ell_l$ for all $l\in \mathbb{Z}$.
 Let $P_3:(\Ell_1\times \Ell_1)\cap P^{-1}(\Ell_l) \rightarrow \Ell_l$ and $\widetilde{C}\in \Ell_l$. Further let $(\widetilde{A}_1,\widetilde{B}_1)$ and $(\widetilde{A}_2,\widetilde{B}_2)$ be two points in the fiber $P_3^{-1}(\widetilde{C})$. 
 Let $\chi(\widetilde{A_i},\widetilde{B_i}) = (x_i,y_i,z)$, where $|x_i|<2,|y_i|<2$, and $z = Tr(\widetilde{C})$.
The complement of $[-2,2]^3\cap \kappa^{-1}([-2,2])$ within the slice $[-2,2]^2\times \{z\}$ has four connected components. See Figure \ref{ellfib}. To construct a path in $P_3^{-1}(\widetilde{C})$ connecting $(\widetilde{A}_1,\widetilde{B}_1)$ and $(\widetilde{A}_2,\widetilde{B}_2)$, we will first show that $(x_1,y_1,z)$ and $(x_2,y_2,z)$ lie in the same connected component, then lift the path connecting $(x_1,y_1,z)$ and $(x_2,y_2,z)$ using Lemma \ref{lem_plpchar}.
\medskip

First, we assume $\kappa(x_i,y_i,z) \neq 2$ for each $i\in \{1,2\}$. 
By Proposition \ref{prop_prodimage}, $(\Ell_1 \times \Ell_1) \cap P^{-1}(\Ell_l)$ is non-empty only if $l \in\{1,2\}$.
We first consider the case when $l=1$,
for two cases $0\leqslant z < 2$ and $-2 < z \leqslant 0$ separately.
\begin{figure}[h!]
    \centering
    \vspace{-1cm}
    \begin{overpic}[width=0.8\textwidth]{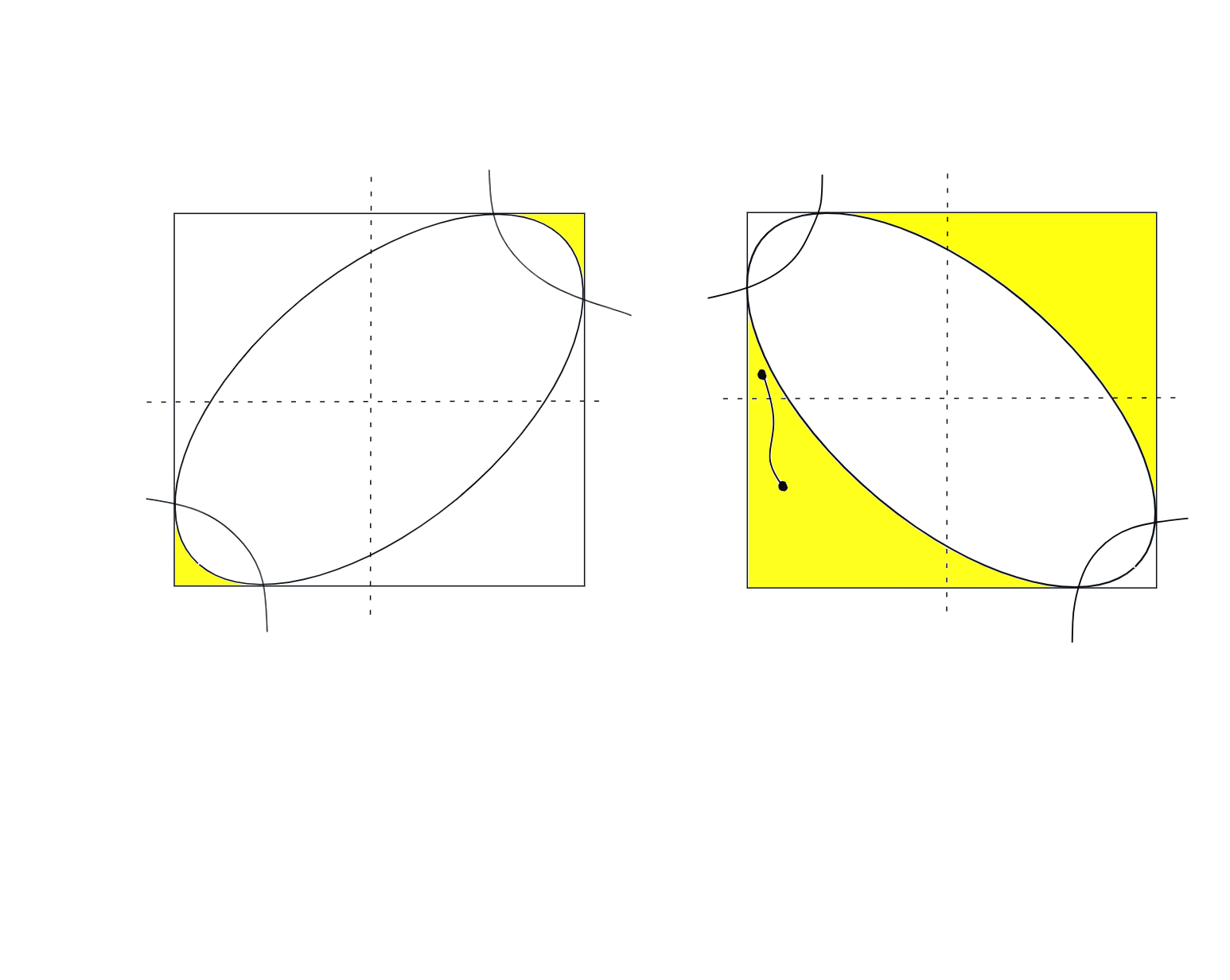}
        \put(26,24){\scalebox{0.7}{$\mathbf{(a)\ 0\leqslant z <2}$}}
        \put(8,63){\scalebox{0.55}{$(-2,2,z)$}}
        \put(47,63){\scalebox{0.55}{$(2,2,z)$}}
        \put(14.5,33){\scalebox{0.55}{$R_2$}}
        \put(7.5,30){\scalebox{0.55}{$(-2,-2,z)$}}
        \put(22,30){\scalebox{0.55}{$(-z,-2,z)$}}
        \put(6.3,40){\scalebox{0.55}{$(-2,-z,z)$}}
        \put(34.4,63){\scalebox{0.55}{$(z,2,z)$}}
        \put(48,53){\scalebox{0.55}{$(2,z,z)$}}
        \put(45.5,60.5){\scalebox{0.55}{$R_1$}}
        \put(47,30){\scalebox{0.55}{$(2,-2,z)$}}
        \put(31,48){\scalebox{0.55}{$(0,0,z)$}}
        \put(71,24){\scalebox{0.7}{$\mathbf{(b) -2< z\leqslant 0}$}}
        \put(53.5,30){\scalebox{0.55}{$(-2,-2,z)$}}
        \put(53,57){\scalebox{0.55}{$(-2,-z,z)$}}
        \put(63,35){\scalebox{0.55}{$R_2$}}
        \put(63,38){\scalebox{0.55}{$\textbf{x}_2=(x',y',z)$}}
        \put(62.7,50){\scalebox{0.55}{$\textbf{x}_1=(x,y,z)$}}
        \put(93.5,30){\scalebox{0.55}{$(2,-2,z)$}}
        \put(79,30){\scalebox{0.55}{$(-z,-2,z)$}}
        \put(94.5,36){\scalebox{0.55}{$(2,z,z)$}}
        \put(54,63){\scalebox{0.55}{$(-2,2,z)$}}
        \put(67,63){\scalebox{0.55}{$(z,2,z)$}}
        \put(93.5,63){\scalebox{0.55}{$(2,2,z)$}}
        \put(78,48){\scalebox{0.55}{$(0,0,z)$}}
        \put(85,56){\scalebox{0.55}{$R_1$}}

    \end{overpic}
    \vspace{-3cm}
    \caption{The slice $[-2,2]^2\times \{z\}$}
    \label{ellfib}
\end{figure}
\[\text{Case 1}:0\leqslant z < 2\] 
For $i \in \{1,2\}$, 
let $A_i$ and $B_i$ be the projections of $\widetilde{A_i}$ and $\widetilde{B_i}$, respectively, to $\SL$.
Up to a simultaneous $\SL$-conjugation of $(A_1,B_1)$, we can assume that
        \[A_1 = \begin{bmatrix}
                \alpha_1 & \beta_1\\
                -\beta_1 & \alpha_1
                \end{bmatrix},
         B_1 = \begin{bmatrix}
                a_1 & b_1\\
                c_1 & d_1
         \end{bmatrix},     
         A_1B_1 = \begin{bmatrix}
                  a_1\alpha_1+c_1\beta_1 & b_1\alpha_1+d_1\beta_1\\
                  c_1\alpha_1 - a_1\beta_1 & d_1\alpha_1-b_1\beta_1 
         \end{bmatrix}\]
for some $\alpha_1, \beta_1, a_1, b_1, c_1, d_1\in \mathbb{R}$ such that $\alpha_1^2 + \beta_1^2 = a_1d_1 - b_1c_1 = 1$,
$2\alpha_1 = x_1$, $a_1+d_1 = y_1$, and $(a_1\alpha_1+c_1\beta_1) + (d_1\alpha_1-b_1\beta_1)= z$.
Since $\widetilde{A_1}, \widetilde{B_1}$ and $\widetilde{A_1}\widetilde{B_1}$ are in $\Ell_1$, by Lemma \ref{lem_offdiag_Ell}, we have $b_1>0,\ c_1<0$, $\beta_1>0$, and $b_1\alpha_1+d_1\beta_1>0$. 
Since $c_1-b_1<0$ and $\beta_1>0$, we have 
$$z = x_1y_1/2 + \beta_1(c_1-b_1) < x_1y_1/2.$$ Hence $(x_1,y_1,z)$ lies in the shaded region $R_1\cup R_2. $ See Figure \ref{ellfib}(a). 
Moreover, letting $(A_1B_1)_{ij}$ be the $(i,j)$-entry of $A_1B_1$, Lemma \ref{lem_offdiag_Ell} implies $(b_1-c_1)\alpha_1+(a_1+d_1)\beta_1 = (A_1B_1)_{12}-(A_1B_1)_{21}>0$. Since $b_1-c_1>0$ and $\beta_1>0$, we conclude $a_1+d_1$ and $\alpha_1$ cannot both be negative, hence $(x_1,y_1,z) = (2\alpha_1, a_1+d_1, z)$ must lie in $R_1$ in Figure \ref{ellfib}(a). Similarly, we can show that $(x_2,y_2,z)$ lies in $R_1$ which is path-connected. 
Therefore, there is a path $\{\chi_t\}\interval$ in $R_1$ connecting $(x_1,y_1,z)$ and $(x_2,y_2,z)$. For $i\in \{1,2\}$, by Proposition \ref{prop_char}, $\tr\big([A_i, B_i]\big) = \kappa(x_i, y_i,z) \neq 2$, which implies that $[A_i, B_i]\neq \mathrm{I}$. Therefore, as in 
the proof of Lemma \ref{Hypfiber}, we may use Lemma \ref{lem_plpchar} to lift $\{\chi_t\}\interval$
to construct
the path connecting $(\widetilde{A_1}, \widetilde{B_1})$ and $(\widetilde{A_2}, \widetilde{B_2})$ within the fiber.

\[\text{Case 2:} -2<z\leqslant 0\]

By the computation in Case 1, we can show that the point $(x_1,y_1,z)$ lies in $R_1 \cup R_2$. See Figure \ref{ellfib}(b). Assume $\textbf{x}_1 = (x_1,y_1,z)\in R_2$. Then, as shown in Case 1, $x_1$ and $y_1$ cannot be both negative, i.e., $\textbf{x}_1 = (x_1,y_1,z)$ lies in $R_2$ with $x_1\geqslant 0$ or $y_1\geqslant 0$. Then we can connect $\textbf{x}_1$ by a path $\{\chi^t\}\interval = \{(x^t,y^t,z)\}\interval$ in $R_2$ to a point $\textbf{x}_2= (x',y',z)\in R_2$ such that $x'<0$ and $y'<0$. See Figure \ref{ellfib}(b). Using Lemma \ref{lem_plpchar}, we can lift $\{\chi^t\}\interval$ to a path $\widetilde{\chi}^t=(\widetilde{A^t},\widetilde{B^t})$ starting from $(\widetilde{A_1},\widetilde{B_1})$;  and since $|x^t|<2,|y^t|<2$ and $|z|<2$ for all $t\in [0,1]$, the path $\{\widetilde{\chi}^t\}\interval$ lies in $\Ell_1\times \Ell_1$ and the path $\{\widetilde{A^t}\widetilde{B^t}\}\interval$ lies in $\Ell_1$. 
Let $\widetilde{\chi_1}=(\widetilde{A'},\widetilde{B'})$. Since $Tr(\widetilde{A}'\widetilde{B}')= Tr(\widetilde{C}) = z$ and both $(\widetilde{A'}\widetilde{B'})$ and $\widetilde{C}$ lie in $\Ell_1$, they are conjugate in $\univcover$ by an element $\tilde{g}$ i.e., $\tilde{g}\widetilde{A'}\widetilde{B'}\tilde{g}^{-1} = \widetilde{C}$. Thus, $\tilde{g}(\widetilde{A'},\widetilde{B'})\tilde{g}^{-1} \in \Ell_1\times \Ell_1$ lies in the fiber $P_3^{-1}(\widetilde{C})$ with $\chi({\widetilde{g}}(\widetilde{A'},\widetilde{B'})\widetilde{g}^{-1}) = (x',y',z)$. 
Again by the computation in Case 1, 
we can show that $x'\geqslant 0$ or $y'\geqslant 0$, which leads to a contradiction. Therefore, $\textbf{x}_1\in R_1$. Similarly, we can show that $(x_2,y_2,z)$ lies in $R_1$. 
Therefore, there is a path $\{\chi_t\}\interval$ in $R_1$ connecting $(x_1,y_1,z)$ and $(x_2,y_2,z)$. Using the path $\{\chi_t\}\interval$, the construction of the path connecting $(\widetilde{A_1}, \widetilde{B_1})$ and $(\widetilde{A_2}, \widetilde{B_2})$ within the fiber follows verbatim from the proof of Lemma \ref{Hypfiber}.
\medskip

When $l=2$, by Lemma \ref{lem_offdiag_Ell}, similar computations as in Case $l=1$ show that for $i\in \{1,2\}$, $\chi(\widetilde{A_i},\widetilde{B_i}) = (x_i,y_i,z)$ lies in the connected component $R_2$; and the proof follows verbatim from Case $l=1$.

Next, we assume $\kappa(x_i,y_i,z) = 2$ for some $i\in \{1,2\}$. Without loss of generality, we let $\kappa(x_1,y_1,z) = 2$. We first show that the level set $(\kappa\circ \chi)^{-1}(2)$ in $\Ell_1\times \Ell_1$ is the set of commuting pairs, which implies that $\widetilde{A_1}$ and $\widetilde{B_1}$ commute. If an elliptic $\SL$-pair $(A, B)$ lies in $(\kappa\circ \chi)^{-1}(2)$, then by Proposition \ref{prop_char}, both $A$ and $B$ preserve a common complex line $\mathbb{C}v\subset \mathbb{C}^2$. As real matrices, they also preserve the conjugate line $\mathbb{C}\bar{v}$. As $A$ and $B$ are elliptic, they don't have real eigenvectors, hence $v\neq\bar{v}$ and we have $\mathbb{C}^2 = \mathbb{C}v\oplus\mathbb{C}\bar{v}$. Therefore, both $A$ and $B$ are diagonal in the basis $(v,\overline{v})$, hence commute. Therefore, their lifts $\widetilde{A}, \widetilde{B}$ in $ \Ell_1$ also commute. Conversely, if $[\widetilde{A},\widetilde{B}]=\mathrm{I}$, then by Proposition \ref{prop_char}, we have $\kappa\big(\chi(\widetilde{A},\widetilde{B})\big)=2$.
\smallskip

We will find a path in $P_3^{-1}(\widetilde{C})$ connecting the commuting pair $(\widetilde{A_1}, \widetilde{B_1})$ to a non-commuting pair $(\widetilde{A'_1}, \widetilde{B'_1})$. Then, using $\kappa\big(\chi(\widetilde{A'_1}, \widetilde{B'_1})\big)\neq 2$, we will conclude from the previous case where $\kappa(x_i, y_i,z)\neq 2$ for each $i\in \{1,2\}$. 
 Since $\widetilde{A_1}$ and $\widetilde{B_1}$ are commuting elliptic elements, $\widetilde{A_1},\widetilde{B_1}$ and $\widetilde{C}$ all lie on the same one parameter subgroup generated by $\widetilde{C}$. Consider the restriction of the product map $P:\{\widetilde{C}\}\times \Ell_{-1}\rightarrow\univcover$. The image of this map is open, as it is homeomorphic to $\Ell_{-1}$. 
This allows us to choose a path $\{\widetilde{B^t}\}\interval \subset \Ell_1$ starting from $\widetilde{B_1}$ such that $[\widetilde{C},\widetilde{B^t}]\neq \mathrm{I}$ for all $t\in (0,1]$ and $\{\widetilde{C}\widetilde{B^t}^{-1}\}\interval \subset \Ell_1$. Let $(\widetilde{A_1'},\widetilde{B_1'}) = (\widetilde{C}\widetilde{B^1}^{-1},\widetilde{B^1}).$  Then $\big\{(\widetilde{C}\widetilde{B^t}^{-1},\widetilde{B^t})\big\}\interval$ is a path in $P_3^{-1}(\widetilde{C})$ whose endpoint $(\widetilde{A_1'},\widetilde{B_1'})$ lies in $\Ell_1\times \Ell_1$ and is non-commuting. Thus, for every commuting pair $(\widetilde{A_1},\widetilde{B_1)}$ in the fiber of $\widetilde{C}$, there is a path in the fiber connecting $(\widetilde{A_1},\widetilde{B_1})$ to some non-commuting pair $(\widetilde{A_1'},\widetilde{B_1'})$. Finally, since the space of non-commuting pairs (i.e. pairs $(\widetilde{A},\widetilde{B})$ such that $\kappa(\chi(\widetilde{A},\widetilde{B}))\neq 2$) in $P_3^{-1}(\widetilde{C})$ is path-connected, we conclude $P_3^{-1}(\widetilde{C})$ is path-connected.
\end{proof}

\subsubsection{Fibers of Parabolic Elements}\label{parfibsec}
The following Lemma \ref{parfiblemm} shows the path-connectedness of $P^{-1}(\widetilde{C})$ in the blocks for parabolic $\widetilde{C}$.

\begin{lemma}\label{parfiblemm}
Let $P: \univcover\times \univcover\to \univcover$ be a product map, and let $m, n, l\in \mathbb{Z}$, $s\in \{+,-\}$.
Consider the following restrictions of $P$:
\begin{align*}
P_1:&\ (\Hyp_m\times \Hyp_n) \cap P^{-1}(\Par^s_l) \to \Par^s_l,
\\
P_2: &\ (\Hyp_m\times \Ell_n) \cap P^{-1}(\Par^s_l) \to \Par^s_l, \text{ and} 
\\
P_3: &\  (\Ell_m \times \Ell_n) \cap P^{-1}(\Par^s_l) \to \Par^s_l.
\end{align*}
For each $i\in \{1,2,3\}$, every non-empty fiber of $P_i$ is path-connected.
\end{lemma}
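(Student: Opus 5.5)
The strategy is to copy the template of Lemmas \ref{Hypfiber} and \ref{ellfiber}. Fix $\widetilde{C}\in\Par^s_l$ and two points $(\widetilde{A_1},\widetilde{B_1})$, $(\widetilde{A_2},\widetilde{B_2})$ in a non-empty fiber of $P_i$. We first join them by a path $\{(\widetilde{P_t}^1,\widetilde{P_t}^2)\}\interval$ in the block along which the character $\chi$ has constant third coordinate $z=Tr(\widetilde{C})=\pm 2$. We then correct this path by conjugation so that it lies in the fiber. The key simplification is that all elements of $\Par^s_l$ are conjugate, so the trace of the product determines the component $\Par^s_l$, provided the product stays in $\Par^s_l$ (not $\{z^l\}$ or the opposite-sign component).

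\textbf{Step 1 (character path).} Write $\chi(\widetilde{A_i},\widetilde{B_i})=(x_i,y_i,z)$.

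For $P_1$ and $P_2$, at least one of $|x_i|,|y_i|$ exceeds $2$ with a fixed sign (the component of $\Hyp_m$ fixes the sign of the trace). Hence the straight segment between $(x_1,y_1,z)$ and $(x_2,y_2,z)$ stays in $\mathbb{R}^3\setminus\Gamma$. Here we need $[A_1,B_1]\neq\mathrm{I}$, which holds automatically: a hyperbolic element commuting with $B$ forces $AB$ to be hyperbolic or $\pm\mathrm{I}$, never parabolic. Lemma \ref{lem_plpchar} lifts the segment, and we lift further to $\univcover\times\univcover$ staying in the block. Since the product has constant trace $\pm2$ and is never central (its projection is non-central along the path because $\chi\notin\kappa^{-1}(2)$ generically; where needed, perturb), the product stays in $\Par_l$. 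By continuity the sign and the component index $l$ are preserved, because $\Par^+_l$ and $\Par^-_l$ are separated from each other in the trace-$\pm2$ level set by $z^l$.

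For $P_3$, $(x_i,y_i)\in(-2,2)^2$ and $z=\pm2$. The slice $[-2,2]^2\times\{z\}$ minus $\kappa^{-1}([-2,2])$ again has components analogous to $R_1,R_2$ (for $z=2$ the set $\kappa\le 2$ is the diagonal region $|x-y|$ small, i.e.\ $\kappa-2=(x-y)^2$ up to the $z$-terms). As in Case 1 of Lemma \ref{ellfiber}, I would normalize $A_1$ to a rotation and use Lemma \ref{lem_offdiag_Ell} for $A_1,B_1$ together with Lemma \ref{lem_offdiag} for the parabolic $A_1B_1$. The sign of the off-diagonal entry of $A_1B_1$ is dictated by $s$ and the parity of $l$. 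This sign inequality should pin $(x_1,y_1)$ into one specific component, and the same component for $(x_2,y_2)$. Commuting pairs cannot occur, since commuting elliptics have an elliptic or central product. So $\kappa\neq2$ throughout and Lemma \ref{lem_plpchar} applies.

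\textbf{Step 2 (conjugation correction).} The endpoint $(\widetilde{A_2'},\widetilde{B_2'})$ has the same character as $(\widetilde{A_2},\widetilde{B_2})$ with $\kappa\ne 2$. By Proposition \ref{prop_char}(2) the two pairs are $\GL$-conjugate. A $\GL\setminus\SL$-conjugation would flip the sign of the off-diagonal entries of the elliptic (or parabolic product) factor, contradicting Lemma \ref{lem_offdiag_Ell} or Lemma \ref{lem_offdiag}. For $P_1$ I would use the parabolic product $\widetilde{C}$ itself, whose sign $s$ is fixed. So the pairs are $\SL$-conjugate, and we extend the path by a conjugation path. Along the concatenated path the product stays in the conjugacy class of $\widetilde{C}$. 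Lemma \ref{lem_conjugacypath_const} then gives $h_t$ with $\pm C_t=\pm h_tCh_t^{-1}$, and conjugating back by $\widetilde{h_t}^{-1}$ yields a path in the fiber ending at $\widetilde{h_1}^{-1}(\widetilde{A_2},\widetilde{B_2})\widetilde{h_1}$, where $h_1$ centralizes $C$. The centralizer of a parabolic is a one-parameter parabolic group (times $\pm$), which is connected. Choosing the lift of $h_1$ lying in $\Par_0\cup\{\mathrm{I}\}$ and flowing along its one-parameter subgroup closes up the path inside the fiber, exactly as at the end of the proof of Lemma \ref{Hypfiber}.

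\textbf{Main obstacle.} The delicate point is Step 1 for $P_3$, and to a lesser extent keeping the product off $z^l$ in $P_1$ and $P_2$. In the $P_3$ case I need to show, via the sign computation of Lemmas \ref{lem_offdiag} and \ref{lem_offdiag_Ell}, that admissible characters lie in a single connected component of the slice. If the sign argument leaves two candidate components, I would rule one out by the contradiction trick of Case 2 of Lemma \ref{ellfiber}: deform within the component to a point where the sign inequality fails, and conjugate into the fiber.
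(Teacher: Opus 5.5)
Your overall scheme matches the paper's: a straight-line character path lifted by Lemma \ref{lem_plpchar}, and a sign argument via Lemma \ref{lem_offdiag} or \ref{lem_offdiag_Ell} to exclude $\GL\setminus\SL$-conjugation (using the parabolic product for $P_1$ is the right choice). It also uses the conjugation and centralizer correction from Lemma \ref{Hypfiber}, and, for $P_3$, pins all characters into one component of the slice. There is, however, a gap in Step 2 for $P_1$. You assert that the endpoint character satisfies $\kappa\neq 2$, and on $\Hyp_m\times\Hyp_n$ this is false in general. Since $\kappa(x,y,\pm 2)=(x\mp y)^2+2$, the locus $\kappa=2$ in a fiber of $P_1$ is the set of reducible pairs, and these can be non-commuting with parabolic product. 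For instance, take $\lambda>1$ and $a+b>0$:
\[
A=\begin{bmatrix}\lambda & a\\ 0&\lambda^{-1}\end{bmatrix},\
B=\begin{bmatrix}\lambda^{-1} & b\\ 0&\lambda\end{bmatrix}
\quad\text{and}\quad
A'=\begin{bmatrix}\lambda^{-1} & \lambda^{2}a\\ 0&\lambda\end{bmatrix},\
B'=\begin{bmatrix}\lambda & \lambda^{2}b\\ 0&\lambda^{-1}\end{bmatrix}.
\]
These are non-commuting pairs with the same character $(\lambda+\lambda^{-1},\lambda+\lambda^{-1},2)$ and lifts in $\Hyp_0\times\Hyp_0$. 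They also have the same product $AB=A'B'=\begin{bmatrix}1&\lambda(a+b)\\0&1\end{bmatrix}$, whose lift lies in $\Par^+_0$, so they lie in one fiber of $P_1$. Yet they are not conjugate by any element of $\GL$. The eigenvalue by which the first matrix acts on the common invariant line (the fixed line of the product) is a conjugation invariant, and it equals $\lambda$ for $(A,B)$ but $\lambda^{-1}$ for $(A',B')$. So at such characters Proposition \ref{prop_char}(2) is unavailable. The endpoint $(\widetilde{A_2'},\widetilde{B_2'})$ produced by path lifting need not be conjugate to $(\widetilde{A_2},\widetilde{B_2})$, and the argument stops.

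The missing step, which is exactly how the paper's proof begins, is to first move each endpoint inside the fiber off the locus $\kappa=2$. Use the path $t\mapsto\big(\widetilde{C}(\widetilde{B}^t)^{-1},\widetilde{B}^t\big)$, where $\widetilde{B}^t$ is a small perturbation of $\widetilde{B_i}$ keeping $\widetilde{B}^t\in\Hyp_n$ and $\widetilde{C}(\widetilde{B}^t)^{-1}\in\Hyp_m$. This is possible because, after conjugating so that $C=\pm\begin{bmatrix}1&\tau\\0&1\end{bmatrix}$, one has $\kappa\big(\chi(CB^{-1},B)\big)=2$ exactly when the $(2,1)$-entry of $B$ vanishes, a proper closed condition. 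For $P_2$ and $P_3$, every fiber point does have $\kappa\neq 2$, as you say.

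Relatedly, in Step 1 the product avoids $z^l$ not because $\chi$ misses $\kappa^{-1}(2)$ ``generically'' (the straight segment may cross it). The real reason is that the lift from Lemma \ref{lem_plpchar} stays in $\Omega$, and $AB=\pm\mathrm{I}$ would force $[A,B]=\mathrm{I}$; no perturbation is needed there.

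Finally, your expectation for $P_3$ is borne out. Reduce to $\Ell_1\times\Ell_1$ (which forces $l=1$, hence trace $-2$) and take $A$ to be a rotation. The trace equation then gives $x+y=-A_{12}\big((AB)_{21}-(AB)_{12}\big)$. So the sign $s$ alone decides the component, and the contradiction trick of Lemma \ref{ellfiber} is not needed.
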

\begin{proof}
 We first consider $P_1: (\Hyp_m\times \Hyp_n) \cap P^{-1}(\Par^s_l) \to \Par^s_l$. Let $\widetilde{C}\in \Par^s_l$, and for $i\in \{1,2\}$, let $(\widetilde{A_i},\widetilde{B_i}) \in P_1^{-1}(\widetilde{C})$ with its character $\chi(\widetilde{A_1},\widetilde{B_1}) = (x_i, y_i, c)$ where $|c| = 2$. If $\kappa(x_1,y_1,c)=2$, then since the set $(\kappa\circ \chi)^{-1}(2)$
 has codimension at least one in $\Hyp_m\times \Hyp_n$, we can choose a path $\{\widetilde{B}^t\}\interval \subset \Hyp_n$ starting at $\widetilde{B_1}$ such that $\big\{\widetilde{C}(\widetilde{B}^t)^{-1}\big\}\interval \subset \Hyp_m$ for all $t\in [0,1]$, and at $t=1$, $\kappa\big(\chi(\widetilde{C}(\widetilde{B^1})^{-1},\widetilde{B^1})\big)\neq 2$. As the path $\big\{(\widetilde{C}\widetilde{B_t}^{-1},\widetilde{B_t})\big\}\interval$ lies in the fiber of $\widetilde{C}$, we can replace $(\widetilde{A_1},\widetilde{B_1})$ with $\big(\widetilde{C}(\widetilde{B^1})^{-1},\widetilde{B^1}\big)$.
 Hence we can hereafter assume $\kappa\big(\chi(\widetilde{A_1},\widetilde{B_1})\big)\neq 2$.
Similarly, we assume $\kappa(\chi(\widetilde{A_2},\widetilde{B_2}))= \kappa((x_2,y_2,c))\neq 2$. For $t\in [0,1]$, let $\chi^t=(x^t,y^t,c)=\big((1-t)x_1+tx_2,(1-t)y_1+ty_2,c\big)$, which defines the path from $(x_1,y_1,c)$ to $(x_2,y_2,c)$. Since $|x_t|>2$ for all $t\in [0,1]$,
using Lemma \ref{lem_plpchar}, we may lift $\{\chi^t\}\interval$ to a path $\big\{(\widetilde{A^t},\widetilde{B^t})\big\}\interval$ of non-commuting pairs in $\Hyp_m\times \Hyp_n$ starting at $(\widetilde{A_1},\widetilde{B_1})$. 
Since $|Tr(\widetilde{A^t}\widetilde{B^t})| = |Tr(\widetilde{C})|= 2$ for all $t\in [0,1]$ and $\widetilde{A^0}\widetilde{B^0} = \widetilde{C}\in \Par^s_l$, the path $\{\widetilde{A^t}\widetilde{B^t}\}\interval$ must stay in $\Par_l \cup \{z^l\}$. Since $\widetilde{A^t}$ and $\widetilde{B^t}$ do not commute for all $t\in [0,1]$, the path $\{\widetilde{A^t}\widetilde{B^t}\}\interval$ never passes $z^l$, hence stays in $\Par^s_l$. Using the path $\big\{(\widetilde{A^t},\widetilde{B^t})\big\}\interval$, the construction of a path in $P_1^{-1}(\widetilde{C})$ connecting $(\widetilde{A_1}, \widetilde{B_1})$ and $(\widetilde{A_2}, \widetilde{B_2})$ follows verbatim as in Lemma \ref{Hypfiber}.
\medskip

In the case of $P_2$, let $(\widetilde{A_1},\widetilde{B_1})\in \Hyp_m\times \Ell_n$ be in the fiber of $\widetilde{C}$ and $\chi(\widetilde{A_1},\widetilde{B_1}) = (x_1,y_1,c)$ where $|c| = 2$. Since $|x_1|>2$ and $|y_1|<2$, we have $\kappa(x_1,y_1,c)\neq 2$. The rest of the construction of the path connecting $(\widetilde{A_1},\widetilde{B_1})$ to $(\widetilde{A_2},\widetilde{B_2})$ in the fiber of $P_2^{-1}(\widetilde{C})$ follows verbatim as $P_1$.
\medskip

     It remains to show the lemma for $P_3:(\Ell_m \times \Ell_n) \cap P^{-1}(\Par_l^s) \rightarrow \Par_l^s$.
     Since $ (\Ell_m \times \Ell_n) \cap P^{-1}(\Par_l^s)$ is homeomorphic to $ (\Ell_1\times \Ell_1)\cap P^{-1}(\Par_{l+k}^s)$ for some $k\in \mathbb{Z}$, it suffices to show the lemma for $P_3:  (\Ell_1 \times \Ell_1) \cap P^{-1}(\Par^s_l)\to \Par^s_l$ for all $l\in \mathbb{Z}$. Notice that $ (\Ell_1 \times \Ell_1) \cap P^{-1}(\Par^s_l)$ is non-empty only if $l = 1$. 
     Indeed, for $(\widetilde{A}, \widetilde{B})\in  (\Ell_1 \times \Ell_1) \cap P^{-1}(\Par^s_l)$ and $\widetilde{C} = P(\widetilde{A}, \widetilde{B})\in \Par^s_l$, we have $\widetilde{B}^{-1}\in \Ell_{-1}$, and 
     $\widetilde{A} = \widetilde{C}\widetilde{B}^{-1}$ lies in the set $P(\Par^s_l\times \Ell_{-1} )\cap \big(\bigcup_{k\in \mathbb{Z}\setminus \{0\}}\Ell_k\big)$, which equals 
    $z^{l-1}\cdot \Big(P(\Par^s_0\times \Ell_1 )\cap \big(\bigcup_{k\in \mathbb{Z}\setminus \{0\}}\Ell_k\big)\Big)$. By 
    Proposition \ref{prop_prodimage}, this set is contained in 
    $\Ell_l$ if $l > 0$; and contained in $\Ell_{l-1}$ if $l \leqslant 0$. Since $\widetilde{A_1}\in \Ell_1$, it follows that $l = 1$.
    Further notice that $\kappa\big(\chi(\widetilde{A}, \widetilde{B})\big)\neq 2$, as otherwise
    $\widetilde{A}$ and $\widetilde{B}$ commute as shown in the proof of Lemma \ref{ellfiber}, which contradicts that $\widetilde{A}\widetilde{B}$ is parabolic. 
    
    We show that the characters of all $(\widetilde{A},\widetilde{B})\in  P_3^{-1}(\widetilde{C})$
    are contained in the same path-connected component of $\big([-2,2]^2\times \{-2\}\big)\setminus\Gamma$. Then the path-connectedness of $P_3^{-1}(\widetilde{C})$ follows verbatim from the proof of Lemma \ref{ellfiber}. We will consider the two cases $s = +$ and $s = -$ separately.
 \begin{figure}[H]
    \centering
    \begin{overpic}[width=0.3\textwidth]{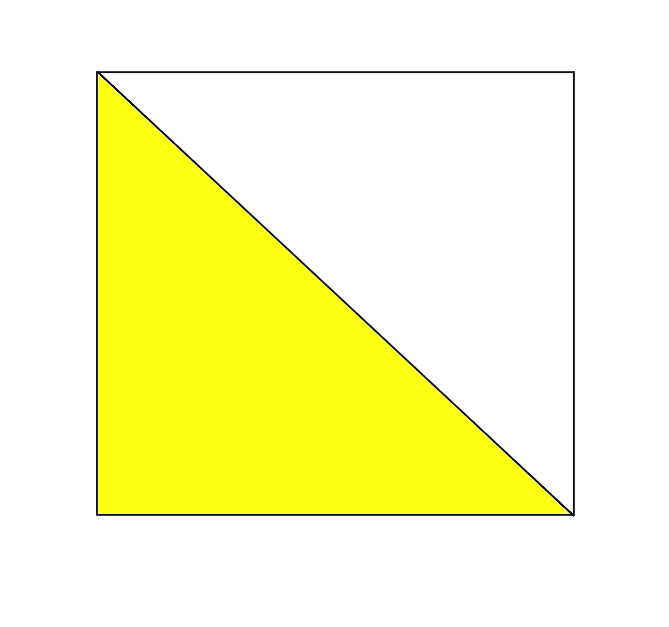}
     \put(35,35){\scalebox{1}{$R_1$}}
     \put(60,60){\scalebox{1}{$R_2$}}
     \end{overpic}
    \caption{Slices $z=-2$}
    \label{Parfib}
\end{figure}
We first consider $P_3:(\Ell_1\times \Ell_1)\cap P^{-1}(\Par^+_1) \rightarrow \Par_1^+$.
Let $(\widetilde{A},\widetilde{B})\in (\Ell_1\times \Ell_1)\cap P_3^{-1}(\widetilde{C})$ and $\chi(\widetilde{A},\widetilde{B})=(x,y,-2)$. Let the projections to $\SL$ be
 \[A = \begin{bmatrix}
          \alpha & \beta\\
          -\beta & \alpha
    \end{bmatrix},B = \begin{bmatrix}
                       a & b\\
                       c & d
    \end{bmatrix},AB = \begin{bmatrix}
        \alpha a + \beta c & b\alpha + d\beta\\
        c\alpha - a\beta & -b\beta + d\alpha
    \end{bmatrix}\]
    for some $\alpha,\beta,a,b,c,d \in \mathbb{R}$ such that $\alpha^2 +\beta^2=1, ad-bc=1$ and $|x|=|2\alpha| <2,|y|=|a+d|<2$.
     Since $\widetilde{A}\widetilde{B}=\widetilde{C}\in \Par_1^+$, by Lemma \ref{lem_offdiag}, we have $\alpha(c-b)-\beta(a+d) = (AB)_{21}-(AB)_{12}>0$. By Lemma \ref{lem_offdiag_Ell}, since $\beta>0$ and $c-b<0$, we conclude $x=2\alpha$ and $y=a+d$ cannot both be positive. If $(x,y,-2)$ lies in $R_2$, then as in Lemma \ref{ellfiber}, we can show that there exists $(\widetilde{A'},\widetilde{B}')\in (\Ell_1\times \Ell_1)\cap P_3^{-1}(\widetilde{C})$ such that $\chi(\widetilde{A'},\widetilde{B'})=(x',y',-2)$ for some $x'>0$ and $y'>0$, which gives a contradiction. Therefore, $(x,y,-2)$ lies in $R_1$.  
     \smallskip
     
    For $P_3:(\Ell_1\times \Ell_1)\cap P^{-1}(\Par^-_{1}) \rightarrow \Par^{-}_1$. In this case, by similar computations as in the case $s = +$, we can show that $\chi(\widetilde{A},\widetilde{B})$ lies in $R_2$ for all $(\widetilde{A},\widetilde{B})\in P_3^{-1}(\widetilde{C})$; and the proof follows verbatim.
\end{proof}

\subsubsection{Gluing the blocks}\label{sec_glue}

We first fix the terminology used in the proof.
Let $m\in \mathbb{Z}$. For $\mathbb{B}_m \in \{\Hyp_m,\ \Ell_m\}$, we define the spaces 
$\overline{\mathbb{B}}^L_m$ and 
$\overline{\mathbb{B}}^R_m$ as follows:
\begin{align*}
    \overline{\mathbb{B}}^L_m
    := \begin{cases}
    \Par_m^- \cup \Hyp_m & \text{ if } \mathbb{B}_m = \Hyp_m\\
    \Par_{m-1}^+ \cup \Ell_m & \text{ if } \mathbb{B}_m = \Ell_m,\ m> 0\\
    \Par_{m}^+ \cup \Ell_m & \text{ if } \mathbb{B}_m = \Ell_m,\ m < 0
    \end{cases}
\end{align*}
\begin{align*}
    \overline{\mathbb{B}}^R_m
    := \begin{cases}
    \Par_m^+ \cup \Hyp_m & \text{ if } \mathbb{B}_m = \Hyp_m\\
    \Par_{m}^- \cup \Ell_m & \text{ if } \mathbb{B}_m = \Ell_m,\ m> 0\\
    \Par_{m+1}^- \cup \Ell_m & \text{ if } \mathbb{B}_m = \Ell_m,\ m < 0
    \end{cases}
\end{align*}
 Let $\partial\overline{\mathbb{B}}_m^L = \overline{\mathbb{B}}_m^L \setminus \mathbb{B}_m$ and $\partial\overline{\mathbb{B}}_m^R = \overline{\mathbb{B}}_m^R \setminus  \mathbb{B}_m$.
 The \textit{edges} of a block $\mathbb{B}_m\times \mathbb{B}_n$ are defined as $$\partial\overline{\mathbb{B}}_m^L \times \mathbb{B}_n,\  \partial\overline{\mathbb{B}}_m^R \times \mathbb{B}_n,\ 
\mathbb{B}_m \times \partial\overline{\mathbb{B}}_n^R,\ \mathbb{B}_m \times \partial\overline{\mathbb{B}}_n^L,$$  
and its \textit{vertices} are defined as 
$$\partial \overline{\mathbb{B}}_m^R\times \partial\overline{\mathbb{B}}_n^R,\ 
\partial \overline{\mathbb{B}}_m^R\times \partial\overline{\mathbb{B}}_n^L,\ 
\partial \overline{\mathbb{B}}_m^L\times \partial\overline{\mathbb{B}}_n^R,\  \partial \overline{\mathbb{B}}_m^L\times \partial\overline{\mathbb{B}}_n^L.$$
In general, for some $s,s'\in \{+,-\}$ and $k,k'\in \mathbb{Z}$, we call the sets $\Par_k^s\times \Par^{s'}_{k'}$ as vertices.
We call two distinct blocks \textit{adjacent} if they share a common edge or a common vertex. Further, we call them \textit{edge-adjacent} if they share a common edge; and \textit{vertex-adjacent} if they share a unique common vertex. 
For a given vertex, we say an edge is \emph{incident to the vertex} if the vertex is contained in the closure of the edge in $(\univcover\setminus Z)\times (\univcover\setminus Z)$. Finally, if $V$ is a vertex of a block we say the block is adjacent to the vertex $V$.

\begin{lemma}\label{glue1}
  For $m_1, n_1, m_2, n_2 \in \mathbb{Z}$, let $\mathbb{B}_{m_1}\times \mathbb{B}_{n_1}$ and $\mathbb{B}_{m_2}\times \mathbb{B}_{n_2}$ be adjacent blocks. Let $\widetilde{C}\in \univcover$. Assume that the fiber $P^{-1}(\widetilde{C})$ intersects both the blocks. 
  \begin{enumerate}[(1)]
      \item If the blocks are edge-adjacent, then $P^{-1}(\widetilde{C})$ intersects their common edge.

      \item If the blocks are vertex-adjacent, then $P^{-1}(\widetilde{C})$ intersects either their common vertex or an edge incident to the common vertex.
  \end{enumerate} 
\end{lemma}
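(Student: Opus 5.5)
We argue entirely at the level of characters and conjugacy classes. The idea is to move from a point of the fiber in one block towards the other block while keeping the product fixed. Take $(\widetilde{A_1},\widetilde{B_1})\in P^{-1}(\widetilde{C})\cap(\mathbb{B}_{m_1}\times\mathbb{B}_{n_1})$ and $(\widetilde{A_2},\widetilde{B_2})\in P^{-1}(\widetilde{C})\cap(\mathbb{B}_{m_2}\times\mathbb{B}_{n_2})$, and write $\chi(\widetilde{A_i},\widetilde{B_i})=(x_i,y_i,z)$ with $z=Tr(\widetilde{C})$.

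\textbf{Edge-adjacent case.} Suppose the common edge is $\partial\overline{\mathbb{B}}^R_{m_1}\times\mathbb{B}_{n_1}$, so $n_1=n_2$ and the first factors lie in the two blocks on either side of one parabolic stratum $\Par^{\epsilon}_k$. Consider the straight segment $\chi_t=((1-t)x_1+tx_2,(1-t)y_1+ty_2,z)$. Since $y_1,y_2$ lie in the same trace interval (that of $\mathbb{B}_{n_1}$), the coordinate $y_t$ stays there. Hence $\chi_t$ avoids $\Gamma=[-2,2]^3\cap\kappa^{-1}([-2,2])$ whenever $|y_t|>2$ or $|z|>2$. In the remaining cases I plan to use the region analysis in the proofs of Lemmas \ref{ellfiber} and \ref{parfiblemm}, which identifies the connected component of the slice $\{z\}\setminus\Gamma$ containing both characters.

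First perturb $(\widetilde{A_1},\widetilde{B_1})$ inside the fiber so that it is non-commuting, exactly as in Lemma \ref{parfiblemm}. Then lift $\chi_t$ by Lemma \ref{lem_plpchar} to a path $(\widetilde{A^t},\widetilde{B^t})$ starting at $(\widetilde{A_1},\widetilde{B_1})$. By continuity of the lift and the intermediate value theorem for $x_t$, the first coordinate must cross $|x|=2$. The sign rules of Lemma \ref{lem_offdiag}, \ref{lem_offdiag_Ell}, and the fact that the lifted path moves continuously through $\univcover\setminus Z$, force $\widetilde{A^t}$ to pass through $\Par^{\epsilon}_k$ rather than through the center. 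The center is excluded because a central $\widetilde{A^t}$ would make $\kappa(\chi_t)=2$, while the lift stays among non-commuting pairs.

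At such a time $t_0$, the product $\widetilde{A^{t_0}}\widetilde{B^{t_0}}$ has trace $z$ and lies in the same stratum as $\widetilde{C}$, since the type along the path is determined by trace and continuity. Therefore it is conjugate to $\widetilde{C}$ by some $\widetilde{g}$. Conjugating gives a point of $P^{-1}(\widetilde{C})$ in the common edge, because conjugation preserves every element of $\mathscr{L}$.

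\textbf{Vertex-adjacent case.} Here both coordinates must change strata. I will run the same segment $\chi_t$. Its lift meets the boundary strata either simultaneously, which produces a point in the common vertex, or at different times. In the latter case, at the first crossing time one coordinate is parabolic while the other is still in its original block. That point, after the same conjugation argument, lies in an edge incident to the common vertex.

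\textbf{Main obstacle.} The hard step is to guarantee that $\chi_t$ avoids $\Gamma$ and that the lift crosses the correct parabolic component when $|z|\leqslant 2$ and both traces are elliptic. If the straight segment fails, I would first replace it by a path inside the appropriate region $R_1$ or $R_2$ of Figures \ref{ellfib} and \ref{Parfib}, which were shown to contain all characters of a given fiber. I would then bend the path to exit through the side $|x|=2$, using the off-diagonal sign computations of Lemma \ref{ellfiber} to identify the parabolic stratum reached.
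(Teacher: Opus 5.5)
Your proposal has a genuine gap. The failing step is the claim that the lifted first coordinate $\widetilde{A^t}$ leaves $\mathbb{B}_{m_1}$ through the common wall $\partial\overline{\mathbb{B}}^R_{m_1}$. Excluding the center is fine, since the lift stays in $\Omega$. But that is all your argument shows: nothing you control decides which parabolic wall is crossed.

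Take $\mathbb{B}_{m_1}=\Hyp_0$, $\mathbb{B}_{m_2}=\Ell_1$, $\mathbb{B}_{n_1}=\Hyp_0$ and $\widetilde{C}\in\Hyp_0$. The common edge is $\Par^+_0\times\Hyp_0$. However, $\Hyp_0$ is also bounded by $\Par^-_0$, behind which lies $\Ell_{-1}$, and both walls have trace $2$.

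Now consider the automorphism of $\univcover$ induced by conjugation by an element of $\pgl\setminus\psl$. It preserves traces, fixes $\Hyp_0$, exchanges $\Ell_1\leftrightarrow\Ell_{-1}$ and $\Par^+_0\leftrightarrow\Par^-_0$, and sends $\widetilde{C}$ to an element of $\Hyp_0$ with the same trace, hence to a conjugate of $\widetilde{C}$. So whenever the fiber meets $\Ell_1\times\Hyp_0$, it also meets $\Ell_{-1}\times\Hyp_0$, with exactly the same set of characters. The trace data along your segment are therefore compatible with exiting through either wall.

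Nothing you cite resolves this ambiguity:
\begin{itemize}
\item Lemma~\ref{lem_plpchar} is a bare existence statement. It does not say which of the two $\SL$-orbits in $\chi^{-1}(\chi_t)$ the lift ends in.
\item The sign rules of Lemmas~\ref{lem_offdiag} and \ref{lem_offdiag_Ell} relate the component of an element to the signs of its off-diagonal entries. You have no control over those entries along the lift.
\end{itemize}
If the lift exits through $\Par^-_0$, you obtain a point of $\Par^-_0\times\Hyp_0$, not of the common edge.

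In the edge case you could repair this by starting from the point in the elliptic block. From $\Ell_m$ the two exit walls have traces of opposite sign, so the first crossing of $|x|=2$ on the way to $x_1$ is forced through the common wall. In the vertex case, however (e.g.\ $\Hyp_0\times\Ell_1$ versus $\Ell_1\times\Hyp_0$), both blocks have a hyperbolic coordinate. The ambiguity remains, and your first-crossing point need not lie on an edge incident to the common vertex.

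Independently, your treatment of the case $|y_t|,|z|\leqslant 2$ is only a plan. The region analysis in Lemmas~\ref{ellfiber} and \ref{parfiblemm} locates characters of fiber points inside a single block $\Ell_m\times\Ell_n$. You would still have to show, for every admissible configuration, that the relevant component of $([-2,2]^2\times\{z\})\setminus\Gamma$ reaches the side $x=\pm 2$ you need.

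None of this is necessary. The missing idea is that, for fixed $\widetilde{C}$, the part of the fiber with second coordinate in $\mathbb{B}_{n_1}$ is the graph $\{(\widetilde{C}\widetilde{B}^{-1},\widetilde{B}) : \widetilde{B}\in\mathbb{B}_{n_1}\}$, which is connected. Its first-coordinate projection stays connected after deleting the discrete set $Z$. The argument then runs as follows:
\begin{itemize}
\item A path in this projection from $\widetilde{A_1}$ to $\widetilde{A_2}$ must meet $\partial\overline{\mathbb{B}}^R_{m_1}$, because that wall separates $\univcover\setminus Z$ with $\mathbb{B}_{m_1}$ and $\mathbb{B}_{m_2}$ on opposite sides.
\item At a crossing point $\widetilde{P}=\widetilde{C}\widetilde{B}^{-1}$, the pair $(\widetilde{P},\widetilde{P}^{-1}\widetilde{C})=(\widetilde{P},\widetilde{B})$ lies in the common edge.
\item For the vertex case, let the second coordinate range over the connected set $\mathbb{B}_{n_1}\cup\partial\overline{\mathbb{B}}^R_{n_1}\cup\mathbb{B}_{n_2}$. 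The crossing point then lies in the common vertex or in one of the two incident edges.
\end{itemize}
This argument works directly in $\univcover$, where the walls are distinct sets rather than indistinguishable trace values. So the question of which wall is crossed never arises.
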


\begin{figure}[H]
    \centering
    \begin{overpic}[width=0.7\textwidth,trim = 0 500 0 200]{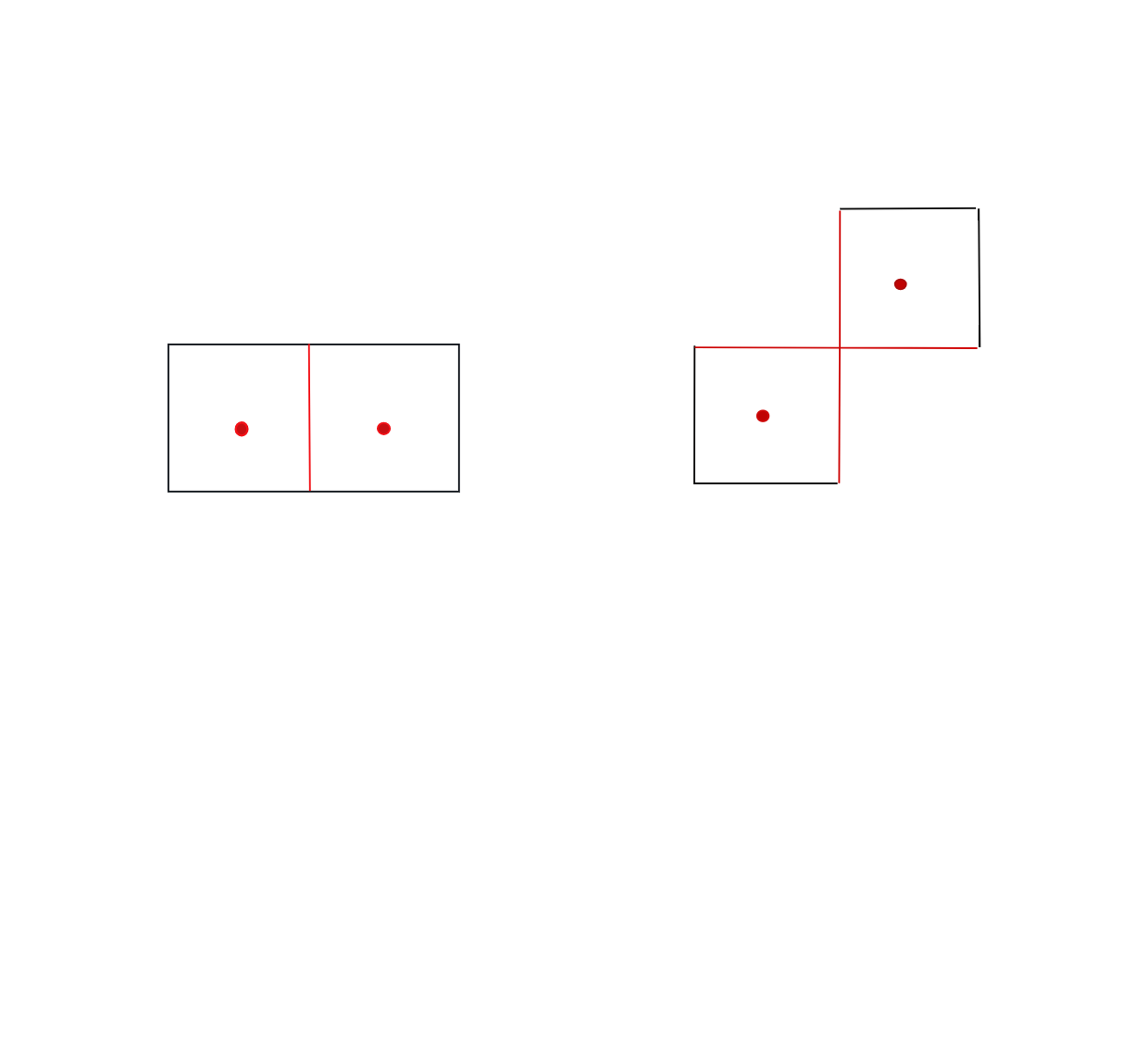}

    \put(26,0){\scalebox{0.9}{(a)}}

    \put(73,0){\scalebox{0.9}{(b)}}

    \put(17.5,9.7){\scalebox{0.6}{\textcolor{red}{$(\widetilde{A_1},\widetilde{B_1})$}}}
    \put(30,9.7){\scalebox{0.6}{\textcolor{red}{$(\widetilde{A_2},\widetilde{B_2})$}}}
    \put(16.5,4.5){\scalebox{0.6}{$\mathbb{B}_{m_1}\times \mathbb{B}_{n_1}$}}
    \put(29,4.5){\scalebox{0.6}{$\mathbb{B}_{m_2}\times \mathbb{B}_{n_2}$}}
    \put(63,11){\scalebox{0.6}{\textcolor{red}{$(\widetilde{A_1},\widetilde{B_1})$}}}
    \put(75,22){\scalebox{0.6}{\textcolor{red}{$(\widetilde{A_2},\widetilde{B_2})$}}}
    \put(62.3,4.7){\scalebox{0.6}{{$\mathbb{B}_{m_1}\times \mathbb{B}_{n_1}$}}}
    \put(75,17){\scalebox{0.6}{$\mathbb{B}_{m_2}\times \mathbb{B}_{n_2}$}}
        
    \end{overpic}
    \caption{Edge-adjacent and vertex-adjacent blocks}
    \label{blockadj}
\end{figure}
\begin{proof}
    For (1), without loss of generality, we assume $\partial \overline{\mathbb{B}}_{m_1}^R = \partial \overline{\mathbb{B}}_{m_2}^L$ and $\mathbb{B}_{n_1} = \mathbb{B}_{n_2}$, i.e., the two blocks share the common edge $ \partial \overline{\mathbb{B}}_{m_1}^R\times \mathbb{B}_{n_1} = \partial \overline{\mathbb{B}}_{m_2}^L\times \mathbb{B}_{n_2}$. 
    Let $(\widetilde{A_1},\widetilde{B_1})\in (\mathbb{B}_{m_1} \times\mathbb{B}_{n_1})\cap P^{-1}(\widetilde{C})$ and $(\widetilde{A_2},\widetilde{B_2})\in (\mathbb{B}_{m_2} \times\mathbb{B}_{n_2})\cap P^{-1}(\widetilde{C})$. See Figure \ref{blockadj}(a).
    As the image
    $P\big(\{\widetilde{C}\}\times \mathbb{B}_{-n_1}\big)$ is homeomorphic to $\mathbb{B}_{-n_1}$ in $\univcover$, $P\big(\{\widetilde{C}\}\times \mathbb{B}_{-n_1}\big)\setminus Z\subset \univcover\setminus Z$ is path-connected. Hence, $\widetilde{A_1} = \widetilde{C}\widetilde{B}_1^{-1}$ and $\widetilde{A_2} = \widetilde{C}\widetilde{B}_2^{-1}$ are connected by a path in $P\big(\{\widetilde{C}\}\times \mathbb{B}_{-n_1}\big)\setminus Z$. Since $\partial \overline{\mathbb{B}}_{m_1}^R$ separates $\univcover\setminus Z$ into two connected components with $\widetilde{A_1}\in \mathbb{B}_{m_1}$ lying in one and $\widetilde{A_2}\in \mathbb{B}_{m_2}$ in the other, the path intersects $\partial \overline{\mathbb{B}}_{m_1}^R$ at a point $\widetilde{P}$. Then the pair $(\widetilde{P}, \widetilde{P}^{-1}\widetilde{C})$ lies in the common edge $\partial \overline{\mathbb{B}}_{m_1}^R\times \mathbb{B}_{n_1}$, which shows that $(\partial \overline{\mathbb{B}}_{m_1}^R\times \mathbb{B}_{n_1})\cap P^{-1}(\widetilde{C})$ is non-empty.
    \smallskip
    
    For (2), without loss of generality, we assume $\partial \overline{\mathbb{B}}_{m_1}^R
    = \partial\overline{\mathbb{B}}_{m_2}^L$ and $\partial \overline{\mathbb{B}}_{n_1}^R = \partial \overline{\mathbb{B}}_{n_2}^L$, i.e., the two blocks share the common vertex $\partial \overline{\mathbb{B}}_{m_1}^R\times \partial\overline{\mathbb{B}}_{n_1}^R = \partial \overline{\mathbb{B}}_{m_2}^L\times \partial\overline{\mathbb{B}}_{n_2}^L$. 
    Let $(\widetilde{A_1},\widetilde{B_1})\in (\mathbb{B}_{m_1} \times\mathbb{B}_{n_1})\cap P^{-1}(\widetilde{C})$ and $(\widetilde{A_2},\widetilde{B_2})\in (\mathbb{B}_{m_2} \times\mathbb{B}_{n_2})\cap P^{-1}(\widetilde{C})$. See Figure \ref{blockadj}(b).
    Since the closures of $\mathbb{B}_{n_1}$ and $\mathbb{B}_{n_2}$ intersect at $\partial \overline{\mathbb{B}}_{n_1}^R$, the space 
    $\mathbb{B}_{n_1}\cup \partial\overline{\mathbb{B}}_{n_1}^R\cup \mathbb{B}_{n_2}$ is path-connected. Similarly, $\mathbb{B}_{-n_1}\cup \partial\overline{\mathbb{B}}_{-n_1}^L\cup \mathbb{B}_{-n_2}$ is path-connected, and the image 
    $P\big(\{\widetilde{C}\}\times \big(\mathbb{B}_{-n_1}\cup \partial\overline{\mathbb{B}}_{-n_1}^L\cup \mathbb{B}_{-n_2}\big)\big) \setminus Z$ is path-connected in $\univcover\setminus Z$.
    Hence, $\widetilde{A_1} = \widetilde{C}\widetilde{B}_1^{-1}$ and $\widetilde{A_2} = \widetilde{C}\widetilde{B}_2^{-1}$are connected by a path in $P\Big(\{\widetilde{C}\}\times \big(\mathbb{B}_{-n_1}\cup \partial\overline{\mathbb{B}}_{-n_1}^L\cup \mathbb{B}_{-n_2}\big)\Big)\setminus Z$.
    Since $\partial \overline{\mathbb{B}}_{m_1}^R$ separates $\univcover\setminus Z$ into two connected components with $\widetilde{A_1}\in \mathbb{B}_{m_1}$ lying in one and $\widetilde{A_2}\in \mathbb{B}_{m_2}$ in the other, the path intersects $\partial \overline{\mathbb{B}}_{m_1}^R$ at a point $\widetilde{P}$. Then the pair $(\widetilde{P}, \widetilde{P}^{-1}\widetilde{C})$ lies in the space $\partial \overline{\mathbb{B}}_{m_1}^R\times \big(\mathbb{B}_{n_1}\cup \partial\overline{\mathbb{B}}_{n_1}^R\cup \mathbb{B}_{n_2}\big)$, which is a union of the common vertex $\partial \overline{\mathbb{B}}_{m_1}^R\times \partial\overline{\mathbb{B}}_{n_1}^R$ and two incident edges $\partial \overline{\mathbb{B}}_{m_1}^R\times \mathbb{B}_{n_1}$ and $\partial \overline{\mathbb{B}}_{m_2}^L\times \mathbb{B}_{n_2}$.
\end{proof}

When the fiber $P^{-1}(\widetilde{C})$ intersects an edge $E$ at a point $(\widetilde{A},\widetilde{B})$, the following Lemma \ref{glue2} provides paths in the fiber which perturbs $(\widetilde{A},\widetilde{B})$ into the blocks having $E$ as their common edge. 

\begin{lemma}\label{glue2}
 Let $\mathbb{B}_{m_1}\times \mathbb{B}_{n_1}$ and $\mathbb{B}_{m_2}\times \mathbb{B}_{n_2}$ be edge-adjacent blocks. For $\widetilde{C}\in \univcover$, let $(\widetilde{A},\widetilde{B})\in P^{-1}(\widetilde{C})$ contained in their common edge. For each $i\in\{1,2\}$, there is a path $\{\delta_{i,t}\}\interval$ in $P^{-1}(\widetilde{C})$ starting from $(\widetilde{A},\widetilde{B})$  such that for all $t\in (0,1]$, $\delta_{i,t}$ lies in $\mathbb{B}_{m_i}\times \mathbb{B}_{n_i}$.
\end{lemma}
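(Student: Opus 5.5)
The plan is to exploit the fact that the common edge has only one \emph{boundary coordinate}: the other coordinate lies in an open set, so it can be used to absorb the perturbation while staying in the fiber. Concretely, I move the boundary coordinate off the parabolic stratum into each of the two adjacent open components, and let the other coordinate be determined by the constraint $\widetilde{A}\widetilde{B}=\widetilde{C}$.

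By symmetry (replacing $P$ by $(\widetilde{A},\widetilde{B})\mapsto \widetilde{B}^{-1}\widetilde{A}^{-1}$, or arguing identically with the roles of the factors swapped), I assume the common edge is $\partial\overline{\mathbb{B}}^R_{m_1}\times\mathbb{B}_{n_1}=\partial\overline{\mathbb{B}}^L_{m_2}\times\mathbb{B}_{n_2}$. Then $\mathbb{B}_{n_1}=\mathbb{B}_{n_2}$, and $\widetilde{A}$ lies in a single parabolic component $\Par^{\sigma}_k$, while $\widetilde{B}\in\mathbb{B}_{n_1}$. From the definitions of $\overline{\mathbb{B}}^L$ and $\overline{\mathbb{B}}^R$, this component $\Par^\sigma_k$ is contained in the closure of exactly two elements of $\mathscr{L}$ other than itself, namely $\mathbb{B}_{m_1}$ and $\mathbb{B}_{m_2}$. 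One of them is a hyperbolic component and the other an elliptic one; for instance $\Par^+_m$ lies in the closures of $\Hyp_m$ and $\Ell_{m+1}$ (or $\Ell_m$ when $m<0$), as in Figure \ref{fig: Universal_cover_Ell_colored2}.

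\textbf{Step 1: perturb $\widetilde{A}$ into each side.} For each $i\in\{1,2\}$ I construct a path $\{\widetilde{A}_{i,t}\}\interval$ with $\widetilde{A}_{i,0}=\widetilde{A}$ and $\widetilde{A}_{i,t}\in\mathbb{B}_{m_i}$ for $t>0$. After conjugating, the projection of $\widetilde{A}$ to $\SL$ is $\pm\begin{bmatrix}1&\pm1\\0&1\end{bmatrix}$. Consider the path $\begin{bmatrix}1&\pm1\\ \epsilon t&1+\epsilon t(\pm1)\end{bmatrix}$, or any explicit path with determinant $1$ and trace $2+\epsilon t$ (adjusted by the sign of the lift). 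Taking $\epsilon>0$ gives hyperbolic elements and $\epsilon<0$ gives elliptic ones. Lift this path to $\univcover$ starting at $\widetilde{A}$. For small $t>0$ it leaves $\Par^\sigma_k$ and enters an element of $\mathscr{L}$ whose closure contains $\Par^\sigma_k$. By the closure description above, the hyperbolic choice lands in the hyperbolic one of $\mathbb{B}_{m_1},\mathbb{B}_{m_2}$ and the elliptic choice in the elliptic one.

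\textbf{Step 2: complete to a path in the fiber.} Set $\widetilde{B}_{i,t}:=\widetilde{A}_{i,t}^{-1}\widetilde{C}$, so $P(\widetilde{A}_{i,t},\widetilde{B}_{i,t})=\widetilde{C}$ for all $t$. Since $\widetilde{B}_{i,0}=\widetilde{A}^{-1}\widetilde{C}=\widetilde{B}$ lies in the open set $\mathbb{B}_{n_1}$, continuity gives $\delta>0$ with $\widetilde{B}_{i,t}\in\mathbb{B}_{n_1}$ for $t\in[0,\delta]$. Reparametrizing $[0,\delta]$ to $[0,1]$ yields $\delta_{i,t}=(\widetilde{A}_{i,t},\widetilde{B}_{i,t})$, which lies in $P^{-1}(\widetilde{C})$ and in $\mathbb{B}_{m_i}\times\mathbb{B}_{n_i}$ for $t\in(0,1]$.

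\textbf{Main obstacle.} The main point to check carefully is Step 1's claim that the elliptic (respectively hyperbolic) lift near $\Par^\sigma_k$ lies in the component with the correct index, i.e.\ exactly the one appearing in $\overline{\mathbb{B}}^{L/R}$. I would verify this by continuity of off-diagonal signs. A nonzero off-diagonal entry of the parabolic projection persists along the path, so Lemma \ref{lem_offdiag} and Lemma \ref{lem_offdiag_Ell} identify the parity and sign of the elliptic index. Alternatively, one can write the path through the one-parameter description $\widetilde{\exp}$ and compare with the rotation angles $\theta\to k\pi$ that define $\Ell_k$.
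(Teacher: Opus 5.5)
Your proposal is correct and follows essentially the same route as the paper: the paper perturbs $\widetilde{A}^{-1}$ into $\mathbb{B}_{-m_i}$, which is the same as your perturbing $\widetilde{A}$ into $\mathbb{B}_{m_i}$, and then keeps the other coordinate, your $\widetilde{A}_{i,t}^{-1}\widetilde{C}$, inside the open set $\mathbb{B}_{n_i}$ by continuity. Your explicit determinant-one path with $|\mathrm{trace}|$ passing through $2$, together with the observation that each parabolic component lies in the closure of exactly one hyperbolic and one elliptic component, just fills in the step the paper dismisses as ``by continuity.''
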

\begin{figure}[H]
    \centering
    \begin{overpic}[width=0.7\textwidth,trim = 0 500 0 350]{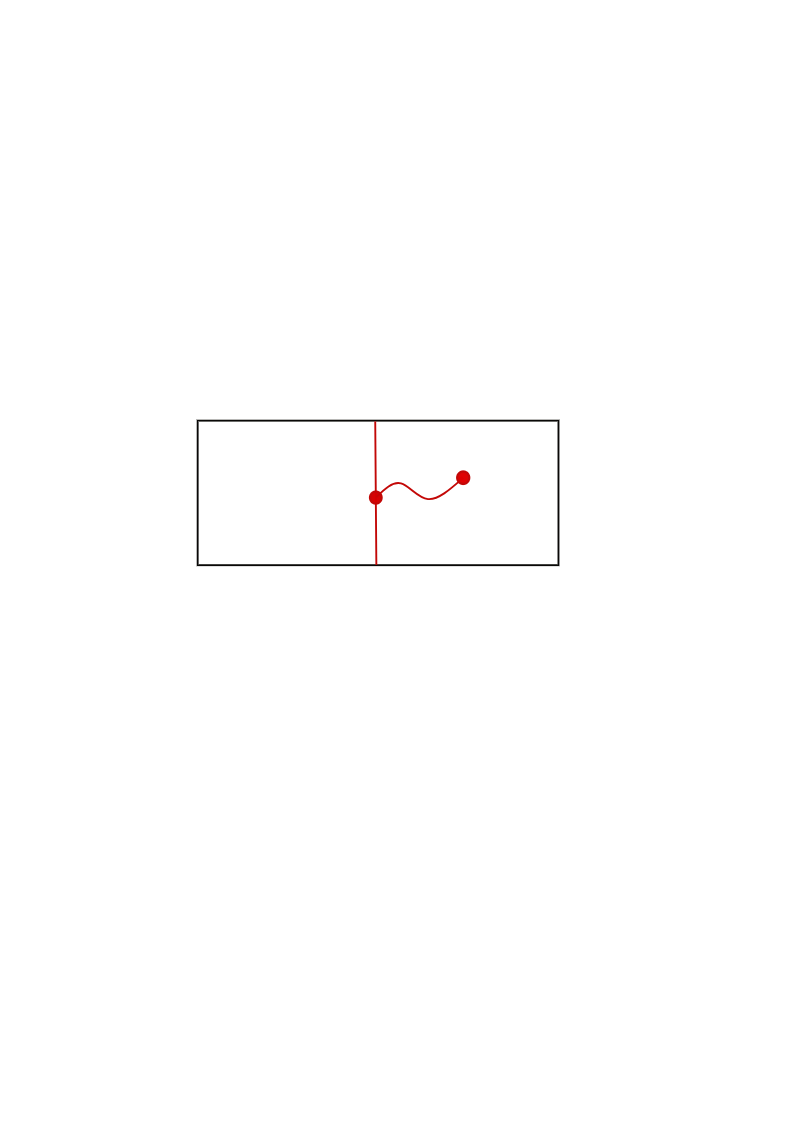}
    \put(40,15){\scalebox{0.6}{\textcolor{red}{$(\widetilde{A},\widetilde{B})$}}}
    \put(58,20){\scalebox{0.6}{\textcolor{red}{$(\widetilde{A'},\widetilde{B'})$}}}
    \put(32.5,4){\scalebox{0.6}{$\mathbb{B}_{m_1}\times \mathbb{B}_{n_1}$}}
    \put(54,4){\scalebox{0.6}{$\mathbb{B}_{m_2}\times \mathbb{B}_{n_2}$}}

    \end{overpic}
    \caption{Pushing into adjacent blocks}
    \label{pathadj}
\end{figure}
\begin{proof}
     Let $\mathbb{B}_{m_1}\times \mathbb{B}_{n_1}$ and $\mathbb{B}_{m_2}\times \mathbb{B}_{n_2}$ be edge-adjacent. Without loss of generality, we assume $\mathbb{B}_{n_1} = \mathbb{B}_{n_2}$ and $\partial \overline{\mathbb{B}}_{m_1}^R = \partial \overline{\mathbb{B}}_{m_2}^L$, i.e., the two blocks share the common edge $ \partial \overline{\mathbb{B}}_{m_1}^R\times \mathbb{B}_{n_1} = \partial \overline{\mathbb{B}}_{m_2}^L\times \mathbb{B}_{n_2}$. Let $(\widetilde{A},\widetilde{B})\in (\partial \overline{\mathbb{B}}_{m_1}^R\times \mathbb{B}_{n_1})\cap P^{-1}(\widetilde{C})$. See Figure \ref{pathadj}. Let $i\in \{1,2\}$. By the continuity of the map $P:\univcover \times\{\widetilde{C}\} \rightarrow\univcover$, we may choose a path $\{\widetilde{P_{i,t}}\}\interval$  
    starting from $\widetilde{P_{i,0}} = \widetilde{A}^{-1}$ such that $\widetilde{P_{i,t}}\in \mathbb{B}_{-m_i}$ for all $t\in (0,1]$, and 
    $\{\widetilde{P_{i,t}}\widetilde{C}\}\interval \subset \mathbb{B}_{n_i}$. Thus, letting $\delta_{i,t}=(\widetilde{P}_{i,t}^{-1},\widetilde{P}_{i,t}\widetilde{C})$ for all $t\in [0,1]$ gives us the desired path $\{\delta_{i,t}\}\interval$ in $P^{-1}(\widetilde{C})$ connecting $(\widetilde{A},\widetilde{B})$ to a point $(\widetilde{A}',\widetilde{B}')$ in $\mathbb{B}_{m_i}\times \mathbb{B}_{n_i}$.
\end{proof}

For $m, n\in \mathbb{Z}$ and $s_1,s_2 \in \{+, -\}$,
let $V =\Par_m^{s_1}\times \Par_n^{s_2}$. It is a common vertex of four adjacent blocks, which we denote by $B^1_L,B^2_L,B^1_R,B^2_R$. Moreover, four of their edges are incident to $V$.  Let $S_V$ denote the union of the four adjacent blocks, four incident edges and $V$. See Figure \ref{4box}.
\begin{figure}[H]
    \centering
    \begin{overpic}[width=0.3\textwidth]{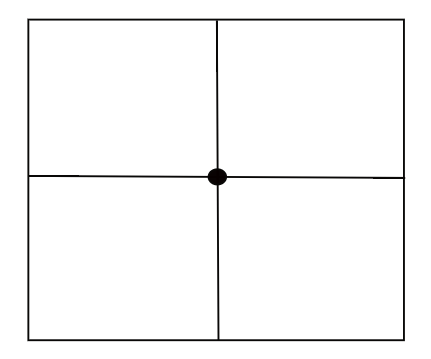}
 
    \put(24,58){\scalebox{0.8}{$B^1_L$}}
    \put(24,21){\scalebox{0.8}{$B^2_L$}}
    \put(67,58){\scalebox{0.8}{$B^1_R$}}
    \put(67,21){\scalebox{0.8}{$B^2_R$}}
    \put(53,44){\scalebox{0.8}{$V$}}
    \end{overpic}
    \caption{$S_V$}
    \label{4box}
\end{figure}

When the fiber $P^{-1}(\widetilde{C})$ intersects a vertex $V$ at a point $(\widetilde{P_1}, \widetilde{P_2})$, the following Lemma \ref{glue3} provides a path in the fiber which perturbs $(\widetilde{P_1}, \widetilde{P_2})$ into a block
having $V$ as its vertex. 
\begin{lemma}\label{glue3}
For $m, n\in \mathbb{Z}$ and $s_1,s_2 \in \{+, -\}$,
let $V =\Par_m^{s_1}\times \Par_n^{s_2}$. Let $\widetilde{C}\in \univcover$, and let $(\widetilde{P_1},\widetilde{P_2}) \in P^{-1}(\widetilde{C})\cap V$. For each $i\in \{1,2\}$, there is a path $\{\delta_{i,t}\}\interval$ in $P^{-1}(\widetilde{C})\cap S_V$ starting at $(\widetilde{P_1}, \widetilde{P_2})$ such that the endpoint $\delta_{i,1}$ lies in one of the two blocks $B^i_L \text{ or } B^i_R$. Likewise, for $\alpha\in \{R,L\}$, there is a path $\{\delta_{\alpha,t}\}\interval$ in $P^{-1}(\widetilde{C})\cap S_V$ starting at $(\widetilde{P_1}, \widetilde{P_2})$ such that the endpoint $\delta_{\alpha,1}$ lies in one of the two blocks $B^1_\alpha \text{ or } B^2_\alpha$.
\end{lemma}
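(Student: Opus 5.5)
We argue in coordinates, moving one factor of the pair and letting the product constraint determine the other. Write $V=\Par_m^{s_1}\times\Par_n^{s_2}$. By the definitions of $\overline{\mathbb{B}}^L$ and $\overline{\mathbb{B}}^R$, the component $\Par_m^{s_1}$ is the common boundary piece of exactly two components $X_L, X_R\in\{\Hyp_k,\Ell_k\}$. Likewise, $\Par_n^{s_2}$ is the common boundary piece of exactly two components $Y^1,Y^2$. With this labelling, $B^i_\alpha=X_\alpha\times Y^i$.

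\textbf{Step 1 (local structure).} I would first check that every point of $\Par_m^{s_1}$ has a neighbourhood $U_1\subset\univcover$ with
$$U_1\subset X_L\cup\Par_m^{s_1}\cup X_R.$$
Similarly, every point of $\Par_n^{s_2}$ has a neighbourhood $U_2\subset Y^1\cup\Par_n^{s_2}\cup Y^2$. The reasons are as follows.
\begin{itemize}
\item Parabolic elements are non-central, and $Z$ is discrete, so a small neighbourhood misses $Z$.
\item Near such a point the trace is close to $\pm2$, so the neighbourhood meets only elements of $\mathscr{L}$ whose closure contains the point.
\item Reading off the picture of $\univcover$ (Figure \ref{fig: Universal_cover_Ell_colored2}) together with the definitions of $\overline{\mathbb{B}}^{L}_m$ and $\overline{\mathbb{B}}^R_m$, the only such elements are $\Par_m^{s_1}$ and its two neighbouring components $X_L,X_R$. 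The other parabolic components are closed and disjoint from $\Par_m^{s_1}$ in $\univcover\setminus Z$.
\end{itemize}
In addition, $\Par_m^{s_1}$ is an embedded surface, hence of codimension one and nowhere dense.

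\textbf{Step 2 (the path $\delta_{i,t}$).} Fix $i$.
\begin{enumerate}[(1)]
\item Since $\widetilde{P_2}$ lies in the closure of the open set $Y^i$, choose a path $\{\widetilde{Q_t}\}_{t\in[0,1]}$ with $\widetilde{Q_0}=\widetilde{P_2}$ and $\widetilde{Q_t}\in Y^i$ for all $t>0$.
\item Set $\delta_{i,t}=(\widetilde{C}\widetilde{Q_t}^{-1},\widetilde{Q_t})$. This lies in $P^{-1}(\widetilde{C})$, and its first coordinate starts at $\widetilde{C}\widetilde{P_2}^{-1}=\widetilde{P_1}$.
\item By continuity, after reparametrising (shrinking the time interval), we may assume $\widetilde{C}\widetilde{Q_t}^{-1}\in U_1$ for all $t$. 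By Step 1, for $t>0$ the point $\delta_{i,t}$ then lies in $B^i_L$, in $B^i_R$, or in the edge $\Par_m^{s_1}\times Y^i$. In particular, the whole path lies in $S_V$.
\item It remains to make the endpoint lie in a block rather than on the edge. The map $\widetilde{Q}\mapsto\widetilde{C}\widetilde{Q}^{-1}$ is a diffeomorphism of $\univcover$, so the preimage of $\Par_m^{s_1}$ is a codimension-one submanifold and hence nowhere dense.
\item If $\widetilde{C}\widetilde{Q_1}^{-1}\in\Par_m^{s_1}$, append a short path in $Y^i\cap(\text{preimage of }U_1)$ from $\widetilde{Q_1}$ to a nearby $\widetilde{Q}'$ with $\widetilde{C}\widetilde{Q}'^{-1}\notin\Par_m^{s_1}$. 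This is possible since $Y^i$ and the preimage of $U_1$ are open.
\end{enumerate}
The appended segment stays in $P^{-1}(\widetilde{C})\cap S_V$ by the same reasoning as in (3). Its endpoint lies in $B^i_L$ or $B^i_R$, as required.

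\textbf{Step 3 (the path $\delta_{\alpha,t}$).} This is symmetric: move the first coordinate. Choose $\widetilde{A_t}$ with $\widetilde{A_0}=\widetilde{P_1}$ and $\widetilde{A_t}\in X_\alpha$ for $t>0$, and set $\delta_{\alpha,t}=(\widetilde{A_t},\widetilde{A_t}^{-1}\widetilde{C})$. Then use $U_2$ and the nowhere-density of $\Par_n^{s_2}$ under the diffeomorphism $\widetilde{A}\mapsto\widetilde{A}^{-1}\widetilde{C}$ to end in $B^1_\alpha$ or $B^2_\alpha$.

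The main point needing care is Step 1: confirming from the explicit description of $\univcover$ that near a parabolic component only the two designated neighbours $X_L,X_R$ (respectively $Y^1,Y^2$) occur. In particular, it must be that no other block, such as one across a different parabolic sheet, is reachable. This is what ensures that the path remains inside $S_V$ and not merely inside $P^{-1}(\widetilde{C})$.
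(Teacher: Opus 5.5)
Your proposal is correct and follows essentially the paper's argument. In both, one coordinate is moved into the target factor, the product constraint determines the other coordinate, and continuity near the parabolic sheet keeps the path inside $S_V$.

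The only difference comes when the endpoint lands on an edge. The paper pushes it into a block by invoking Lemma \ref{glue2}. You instead perturb directly, using that the preimage of $\mathrm{Par}_m^{s_1}$ is nowhere dense. You also treat all $(m,n,s_1,s_2)$ uniformly, where the paper proves only the case $m=n=0$, $s_1=s_2=+$ and says the rest is verbatim.
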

\begin{proof}
     We prove the lemma for the case $s_1=s_2=+$ and $m = n = 0$; and for the other cases of $s_1, s_2\in \{+,-\}$ and $m, n\in \mathbb{Z}$, the proof follows verbatim. 
    In this case, the blocks adjacent to $V$ are
    
    $$B^1_L = \Hyp_0\times \Ell_1,\ B^1_R = \Ell_1\times \Ell_1,\  B^2_L=\Hyp_0\times \Hyp_0,\ B^2_R = \Ell_1\times \Hyp_0.$$
    By continuity of the product map, we may choose a path $\{\widetilde{P^t}\}\interval$ starting at $\widetilde{P^0} = \widetilde{P'_2}^{-1}$ such that $\{\widetilde{P^t}\}_{t\in(0,1]} \subset \Hyp_0$ and $\{\widetilde{C}\widetilde{P^t}\}\interval \subset \Hyp_0\cup \Par_0^+ \cup \Ell_1$. 
    Hence the path $\big\{(\widetilde{C}\widetilde{P^t},\widetilde{P^t}^{-1})\big\}_{t\in(0,1]}\subset \big((\Hyp_0 \cup \Par_0^+ \cup \Ell_1) \times \Hyp_0\big)\cap P^{-1}(\widetilde{C})$. 
    If $(\widetilde{C}\widetilde{P^1},\widetilde{P^1}^{-1})\in (\Ell_1\cup \Hyp_0)\times \Hyp_0$, then we let $\{\delta'_{2,t}\}\interval = \big\{(\widetilde{C}\widetilde{P^t},\widetilde{P^t}^{-1})\big\}\interval$. 
    If $(\widetilde{C}\widetilde{P^1},\widetilde{P^1}^{-1})\in \Par_0^+\times \Hyp_0$, then we can concatenate $\{(\widetilde{C}\widetilde{P^t},\widetilde{P^t}^{-1})\}_{t\in[0,1]}$ with a path $\{\delta''_t\}\interval \subset P^{-1}(\widetilde{C})$ obtained from Lemma \ref{glue2} to construct the desired path $\{\delta'_{2,t}\}\interval$. Similarly, we may find a path $\{\delta'_{1,t}\}\interval$ in $P^{-1}(\widetilde{C})$ starting at $(\widetilde{P_1},\widetilde{P_2})$ and ending at $B^1_L$ or $B^1_R$. 
    \smallskip
    
    The construction of $\{\delta_{R,t}\}\interval$ and $\{\delta_{L,t}\}\interval$ follows verbatim.
    \end{proof}
   
Let $I_1$ and $I_2$ denote intervals in $\univcover$.
Notice that $I_1\times I_2$ is a union of blocks glued along their edges and vertices contained in $I_1\times I_2$. Let $\widetilde{C}\in \univcover\setminus Z$. Let $\mathcal{B}$ denote the collection of blocks in $I_1\times I_2$ that intersect the fiber $P^{-1}(\widetilde{C})$. 
If $P^{-1}(\widetilde{C})\cap(I_1\times I_2)$ is non-empty, then by Lemmas \ref{glue2} and \ref{glue3},  $\mathcal{B}$ is non-empty. Let $G$ be a graph whose vertices correspond to elements of $\mathcal{B}$, where two vertices are connected by an edge if the corresponding blocks are adjacent. 

\begin{lemma}\label{gconn}
    The graph $G$ is connected.
\end{lemma}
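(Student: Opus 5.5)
Given two blocks $\mathbb{B}_1,\mathbb{B}_2\in\mathcal{B}$, I will produce a chain of pairwise adjacent blocks in $\mathcal{B}$ joining them. The approach mirrors the proof of Lemma \ref{glue1}. Pick $(\widetilde{A_i},\widetilde{B_i})\in P^{-1}(\widetilde{C})\cap\mathbb{B}_i$ for $i=1,2$. Parametrize the fiber by its second coordinate: $P^{-1}(\widetilde{C})\cap(I_1\times I_2)$ is homeomorphic, via $(\widetilde{A},\widetilde{B})\mapsto\widetilde{B}$, to the set
$$W:=\{\widetilde{B}\in I_2 \mid \widetilde{C}\widetilde{B}^{-1}\in I_1\}.$$
The first step is to show that $W$ is path-connected, or at least that $\widetilde{B_1}$ and $\widetilde{B_2}$ are joined by a path in $W$. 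For this I use that $I_1,I_2$ are intervals, i.e. connected unions of consecutive elements of $\mathscr{L}$, so each is a ``slab'' of $\univcover\setminus Z$ cut out by parabolic walls. The inverse-translate $\{\widetilde{B}:\widetilde{C}\widetilde{B}^{-1}\in I_1\}=I_1^{-1}\widetilde{C}$ is again a slab, and the intersection of two such slabs should be connected. If a direct proof is awkward, I would instead use the open map $P$ and the path-lifting from Lemma \ref{Prod_confib}(a) applied to $D=(I_1\times I_2)\setminus P^{-1}(Z)$. Its fiber over $\widetilde{C}$ is path-connected, so a path $\{\delta_t\}_{t\in[0,1]}$ in the fiber from $(\widetilde{A_1},\widetilde{B_1})$ to $(\widetilde{A_2},\widetilde{B_2})$ exists directly. (This step must avoid circularity if Lemma \ref{gconn} is itself used to prove Lemma \ref{Prod_confib}.) The safer route is the slab argument, carried out by noting that the sign data of the relevant off-diagonal entries (Lemmas \ref{lem_offdiag}, \ref{lem_offdiag_Ell}) vary monotonically along one-parameter families, exactly as used implicitly in Lemma \ref{glue1}.

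Given such a path $\delta_t=(\widetilde{C}\widetilde{B_t}^{-1},\widetilde{B_t})$, the second step is to read off the sequence of strata it visits. Each coordinate lies in $\univcover\setminus Z$ (the first since $\widetilde{C}\notin Z$ only guarantees the product, so I perturb the path off the codimension-$3$ set where a coordinate is central, using openness of $P$ as in Lemma \ref{glue2}). By general position, again perturbing within the fiber, which is a $3$-manifold since $P$ is a submersion, I arrange that $\delta$ crosses the codimension-one walls $\Par_k^{\pm}\times(\cdot)$ and $(\cdot)\times\Par_k^\pm$ transversally and finitely often, and meets the vertices $\Par\times\Par$ only at finitely many times. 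Between consecutive wall-crossing times, $\delta_t$ stays in a single block, which is therefore in $\mathcal{B}$. Consecutive blocks along the path share the edge or vertex being crossed, so they are adjacent in $G$. This yields a path in $G$ from $\mathbb{B}_1$ to $\mathbb{B}_2$.

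The main obstacle is the transversality and general position step. Parabolic strata are not submanifolds of the fiber in an obviously transverse way, and a path may linger inside an edge or vertex. To handle it, I avoid full genericity and argue combinatorially. Whenever $\delta$ enters an edge or vertex at a point $q$, the blocks visited just before and just after are both adjacent to that edge or vertex, because the only blocks whose closures contain $q$ are those listed in the definition of $S_V$ or the two sharing the edge. Lemmas \ref{glue2} and \ref{glue3} show that these blocks are actually met by the fiber near $q$. Hence by compactness of $[0,1]$ and a Lebesgue-number argument, the path is covered by finitely many open sets, each meeting only blocks pairwise adjacent through a common edge or vertex, giving the required chain in $G$.
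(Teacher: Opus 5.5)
Your argument rests on Step 1, the path-connectedness of $W=\{\widetilde{B}\in I_2 \mid \widetilde{C}\widetilde{B}^{-1}\in I_1\}$, and that step is never established. The map $\widetilde{B}\mapsto(\widetilde{C}\widetilde{B}^{-1},\widetilde{B})$ identifies $W$ with $P^{-1}(\widetilde{C})\cap(I_1\times I_2)$. So Step 1 is exactly the fiber-connectedness assertion of Lemma \ref{Prod_confib}(a). In the paper, Lemma \ref{gconn} is an ingredient of that proof: it handles two non-adjacent blocks $B_1,B_2$. Your fallback is therefore circular, as you feared.

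The ``slab'' route does not rescue it. The set $I_1^{-1}\widetilde{C}$ is a right translate of $I_1^{-1}$, and right multiplication by $\widetilde{C}$ does not preserve conjugacy classes. Hence this set is not a union of elements of $\mathscr{L}$, and nothing forces its intersection with $I_2$ to be connected: a connected set may leave $I_2$ through a parabolic wall and come back through the same wall. The remark about monotone off-diagonal signs is not an argument, and Lemma \ref{glue1} uses nothing of that sort. In short, you deduce Lemma \ref{gconn} from the stronger statement it is meant to help prove.

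Given that statement, your Step 2 would be fine. Lemmas \ref{glue2} and \ref{glue3} do handle a path lingering on edges or vertices, and no perturbation off $Z$ is needed since $I_1,I_2\subset\univcover\setminus Z$. But the real content is missing.

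The missing idea is to use only one-coordinate slices of the fiber, whose connectedness comes for free:
\begin{enumerate}
\item For a single stratum $\mathbb{B}_m$, the admissible second coordinates form $P(\mathbb{B}_{-m}\times\{\widetilde{C}\})$. This set is homeomorphic to $\mathbb{B}_{-m}$, so it stays connected after removing the discrete set $Z$.
\item Likewise, the admissible first coordinates over $I_2'$ form the connected set $P(\{\widetilde{C}\}\times(I_2')^{-1})$.
\end{enumerate}
The paper argues by contradiction. Assuming $G$ has components $C_1,C_2$, it passes to $I_1'\times I_2'$ meeting only these. It then uses the first slice and the linear ordering of strata in $\univcover\setminus Z$ to show that no column $\mathbb{B}_m\times I_2'$ meets both $R_1$ and $R_2$: a block of the column missed by the fiber would disconnect that slice. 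Next it uses Lemmas \ref{glue2} and \ref{glue3} to produce a wall $\Par^s_l\times I_2'$ that misses the fiber and separates $R_1$ from $R_2$. This contradicts the connectedness of the second slice. No global information about the fiber is required, which is exactly what makes the argument non-circular.
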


\begin{proof}

For the sake of contradiction, assume $G$ has at least two connected components. We choose subintervals $I_1' \subset I_1$ and $I_2' \subset I_2$ such that $I_1'\times I_2'$ satisfies the following conditions; see Figure \ref{r1r2}.
\begin{enumerate}[(a)]
        \item 
    $I_1'\times I_2'$ intersects only those blocks in $\mathcal{B}$ whose corresponding vertices in $G$ belong to exactly two connected components $C_1$ and $C_2$ of $G$. Denote by $K_1$ and $K_2$ the sets of vertices in $C_1$ and $C_2$, respectively, whose corresponding blocks are contained in $I_1'\times I_2'$.
    \item
    For $i\in \{1,2\}$, the subgraph of $G$ induced by $K_i$ is connected.
\end{enumerate}
\begin{figure}[H]
    \centering
    \begin{overpic}[width=0.8\textwidth, trim = 0 225 0 175]{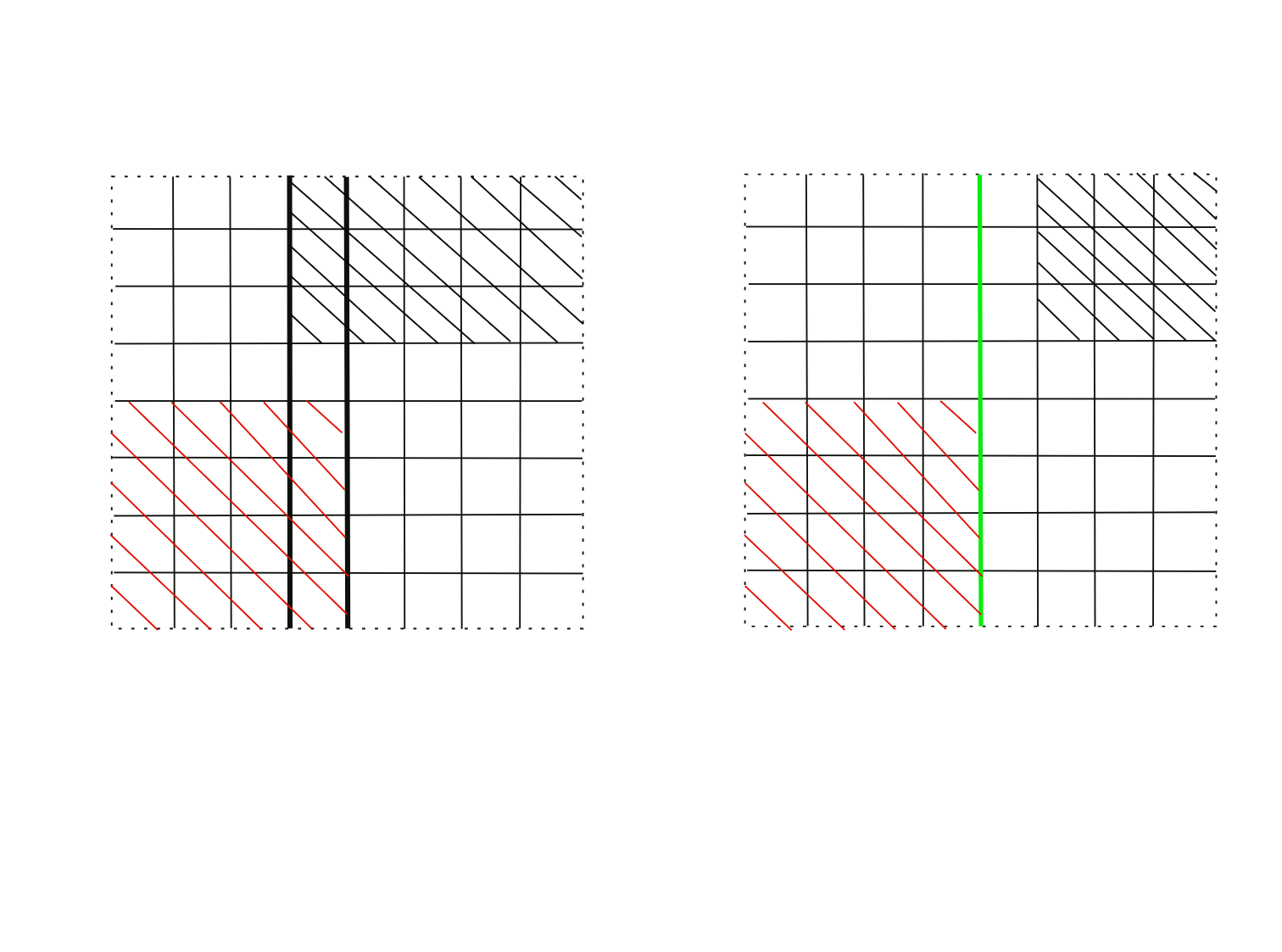} 
    \put(5,26){\scalebox{1}{$I_2'$}}
    \put(54,26){\scalebox{1}{$I_2'$}}
    \put(28,7){\scalebox{1}{$I_1'$}}
    \put(24,2){\scalebox{1}{(a)}}
    \put(23,47){\scalebox{1}{$\mathbb{B}_m$}}
    \put(24,29){\scalebox{1}{$B$}}
    \put(7,6.5){\scalebox{1}{\textcolor{red}{$R_2$}}}
    \put(46,46.5){\scalebox{1}{$R_1$}}
    \put(78,7){\scalebox{1}{$I_1'$}}
    \put(75,2){\scalebox{1}{(b)}}
    \put(73.5,48){\scalebox{1}{$\Par^s_l$}}
    \put(56,6.5){\scalebox{1}{\textcolor{red}{$R_2$}}}
    \put(95,46.5){\scalebox{1}{$R_1$}}
    \end{overpic}
    \caption{Regions $R_1$ and $R_2$}
    \label{r1r2}
\end{figure}

 Denote by $R_1$ and $R_2$ the union of blocks in $\mathcal{B}$ corresponding to vertices in $K_1$ and $K_2$, respectively.\smallskip
 
 We first show for every $\mathbb{B}_m\subset I_1',m\in \mathbb{Z}$, the set $\mathbb{B}_m\times I_2'$ intersects at most one of $R_1$ or $R_2$. Suppose not. Then there exists $\mathbb{B}_m\subset I'_1$ for some $m\in \mathbb{Z}$ such that $\mathbb{B}_m\times I'_2$ intersects both $R_1$ and $R_2$. See Figure \ref{r1r2}(a). 
Then there
 exists a block $B=\mathbb{B}_m\times \mathbb{B}_n \subset \mathbb{B}_m\times I_2'$ for some $n \in \mathbb{Z}$, which is disjoint from $P^{-1}(\widetilde{C})$, and such that the interior of $(\mathbb{B}_m\times I'_2)\setminus B$ has two connected components, say $\mathbb{B}_m\times J'$ and $\mathbb{B}_m\times J''$, which intersect $R_1$ and $R_2$, respectively. Indeed, if no such block existed, then the sequence of blocks in $\mathbb{B}_m\times I'_2$ would induce a subpath in $G$ connecting the subgraphs of $G$ induced by $K_1$ and $K_2$, contradicting that the two subgraphs are contained in two distinct connected components of $G$. 
 Since $\widetilde{C}\not\in P(\mathbb{B}_m\times \mathbb{B}_n)$, the image $P(\mathbb{B}_{-m}\times \{\widetilde{C}\})$ in $\univcover$ does not intersect $\mathbb{B}_n$. 
 Moreover, since $P^{-1}(\widetilde{C})$ intersects every block in $R_1$ and $R_2$, it intersects both $\mathbb{B}_m\times J'$ and $\mathbb{B}_m\times J''$; and hence $P(\mathbb{B}_{-m}\times \{\widetilde{C}\})$ intersects both intervals $J'$ and $J''$. Since $J'$ and $J''$ lie in distinct connected components of $(\univcover\setminus Z)\setminus \mathbb{B}_n$, this implies that $P(\mathbb{B}_{-m}\times \{\widetilde{C}\})\setminus Z \subset \univcover\setminus Z$ is not path-connected. However, since the space
$P\big(\mathbb{B}_{-m}\times\{\widetilde{C}\}\big)$ is homeomorphic to $\mathbb{B}_{-m}$, $P(\mathbb{B}_{-m}\times \{\widetilde{C}\})\setminus Z$ is connected, which is a contradiction. \smallskip

Next, we show that there exists $\Par^s_l\subset I_1'$ for some $l\in \mathbb{Z}$ and $s\in \{+,-\}$ such that 
\begin{enumerate}[(1)]
    \item 
$\Par^s_l\times I_2'$ does not intersect $P^{-1}(\widetilde{C})$, and
    \item 
$R_1$ is contained in one of the connected components of $(I_1'\times I_2')\setminus (\Par^s_l \times I_2')$, and $R_2$ is contained in the other. (See Figure \ref{r1r2}(b).)
\end{enumerate}
Denote by $\overline{\pi(R_2)}$ the closure of the projection of $R_2$ to $I_1'\subset \univcover\setminus Z$. Then its boundary $\partial \overline{\pi(R_2)}$ in $ \univcover\setminus Z$ consists of two sets $\Par^{s'}_{k'}$ and $\Par^{s''}_{k''}$ for some $s',s''\in \{+,-\}$ and $k',k''\in \mathbb{Z}$. We claim that one of these two sets satisfies Conditions (1) and (2). As shown in the previous paragraph, for every $\mathbb{B}_m\subset I_1',m\in \mathbb{Z}$, the set $\mathbb{B}_m\times I_2'$ intersects at most one of $R_1$ or $R_2$. Hence for each $(r,k)\in \big\{(s',k'),(s'',k'')\big\}$, $R_1$ is contained in one of the connected components of the complement $(I_1'\times I_2')\setminus (\Par^r_k\times I_2')$. 
 Moreover, there exists $(s,l) \in \big\{(s',k'),(s'',k'')\big\}$ such that $R_1$ and $R_2$ lie in distinct components of the complement of $\Par^s_l\times I'_2$ in $I_1'\times I_2'$. Next, assume $q\in P^{-1}(\widetilde{C})\cap (\Par^{s}_{l}\times I_2')$. As an element in $\Par^s_l\times I'_2$, $q$ lies in a common edge or a common vertex of blocks in $I_1'\times I_2'$. By Lemmas \ref{glue2} and \ref{glue3}, there exist adjacent blocks $B_1$ and $B_2$ in $\mathcal{B}$, lying in distinct components of $(I'_1\times I'_2)\setminus (\Par^s_l\times I'_2)$, 
whose common edge or vertex contains $q$. 
By Condition (a) of $I'_1\times I'_2$, one of $R_1$ and $R_2$ contains both $B_1$ and $B_2$. This contradicts that $R_1$ and $R_2$ lie in distinct components of $(I'_1\times I'_2)\setminus (\Par^s_l\times I'_2)$. Therefore, $\Par^s_l\times I'_2$ does not intersect $P^{-1}(\widetilde{C}).$\smallskip
 
 Finally, since $P^{-1}(\widetilde{C})$  intersects both components of $(I'_1\times I'_2)\setminus (\Par^s_l\times I'_2)$, $P(\{\widetilde{C}\}\times (I'_2)^{-1})$ intersects both components of $(\univcover\setminus Z)\setminus \Par_l^s$. However, $P(\{\widetilde{C}\}\times (I_2')^{-1})$ is a connected set in $\univcover\setminus Z$ that does not intersect $\Par_l^s$, which is a contradiction.
\end{proof}

\subsubsection{Proofs of Lemmas \ref{Prod_confib} and \ref{plp_HP'}} 
To prove Lemma \ref{Prod_confib}, 
we need the following Lemmas \ref{edgecon1}, \ref{edgecon2} and \ref{edgecon3}.
 The proofs of these lemmas are given in the Appendix.

 \begin{lemma}\label{edgecon1}
Let $P: \univcover\times \univcover\to \univcover$ be a product map, and let $m, n, l\in \mathbb{Z}, s\in\{+,-\}$.
Consider the following restrictions of $P$:
\begin{align*}
P_1:&\ (\Hyp_m\times \Par_0^s) \cap P^{-1}(\Hyp_l) \to \Hyp_l, \text{ and}
\\
P_2: &\ (\Ell_m \times \Par_0^s) \cap P^{-1}(\Hyp_l) \to \Hyp_l.
\end{align*}
For each $i\in \{1,2\}$, every non-empty fiber of $P_i$ is path-connected.   
\end{lemma}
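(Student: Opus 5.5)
We follow the template of Lemma \ref{Hypfiber}, with the parabolic factor replacing the second hyperbolic or elliptic factor. Fix $\widetilde{C}\in\Hyp_l$ and two points $(\widetilde{A_1},\widetilde{B_1})$, $(\widetilde{A_2},\widetilde{B_2})$ in the fiber $P_i^{-1}(\widetilde{C})$, with projections $(A_j,B_j)$ to $\SL$. Their characters are $\chi(A_j,B_j)=(x_j,\pm 2,z)$, where $z=Tr(\widetilde{C})$ and $|z|>2$. The second coordinate is the same for $j=1,2$, since $\widetilde{B_j}\in\Par_0^s$ forces $Tr(\widetilde{B_j})=2$ under the paper's convention on lifts in the identity component. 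For $P_1$ we have $|x_j|>2$ with a common sign determined by the parity of $m$. For $P_2$ we have $|x_j|<2$. In both cases the point $(x_j,2,z)$ avoids $\Gamma=[-2,2]^3\cap\kappa^{-1}([-2,2])$, because $|z|>2$.

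\textbf{Step 1 (non-commuting).} We first check that $[A_j,B_j]\neq\mathrm{I}$. If $A_j$ commuted with the parabolic $B_j$, then $A_j$ would be parabolic or $\pm\mathrm{I}$. This is impossible for $A_j$ hyperbolic or elliptic. Hence $(A_j,B_j)\in\Omega$.

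\textbf{Step 2 (connecting characters).} We join the characters by the straight-line path $\chi_t=((1-t)x_1+tx_2,\,2,\,z)$, which stays in $\mathbb{R}^3\setminus\Gamma$. By Lemma \ref{lem_plpchar}, this path lifts to $\SL\times\SL$ starting at $(A_1,B_1)$, and then to $\univcover\times\univcover$ starting at $(\widetilde{A_1},\widetilde{B_1})$. Along the lift the trace of the first factor keeps its type, so the first factor stays in $\Hyp_m$, respectively $\Ell_m$. The second factor has trace constantly $2$ and cannot pass through the centre: by Step 1 the pair never commutes, since its product has fixed trace $z$ and the first factor is never parabolic. So the second factor stays in $\Par_0^s$, a single component, and the product stays in $\Hyp_l$ by continuity of trace.

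\textbf{Step 3 (conjugating to the target).} At the endpoint $(\widetilde{A_2'},\widetilde{B_2'})$, the character equals that of $(A_2,B_2)$. We need $\kappa\neq 2$ to apply Proposition \ref{prop_char}(2). By Proposition \ref{prop_char}(1), $\kappa=2$ would make the pair reducible. Then $A$ and $B$ would share an eigenvector, so $Tr(AB)=\lambda\mu$ with $\mu=\pm1$ for the parabolic; for $A$ hyperbolic this gives $z=\pm Tr(A)$. Reducibility is not excluded for the hyperbolic case $P_1$, so this case needs a perturbation first, as in Lemma \ref{parfiblemm}: move $\widetilde{B}$ inside $\Par_0^s$ along the fiber to break reducibility. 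The reducible locus has positive codimension, and Lemma \ref{lem_evPsubm}(1) (submersion on $U\times\Par$) makes such moves possible. For elliptic $A$ there is no real eigenvector, so reducibility cannot occur.

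Given $\kappa\neq2$, the two pairs are $\GL$-conjugate. To upgrade this to $\SL$-conjugacy, we use the sign of the parabolic: a $\GL\setminus\SL$ conjugation would flip the sign of the off-diagonal entry of $B$, contradicting Lemma \ref{lem_offdiag} since both second factors lie in $\Par_0^s$. We then take a path $g_t$ from $\mathrm{I}$ to the conjugating element, lift it, and connect to $(\widetilde{A_2},\widetilde{B_2})$ within $\Hyp_m\times\Par_0^s$, respectively $\Ell_m\times\Par_0^s$.

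\textbf{Step 4 (returning to the fiber).} Finally we correct the resulting path so that it lies in the fiber, exactly as at the end of the proof of Lemma \ref{Hypfiber}. Lemma \ref{lem_conjugacypath_const} gives $h_t$ with $h_t^{-1}C_th_t=C$. We conjugate the whole path by $\widetilde{h_t}^{-1}$. The residual discrepancy is conjugation by an element $h_1$ commuting with $C$, and we remove it through the one-parameter subgroup of the centralizer of $C$, which is connected in $\Hyp_0\cup\{\mathrm{I}\}$.

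The main obstacle is the reducible, i.e.\ commuting-eigenvector, configurations in $P_1$. There we must perturb within the fiber, keeping the second factor parabolic, before Proposition \ref{prop_char}(2) applies.
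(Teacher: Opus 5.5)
Your proposal is correct and follows the paper's route. You reduce to endpoints with $\kappa\neq 2$, which is automatic for $P_2$ and needs a perturbation for $P_1$. You then lift the straight-line character path with Lemma \ref{lem_plpchar}, upgrade $\GL$- to $\SL$-conjugacy using the sign of the parabolic factor, and correct back into the fiber as in Lemma \ref{Hypfiber}.

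The one variation is how you leave the reducible locus $x=z$. You move $\widetilde{B}$ inside $\Par_0^s$ and set $\widetilde{A}=\widetilde{C}\widetilde{B}^{-1}$, as in Lemma \ref{parfiblemm}. The paper instead lifts the character path $(z+\epsilon t,2,z)$ and conjugates back into the fiber; both work. Also, along the lift the second factor avoids the centre simply because the lift lies in $\Omega$, not because of your Step 1.
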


Let $m\in \mathbb{Z}, s\in \{+,-\}$, and $\mathbb{B}_m\in \{\Hyp_m,\ \Ell_m\}$. The sets $E = \mathbb{B}_m\times \Par_0^s$ will be called as \emph{edges}. Let $m,n\in \mathbb{Z}, s\in \{+,-\}$ and $E_1= \mathbb{B}_m\times \Par_0^s$ and $E_2=\mathbb{B}_n\times \Par^s_0$. 
Let $\overline{\mathbb{B}_m}$ and $\overline{\mathbb{B}_n}$ be the closures of $\mathbb{B}_m$ and $\mathbb{B}_n$, respectively, in $\univcover\setminus Z$.
If $\big(\overline{\mathbb{B}_m}\times \Par_0^s\big) \cap \big(\overline{\mathbb{B}_n}\times \Par_0^s\big) = \Par_k^{s'}\times \Par_0^s$ for some $k\in \mathbb{Z}$ and $s'\in\{+,-\}$, then we say $E_1,E_2$ are \emph{adjacent at} $V=\Par_k^{s'}\times \Par_0^s$.

 \begin{lemma}\label{edgecon2}
    Let $\widetilde{C}\in \Hyp_l$ for $l\in \mathbb{Z}$. For $m,n\in \mathbb{Z}$, 
    let $\mathbb{B}_m\in \{\Hyp_m,\ \Ell_m\}$ and $\mathbb{B}_n\in \{\Hyp_n,\ \Ell_n\}$.
    For $s\in \{+,-\}$, let $E_1=\mathbb{B}_m\times \Par_0^s$ and $E_2=\mathbb{B}_n\times \Par_0^s$. Assume that $E_1$ and $E_2$ are adjacent at $V=\Par^{s'}_k\times \Par_0^s$ for some $k\in \mathbb{Z}, s'\in \{+,-\}$. Then
    \begin{enumerate}[(a)]
        \item For every $(\widetilde{A},\widetilde{B})\in V\cap P^{-1}(\widetilde{C})$ and $j\in \{1,2\}$, there is a path $\{\delta^t_j\}\interval \subset P^{-1}(\widetilde{C})$ starting at $(\widetilde{A},\widetilde{B})$ such that for all $t\in (0,1]$, $\delta^t_j\in E_j$.
        \item If $P^{-1}(\widetilde{C})$ intersects both $E_1$ and $E_2$, then $P^{-1}(\widetilde{C})$ intersects $V$.
    \end{enumerate}
 \end{lemma}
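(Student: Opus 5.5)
The proof follows the template of Lemmas \ref{glue2} and \ref{glue1}, with the second factor now constrained to the fixed parabolic stratum $\Par_0^s$ rather than ranging over an open set.

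For (a), let $(\widetilde{A},\widetilde{B})\in V\cap P^{-1}(\widetilde{C})$, so $\widetilde{A}\in\Par^{s'}_k$ and $\widetilde{B}\in\Par^s_0$. Since $\widetilde{C}$ is hyperbolic, $\widetilde{A}$ and $\widetilde{B}$ do not commute. I would hold the product fixed and move $\widetilde{B}$ inside $\Par_0^s$, setting $\widetilde{A}=\widetilde{C}\widetilde{B}^{-1}$.

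Consider the map $\Par_0^s\to\univcover$, $\widetilde{B}'\mapsto \widetilde{C}\widetilde{B}'^{-1}$, composed with the trace. It suffices to show that the trace of $\widetilde{C}\widetilde{B}'^{-1}$ can be pushed slightly above or below $\pm 2$ in either direction while $\widetilde{B}'$ stays in $\Par_0^s$. This is a submersion-type statement. The map $\Par\times U\to\SL$ is a submersion by Lemma \ref{lem_evPsubm}(1), so $\{(\widetilde{A}',\widetilde{B}')\in\univcover\times\Par^s_0 : \widetilde{A}'\widetilde{B}'=\widetilde{C}\}$ is a smooth manifold of dimension $2$ near $(\widetilde{A},\widetilde{B})$.

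On this fiber, consider the function $\tr(\widetilde{A}')$. I would show it has nonzero differential at $(\widetilde{A},\widetilde{B})$. Concretely, conjugate so that $B=\parpp$ (or its sign variant) and write $C$ explicitly. Perturbing $B$ by conjugation along a one-parameter family $g_\epsilon$ changes $\tr(CB_\epsilon^{-1})$ to first order in $\epsilon$. The derivative vanishes identically only if $C$ and $B$ share a fixed point, i.e.\ only if $\widetilde{A}=\widetilde{C}\widetilde{B}^{-1}$ commutes with $\widetilde{B}$. That is excluded because two commuting parabolics have a parabolic or trivial product, not a hyperbolic one, which is consistent with Lemma \ref{parplp}.

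With a nonzero derivative, the trace can be moved to either side of $\pm2$. Crossing from $\Par^{s'}_k$ lands in one of the two strata adjacent to $\Par^{s'}_k$; these are exactly the $\mathbb{B}_m$ and $\mathbb{B}_n$ whose closures meet at $\Par^{s'}_k$, since each parabolic stratum lies between one hyperbolic and one elliptic component. Taking $t\mapsto(\widetilde{C}\widetilde{B}_t^{-1},\widetilde{B}_t)$ for $t>0$ therefore gives $\delta_j^t\in E_j$. The step I expect to be the main obstacle is confirming that the sign of the trace change actually selects the correct adjacent component, and in particular separating the hyperbolic side from the elliptic side when $s'=\pm$. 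I would first try the explicit matrix computation, using Lemma \ref{lem_offdiag} to track the sign of the parabolic.

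For (b), suppose $(\widetilde{A}_j,\widetilde{B}_j)\in E_j\cap P^{-1}(\widetilde{C})$ for $j=1,2$. Since $\Par^s_0$ is connected, join $\widetilde{B}_1$ to $\widetilde{B}_2$ by a path $\widetilde{B}_t$ in $\Par_0^s$. The path $\widetilde{A}_t=\widetilde{C}\widetilde{B}_t^{-1}$ then runs from $\mathbb{B}_m$ to $\mathbb{B}_n$. It never meets $Z$, since that would force $\widetilde{C}$ to be a central multiple of a parabolic. Because $\Par^{s'}_k$ separates $\univcover\setminus Z$ with $\mathbb{B}_m$ and $\mathbb{B}_n$ on opposite sides, as in the proof of Lemma \ref{glue1}, the path $\widetilde{A}_t$ meets $\Par^{s'}_k$ at some time $t_0$. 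Then $(\widetilde{A}_{t_0},\widetilde{B}_{t_0})\in V\cap P^{-1}(\widetilde{C})$.
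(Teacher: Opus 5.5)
Your proposal is correct. Part (b) is essentially the paper's argument: both use that $\{\widetilde{C}(\widetilde{B}')^{-1} : \widetilde{B}'\in\Par_0^s\}$ is a connected subset of $\univcover\setminus Z$ that meets both sides of the separating set $\Par_k^{s'}$.

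Part (a) follows a genuinely different route. The paper lifts a character path such as $\{(2+t,2,z)\}_{t\in[0,1]}$ through Goldman's path-lifting for $\chi$ (Lemma \ref{lem_plpchar}). That lift only keeps the product in the conjugacy class of $\widetilde{C}$, so the paper then uses Lemma \ref{lem_conjugacypath_const} to conjugate the path back into $P^{-1}(\widetilde{C})$.

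You instead parametrize the relevant part of the fiber directly by $\widetilde{B}'\mapsto(\widetilde{C}(\widetilde{B}')^{-1},\widetilde{B}')$, and you vary $\widetilde{B}'$ by conjugation inside $\Par_0^s$. The first-order change of $\tr\big(CB_\epsilon^{-1}\big)$ along $B_\epsilon=\exp(\epsilon X)B\exp(-\epsilon X)$ is $\tr\big(X[B^{-1},C]\big)$. This is nonzero for some $X\in\mathfrak{sl}_2(\mathbb{R})$, because a hyperbolic $C$ cannot commute with a parabolic $B$: the centralizer of $B$ consists of parabolics and $\pm\mathrm{I}$. That is a cleaner criterion than your ``share a fixed point''.

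Your route is more elementary and never leaves the fiber; the submersion remark via Lemma \ref{lem_evPsubm} is not even needed. The paper's route has the advantage of reusing the lift-then-conjugate template of Lemmas \ref{Hypfiber} and \ref{edgecon1}.

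The step you flag as the main obstacle is not one. Replacing $X$ by $-X$ reverses the sign of the trace change, so you reach both the side $|\tr|>2$ and the side $|\tr|<2$ of $\Par_k^{s'}$. Near $\widetilde{A}$, these lie respectively in the unique hyperbolic component and the unique elliptic component whose closure contains $\Par_k^{s'}$. These are necessarily $\mathbb{B}_m$ and $\mathbb{B}_n$, so no sign bookkeeping with Lemma \ref{lem_offdiag} is required.
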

For an interval $I\subset \univcover$ and $s\in \{+,-\}$, notice that the set $I\times \Par_0^s$ is a union of edges adjacent at vertices contained in $I\times \Par_0^s$. For $\widetilde{C}\in \Hyp_l$, we denote by $\mathbf{E}$ the collection of edges $\mathbb{B}_m\times \Par_0^s\subset I\times \Par_0^s$, where $m\in \mathbb{Z}$ and $\mathbb{B}_m\in \{\Hyp_m,\ \Ell_m\}$, which intersect $P^{-1}(\widetilde{C})$.

\begin{lemma}\label{edgecon3}
    If $E,E' \in \mathbf{E}$, then there is a sequence of egdes $E_0=E, E_1,\dots ,E_k=E'$ for some $k\in \mathbb{N}$, such that for all $i\in \{0,\dots, k-1\}$, $E_i\in \mathbf{E}$ and $E_i,E_{i+1}$ are adjacent at a vertex.  
\end{lemma}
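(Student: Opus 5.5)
The plan is to mimic the proof of Lemma \ref{gconn}, replacing the two-dimensional grid of blocks by the one-dimensional chain of edges $\mathbb{B}_m\times\Par_0^s$ inside $I\times\Par_0^s$. The edges of $I\times \Par_0^s$ are linearly ordered along $I$: consecutive ones $\mathbb{B}_m\times\Par_0^s$ and $\mathbb{B}_{n}\times\Par_0^s$ are adjacent at $\Par^{s'}_k\times\Par_0^s$, where $\Par^{s'}_k$ separates $\mathbb{B}_m$ from $\mathbb{B}_n$ in $\univcover\setminus Z$. So it suffices to show that the set of indices of edges in $\mathbf{E}$ is a set of consecutive edges in this order. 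Then $E$ and $E'$ are joined by the chain of intermediate edges, each in $\mathbf{E}$ and each adjacent to the next at a vertex.

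First, given $E=\mathbb{B}_m\times\Par_0^s$ and $E'=\mathbb{B}_{m'}\times\Par_0^s$ in $\mathbf{E}$, pick $(\widetilde{A},\widetilde{B})\in E\cap P^{-1}(\widetilde{C})$ and $(\widetilde{A'},\widetilde{B'})\in E'\cap P^{-1}(\widetilde{C})$. Since $\widetilde{B}=\widetilde{A}^{-1}\widetilde{C}$ and $\widetilde{B'}=\widetilde{A'}^{-1}\widetilde{C}$ both lie in $\Par_0^s$, it is natural to parametrize the fiber in $I\times\Par_0^s$ by the second coordinate. The set $\widetilde{C}\,(\Par_0^s)^{-1}=P(\{\widetilde{C}\}\times(\Par_0^s)^{-1})$ is homeomorphic to $\Par_0^s$, hence connected. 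It avoids $Z$, since $\widetilde{C}\widetilde{Q}^{-1}\in Z$ would force $\widetilde{C}$ to be parabolic or central. The first coordinates of fiber points are exactly $\widetilde{C}(\Par_0^s)^{-1}\cap I$.

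Second, take any intermediate edge $\mathbb{B}_j\times\Par_0^s$ between $E$ and $E'$. The intermediate portion of $I$ lies between $\mathbb{B}_m$ and $\mathbb{B}_{m'}$, so it is contained in $I$ by connectedness of $I$. I want to show that $\widetilde{C}(\Par_0^s)^{-1}$ meets $\mathbb{B}_j$. Suppose it does not. I would first check that $\widetilde{C}(\Par_0^s)^{-1}$ cannot lie in any single $\Par^{s''}_k$. If it did, the connected set would contain points in the components on both sides of $\mathbb{B}_j$ (those containing $\mathbb{B}_m$ and $\mathbb{B}_{m'}$), and would therefore have to pass through the separating sets $\Par$ bounding $\mathbb{B}_j$ and then cross $\mathbb{B}_j$ itself. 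This crossing argument is the same separation argument used in Lemma \ref{glue1} and in the last step of Lemma \ref{gconn}.

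The main obstacle is making this crossing rigorous: the separating set $\overline{\mathbb{B}_j}$ in $\univcover\setminus Z$ has two boundary parabolic components, and the connected set could in principle pass from one side to the other only through them. I would handle this by noting that $\widetilde{C}(\Par_0^s)^{-1}$ is a $2$-dimensional connected surface, invariant under conjugation by the centralizer-free directions. It meets each $\Par^{s''}_k$ in a set of positive codimension within the surface, so it cannot go from one boundary parabolic sheet to the other without entering the open set $\mathbb{B}_j$. Concretely, I would use Lemma \ref{edgecon2}(a): at any fiber point on a vertex $V$, the fiber can be pushed into both adjacent edges. Hence if the fiber meets the left boundary sheet of $\mathbb{B}_j$, it already meets the edge $\mathbb{B}_j\times\Par_0^s$. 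This contradicts the assumption and shows every intermediate edge lies in $\mathbf{E}$. The consecutive adjacencies then give the required sequence; Lemma \ref{edgecon2}(b) confirms that each consecutive pair is genuinely joined through $V$ in the fiber.
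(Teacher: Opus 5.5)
Your argument is correct and is essentially the paper's proof. In both, a missing intermediate edge $\mathbb{B}_j\times\Par_0^s\subset I\times\Par_0^s$ would contradict the connectedness of $P(\{\widetilde{C}\}\times\Par_0^{-s})\subset\univcover\setminus Z$, a set that meets both $\mathbb{B}_m$ and $\mathbb{B}_{m'}$, which lie on opposite sides of $\mathbb{B}_j$.

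The ``main obstacle'' you raise is not real. The set $(\univcover\setminus Z)\setminus\mathbb{B}_j$ is already the disjoint union of two relatively closed pieces, one containing each boundary sheet; the two sheets can only touch at a point of $Z$, and your set avoids $Z$. So the paper concludes immediately from separation by the open region $\mathbb{B}_j$. Your codimension heuristic is therefore unnecessary, and it is also unjustified as stated. Your fallback does work: separate by one boundary sheet to get a fiber point on a vertex, then push it into $\mathbb{B}_j\times\Par_0^s$ with Lemma \ref{edgecon2}(a).
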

\begin{proof}[Proof of Lemma \ref{Prod_confib}]
     For (a), as a restriction of the product map to the open set $D$, $P: D\to P(D)$ is a submersion. By Lemma \ref{lem_plp},
     it suffices to show that for all $\widetilde{C}\in P(I_1\times I_2)\setminus Z$, the fiber $P^{-1}(\widetilde{C})\cap (I_1\times I_2)$ is connected.
     To this end, for any $p, q\in P^{-1}(\widetilde{C})\cap (I_1\times I_2)$, we show that $p$ and $q$ are connected by a path in $P^{-1}(\widetilde{C})\cap (I_1\times I_2)$. By Lemma \ref{glue2} and \ref{glue3}, it suffices to show this for $p$ and $q$ lying in blocks in $I_1\times I_2$. 
     Let $\mathcal{B}$ denote the collection of blocks in $I_1\times I_2$ that intersect the fiber $P^{-1}(\widetilde{C})$, and let $B_1, B_2\in \mathcal{B}$ be the blocks containing $p$ and $q$, respectively.\smallskip
     
     If $B_1 = B_2$, then by Lemmas \ref{Hypfiber}, \ref{ellfiber} and \ref{parfiblemm}, 
     the set $B_1\cap P^{-1}(\widetilde{C})$ is path-connected, and the claim directly follows.
     \smallskip
     
     If $B_1$ and $B_2$ are edge-adjacent, then by Lemma \ref{glue1}, $P^{-1}(\widetilde{C})$ intersects the common edge at a point, say $r$. Using Lemma \ref{glue2}, together with the path-connectedness of the fiber within each block $B_1$ and $B_2$, we may connect $p$ to $r$ and $r$ to $q$ by paths in $P^{-1}(\widetilde{C})\cap (I_1\times I_2)$. 
     \smallskip

     If $B_1$ and $B_2$ are vertex-adjacent, 
     then let $V$ be their common vertex. If there is a block in $\mathcal{B}$ which is edge-adjacent to both $B_1$ and $B_2$, then as in the previous paragraph, we can connect $p$ to a point $r$ in this block by a path in $P^{-1}(\widetilde{C})\cap (I_1\times I_2)$. Similarly, there is a path in the fiber connecting $r$ to $q$. 
    If no block in $\mathcal{B}$ is edge-adjacent to both $B_1$ and $B_2$, then by Lemma \ref{glue2}, $P^{-1}(\widetilde{C})$ intersects none of the four edges incident to $V$. Moreover, by Lemma \ref{glue1}, the fiber intersects $V$. Let    
    $r\in P^{-1}(\widetilde{C})\cap V$. Using Lemma \ref{glue3}, together with the path-connectedness of the fiber within each block $B_1$ and $B_2$, we may connect $p$ to $r$ and $r$ to $q$ by paths in $P^{-1}(\widetilde{C})\cap (I_1\times I_2)$. 
    
    Finally, if $B_1$ and $B_2$ are not adjacent, then by Lemma \ref{gconn}, there is a sequence of blocks in $\mathcal{B}$ from $B_1$ to $B_2$ such that consecutive blocks are adjacent. Since we can connect any two elements within the fiber lying in adjacent blocks by a path in $P^{-1}(\widetilde{C})\cap(I_1\times I_2)$, we can connect $p$ to $q$ by a path in $P^{-1}(\widetilde{C})\cap(I_1\times I_2)$. Therefore, $P^{-1}(\widetilde{C})\cap (I_1\times I_2)$ is path-connected. 
    \medskip

    For (b), by Lemma \ref{lem_evPsubm}, the map $P:D\rightarrow P(D)$ is a submersion; and by Lemma \ref{lem_plp}, it suffices to show that for all $\widetilde{C}\in \Hyp_l$, the fiber $P^{-1}(\widetilde{C})\cap \big(I\times \Par_0^s\big)$ is path-connected. 
 Let $p,q\in P^{-1}(\widetilde{C})\cap \big(I\times \Par_0^s\big)$. We show that $p,q$ are connected by a path in $P^{-1}(\widetilde{C})\cap \big(I\times \Par_0^s\big)$. By Lemma \ref{edgecon2}, 
 it suffices to show this for $p,q$ lying in edges in $I\times \Par_0^s$. Let $E_1,E_2\in \mathbf{E}$ such that $p\in E_1$ and $q\in E_2$. 
 If $E_1=E_2$, then by Lemma \ref{edgecon1}, $p$ and $q$ are connected by a path in $E_1\cap P^{-1}(\widetilde{C})$. 
 If $E_1,E_2$ are adjacent at a vertex $V$, then by Lemma \ref{edgecon2}, 
 $V$ intersects $P^{-1}(\widetilde{C})$. Let $r\in P^{-1}(\widetilde{C})\cap V$. Then, using Lemma \ref{edgecon2}, 
 there is a path in $\big(I\times \Par_0^s\big)\cap P^{-1}(\widetilde{C})$ connecting $r$ to a point $p'\in E_1\cap P^{-1}(\widetilde{C})$. Similarly, we can connect $r$ to a point $q'\in E_2\cap P^{-1}(\widetilde{C})$ by a path within $\big(I\times \Par_0^s\big)\cap P^{-1}(\widetilde{C})$. Finally, since each of $E_1\cap P^{-1}(\widetilde{C})$ and $E_2\cap P^{-1}(\widetilde{C})$ is path-connected, we may connect $p$ to $q$ by a path in $\big(I\times \Par_0^s\big)\cap P^{-1}(\widetilde{C})$ which passes through $r$. 
 If $E_1,E_2$ are not adjacent at a vertex, then by Lemma \ref{edgecon3}, there is a sequence of edges in $\mathbf{E}$ such that consecutive edges are adjacent at a vertex. Since any two elements in the fiber lying in edges adjacent at a vertex can be connected by a path in $\big(I\times \Par_0^s\big)\cap P^{-1}(\widetilde{C})$, we may connect $p$ to $q$ by a path in $\big(I\times \Par_0^s\big)\cap P^{-1}(\widetilde{C})$. This shows $P^{-1}(\widetilde{C})\cap \big(I\times \Par_0^s\big)$ is path-connected. 
\end{proof}

We now prove Lemma \ref{plp_HP'}. We first fix some notation we will use in the proof. 
Recall from Section \ref{sec_pfof4.3} the notation $\alpha_l, l\in \{1,\cdots g+m-1\}$ and $\beta_k, k\in \{1,\cdots g+m-2\}$ for the decomposition curves.
We consider the case $p=2m$ and $p=2m+1$ where $m\in \mathbb{N}\cup \{0\}$ separately. 
\smallskip

When $p=2m$, to each of these decomposition curves, we associate an interval in $\univcover$ as follows.  For $j\in \{1,\dots, g-1\}$, we let $I_{\alpha_j}:= \mathcal{I}\setminus \{\mathrm I\}$ where $\mathcal{I}$ is the image of the lifted commutator defined in Theorem \ref{thm_liftcommu}. For $i\in \{1,\dots, m\}$, we let
$$I_{\alpha_{g+i-1}} := \ev\big(\Par^{sgn(s_{2i-1})}\times \Par^{sgn(s_{2i})}\big)
\setminus \big(\Par_0\cup \{\mathrm I\}\big),$$ where $\ev$ is the lifted product map. Then for every $i\in \{1,\dots, m\}$, by Lemma \ref{parplp}, $I_{\alpha_{g+i-1}}$ is an interval. Let $I_{\beta_1} := P(I_{\alpha_1}\times I_{\alpha_2})\setminus Z$, and for each $k\in \{2,\dots, g+m-3\}$, let $I_{\beta_k}: = P(I_{\beta_{k-1}}\times I_{\alpha_{k+1}})\setminus Z$. Then for each $k\in \{1,\dots, g+m-3\}$, by Lemma \ref{Prod_open}, $I_{\beta_k}$ is an interval. 
Moreover, for every $\rho \in HP^{s,0}_n(\Sigma\setminus T)'$ and every decomposition curve $\delta$, we choose the lift $\widetilde{\rho}_\delta$ of $\rho(\delta)$ in $I_{\delta}$ as follows. For $j\in \{1,\dots,g-1\}$, we let $\widetilde{\rho}_{\alpha_j} = [\widetilde{\rho(a_j)},\widetilde{\rho(b_j)}]$; and for $i \in \{1,\dots,m\}$, we let $\widetilde{\rho}_{\alpha_{g+i-1}} =\ev\big(\rho(c_{2i-1}), \rho(c_{2i})\big)$. 
Since the restrictions $\rho|_{\pi_1(T_j)}, j\in \{1,\dots, g-1\}$ and $\rho|_{\pi_1(P_i)}, i\in \{1,\dots, m\}$ are non-abelian, we have $\widetilde{\rho}_{\alpha_l}\in I_{\alpha_l}$ for all $l\in \{1,\dots, g+m-1\}$.
Moreover, the product $\prod_{l = 1}^{g+m-1}\widetilde{\rho}_{\alpha_l} = \widetilde{e}_{s,n}(\rho)$. Let $\widetilde{\rho}_{\beta_1}: = \widetilde{\rho}_{\alpha_{1}}\widetilde{\rho}_{\alpha_{2}}$, and for each $k\in \{2,\dots, g+m-2\}$, let $\widetilde{\rho}_{\beta_k} = \widetilde{\rho}_{\beta_{k-1}}\widetilde{\rho}_{\alpha_{k+1}}$. Then for all $k\in \{1,\dots, g+m-2\}$, since $\rho(\beta_k)\neq \pm \mathrm I$, we have $\widetilde{\rho}_{\beta_k}\in I_{\beta_k}$. \smallskip

When $p=2m+1$, 
the intervals $I_{\alpha_l}, l\in \{1,\dots, g+m-1\}$ and $I_{\beta_k}, k\in \{1,\dots, g+m-3\}$ are defined as in the case $p = 2m$. Moreover, we let $I_{\beta_{g+m-2}}: = P(I_{\beta_{g+m-3}}\times I_{\alpha_{g+m-1}})\setminus Z$.
For each decomposition curve $\delta$ and $\rho\in HP^{s,0}_n(\Sigma\setminus T)'$, the association of elements $\widetilde{\rho}_{\delta}\in I_{\delta}$ follows verbatim as above, with the product $\big(\prod_{l = 1}^{g+m-1}\widetilde{\rho}_{\alpha_l}\big) \cdot \widetilde{\rho(c_p)}= \widetilde{e}_{s,n}(\rho)$ where $\widetilde{\rho(c_p)}$ is the preferred lift of $\rho(c_p)$.

\begin{proof}[Proof of Lemma \ref{plp_HP'}]
      We prove the lemma separately for the cases where $p$ is even and where $p$ is odd.
    \medskip
    
    Assume $p = 2m$ for some $m\in \mathbb{N}$. We first show that, for every path $\{\widetilde{A_t}\}\interval \subset \Hyp_n$ and every $\rho\in \widetilde{e}_{s,n}^{-1}(\widetilde{A}_0)$, there is a path $\{\rho_t
    \}\interval$ in $HP^{s,0}_n(\Sigma\setminus T)'$ starting at $\rho_0 = \rho$ such that $\widetilde{e}_{s,n}(\rho_t)= \widetilde{A_t}$ for all $t\in [0,1]$. 
    By Lemma \ref{Prod_confib}, 
    the product map $P: (I_{\beta_{g+m-3}}\times I_{\alpha_{g+m-1}})\cap P^{-1}(\Hyp_n)\to \Hyp_n$ satisfies the strong path-lifting property. Hence there is a path $$\big\{\big(\widetilde{\rho}^t_{\beta_{g+m-3}},\widetilde{\rho}^t_{\alpha_{g+m-1}}\big)\big\}\interval\subset (I_{\beta_{g+m-3}}\times I_{\alpha_{g+m-1}})\cap P^{-1}(\Hyp_n)$$ starting at 
    $\big(\widetilde{\rho}^0_{\beta_{g+m-3}},\widetilde{\rho}^0_{\alpha_{g+m-1}}\big) = \big(\widetilde{\rho}_{\beta_{g+m-3}},\widetilde{\rho}_{\alpha_{g+m-1}}\big)$, such that $P\big(\widetilde{\rho}^t_{\beta_{g+m-3}},\widetilde{\rho}^t_{\alpha_{g+m-1}}\big) = \widetilde{A_t}$ for all $t\in [0,1]$. 
    Similarly, for $k\in \{2,\dots, g+m-3\}$, Lemma \ref{Prod_confib} 
    shows that
    the path 
    $\{\widetilde{\rho}^t_{\beta_k}\}\interval$ in $I_{\beta_k}$ can be successively lifted to a path $\big\{\big(\widetilde{\rho}^t_{\beta_{k-1}},\widetilde{\rho}^t_{\alpha_{k+1}}\big)\big\}\interval\subset (I_{\beta_{k-1}}\times I_{\alpha_{k+1}})\cap P^{-1}(I_{\beta_{k}})$ starting at 
    $\big(\widetilde{\rho}_{\beta_{k-1}},\widetilde{\rho}_{\alpha_{k+1}}\big)$. 
    Finally, applying Lemma \ref{Prod_confib} 
    to the product map $P: (I_{\alpha_1}\times I_{\alpha_2})\cap P^{-1}(I_{\beta_1})\to I_{\beta_1}$, we obtain the lift $\big\{\big(\widetilde{\rho}^t_{\alpha_1}, \widetilde{\rho}^t_{\alpha_2}\big)\big\}\interval$ of $\{\widetilde{\rho}^t_{\beta_1}\}\interval$ starting at $ \big(\widetilde{\rho}_{\alpha_1}, \widetilde{\rho}_{\alpha_2}\big)$. 
    For each $j\in \{1,\dots, g-1\}$, by Proposition \ref{prop_plpliftcommu}, the path $\{\widetilde{\rho}^t_{\alpha_j}\}\interval$ can be lifted to a path $\{(\pm A_{j,t}, \pm B_{j,t})\}\interval$ of non-commuting $\psl$-pairs, starting at $(\pm A_{j,0}, \pm B_{j,0}) = (\rho(a_j), \rho(b_j))$. 
    For each $i\in \{1,\dots, m\}$, by Lemma \ref{parplp}, the path $\{\widetilde{\rho}^t_{\alpha_{g+i-1}}\}\interval$ can be lifted to a path $\{(\pm C_{2i-1,t}, \pm C_{2i,t})\}\interval$ of non-commuting pairs in $\Par^{sgn(s_{2i-1})}\times \Par^{sgn(s_{2i})}$, starting at $(\pm C_{2i-1,0}, \pm C_{2i,0}) = (\rho(c_{2i-1}), \rho(c_{2i}))$. For $t\in [0,1]$, let $\rho_t(a_j) = \pm A_{j,t}$ and $\rho_t(b_j) = \pm B_{j,t}$ for $j\in \{1,\dots, g-1\}$, and let $\rho_t(c_i) = \pm C_{i,t}$ for $i\in \{1,\dots, p\}$. This defines the desired path $\{\rho_t\}\interval$ in $HP^{s,0}_n(\Sigma\setminus T)'$. \medskip
 
    We now show the path-connectedness of the fiber. Let $\widetilde{A} \in \Hyp_n$ and $\rho,\rho'\in \widetilde{e}_{s,n}^{-1}(\widetilde{A})$. We will construct a path $\{\rho_t\}\interval$ in $\widetilde{e}_{s,n}^{-1}(\widetilde{A})$ connecting $\rho$ and $\rho'$ as follows. First, using Lemma \ref{Prod_confib}, 
    we construct paths $\{\widetilde{\rho}^t_{\alpha_l}\}\interval\subset I_{\alpha_l}$ connecting $\widetilde{\rho}_{\alpha_l}$ and $\widetilde{\rho'}_{\alpha_l}$ for all $l\in \{1,\dots, g+m-1\}$ such that for all $t\in [0,1]$, their product $\prod_{l=1}^{g+m-1}\widetilde{\rho}^t_{\alpha_l} = \widetilde{A}$. Next, using Proposition \ref{prop_plpliftcommu}, we lift the path $\{\widetilde{\rho}^t_{\alpha_l}\}\interval\subset I_{\alpha_l}$ to a path of $\psl$-pairs for each $l\in \{1,\dots, g-1\}$. Finally, using Lemma \ref{parplp}, we lift the path $\{\widetilde{\rho}^t_{\alpha_l}\}\interval\subset I_{\alpha_l}$ to a path of pairs of parabolic elements for each $l\in \{g,\dots, g+m-1\}$. These paths of $\psl$-pairs will together define the path $\{\rho_t\}\interval$. \medskip

We first construct, for each $l\in \{1,\dots, g+m-1\}$, a path $\{\widetilde{\rho}^t_{\alpha_l}\}\interval\subset I_{\alpha_l}$ from $\widetilde{\rho}^0_{\alpha_l} = \widetilde{\rho}_{\alpha_l}$ to 
 $\widetilde{\rho}^1_{\alpha_l} = \widetilde{\rho}'_{\alpha_l}$, such that for all $t\in[0,1]$, $\prod_{l=1}^{g+m-1}\widetilde{\rho}^t_{\alpha_l} = \widetilde{A}$. 
To this end, for
each $k\in\{2,\dots,g+m-2\}$, we construct paths
$
\{\widetilde\rho^t_{\beta_{k-1}}\}_{t\in[0,1]}\subset I_{\beta_{k-1}}
\text{ and }
\{\widetilde\rho^t_{\alpha_l}\}_{t\in[0,1]}\subset I_{\alpha_l},
\ l\in \{k+1,\dots,g+m-1\},
$
such that
\begin{enumerate}[(a)]
\item $\widetilde\rho^0_{\beta_{k-1}}=\widetilde\rho_{\beta_{k-1}},\ 
\widetilde\rho^1_{\beta_{k-1}}=\widetilde\rho'_{\beta_{k-1}},
$
and
$\widetilde\rho^0_{\alpha_l}=\widetilde\rho_{\alpha_l},\ 
\widetilde\rho^1_{\alpha_l}=\widetilde\rho'_{\alpha_l}\
\text{for } l\in \{k+1,\dots,g+m-1\}$, and
\item for every $t\in[0,1]$,
$\widetilde\rho^t_{\beta_{k-1}}
\prod_{l=k+1}^{g+m-1}\widetilde\rho^t_{\alpha_l}
=
\widetilde{A}.$
\end{enumerate}
\smallskip

We proceed by induction on $k$. For the base case $k = g+m-2$, note that $\widetilde{\rho}_{\beta_{g+m-3}}\widetilde{\rho}_{\alpha_{g+m-1}} = \widetilde{\rho}'_{\beta_{g+m-3}}\widetilde{\rho}'_{\alpha_{g+m-1}}=\widetilde{A}$. By Lemma \ref{Prod_confib}, 
we may choose a path $\big\{\big(\widetilde{\rho}^t_{\beta_{g+m-3}},\widetilde{\rho}^t_{\alpha_{g+m-1}}\big)\big\}\interval \subset \big(I_{\beta_{g+m-3}}\times I_{\alpha_{g+m-1}}\big)\cap P^{-1}(\widetilde{A})$ connecting $\big(\widetilde{\rho}_{\beta_{g+m-3}},\widetilde{\rho}_{\alpha_{g+m-1}}\big)$ to $\big(\widetilde{\rho}'_{\beta_{g+m-3}},\widetilde{\rho}'_{\alpha_{g+m-1}}\big)$. 
Next, assume for some $k\in\{3,\dots,g+m-2\}$, there are paths 
$ \{\widetilde\rho^t_{\beta_{k-1}}\}_{t\in[0,1]}\subset I_{\beta_{k-1}}$, 
$\{\widetilde\rho^t_{\alpha_l}\}_{t\in[0,1]}\subset I_{\alpha_l},
\ l\in \{k+1,\dots,g+m-1\}$
satisfying Conditions (a) and (b). By Lemma \ref{Prod_confib}, 
we may lift $\{\widetilde{\rho}^t_{\beta_{k-1}}\}\interval$ to a path $\big\{\big(\widetilde{\rho}^t_{\beta_{k-2}},\widetilde{\rho}^t_{\alpha_k}\big)\big\}\interval \subset I_{\beta_{k-2}}\times I_{\alpha_k}$ starting at $\big(\widetilde{\rho}_{\beta_{k-2}},\widetilde{\rho}_{\alpha_k}\big)$. Since $P(\widetilde{\rho}^1_{\beta_{k-2}},\widetilde{\rho}^1_{\alpha_k})=\widetilde{\rho}'_{\beta_{k-1}}$, by Lemma \ref{Prod_confib}, 
we may choose a path $\big\{\big(\widetilde{\rho}^t_{\beta_{k-2}},\widetilde{\rho}^t_{\alpha_k}\big)\big\}_{t\in[1,2]} \subset \big(I_{\beta_{k-2}}\times I_{\alpha_k}\big)\cap P^{-1}(\widetilde{\rho}'_{k-1})$ such that $(\widetilde{\rho}^2_{\beta_{k-2}},\widetilde{\rho}^2_{\alpha_k})=(\widetilde{\rho}'_{\beta_{k-2}},\widetilde{\rho}'_{\alpha_k})$. For every $l\in \{k+1,\dots,g+m-1\}$, we extend the paths $\{\widetilde{\rho}^t_{\alpha_l}\}\interval$ to paths $\{\widetilde{\rho}^t_{\alpha_l}\}_{t\in[0,2]}$ by defining $\widetilde{\rho}^t_{\alpha_l} = \widetilde{\rho}'_{\alpha_l}$ for all $t\in[1,2]$. 
Then, by a simultaneous re-parametrization, this defines paths
$\{\widetilde{\rho}^t_{\beta_{k-2}}\}_{t\in[0,1]}$, $\{\widetilde{\rho}_{\alpha_{k}}^t\}_{t\in[0,1]},\dots,\{\widetilde{\rho}^t_{\alpha_{g+m-1}}\}_{t\in[0,1]}$
satisfying conditions (a) and (b). This completes the construction of desired paths for all $k\in \{2,\dots g+m-2\}$ and in particular, for $k=2$, this gives paths $\{\widetilde{\rho}^t_{\beta_1}\}\interval,\{\widetilde{\rho}^t_{\alpha_3}\}\interval,\dots \{\widetilde{\rho}^t_{\alpha_{g+m-1}}\}\interval$ satisfying conditions (a) and (b). Finally, we can construct $\{\widetilde{\rho}_{\alpha_1}^t\}_{t\in[0,1]}$ and $\{\widetilde{\rho}_{\alpha_2}^t\}_{t\in[0,1]}$using Lemma \ref{Prod_confib}, and re-parametrize $\{\widetilde{\rho}^t_{\alpha_1}\}\interval,\dots, \{\widetilde{\rho}^t_{\alpha_{g+m-1}}\}\interval$ to satisfy the required conditions.
\smallskip

Next, for each $N\in \{1,\dots, g-1\}$, we construct paths 
$\{(\pm A_{j,t},\pm B_{j,t})\}\interval \subset \psl\times \psl$ for all $j\in \{1,\dots N\}$ and $\{\widetilde{\rho}^t_{\alpha_{l}}\}\interval \in I_{\alpha_l}$ for all $l\in \{N+1,\dots g+m-1\}$ such that 
\begin{enumerate}[(1)]
    \item for all $j\in \{1,\dots N\}$, 
$(\pm A_{j,0},\pm B_{j,0}) = (\rho(a_j),\rho(b_j))$ and $(\pm A_{j,1},\pm B_{j,1})=(\rho'(a_j),\rho'(b_j))$ and for all $l\in \{N+1,\dots g+m-1\}$, $\widetilde{\rho}^0_{\alpha_l}=\widetilde{\rho}_{\alpha_l}$ and $\widetilde{\rho}^1_{\alpha_l}=\widetilde{\rho}'_{\alpha_l}$, and
\item $\prod_{j=1}^N\widetilde{R}(\pm {A}_{j,t},\pm {B}_{j,t})\cdot \prod_{l=N+1}^{g+m-1}\widetilde{\rho}^t_{\alpha_l} = \widetilde{A}$ for all $t\in [0,1]$.
\end{enumerate}

We proceed by induction on $N$.
For the base case $N = 1$, by Proposition \ref{prop_plpliftcommu}, we may lift $\{\widetilde{\rho}^t_{\alpha_1}\}\interval$ to a path $\{(\pm A_{1,t},\pm B_{1,t})\}\interval$ starting at $(\rho(a_1),\rho(b_1))$. Since $\widetilde{R}(\pm A_{1,1}, \pm B_{1,1})=\widetilde{\rho}'_{\alpha_1}$, using Theorem \ref{thm_liftcommu}, we may choose a path $\{(\pm A_{1,t},\pm B_{1,t})\}_{t\in[1,2]}\subset \widetilde{R}^{-1}(\widetilde{\rho}'_{\alpha_1})$ such that $(\pm A_{1,2},\pm B_{1,2}) = \big(\rho'(a_1), \rho'(b_1)\big)$. For all $l\in\{2,\dots, g+m-1\}$, we extend the previously constructed paths $\{\widetilde{\rho}^t_{\alpha_l}\}\interval$ to $\{\widetilde{\rho}^t_{\alpha_l}\}_{t \in [0,2]}$ by defining $\widetilde{\rho}^t_{\alpha_l} = \widetilde{\rho}'_{\alpha_l}$ for all $t\in [1,2]$. Then, by a simultaneous re-parametrization, this defines the required paths for $N=1$.
Next, assume for some $N\in\{1,\dots, g-2\}$, we have paths $\{(\pm A_{1,t},\pm B_{1,t})\}\interval,\dots \{(\pm A_{N,t},\pm B_{N,t})\}\interval$ 
and paths $\{\{\widetilde{\rho}^t_{\alpha_{N+1}}\}\interval,\dots, 
\{\widetilde{\rho}^t_{\alpha_{g+m-1}}\}\interval\}$ satisfying Conditions (1) and (2).
Again by Proposition \ref{prop_plpliftcommu}, we may lift $\{\widetilde{\rho}^t_{\alpha_{N+1}}\}\interval$ to a path $\{(\pm A_{N+1,t},\pm B_{N+1,t})\}\interval$ starting at $(\rho(a_{N+1}),\rho(b_{N+1}))$. Since $\widetilde{R}(\pm A_{N+1,1},\pm B_{N+1,1})=\widetilde{\rho}'_{\alpha_{N+1}}$, using Theorem \ref{thm_liftcommu}, we may choose a path $\{(\pm A_{N+1,t},\pm B_{N+1,t})\}_{t\in[1,2]}\subset \widetilde{R}^{-1}(\widetilde{\rho}'_{\alpha_{N+1}})$ such that $(\pm A_{N+1,2},\pm B_{N+1,2})=(\rho'(a_{N+1}),\rho'(b_{N+1}))$. 
For all $l\in\{N+2,\dots, g+m-1\}$, we extend the previously constructed paths $\{\widetilde{\rho}^t_{\alpha_l}\}\interval$ to $\{\widetilde{\rho}^t_{\alpha_l}\}_{t \in [0,2]}$ by defining $\widetilde{\rho}^t_{\alpha_l} = \widetilde{\rho}'_{\alpha_l}$ for all $t\in [1,2]$; and for $j\in\{1,\dots N\}$, we extend the previously constructed paths $\{(\pm A_{j,t},\pm B_{j,t})\}\interval$ to paths on $[0,2]$ by defining $(\pm A_{j,t},\pm B_{j,t}) = (\rho'(a_j),\rho'(b_j))$ for all $t\in [1,2]$. Then, by a simultaneous re-parametrization, this defines paths $\{(\pm A_{j,t},\pm B_{j,t})\}_{t\in[0,1]}$ for $j\in \{1,\dots N+1\}$ and $\{\widetilde{\rho}^t_{\alpha_l}\}_{t\in [0,1]}$ for $l\in \{N+2,\dots, g+m-1\}$ satisfying Conditions (1) and (2). \smallskip

Finally, using Lemma \ref{parplp}, together with simultaneous re-parametrizations as above, for each $k\in \{1,\dots, m\}$, we may construct paths
\[
\big\{(\pm C_{2i-1,t},\pm C_{2i,t})\big\}_{t\in[0,1]} \subset \Par^{sgn(s_{2i-1})}\times \Par^{sgn(s_{2i})},
\quad i\in \{1,\dots,k\}
\]
connecting $\big(\rho(c_{2i-1}), \rho(c_{2i})\big)$  and $\big(\rho'(c_{2i-1}), \rho'(c_{2i})\big)$,
\[
\big\{(\pm A_{j,t},\pm B_{j,t})\big\}_{t\in[0,1]}\subset \psl \times \psl,
\quad j\in \{1,\dots,g-1\}
\]
connecting  $\big(\rho(a_j),\rho(b_j)\big)$ and $\big(\rho'(a_j),\rho'(b_j)\big)$, and
\[
\big\{\widetilde\rho^t_{\alpha_l}\big\}_{t\in[0,1]}\subset I_{\alpha_l},
\quad l\in \{g+k,\dots,g+m-1\}
\]
connecting  $\widetilde{\rho}_{\alpha_l}$ and  $\widetilde{\rho}'_{\alpha_l}$, such that for all $t\in[0,1]$,
\[
\prod_{j=1}^{g-1}
\widetilde R(\pm A_{j,t},\pm B_{j,t})\cdot
\prod_{i=1}^{k}
\operatorname{ev}(\pm C_{2i-1,t},\pm C_{2i,t})\cdot
\prod_{l=g+k}^{g+m-1}
\widetilde\rho^t_{\alpha_l}
=
\widetilde A.
\]
Then when $k=m$, by setting $\rho^t(a_j)=\pm A_{j,t}, \rho^t(b_j)=\pm B_{j,t}$ for $j\in \{1,\dots g-1\}$ and $\rho^t(c_{i})=\pm C_{i,t}$ for all $i\in \{1,\dots p\}$, we obtain a path $\{\rho^t\}\interval \subset \widetilde{e}_{s,n}^{-1}(\widetilde{A})$ connecting $\rho$ and $\rho'$.\medskip

Assume $p = 2m+1$ for some $m\in \mathbb{N}\cup \{0\}$. For every path $\{\widetilde{A_t}\}\interval \subset \Hyp_n$ and every $\rho\in \widetilde{e}_{s,n}^{-1}(\widetilde{A}_0)$, using Lemma \ref{Prod_confib}, 
we may lift $\{\widetilde{A_t}\}\interval$ to a path $\big\{\big(\widetilde{\rho}^t_{\beta_{g+m-2}}, \widetilde{\rho}^t_{p}\big)\big\}\interval\subset I_{\beta_{g+m-2}}\times \Par^{sgn(s_p)}_0$ starting at $\big(\widetilde{\rho}_{\beta_{g+m-2}}, \widetilde{\rho(c_p)}\big)$. 
Then, following the construction as in the case $p = 2m$, we may lift the path $\{\widetilde{\rho}^t_{\beta_{g+m-2}}\}\interval$ to a path $\{\rho_t\}\interval$ in $HP^{s,0}_n(\Sigma\setminus T)'$ starting at $\rho_0 = \rho$ such that $\widetilde{e}_{s,n}(\rho_t)= \widetilde{A_t}$ for all $t\in [0,1]$. Next, we show the connectedness of $\widetilde{e}_{s,n}^{-1}(\widetilde{A})$ for any $\widetilde{A} \in \Hyp_n$.  
For any $\rho,\rho'\in \widetilde{e}_{s,n}^{-1}(\widetilde{A})$, using Lemma \ref{Prod_confib}, 
we may choose a path $\big\{\big(\widetilde{\rho}^t_{\beta_{g+m-2}}, \widetilde{\rho}^t_p\big)\big\}\interval\subset \big(I_{\beta_{g+m-2}}\times \Par^{sgn(s_p)}_0\big) \cap P^{-1}(\widetilde{A})$ with endpoints $\big(\widetilde{\rho}_{\beta_{g+m-2}}, \widetilde{\rho(c_p)}\big)$ and $\big(\widetilde{\rho}'_{\beta_{g+m-2}}, \widetilde{\rho'(c_p)}\big)$. 
Then, following the case of $p=2m$, we obtain a path $\{\rho_t\}\interval$ within $\widetilde{e}_{s,n}^{-1}(\widetilde{A})$ connecting $\rho$ and $\rho'$. This completes the proof.
\end{proof}

\section{$\mathcal{EL}^s_n$ is a subset of $\mathcal{U}^s_n$}\label{sec_U}
In this section, we prove Proposition \ref{U}.
Let $\Sigma=\Sigma_{g,p}$, $g\geqslant 2,p\geqslant 1$, and let $n\in \mathbb{Z}$, $s\in \{\pm 1\}^p$. 
Recall from Section \ref{sec_outline} the notation used to define $\mathcal{U}^s_n$. Assume $[\rho] \in \mathcal{EL}^s_n$, i.e., there exists $\gamma \in \mathcal{S}^{ns}$ such that $\rho(\gamma)$ is elliptic of infinite order. Denote by $D_{[\rho]}$ the subspace of $T^*_{[\rho]}\mathcal{M}^s_n$ spanned by $dF_{\alpha}$ for simple closed curves $\alpha$ with $|Tr(\rho(\alpha))|<2$. Clearly $dF_{\gamma}\in D_{[\rho]}$. We claim $D_{[\rho]}=T^*_{[\rho]}\mathcal{M}^s_n$.
\smallskip

To prove the claim, we need the following lemmas.

\begin{lemma}\label{int_1_drho}\cite[Lemma 6.12]{Marche-Wolff}
Let $\delta \in \mathcal{S}$ such that $i(\delta,\gamma)=1$. Then $dF_{\delta}\in D_{[\rho]}$.  
\end{lemma}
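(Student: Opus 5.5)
The plan follows the proof of \cite[Lemma 6.12]{Marche-Wolff}; the argument is local on the one-holed torus $T_0$ given by a regular neighbourhood of $\gamma\cup\delta$. Since $i(\delta,\gamma)=1$, the Dehn twist $\tau_\gamma$ acts on $\delta$, and $\delta_k:=\tau_\gamma^k(\delta)$ is a simple closed curve for every $k\in\mathbb{Z}$. Algebraically, $\delta_k=\delta\gamma^k$ (up to orientation conventions) in $\pi_1(T_0)\subset\pi_1(\Sigma)$. Work in the $\SL$-chart $\overline{W}^{[\overline\rho]}_{s,n}$, where each $F_\alpha$ is an honest trace function. Conjugate so that $\overline\rho(\gamma)=R_\theta$ is a rotation by an angle $\theta\notin\pi\mathbb{Q}$.

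\emph{Trace recursion.} The trace identity $\tr(XY^{k+1})+\tr(XY^{k-1})=\tr(Y)\tr(XY^k)$ gives, on a neighbourhood of $[\rho]$,
\[F_{\delta_{k+1}}=F_\gamma F_{\delta_k}-F_{\delta_{k-1}}.\]
Solving this linear recursion with $F_\gamma=2\cos\Theta$ yields
\[F_{\delta_k}=\frac{\sin(k\Theta)}{\sin\Theta}F_{\delta_1}-\frac{\sin((k-1)\Theta)}{\sin\Theta}F_{\delta_0},\]
which is valid near $[\rho]$ because $\Theta$ is a smooth function there (ellipticity is an open condition). Differentiating at $[\rho]$ gives
\[dF_{\delta_k}=a_k\,dF_{\delta_1}+b_k\,dF_{\delta_0}+c_k\,dF_\gamma,\]
with $a_k=\sin(k\theta)/\sin\theta$ and $b_k=-\sin((k-1)\theta)/\sin\theta$. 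Since $dF_\gamma\in D_{[\rho]}$, modulo $D_{[\rho]}$ we have $dF_{\delta_k}\equiv a_k\,dF_{\delta_1}+b_k\,dF_{\delta_0}$.

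\emph{Elliptic twists.} Next I show that $\rho(\delta_k)$ is elliptic for infinitely many $k$. Write $\tr(\rho(\delta_k))=\tr(\rho(\delta)R_{k\theta})=A\cos(k\theta+\phi)+C$ for constants $A,C$ determined by $\rho(\delta)$. Since $\theta/\pi$ is irrational, $k\theta \bmod 2\pi$ is equidistributed. The function $\psi\mapsto \tr(\rho(\delta)R_\psi)$ takes values of modulus less than $2$ on an open set of angles $\psi$: by Lemma \ref{BAn}, or directly, some $\rho(\delta)R_\psi$ is elliptic. Hence $|\tr\rho(\delta_k)|<2$ for an infinite set $K\subset\mathbb{Z}$, and $dF_{\delta_k}\in D_{[\rho]}$ for every $k\in K$.

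\emph{Linear algebra.} Take two indices $k,k'\in K$. If $\det\begin{pmatrix}a_k&b_k\\a_{k'}&b_{k'}\end{pmatrix}\neq0$, then $dF_{\delta_0}$ and $dF_{\delta_1}$ both lie in $D_{[\rho]}$, and in particular $dF_\delta=dF_{\delta_0}\in D_{[\rho]}$. The determinant equals
\[\frac{\sin(k\theta)\sin((k'-1)\theta)-\sin(k'\theta)\sin((k-1)\theta)}{\sin^2\theta}=\frac{\sin((k'-k)\theta)}{\sin\theta}\]
by a product-to-sum identity. This is nonzero whenever $k\neq k'$, because $\theta\notin\pi\mathbb{Q}$. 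So any two distinct elements of $K$ suffice.

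\emph{Main obstacle.} The step needing care is the second one: producing infinitely many, or at least two, elliptic $\rho(\delta_k)$. The worry is that the set of angles $\psi$ with $|\tr(\rho(\delta)R_\psi)|<2$ might be empty. This cannot happen: $\psi\mapsto\tr(\rho(\delta)R_\psi)$ is an affine function of $(\cos\psi,\sin\psi)$ whose range, when $\rho(\delta)$ does not commute with $R_\theta$, contains an open interval around a value of modulus less than $2$, as in Marché–Wolff. If $\rho(\delta)$ does commute with $\rho(\gamma)$, then $\rho(\delta)$ is itself elliptic (or $\pm\mathrm I$), so $dF_\delta\in D_{[\rho]}$ directly when it is elliptic; the degenerate case is handled as in \cite{Marche-Wolff}. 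All other steps are formal.
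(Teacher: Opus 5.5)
Your proposal is correct. It is essentially the argument of \cite[Lemma 6.12]{Marche-Wolff}, which the paper cites rather than reproves: twist $\delta$ along $\gamma$, and use the recursion $F_{\delta_{k+1}}+F_{\delta_{k-1}}=F_\gamma F_{\delta_k}$ to write $dF_{\delta_k}\equiv \frac{\sin(k\theta)}{\sin\theta}\,dF_{\delta_1}-\frac{\sin((k-1)\theta)}{\sin\theta}\,dF_{\delta_0}$ modulo $dF_\gamma\in D_{[\rho]}$. Then $\theta\notin\pi\mathbb{Q}$ does double duty: it produces infinitely many elliptic $\rho(\delta_k)$, and it makes the $2\times 2$ determinant $\sin((k'-k)\theta)/\sin\theta$ nonzero.

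The degenerate case you set aside at the end needs no separate treatment. Writing $\overline\rho(\delta)$ with entries $a,b,c,d$, one has $\operatorname{Tr}\big(\overline\rho(\delta)R_\psi\big)=(a+d)\cos\psi+(c-b)\sin\psi$. This is a sinusoid of mean zero, so it takes values in $(-2,2)$ on a nonempty open arc of angles, whether or not $\overline\rho(\delta)$ commutes with $R_\theta$ (including when $\overline\rho(\delta)=\pm\mathrm I$).
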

\begin{lemma}\label{int0}
    Let $\delta$ be a simple closed curve such that $i(\delta,\gamma)=0$. Then $dF_{\delta}\in D_{[\rho]}$.
\end{lemma}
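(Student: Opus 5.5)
Fix $\delta$ with $i(\delta,\gamma)=0$. The plan is to realize $dF_\delta$ as a combination of differentials of curves already known to lie in $D_{[\rho]}$, namely $\gamma$ and curves meeting $\gamma$ once (Lemma \ref{int_1_drho}), using trace identities. If $\delta$ is isotopic to $\gamma$ there is nothing to prove, so assume $\delta$ is disjoint from and not isotopic to $\gamma$. Since $\gamma$ is non-separating, we will choose a simple closed curve $\eta$ with $i(\eta,\gamma)=1$ and $i(\eta,\delta)=1$ when $\delta$ is non-separating and $\delta\cup\gamma$ does not separate. When $\delta$ is separating, or $\delta\cup\gamma$ bounds a subsurface, we instead take $\eta$ with $i(\eta,\gamma)=1$ and $i(\eta,\delta)=2$. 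In both cases we arrange that $\eta$ crosses $\delta$ in a way that makes the Dehn twists $\tau_\delta^k(\eta)$ again simple closed curves with $i(\tau_\delta^k(\eta),\gamma)=1$. By Lemma \ref{int_1_drho}, we then have $dF_{\tau_\delta^k(\eta)}\in D_{[\rho]}$ for every $k\in\mathbb{Z}$.

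The key step is a trace identity relating these curves. In $\SL$, lifting via $\Pi^{-1}$, write $A=\overline\rho(\delta)$ and $B=\overline\rho(\eta)$ in the case of one intersection. Cayley--Hamilton gives $\tr(A^{k+1}B)=\tr(A)\tr(A^kB)-\tr(A^{k-1}B)$, so the functions $F_{\tau_\delta^k\eta}$ satisfy a linear recurrence whose coefficient is $F_\delta$. Differentiating the identity for three consecutive values of $k$ yields
\[ dF_{\tau^{k+1}\eta}+dF_{\tau^{k-1}\eta}-F_\delta\, dF_{\tau^k\eta}=F_{\tau^k\eta}\,dF_\delta .\]
Hence $dF_\delta\in D_{[\rho]}$ as soon as $F_{\tau^k\eta}([\rho])\neq 0$ for some $k$. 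For the two-intersection case we use the analogous recurrence for $\tr(\overline\rho(\tau_\delta^k\eta))$. This is a linear combination of $\lambda^{k}$, $\lambda^{-k}$ and a constant, with $\lambda$ an eigenvalue of $A$ (or, treated piece by piece, a second-order recurrence in $A^k$ acting on the arcs). Differentiating again isolates $dF_\delta$ with a coefficient that is a nonzero function of the traces of the twisted curves.

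The main obstacle is ensuring the coefficient in front of $dF_\delta$ is nonzero at $[\rho]$, i.e.\ that not all $\tr(\overline\rho(\tau^k_\delta\eta))$ vanish. If they all vanish for three consecutive $k$, the recurrence forces them to vanish for all $k$, so $\tr(A^kB)=0$ for all $k$. Two inputs should rule this out. First, $\rho$ is non-elementary, so $\rho(\delta)$ cannot act trivially on everything. Second, we have freedom to replace $\eta$ by $\tau_\gamma^j(\eta)$, since these still meet $\gamma$ once and $\delta$ the same number of times. We would first try this: because $\rho(\gamma)$ is elliptic of infinite order, the traces $\tr(\rho(\tau_\gamma^j\eta)\cdots)$ vary non-trivially in $j$ (the same mechanism as in Lemma \ref{BAn}). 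So some choice of $j$ and $k$ gives a nonzero coefficient, unless $\rho(\delta)$ commutes with $\rho(\gamma)$ and with $\rho(\eta)$. That remaining degenerate case should contradict non-elementarity or the type-preserving condition, in the same way as the argument showing $\mathcal{EL}^s_n=\cup\,\mathcal{EL}^s_n(a,b)$. Finally, the curves used all have elliptic image only in the sense required by $D_{[\rho]}$ via Lemma \ref{int_1_drho}, so no further trace condition needs to be checked.
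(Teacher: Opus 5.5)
The one-intersection case is correct. It is essentially the Marché--Wolff argument the paper invokes for non-separating $\delta$, and no degenerate case arises there: Lemma~\ref{BAn} applied to $\rho(\eta)$ and $\rho(\gamma)$ directly gives a $j$ with $F_{\tau_\gamma^j\eta}([\rho])\neq 0$. One small correction: if $\delta$ is non-separating but $\gamma\cup\delta$ separates, no $\eta$ with $i(\eta,\gamma)=1$ and $i(\eta,\delta)=2$ exists, because a closed curve crosses a separating multicurve an even number of times. In that case take $i(\eta,\delta)=1$, which is always possible.

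The genuine gap is the separating case, in particular $\delta$ cutting off a punctured sphere, which is the case the paper has to prove from scratch. Write $\eta=\eta_1*\eta_2$ and $C=\overline\rho(\delta)$. The twisted traces are $f_k=\operatorname{Tr}(B_1C^kB_2C^{-k})$. Conjugation by $C^k$ has eigenvalues $\lambda^{2k},1,\lambda^{-2k}$ (not $\lambda^{\pm k}$), so the recurrence is
\[
f_{k+3}-(F_\delta^2-1)\,(f_{k+2}-f_{k+1})-f_k=0 .
\]
Differentiating this yields only $2F_\delta\,(f_{k+2}-f_{k+1})\,dF_\delta\in D_{[\rho]}$. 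The factor $F_\delta$ cannot be avoided. Each $f_k$ is invariant under $C\mapsto -C$, so the part of any trace identity among $F_\delta$ and the $f_k$ that is odd in $F_\delta$ is $F_\delta$ times another identity. Hence the coefficient of $dF_\delta$ it produces vanishes wherever $F_\delta=0$.

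So your argument gives nothing when $\rho(\delta)$ is elliptic of order two, and membership in $\mathcal{EL}^s_n$ does not exclude this. Replacing $\eta$ by $\tau_\gamma^j(\eta)$ cannot help, since the vanishing factor does not involve $\eta$. The degenerate case you propose to rule out (commutation, non-elementarity) is not the relevant one.

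The paper avoids this by using an identity that is linear in $F_\delta$. It takes $\zeta=\zeta_1*\zeta_2$ with $\zeta_1$ meeting $\gamma$ once and $\zeta_2$ peripheral. It then expands $F_\zeta F_\delta=\operatorname{Tr}(B_1B_2C)+\operatorname{Tr}(B_1B_2C^{-1})$ by Fricke identities into three kinds of terms:
\begin{itemize}
\item traces of curves through $\zeta_1$, handled by Lemma~\ref{int_1_drho};
\item the constant $F_{\zeta_2}$;
\item $F_{\zeta_2\delta}$, where $\zeta_2\delta$ cuts off one fewer puncture, handled by induction with base case $\delta$ peripheral.
\end{itemize}
The coefficient of $dF_\delta$ is then $F_\zeta([\rho])$, which Lemma~\ref{BAn} makes nonzero after twisting $\zeta$ along $\gamma$.
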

\begin{proof}
    If $\delta$ is non-separating, the proof can be adapted verbatim from \cite[Lemma 6.13]{Marche-Wolff}.
    We prove the case when $\delta$ is separating. Let $\Sigma =\Sigma_1\cup_{\delta}\Sigma_2$. If  $\Sigma_2$ has positive genus, then the proof follows verbatim from \cite[Lemma 6.13]{Marche-Wolff}. It remains to show the case where $\Sigma_2$ is a surface homeomorphic to a punctured sphere $\Sigma_{0,p_2}$ and $\Sigma_1$ is a surface with positive genus. We proceed by induction on the number of punctures $p_2$ of $\Sigma_2$. 
    For the base case $p_2 = 2$, $\delta$ is peripheral and $F_\delta$ is constant, hence $dF_{\delta}=0$, which is clearly in $D_{[\rho]}$. Now assume, for every separating curve $\eta$ disjoint from $\gamma$ such that $\Sigma = \Sigma'_1\cup_\eta \Sigma'_2$ where $\Sigma'_2$ is homeomorphic to $\Sigma_{0,p_2-1}$,
    $dF_\eta$ belongs to $D_{[\rho]}$. Choose a curve $\zeta$ such that $i(\zeta,\gamma)=1$, $i(\zeta,\delta)=2$, and $\zeta = \zeta_1*\zeta_2$ where $\zeta_1$ is non-separating and $\zeta_2$ is a peripheral curve. See Figure \ref{peripheral}. By Lemma \ref{BAn}, up to replacing $\zeta$ by a power of its Dehn twist about $\gamma$, we may assume $|Tr(\rho(\zeta))|\neq 0$. 
    \begin{figure}[h!]
            
            \centering
            \begin{overpic}[width=0.8\textwidth, trim = 0 400 0 150]{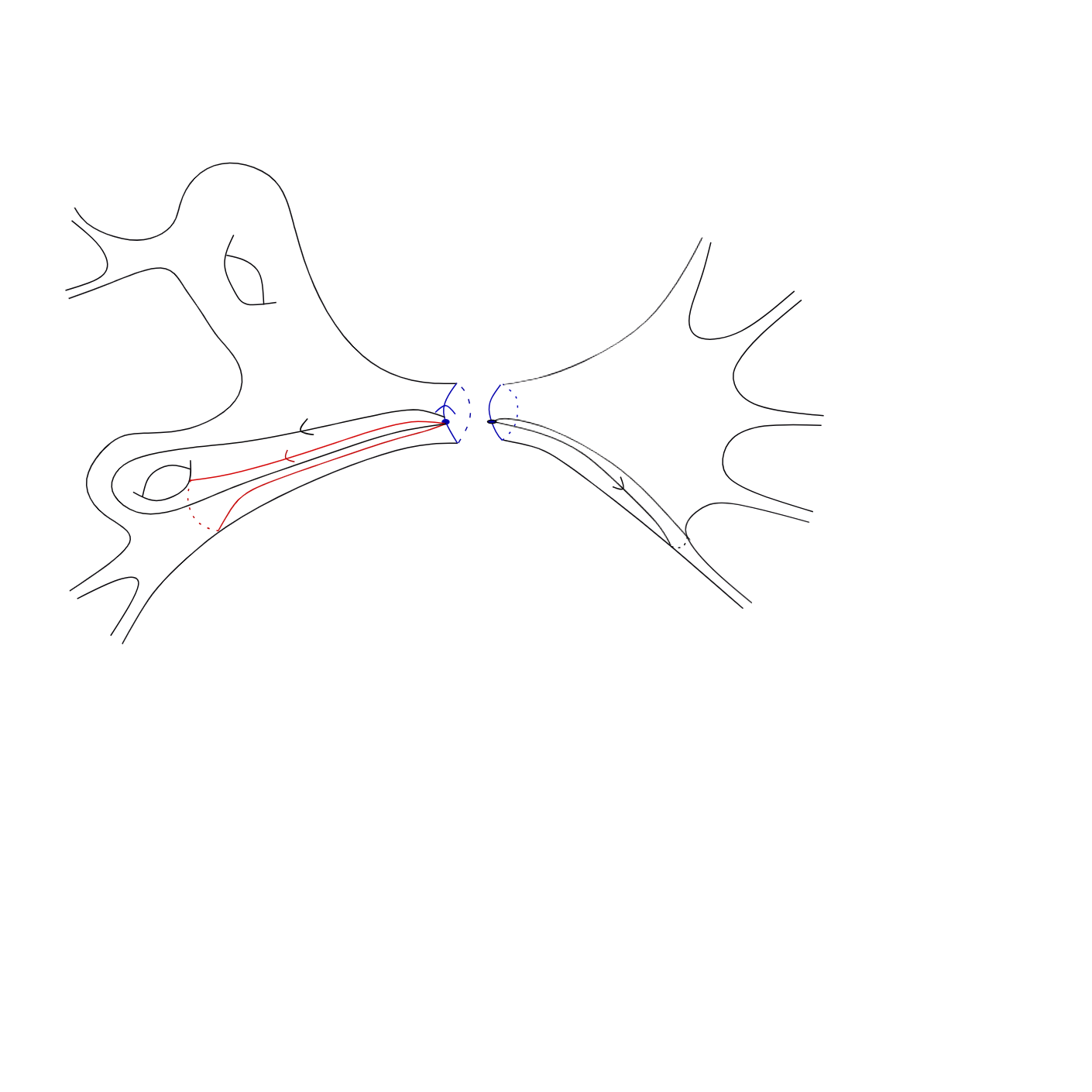}
            \put(30,34){\scalebox{0.7}{$\zeta_1$}}
            \put(20,29){\scalebox{0.7}{\textcolor{red}{$\gamma$}}}
            \put(39,34.5){\scalebox{0.7}{\textcolor{blue}{$\delta$}}}
            \put(49,33){\scalebox{0.7}{$\zeta_2$}}
                
            \end{overpic}
            \vspace{-1cm}
             \caption{The choice of $\zeta = \zeta_1*\zeta_2$}
            \label{peripheral}
          
        \end{figure}
    
    Recall from Section 3 that there exist neighbourhoods $\overline{W}^{[\overline{\rho}]}_{s,n}\subset \overline{\mathcal{M}}_{s,n}$ of $[\overline{\rho}]$
     and $W^{[\rho]}_{s,n}\subset \mathcal{M}^s_n$ of $[\rho]$ diffeomorphic by $\Pi$. For each $[\phi]\in W^{[\rho]}_{s,n}$, let $[\overline{\phi}] = \Pi^{-1}([\phi])$, and let $B_1(\phi)= \overline{\phi}(\zeta_1), B_2(\phi)= \overline{\phi}(\zeta_2)$ and $C(\phi)=\overline{\phi}(\delta)$. Now consider the trace identity,
   \[
    \begin{aligned}
    Tr(B)Tr(C) &= Tr(B_1B_2)Tr(C) \\
               &= Tr(B_1B_2C)+Tr(B_1B_2C^{-1}) \\
               &= Tr(B_1)Tr(B_2C)-Tr(B_1C^{-1}B_2^{-1})+Tr(B_2)Tr(B_1C^{-1})-Tr(B_1B_2^{-1}C^{-1}).
\end{aligned}
\]
This translates to the equation 
\[F_{\zeta_1\zeta_2}F_{\delta}=F_{\zeta_1}F_{\zeta_2\delta}-F_{\zeta_1\delta^{-1}\zeta_2^{-1}}+F_{\zeta_2}F_{\zeta_1\delta^{-1}}-F_{\zeta_1\zeta_2^{-1}\delta^{-1}}.\]
 By Lemma \ref{int_1_drho}, for any simple curve $\alpha$ containing $\zeta_1$, $dF_{\alpha}$ belong to $D_{[\rho]}$; and since $\zeta_2$ is peripheral, $dF_{\zeta_2}=0$. Moreover, since the curve $\zeta_2\delta$ separates $\Sigma$ into a surface with positive genus and a punctured sphere with $p_2-1$ punctures, by the induction hypothesis, $dF_{\zeta_2\delta} \in D_{[\rho]}$. Differentiating the equation above, since all terms except possibly $F_\zeta(\rho)\,dF_\delta$ are in $D_{[\rho]}$, $F_\zeta(\rho)dF_\delta$ is also in $D_{[\rho]}$ where $F_\zeta(\rho)\neq 0$. This completes the inductive step.
\end{proof}

For every simple closed curve $\gamma$, define $f_{\gamma}:\mathcal{M}^s_n\rightarrow \mathbb{R}$ as $f_{\gamma}([\phi])=Tr(\phi(\gamma))^2$ for every $[\phi]\in \mathcal{M}^s_n$.
The following Lemma \ref{poisson} can be adapted verbatim to our case of relative character varieties. 
\begin{lemma}\label{poisson}\cite[Lemma 6.14]{Marche-Wolff}\label{Poisson}
There exists $\delta$ such that $i(\gamma,\delta)=1$ and $\{f_\gamma,f_\delta\}\neq 0$. In particular, $df_\gamma\neq 0$.
\end{lemma}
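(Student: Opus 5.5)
We follow the strategy of Marché–Wolff \cite[Lemma 6.14]{Marche-Wolff}, using the Goldman bracket formula for trace functions. Here $\gamma$ is a non-separating simple closed curve with $\rho(\gamma)$ elliptic of infinite order. For simple closed curves $\gamma,\delta$ with $i(\gamma,\delta)=1$ meeting at a single point with sign $\epsilon$, Goldman's product formula gives, on the lifted space $\overline{W}^{[\overline{\rho}]}_{s,n}$ (which is legitimate since the Poisson structure on the relative character variety restricts to the usual one on trace functions of non-peripheral curves), the identity
\[
\{F_\gamma,F_\delta\}=\tfrac{\epsilon}{2}\big(F_{\gamma\delta}-F_{\gamma\delta^{-1}}\big).
\]
It follows that
\[
\{f_\gamma,f_\delta\}=4F_\gamma F_\delta\{F_\gamma,F_\delta\}=2\epsilon F_\gamma F_\delta\big(F_{\gamma\delta}-F_{\gamma\delta^{-1}}\big).
\]
Since $\rho(\gamma)$ is elliptic of infinite order, $F_\gamma([\rho])\neq0$, as $\rho(\gamma)$ is not of order $2$. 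So it suffices to find $\delta$ with $i(\gamma,\delta)=1$ such that $F_\delta([\rho])\neq0$ and $\operatorname{Tr}(\rho(\gamma\delta))\neq\operatorname{Tr}(\rho(\gamma\delta^{-1}))$ for the lifts in $\SL$.

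Fix any $\delta_0$ with $i(\gamma,\delta_0)=1$. Then the curves $\delta_k=\delta_0\gamma^k$, the Dehn twists of $\delta_0$ about $\gamma$, all satisfy $i(\gamma,\delta_k)=1$. In a basis diagonalising $\rho(\gamma)$ over $\mathbb{C}$, with eigenvalues $e^{\pm i\theta}$ and $\theta\notin\pi\mathbb{Q}$, write $B=\overline{\rho}(\delta_0)$ with diagonal entries $b_{11},b_{22}$. Then
\[
F_{\delta_k}=b_{11}e^{ik\theta}+b_{22}e^{-ik\theta}.
\]
We also have
\[
F_{\gamma\delta_k}-F_{\gamma\delta_k^{-1}}=\operatorname{Tr}(AB_k)-\operatorname{Tr}(AB_k^{-1}),
\]
where $A=\overline{\rho}(\gamma)$ and $B_k=BA^k$. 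By the trace identity $\operatorname{Tr}(AB)-\operatorname{Tr}(AB^{-1})=\operatorname{Tr}(AB)-\operatorname{Tr}(A)\operatorname{Tr}(B)+\operatorname{Tr}(AB)$, this difference vanishes for all $k$ only under a restrictive algebraic condition. Concretely, $\operatorname{Tr}(AB_k)-\operatorname{Tr}(AB_k^{-1})=(e^{i\theta}-e^{-i\theta})(b_{11}e^{ik\theta}-b_{22}e^{-ik\theta})$ up to normalisation. Both quantities are thus trigonometric polynomials in $k$. Since $\theta/\pi$ is irrational, each vanishes for at most finitely many $k$ unless $b_{11}=b_{22}=0$. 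Hence there is a single $k$ for which both are nonzero, and $\delta=\delta_k$ works.

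The main obstacle is excluding the degenerate case $b_{11}=b_{22}=0$. We plan to handle it by replacing $\delta_0$ with another curve meeting $\gamma$ once. If every such curve had vanishing diagonal entries in the eigenbasis of $A$, then $\rho(\pi_1)$ would lie in the normaliser of the elliptic one-parameter subgroup containing $\rho(\gamma)$. Indeed, curves meeting $\gamma$ once, together with $\gamma$, generate $\pi_1(\Sigma)$, as used in Section \ref{sec_conn}. That would make $\rho$ elementary, or would force the peripheral images to be elliptic, either way contradicting type-preservation and non-elementarity. Finally, $\{f_\gamma,f_\delta\}\neq0$ implies $df_\gamma\neq0$ at $[\rho]$ by the bilinearity of the Poisson bracket in the differentials.
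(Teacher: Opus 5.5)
Your proof is correct and is essentially the Marché--Wolff argument that the paper imports verbatim. Goldman's formula for curves meeting once reduces $\{f_\gamma,f_\delta\}\neq0$ to $F_\delta\neq0$ and $F_{\gamma\delta}\neq F_{\gamma\delta^{-1}}$, and twisting $\delta_0$ around $\gamma$, whose rotation angle is irrational, makes each of these conditions fail for at most one $k$. One simplification: the degenerate case you call the main obstacle never occurs, because for real $B$ in the eigenbasis $(v,\bar v)$ of $A$ one has $b_{22}=\overline{b_{11}}$ and $b_{12}=\overline{b_{21}}$, so $|b_{11}|^2=1+|b_{21}|^2\geq 1$, and the normaliser argument (valid as written) can be dropped.
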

 Choose $[\overline{\rho}]\in \overline{\mathcal{M}}_{s,n}$ such that $[\pi(\overline{\rho})]=[\rho]$ and $Tr(\overline{\rho}(\gamma))=2\cos(\theta)$. Choose diffeomorphic neighbourhoods $W^{[\rho]}_{s,n}\subset \mathcal{M}^s_n$ of $[\rho]$ and $\overline{W}^{[\overline{\rho}]}_{s,n}\subset \overline{\mathcal{M}}_{s,n}$ of $[\overline{\rho}]$ as in section \ref{sec_outline}. Since $df_{\gamma}$ is proportional to $dF_{\gamma}$, we also have $dF_{\gamma}\neq 0$. In particular, $W^{[\rho]}_{\theta}=F_{\gamma}^{-1}(2\cos(\theta))$ is a proper closed submanifold of $W^{[\rho]}_{s,n}$. 
 
 
 
\medskip

We now complete the proof of the claim $D_{[\rho]}=T^*_{[\rho]}\mathcal{M}^s_n$ by adapting the final step of the proof of \cite[Proposition 6.4]{Marche-Wolff}.

\begin{proof}[Proof of Proposition \ref{U}]
 Let $v \in T_{[\rho]}\mathcal{M}^s_n$ orthogonal to $D_{[\rho]}$. It suffices to prove $v=0$. Since $dF_{\gamma}\in D_{[\rho]}$, $dF_{\gamma}(v)=0$ and hence $v\in T_{[\rho]}W^{[\rho]}_{\theta}$.
 Let $W_{\theta}^{[\rho]}(\Sigma \setminus \gamma)$ be the set of restrictions $[\phi|_{\pi_1(\Sigma\setminus \gamma)}]$ of $[\phi]\in W_{\theta}^{[\rho]}$, and let
 $r: W_{\theta}^{[\rho]}\rightarrow W_{\theta}^{[\rho]}(\Sigma \setminus \gamma)$ be the restriction map.  
 The restriction $r([\rho])$ is a smooth point. Indeed as $\gamma$ is non-separating, we can choose a simple closed curve $\beta$ such that $i(\gamma,\beta) =1$. Then the commutator $[\gamma,\beta]$ is sent to a hyperbolic element which implies $r[\rho]$ is non-elementary. The differentials $dF_\delta$ for non-peripheral simple closed curves $\delta$ disjoint from $\gamma$ span $T^*_{{r}[{\rho}]}{W}^{[{\rho}]}_{\theta}(\Sigma\setminus \gamma)$. (See Section 2 and Lemma 3.1 in \cite{goldman_SU2} for more details.) 
Moreover, by Lemma \ref{int0}, $r^*(dF_{\delta})(v)=0$ for every simple curve $\delta$ disjoint from $\gamma$. Thus $Dr(v)=0$. The kernel of $Dr$ is precisely the span of the symplectic gradient of $f_\gamma$ at $[\rho]$ that we denote by $(X_{\gamma})_{[\rho]}$. Thus, $v = \lambda (X_{\gamma})_{[\rho]}$ for some $\lambda \in \mathbb{R}$. 
By Lemma \ref{Poisson}, there is an $\alpha \in \mathcal{S}$ such that $i(\alpha,\gamma)=1$ and $\{f_\alpha,f_\gamma\}\neq 0$. 
By Lemma \ref{int_1_drho}, $dF_\alpha \in D_{[\rho]}$, and hence $df_{\alpha}\in D_{[\rho]}$. Thus, $(X_\alpha,v)=df_\alpha(v) = 0$ where $(.,.)$ is the Goldman symplectic form. Therefore, $((X_\alpha)_{[\rho]}, \lambda (X_\gamma)_{[\rho]})=\lambda\{f_\alpha,f_\gamma\}=0$, which implies $\lambda =0$. We conclude that $v = 0$. 
\end{proof}

\section{$\mathcal{EL}^s_n$ has full measure in $\mathcal{NH}^s_n$}\label{sec_EfullNH}

In this section, we prove Proposition \ref{EfullNH}. 
By abuse of notation, we use the term ``simple closed curve" to refer both to the actual curve on the surface $\Sigma = \Sigma_{g,p}$ and to its homotopy class in $\pi_1(\Sigma)$.
Recall that $\mathcal{NH}^s_n$ is a subset of $\Mns$ consisting of the classes of all non-elementary type-preserving representations that map at least one non-peripheral simple closed curve to a non-hyperbolic element.
\smallskip

In order to prove Proposition \ref{EfullNH}, we define a full measure subset $X^s_n$ of $\Mns$ as follows.
Recall from Section \ref{sec_outline} that $\N$ 
is the subspace of $\Mns$ consisting of $[\rho]$ mapping
no non-peripheral
simple closed curve to the identity, a parabolic element, or to an elliptic element of finite order. 
By Lemma \ref{Nfull}, $\N$ has full measure in $\mathcal{M}^s_n$. Let $\nonabel$ be the subspace of $\mathcal{M}^s_n$ consisting of $[\rho]$ such that, for every embedded subsurface $P$ of $\Sigma$ homeomorphic to a pair of pants, the restriction $\rho|_{\pi_1(P)}$ is non-abelian. We claim $\nonabel$ has full measure in $\mathcal{M}^s_n$. Let $P$ be a pair of pants in $\Sigma$ such that $\pi_1(P)=\langle a,b\rangle$ where $a$ and $b$ are represented by two distinct boundary components of $P$. Let $R_P:\mathcal{M}^s_n\to \mathbb{R}$ be a map defined by $R_P\big([\rho]\big)=\tr\big([A_\rho, B_\rho]\big)$, where $A_\rho$ and $B_\rho$ are the lifts of $\rho(a)$ and $\rho(b)$ in $\SL$, respectively. Notice that the commutator $[A_\rho, B_\rho]$ is independent of the choice of the lifts $A_\rho$ and $B_\rho$, thus its trace is well-defined. 
Then $R_P^{-1}(2)$ is a closed 
proper real-analytic subset of  $\mathcal{M}^s_n$, hence is of measure zero \cite{Mityagin}. Since the set of pairs of pants in $\Sigma$ is countable up to isotopy, the union $\bigcup_{P}R_P^{-1}(2)$ for every pair of pants $P \subset \Sigma$ has measure zero in $\mathcal{M}^s_n$. Since $\mathcal{M}^s_n\setminus \nonabel$ is contained in $\bigcup_{P}R_P^{-1}(2)$, it has measure zero as claimed. Let $X^s_n= \N\cap \nonabel$. Then $X^s_n$ has full measure in $\mathcal{M}^s_n$, hence it suffices to show that $\ELns\cap X^s_n = \NHns\cap X^s_n$. Therefore, Proposition \ref{EfullNH} follows directly from the proposition below.

\begin{proposition}\label{prop_ELnNHn}
    Let $\Sigma = \Sigma_{g,p}$ with $g\geqslant 2$ and $p\geqslant 1$, and let $s\in \{\pm 1\}^p$ and $n\in \{\chi(\Sigma) + p_+(s) + 1,\dots, -\chi(\Sigma) - p_-(s)- 1\}$. 
    Then $\ELns\cap X^s_n = \NHns\cap X^s_n$. Consequently, $\mathcal{E}^s_n$ has full measure in $\NHns$.
\end{proposition}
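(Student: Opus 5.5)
The inclusion $\ELns\cap X^s_n\subset \NHns\cap X^s_n$ is immediate, since an elliptic image has trace of absolute value less than $2$. For the reverse inclusion, take $[\rho]\in \NHns\cap X^s_n$. Because $[\rho]\in\N$, any non-peripheral simple closed curve $\delta$ with $|\tr\rho(\delta)|\leqslant 2$ is sent to an elliptic element of infinite order. If some such $\delta$ is non-separating, then $[\rho]\in\ELns$ and we are done. So the real case is when every simple closed curve with non-hyperbolic image is separating. Fix one such separating $\delta$ with $\rho(\delta)$ elliptic of infinite order. The goal is to build a non-separating simple closed curve whose image is elliptic.

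I would split $\Sigma=\Sigma_1\cup_\delta\Sigma_2$ and treat two cases.

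\emph{Case 1: both sides have positive genus.} Choose a non-separating curve $a$ in $\Sigma_1$ together with a curve $b$ with $i(a,b)=1$, arranged so that $b$ crosses $\delta$ twice or can be written as $b=b_1 * b_2$ with $b_2$ running around $\delta$. I would then use Dehn twists $T_\delta^k$: the curves $T_\delta^k(b)$ are non-separating simple curves with images $\rho(b_1)\rho(\delta)^k\cdots$. Since $\rho(\delta)$ is elliptic of infinite order, a Lemma \ref{BAn}-type argument applies to $B\,A^k$ with $A$ conjugate to $\rho(\delta)$. One needs a version for products of the form $XA^kY$; conjugating reduces this to $(YX)A^k$. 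This produces some $k$ with $\rho(T^k_\delta b)$ elliptic. The key point is that the curve obtained by twisting a non-separating curve that meets $\delta$ along $\delta$ stays non-separating and simple, so it lies in $\mathcal S^{ns}$, giving $[\rho]\in\ELns$.

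\emph{Case 2: one side, say $\Sigma_2\cong\Sigma_{0,p_2}$, is a punctured sphere.} The twisting argument still works, provided there is a non-separating simple curve $\eta$ crossing $\delta$ essentially. Take $\eta=\zeta_1*\zeta_2$ as in Figure \ref{peripheral}, built from a non-separating arc in $\Sigma_1$ and an arc in $\Sigma_2$ around a puncture, with $i(\eta,\delta)=2$. Then $\rho(T_\delta^k\eta)$ is conjugate to $X\rho(\delta)^kY\rho(\delta)^{-k}$-type words, and I need ellipticity for some $k$.

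This is the main obstacle. The double intersection produces conjugation by $\rho(\delta)^k$ rather than a plain power. The trace becomes a trigonometric polynomial in $k\theta$, and its range must be shown to enter $(-2,2)$. I would handle this by writing $\tr\rho(T^k_\delta\eta)=\alpha+\beta\cos(2k\theta+\phi)$ via the trace identities, then using density of $\{k\theta\}$ modulo $2\pi$ together with non-abelianness of restrictions to pants (guaranteed by $\nonabel$) to get $\beta\neq0$. Even with $\beta\neq 0$, it is not automatic that the interval $[\alpha-|\beta|,\alpha+|\beta|]$ meets $(-2,2)$. If it does not, I would fall back on the non-extremality of $n$. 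Using Proposition \ref{prop_additivity} and Theorem \ref{ryuhp} on the pieces, I would rule out the configuration in which all such curves stay hyperbolic: the configuration would force the relative Euler class of $\rho|_{\pi_1(\Sigma_1)}$ to be extremal, so that $\rho|_{\pi_1(\Sigma_1)}$ is a holonomy representation by Proposition \ref{prop_holonomy}. That is incompatible with the Euler class of $\rho|_{\pi_1(\Sigma_2)}$ lying within the allowed range once $\rho(\delta)$ is elliptic.

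The final clause of the proposition follows at once. $X^s_n$ has full measure, and $\ELns\subset\mathcal E^s_n\subset\NHns$, so $\mathcal E^s_n$ has full measure in $\NHns$.
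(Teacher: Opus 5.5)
Both cases have genuine gaps. In Case 1 the reduction to Lemma \ref{BAn} is false. A separating curve $\delta$ is null-homologous, so every closed curve meets it an even number of times. A curve $b$ with $i(b,\delta)=2$ therefore splits as $b=b_1b_2$, with loops $b_1\subset\Sigma_1$ and $b_2\subset\Sigma_2$ based on $\delta$, and $T_\delta^k(b)=b_1\delta^kb_2\delta^{-k}$. Thus $\rho(T_\delta^kb)=XA^kYA^{-k}$ with $A=\rho(\delta)$, which is $X$ times a copy of $Y$ rotated about the fixed point $x_0$ of $A$. It is never of the form $XA^kY$, and no conjugation turns it into $(YX)A^k$.

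Whether such a product is elliptic depends on the relative position and orientation of two hyperbolic axes. Crossing axes always give a hyperbolic product. For ultraparallel axes the answer depends on orientation and on a distance that rotation about $x_0$ only varies within a bounded range. So no density argument like Lemma \ref{BAn} is available. This is why Lemma \ref{lem_MW_posgenus}, following Marché--Wolff, first twists inside one side to push the axis of a hyperbolic image arbitrarily far from $x_0$, and only then rotates. In the genus-one/genus-one case it does this with the curves $(ac)^Na(ac)^{-N}$.

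Case 2 carries the real content of the proposition, and there you name the obstacle without overcoming it. Your fallback is not meaningful. Since $\rho(\delta)$ is elliptic, neither $\rho|_{\pi_1(\Sigma_1)}$ nor $\rho|_{\pi_1(\Sigma_2)}$ has a relative Euler class, so Propositions \ref{prop_additivity} and \ref{prop_holonomy} cannot be used across $\delta$.

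The paper proceeds as follows.
\begin{enumerate}
\item[(i)] If some curve in $\Sigma_2$ has hyperbolic image, Lemma \ref{lem_MW_posgenus}(2) applies.
\item[(ii)] Otherwise one shrinks $\Sigma_2$ to a pair of pants with $\gamma=c_1c_2$. Lemma \ref{parplp} forces $\rho(c_1),\rho(c_2)$ to have the same sign, which is made negative via Proposition \ref{prop_pglpsl}.
\item[(iii)] One takes $\mathcal P\in\mathrm{PGD}_\gamma$ with all its curves in $\Sigma_1$ hyperbolic, using Lemma \ref{lem_MW_posgenus} again if this fails. One then absorbs $\gamma$ into the four-holed sphere $\Sigma'=P\cup_\gamma\Sigma_2$. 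Its relative Euler class is defined, and it is $\geqslant -1$ because $\rho|_{\pi_1(\Sigma')}$ is not a holonomy representation.
\item[(iv)] The bound $n\leqslant-\chi(\Sigma)-p_-(s)-1$, the side of non-extremality matched to the negative cusps, forces some piece of $\mathcal P$ strictly below its maximal Euler class. These are exactly conditions (a)--(d) of Lemma \ref{lem_part3_pants}.
\item[(v)] In such a piece, matrix and axis-configuration arguments produce curves $\delta_k$ with $\delta_kc_1$ simple, whose hyperbolic axes turn positively around $x_0$ and whose distance from $x_0$ tends to infinity. This orientation relative to the negative cusp is exactly what makes the traces of $\rho(\gamma^m\delta_k\gamma^{-m}c_1)$ enter $(-2,2)$ in Lemma \ref{lem_ellnonsep}. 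Case (d) needs the separate computation of Lemma \ref{lem_fourpuncsphere}.
\end{enumerate}

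So the Euler class is not used to contradict ``all twisted curves are hyperbolic''. It is used to locate a piece of the decomposition with the right orientation. Without that, nothing in your argument prevents every $T_\delta^k\eta$ from being hyperbolic.
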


    We begin with several lemmas that will be used to prove Proposition~\ref{prop_ELnNHn}.    
    \begin{lemma}
    \label{lem_MW_posgenus}
       Let $\Sigma = \Sigma_{g,p}$ with $g\geqslant 2$ and $p\geqslant 1$. Let $\gamma$ be a simple closed curve that separates $\Sigma$ into two subsurfaces $\Sigma_1$ and $\Sigma_2$. Let $s\in \{\pm 1\}^p$ and $n\in \big\{\chi(\Sigma) + p_+(s) ,\dots, -\chi(\Sigma) - p_-(s) \big\}$, and let 
        $[\rho]\in \NHns\cap X^s_n$ such that $\rho(\gamma)$ is elliptic of infinite order.
       Suppose that one of the following holds:
       \begin{enumerate}[(1)]
           \item Both $\Sigma_1$ and $\Sigma_2$ have positive genus.

           \item $\Sigma_2$ has genus 0, and there is a simple closed curve in $\Sigma_2$ that maps to a hyperbolic element.
       \end{enumerate}
        Then there exists a non-separating simple closed curve in $\Sigma$ that maps to an elliptic element.
    \end{lemma}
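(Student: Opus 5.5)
Since $\rho(\gamma)$ is elliptic of infinite order, we look for curves of the form $\delta\cdot\gamma^k$ (that is, images of a curve $\delta$ under powers of the Dehn twist $T_\gamma$) and apply Lemma \ref{BAn}. That lemma supplies some $k$ with $\rho(\delta)\rho(\gamma)^k$ elliptic. The work is to pick $\delta$ so that the resulting curve is simple and non-separating in $\Sigma$. Dehn twisting along $\gamma$ does not help when $\delta$ is disjoint from $\gamma$, so $\delta$ must cross $\gamma$. The most convenient choice is a product of based loops, one in $\Sigma_1$ and one in $\Sigma_2$, with $\gamma$ as the common boundary.

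For case (1), both $\Sigma_1$ and $\Sigma_2$ have positive genus. Fix a basepoint on $\gamma$ and choose non-separating simple loops $x_1\subset\Sigma_1$ and $x_2\subset\Sigma_2$ based there. Then $x_1x_2$ is represented by a simple closed curve meeting $\gamma$ twice, and it is non-separating in $\Sigma$. The curves $x_1\gamma^k x_2$ are also simple and non-separating, because they differ from $x_1x_2$ by twisting an arc across $\gamma$. Equivalently, they are the images of $x_1x_2$ under $T_\gamma^k$, so non-separation is preserved by the mapping class. Set $B:=\rho(x_2)\rho(x_1)$. Since $\rho(x_1\gamma^kx_2)$ is conjugate to $\rho(x_2)\rho(x_1)\rho(\gamma)^k=B\rho(\gamma)^k$, Lemma \ref{BAn} with $A=\rho(\gamma)$ gives a $k$ for which this element is elliptic. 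The only thing to check is the orientation convention, namely that $\gamma^k$ is inserted correctly. The membership $[\rho]\in X^s_n$ is not even needed here.

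For case (2), $\Sigma_2$ has genus $0$, and we take $x_1$ as above in $\Sigma_1$, which has positive genus because $g\ge 2$. For $x_2$ we need a loop in $\Sigma_2$ that, once concatenated, makes the curve $x_1x_2$ simple and non-separating. A simple arc in $\Sigma_2$ from $\gamma$ back to $\gamma$ closes up with an arc in $\Sigma_1$, and the result is non-separating as long as the arc in $\Sigma_1$ is non-separating. So the choice of $x_2$ in $\Sigma_2$ is quite free: we can take $x_2$ to be any simple based loop in $\Sigma_2$, for instance one around a group of punctures or around a curve $\eta\subset\Sigma_2$. We then run Lemma \ref{BAn} again. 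Hypothesis (2) guarantees a hyperbolic curve $\eta\subset\Sigma_2$, and $X^s_n$ guarantees that restrictions to pants are non-abelian.

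I expect the main obstacle to be that Lemma \ref{BAn} works directly and these hypotheses look superfluous. So I suspect they are used to ensure that the element $B$, or the resulting elliptic, is not degenerate. For example, if $\rho(x_2)\rho(x_1)$ commutes with $\rho(\gamma)$, the lemma still yields ellipticity, but perhaps of finite order. The step I would check most carefully is therefore the topological claim that $T_\gamma^k(x_1x_2)$ is non-separating. If that fails in genus zero, I would instead route $x_2$ through a pants containing $\gamma$ and $\eta$, and use the hyperbolicity of $\rho(\eta)$ together with the non-abelian restriction to that pants to arrange the needed crossing.
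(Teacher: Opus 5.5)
Your argument has a genuine gap: the curves $x_1\gamma^kx_2$ are not the Dehn-twisted curves, and in general they are not simple. Put the basepoint on $\gamma$. Since $\gamma$ is separating, $T_\gamma$ acts as the identity on $\pi_1(\Sigma_1)$ and by conjugation by $\gamma$ on $\pi_1(\Sigma_2)$. Hence $T_\gamma^k(x_1x_2)=x_1\gamma^{k}x_2\gamma^{-k}$ (up to sign convention), not $x_1\gamma^kx_2$. Geometrically, $x_1x_2$ crosses $\gamma$ twice with opposite orientations, and the twist inserts $\gamma^{k}$ at one crossing and $\gamma^{-k}$ at the other. Inserting $\gamma^k$ at only one crossing is not induced by any homeomorphism. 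By malnormality of $\langle\gamma\rangle$ in the free groups $\pi_1(\Sigma_i)$, $x_1\gamma^kx_2$ is not conjugate to any $T_\gamma^j(x_1x_2)$ when $k\neq 0$. Lemma~\ref{BAn} only helps when a curve meets the twisting curve once, as in Lemma~\ref{int0}; a separating curve is always crossed an even number of times, with alternating signs.

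For the genuinely simple curves, $\rho(T^k_\gamma(x_1x_2))=\rho(x_1)\,R^k\rho(x_2)R^{-k}$ with $R=\rho(\gamma)$. This is $\rho(x_1)$ times a rotated copy of $\rho(x_2)$, and nothing forces it to be elliptic for any $k$. For instance, suppose $\rho(x_1)$ and $\rho(x_2)$ are hyperbolic with axes through the fixed point $x_0$ of $R$. Then every rotated axis crosses that of $\rho(x_1)$, and two hyperbolic elements with crossing axes have hyperbolic product. Indeed, for lifts with traces $x,y>2$ and $|z|\leqslant 2$ one gets $\kappa(x,y,z)\geqslant (x-y)^2+2\geqslant 2$, contradicting Lemma~\ref{lem_torus}.

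What is missing is the geometric argument of March\'e--Wolff (their Lemma 6.17), which the paper invokes. First, produce curves on one side whose hyperbolic images have axes at distance tending to infinity from $x_0$. Then rotate, by powers of $\rho(\gamma)$, the axis of a hyperbolic curve on the other side until the two axes are disjoint, suitably oriented, and at a distance that makes the product elliptic. Lemma~\ref{lem_ellnonsep} is an explicit version of this mechanism. The hypotheses are exactly what feeds it, so they are not superfluous:
\begin{itemize}
\item When both sides have genus one, the paper takes a puncture $c\in\pi_1(\Sigma_1)$ and a non-separating $a$ with $ac$ simple. If $\rho(a)$ and $\rho(ac)$ are both hyperbolic, their axes differ because $\rho(c)$ is parabolic, and the twisted curves $(ac)^Na(ac)^{-N}$ have axes escaping from $x_0$.
\item In case (2), $\Sigma_2$ has no non-separating curves, so the hyperbolic curve $b\subset\Sigma_2$ plays the role of the curve on the other side.
\end{itemize}

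Your argument never uses (1), (2) or $X^s_n$. It would therefore also apply when $\Sigma_2$ is a pair of pants around two punctures with $\rho(c_1c_2)$ elliptic. That is exactly the case for which the paper needs Lemmas~\ref{lem_ellnonsep}--\ref{lem_part3_pants} and the non-extremality of $n$. The conclusion can in fact fail there. Take $p=2$, $s=(+1,+1)$ and $n=\chi(\Sigma)+2$. By Proposition~\ref{connect_el} and Lemma~\ref{Nfull}, $\mathcal{E}^s_n=\emptyset$. Yet, as far as I can check by combining Lemma~\ref{parplp} with the images of lifted commutators, an open set of classes there sends $c_1c_2$ to an elliptic element.
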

    \begin{proof}
        Let $x$ be a point on $\gamma$.
        We first consider Case (1), where both $\Sigma_1$ and $\Sigma_2$ have positive genus.
        For the case where either $\Sigma_1$ or $\Sigma_2$ has genus greater than 1, see the proof of \cite[Lemma 6.17]{Marche-Wolff}. If both $\Sigma_1$ and $\Sigma_2$ have genus 1, then we may assume without loss of generality  that $\pi_1(\Sigma_1\textcolor{black}{, x})$ contains at least one peripheral element of $\pi_1(\Sigma\textcolor{black}{, x})$, say $c$. 
         Let $a\in \pi_1(\Sigma_1\textcolor{black}{, x})$ be a non-separating simple closed curve  such that 
         $ac\in \pi_1(\Sigma_1\textcolor{black}{, x})$ is simple.
         If either $\rho(a)$ or $\rho(ac)$ is elliptic, then the statement follows. If both $\rho(a)$ and $\rho(ac)$ are hyperbolic, then they have different axes in $\mathbb{H}^2$, as otherwise $\rho(c)$ cannot be parabolic. 
         Consider an image $(ac)^N a (ac)^{-N}$ of $a$ under the $N$th power of the Dehn twist along $ac$. As $N$ goes to $+\infty$ or $-\infty$, the distance between the axis of $\rho\big( (ac)^N a (ac)^{-N}\big)$ and the fixed point of $\rho(\gamma)$ tending to $+\infty$. The rest of the proof follows verbatim as in \cite[Lemma 6.17]{Marche-Wolff}. 
         \smallskip
         
         For Case (2) where \(\Sigma_2\) has genus \(0\), as $\Sigma$ has genus $g\geqslant 2$,
        \(\Sigma_1\) has genus greater than \(1\). Let
        \(b\) be a simple closed curve in
        \(\pi_1(\Sigma_2,x)\) such that \(\rho(b)\) is hyperbolic. The proof is the same as
        the proof of \cite[Lemma 6.17]{Marche-Wolff} for the case where exactly one of the separated subsurfaces has genus 1, with the
        curve \(b\) playing the role of the curve chosen on such subsurface.
        Although \(b\) is separating in the punctured sphere \(\Sigma_2\), the
        argument in the proof only uses that \(b\) is simple and that \(\rho(b)\) is hyperbolic. Hence the rest of the proof follows verbatim.
    \end{proof} 

    For a hyperbolic element $\pm A$, let $l_A$ denote its axis in $\mathbb{H}^2$, oriented in the direction in which $\pm A$ translates points along $l_A$. Let $x_0$ be a point in $\mathbb{H}^2$. 
    We say $l_A$ \emph{turns positively around $x_0$} if, when traveling along $l_A$ toward its attracting fixed point, the point $x_0$ lies on the left-hand side of $l_A$.
    
    \begin{lemma}\label{lem_ellnonsep}
        Let $\Sigma = \Sigma_{g,p}$ with $g\geqslant 2$ and $p\geqslant 2$. Let $\gamma$ be a separating simple closed curve in $\Sigma$ that separates $\Sigma$ into $\Sigma_1$ and $\Sigma_2$, where $\Sigma_2$ has genus 0. Let 
        $s\in \{\pm 1\}^p$ and $n\in \big\{\chi(\Sigma) + p_+(s) , \dots, -\chi(\Sigma) - p_-(s)\big\}$, and 
        let $[\rho]\in \NHns\cap X^s_n$ such that $\rho(\gamma)$ is elliptic of infinite order with fixed point $x_0\in \mathbb{H}^2$. 
        \textcolor{black}{Let $x$} be a point on $\gamma$, and let $c_1$ be a preferred peripheral element of $\pi_1(\Sigma\textcolor{black}{, x})$ contained in $\pi_1(\Sigma_2\textcolor{black}{, x})$ that maps to a negative parabolic element.
        Suppose that there is a sequence $\{\delta_k\}_{k = 1}^\infty \subset \pi_1(\Sigma, x)$ of simple closed curves in $\pi_1(\Sigma_1,x)$ such that for each $k\in \mathbb{Z}_+$,
        \begin{enumerate}[(a)]
            \item $\delta_kc_1$ is simple, 
            \item $\rho(\delta_k)$ is hyperbolic, and
            \item the (oriented) axis $l_k$ of $\rho(\delta_k)$ turns positively around $x_0$.
        \end{enumerate}
        Assume further that 
        either every $\delta_k$ is non-separating, or every $\delta_k$ is separating and each connected component of $\Sigma\setminus [\delta_k]$
        has positive genus.
        In addition, suppose that the distance $d(l_k, x_0)\to \infty$ as $k\to \infty$.
        Then $\rho$ also sends a non-separating simple closed curve to an elliptic element.
    \end{lemma}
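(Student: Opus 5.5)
We consider the family of curves $\delta_k c_1$, which are simple by (a). The plan is to show that for $k$ large, $\rho(\delta_k c_1)=\rho(\delta_k)\rho(c_1)$ is elliptic. We then produce from it (or from a closely related curve) a non-separating simple closed curve with elliptic image. In each step we work in the upper half-plane model and use the geometry of the oriented axis $l_k$ relative to $x_0$ and to the fixed point of $\rho(c_1)$.

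\textbf{Step 1: normalize.} Conjugate so that $\rho(c_1)=\pm\parmp$, which fixes $\infty$ and translates negatively, $z\mapsto z-1$. The fixed point $x_0$ of $\rho(\gamma)$ is then some point of $\mathbb{H}^2$. Condition (c), together with $d(l_k,x_0)\to\infty$, says that the axes $l_k$ escape to infinity while keeping $x_0$ on their left. Passing to a subsequence, the endpoints of $l_k$ converge to a single point $\xi\in\partial\mathbb{H}^2$. We split into the case $\xi=\infty$, the fixed point of $\rho(c_1)$, and the case $\xi\neq\infty$.

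\textbf{Step 2: compute the trace.} Write $\rho(\delta_k)$ as a lift $D_k\in\SL$ with entries $(a_k,b_k;c_k,d_k)$. Then
\[
\tr\big(D_k\,\mathrm{P}\big)=\tr(D_k)-c_k
\]
for the lift $\mathrm{P}$ of $\pm\parmp$ with trace $2$. The sign of $c_k$ is controlled by the orientation of $l_k$: since $x_0$ lies to the left and the axis runs far from $x_0$, the translation direction of $\rho(\delta_k)$ is ``opposite'' to that of the negative parabolic in the appropriate sense. The aim is to show that $|\tr(D_k)-c_k|<2$ for some large $k$. In practice I would avoid the case split by conjugating $\rho(\delta_k)$ by powers $\rho(c_1)^N$. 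This corresponds to the twist-type curves $c_1^N\delta_k c_1^{-N}$, which are still simple and still satisfy (a)--(c) up to the change of $x_0$. The intended use is to move the endpoints of $l_k$ into a standard position relative to $\infty$, in the spirit of Lemma \ref{BAn} and the argument of \cite[Lemma 6.17]{Marche-Wolff}.

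The hyperbolic-geometric fact to prove is the following. For a hyperbolic $h$ and a negative parabolic $q$ fixing $\infty$, suppose the axis of $h$ passes ``below'' $\infty$ in the orientation compatible with (c) and has large Euclidean size relative to $x_0$. Then $hq$ is elliptic, with rotation centre near $x_0$ but not fixed by anything of finite order. The last point is automatic because $[\rho]\in\N$, so every non-peripheral simple closed curve has image that is not elliptic of finite order. An alternative argument is to track $\tr(D_kq^t)$ as $t$ runs from $0$ to $1$. Since $\delta_kc_1$ is non-peripheral and $[\rho]\in X^s_n$, its image is not parabolic. If the trace crosses $[-2,2]$ we are done, and the positivity in (c) is exactly what forces the crossing for large $k$.

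\textbf{Step 3: obtain a non-separating curve.} If every $\delta_k$ is non-separating, then $\delta_kc_1$ is non-separating, because band-summing with a peripheral loop preserves non-separation. So Step 2 finishes this case. If every $\delta_k$ is separating with both sides of positive genus, then $\delta_kc_1$ is separating and its complementary pieces still have positive genus. Its image is elliptic of infinite order, since $[\rho]\in\N$. We then apply Lemma \ref{lem_MW_posgenus}(1) to the curve $\delta_kc_1$ to get a non-separating simple closed curve with elliptic image.

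\textbf{Main obstacle.} I expect the hardest step to be Step 2: turning the qualitative hypothesis (c) plus $d(l_k,x_0)\to\infty$ into the trace inequality $|\tr(D_k)-c_k|<2$. Two things must be handled. First, the sign convention relating ``turns positively around $x_0$'' to the sign of the lower-left entry $c_k$, which requires care with Lemma \ref{lem_offdiag}. Second, the choice of $\rho(c_1)$-conjugations that places the axis correctly.
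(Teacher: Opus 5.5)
There is a genuine gap in Step~2. Your claim that $\rho(\delta_kc_1)$ is elliptic for large $k$ is false in general, and neither of your remedies repairs it.

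Normalize $q=\rho(c_1)\colon z\mapsto z-1$. Let $h$ be hyperbolic with translation length $\lambda$ and axis a semicircle of Euclidean radius $R$. For the positive-trace lifts, $\mathrm{Tr}(hq)=2\cosh(\lambda/2)\mp\sinh(\lambda/2)/R$, with the minus sign when the axis is oriented left to right. So $hq$ is elliptic iff the axis is oriented left to right and $R\in\big(\tfrac12\tanh\tfrac{\lambda}{4},\tfrac12\coth\tfrac{\lambda}{4}\big)$, independently of where the semicircle sits horizontally. Hypotheses (c) and $d(l_k,x_0)\to\infty$ do not place $R_k$ in this window:
\begin{itemize}
\item Suppose the $l_k$ shrink to a finite boundary point with $\lambda_k$ bounded below. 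This happens for $\delta_k=\gamma_0^k\delta\gamma_0^{-k}$ in Lemma~\ref{lem_part3_pants}, where $\lambda_k$ is constant, whenever the attracting fixed point of $\rho(\gamma_0)$ is not $\infty$. Then (c) forces left-to-right orientation, $R_k\to0$, and $\mathrm{Tr}\to-\infty$.
\item If instead the $l_k$ are large semicircles enclosing $x_0$, they are oriented right to left and $\mathrm{Tr}(hq)>2$ for every $k$.
\end{itemize}
So the ``large Euclidean size'' heuristic points the wrong way. Conjugating by $\rho(c_1)^N$ does nothing, since $\mathrm{Tr}(q^Nhq^{-N}q)=\mathrm{Tr}(hq)$. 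It also moves the axis relative to $x_0$, which is the fixed point of $\rho(\gamma)$ and cannot be ``changed'', so (c) is lost. Tracking $\mathrm{Tr}(D_kq^t)=\mathrm{Tr}(D_k)-tc_k$ merely restates the question.

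The missing idea is to twist along $\gamma$ rather than along $c_1$.
\begin{itemize}
\item The curves $\gamma^m\delta_k\gamma^{-m}c_1$ are images of $\delta_kc_1$ under powers of the Dehn twist about $\gamma$, so they are simple and of the same topological type.
\item $\rho(\gamma)^m$ rotates $l_k$ about $x_0$, preserving (b), (c) and $d(l_k,x_0)$.
\item The rotated positions that are left-to-right semicircles have Euclidean radii sweeping $[r_k,\infty)$, where $r_k=\mathrm{Im}(x_0)e^{-d(l_k,x_0)}$ is the radius of the one directly below $x_0$. Since $\rho(\gamma)$ is an irrational rotation, the orbit realizes a dense set of these radii.
\item The hypothesis $d(l_k,x_0)\to\infty$ is used exactly to choose $k_0$ with $r_{k_0}<\tfrac12$. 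Then choose $M$ so that $\rho(\gamma)^M(l_{k_0})$ has radius close to $\tfrac12$, which lies in the window for every $\lambda$.
\end{itemize}
The paper phrases this with reflections: $\rho(\gamma^M\delta_{k_0}\gamma^{-M})=s_1s_2$ and $\rho(c_1)=s_2s_3$. Here $s_2,s_3$ are the reflections in the vertical lines through the Euclidean center $x_M$ of the axis and through $x_M+\tfrac12$. The product is $s_1s_3$, which is elliptic once the mirrors of $s_1$ and $s_3$ cross. Your Step~3 then applies verbatim to $\gamma^M\delta_{k_0}\gamma^{-M}c_1$.
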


    \begin{proof}
       We consider the two cases separately: First, that all $\delta_k$ are non-separating; and second, that all $\delta_k$ are separating and each connected component of $\Sigma\setminus [\delta_k]$ has positive genus.
        \medskip
        
        We first assume that all $\delta_k$ are non-separating.
        Up to $\psl$-conjugation of $\rho$, we assume that $\rho(c_1) = \pm \parmp$. 
        Notice that for each $m\in \mathbb{Z}$ and $k\in \mathbb{Z}_+$, $\gamma^m\delta_k \gamma^{-m} c_1$ is represented by a non-separating simple closed curve. We will show that, for some $k_0\in \mathbb{Z}_+$ and $M\in \mathbb{Z}$, $\rho(\gamma^M \delta_{k_0} \gamma^{-M}c_1)$ is elliptic.
        \smallskip
        
        First, we choose $k_0$ as follows.
        For each $k\in \mathbb{Z}_+$, the hyperbolic element $\rho(\delta_k)$ fixes a geodesic axis $l_k$ with displacement $\lambda_k>0$. 
        Notice that $\rho(\gamma^m\delta_k \gamma^{-m})$ is hyperbolic with the same displacement $\lambda_k$.
        The axis of $\rho(\gamma^m\delta_k \gamma^{-m})$ is the rotation $\rho(\gamma)^m (l_k)$ of $l_k$ around the point $x_0$, which still turns positively around $x_0$ with the same distance $d(\rho(\gamma)^m(l_k), x_0) = d(l_k, x_0)$. 
        Since $\rho(\gamma)$ is elliptic of infinite order,
        by replacing $\delta_k$ by $\gamma^m\delta_k \gamma^{-m}$ for suitable $m\in \mathbb{Z}$, we can assume that each $l_k$ is a semicircle of radius less than $\mathrm{Im}(x_0)$, where the point $\mathrm{Re}(x_0)$ in $\mathbb{R} \subset \partial \mathbb{H}^2$ 
        lies between two endpoints of $l_k$ in $\mathbb{R}$. Note that in this case, as the semicircle $l_k$ turns positively around $x_0$, it is oriented from left to right in the upper half plane. See Figure \ref{fig: lemma_6.4.jpg}. 
        Moreover, $\displaystyle\lim_{k\to\infty} d(l_k, x_0)\to \infty$ implies that the Euclidean radius $r_k$ of $l_k$ converges to $0$. In particular, there exists $k_0\in \mathbb{Z}$ such that $r_{k_0}<\frac{1}{2}$.
            \begin{figure}[h!]
                \centering
                \begin{overpic}[width=0.4\textwidth, trim = 0 0 0 0]{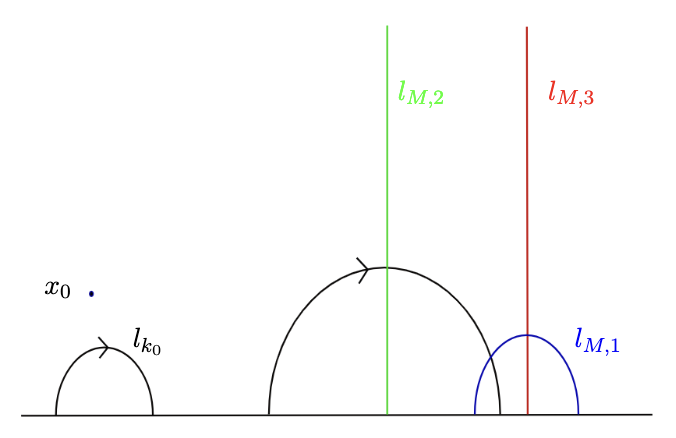}
                \end{overpic}
                \caption{Configuration of $l_{k_0}$ in $\mathbb{H}^2$}
                {\label{fig: lemma_6.4.jpg}}
                
            \end{figure}
        
        Next, we choose $M$ as follows. 
        Recall that for each $m\in \mathbb{Z}$, the hyperbolic element $\rho(\gamma^m\delta_{k_0}\gamma^{-m})$ is represented by a composition $s_{m,1}s_{m,2}$  of two reflections $s_{m,1}$ and $s_{m,2}$ across the axes $l_{m,1}$ and $l_{m,2}$ that are perpendicular to the axis of $\rho(\gamma^m\delta_{k_0}\gamma^{-m})$.
        For $m\in\mathbb{Z}$ where the axis of $\rho(\gamma^m\delta_{k_0}\gamma^{-m})$ is a semicircle with center $x_m\in \mathbb{R}$, we let $l_{m,2}$ be a vertical line $x = x_m$. 
        Denote by $J_+$ the subinterval of $(-\pi, \pi)$ consisting of rotation angles $\theta$ such that the $\theta$-rotation of $l_{k_0}$ around $x_0$ is a semicircle in $\mathbb{H}^2$ oriented from left to right. Let \(\theta_m\in (-\pi, \pi)\) be the rotation parameter of the oriented geodesic
        \(\rho(\gamma)^m(l_{k_0})\) about \(x_0\), with $\theta_0 = 0$.
        Then \(l_{m,1}\) lies in
        the right half-plane with respect to \(l_{m,2}\) with distance $d(l_{m,1},l_{m,2})=\frac{\lambda_{k_0}}{2}$ if and only if \(\theta_m\in J_+\). As we define $l_{m,3}$ to be a vertical line $x = x_m + \frac{1}{2}$ which also lies in the right half-plane with respect to $l_{m,2}$, and define $s_{m,3}$ be a reflection across $l_{m,3}$, the composition $s_{m,2} s_{m,3}$ equals $\rho(c_1)= \pm \parmp$. See Figure \ref{fig: lemma_6.4.jpg}. Therefore, we have $\rho(\gamma^m\delta_{k_0}\gamma^{-m}c_1) = (s_{m,1}s_{m,2}) (s_{m,2}s_{m,3}) = s_{m,1}s_{m,3}$.
        Since $\rho(\gamma)$ is an irrational rotation, 
        the orbit
        \(\{\theta_m\}_{m\in\mathbb Z}\) is dense in \((-\pi, \pi)\); and since $\theta_0\in J_+$ with the radius of $l_{k_0}$ equals to $r_{k_0}$,
        the radii of
        \(\rho(\gamma)^m(l_{k_0})\) with \(\theta_m\in J_+\) are dense in
        \([r_{k_0},\infty)\). Since \(r_{k_0}<1/2\), we may choose \(M\) with
        \(\theta_M\in J_+\) such that the radius of \(\rho(\gamma)^M(l_{k_0})\) is
        arbitrarily close to \(1/2\), in which case the axes $l_{M,1}$ and $l_{M,3}$ intersect; and  $\rho(\gamma^M\delta_{k_0}\gamma^{-M}c_1) = s_{M,1}s_{M,3}$ is elliptic.
        \medskip
    
        We now assume that all $\delta_k$ are separating and each component of $\Sigma\setminus [\delta_k]$ has positive genus. 
        In this case, for each $m\in \mathbb{Z}$ and $k\in \mathbb{Z}_+$, $\gamma^m\delta_k \gamma^{-m} c_1$ is represented by a separating simple closed curve whose complement has components of positive genus. 
        As in the previous case, we can choose $k_0\in \mathbb{Z}_+$ and $M\in \mathbb{Z}$ such that $\rho(\gamma^M \delta_{k_0} \gamma^{-M}c_1)$ is elliptic, which is of infinite order since $[\rho]\in \N$. Then Lemma \ref{lem_MW_posgenus} completes the proof.
    \end{proof}
     
    \begin{lemma}\label{lem_fourpuncsphere}
    Let $\Sigma_{0,4}$ be a four-holed sphere, and let $c_1,c_2,c_3,c_4$ denote the preferred peripheral elements of its fundamental group $\pi_1(\Sigma_{0,4})$. Let $\gamma = c_1c_2$ and $s = (0,0,-1,-1)$. Let $\rho\in \HP^s_{-1}(\Sigma_{0,4})$ that maps $\gamma$ to an elliptic element of infinite order. Then there exists $n\in \mathbb{Z}$ such that $\rho$ maps the simple closed curve $c_1\gamma^nc_3\gamma^{-n}$ to an elliptic element.  
\end{lemma}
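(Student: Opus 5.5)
Write $\rho(c_i)=\pm A_i$ and $\rho(\gamma)=\pm E$. Conjugate so that $E$ is the rotation $R_\theta$ about $i\in\mathbb{H}^2$, where $\theta/\pi$ is irrational. The relation $c_1c_2c_3c_4=1$ gives $\rho(c_3c_4)=\pm E^{-1}$. The curves $c_1\gamma^n c_3\gamma^{-n}$ are all simple, being images of $c_1c_3$ under powers of the half-twist/Dehn twist along $\gamma$. So it suffices to study
$$f(\varphi):=\operatorname{Tr}\big(A_1R_\varphi A_3R_{-\varphi}\big),\qquad \varphi\in\mathbb{R}/2\pi\mathbb{Z},$$
for fixed $\mathrm{SL}(2,\mathbb{R})$-lifts $A_1,A_3$. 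Then $\operatorname{Tr}(\rho(c_1\gamma^nc_3\gamma^{-n}))=\pm f(n\theta)$.

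Since $\{n\theta\}$ is dense mod $2\pi$ and $|f|<2$ is an open condition, it is enough to find one $\varphi$ with $|f(\varphi)|<2$. Decompose $A_3=\tfrac{\operatorname{Tr}A_3}{2}\mathrm{I}+\kappa J+S$, where $J=\begin{bmatrix}0&1\\-1&0\end{bmatrix}$ and $S$ is traceless symmetric. Conjugation by $R_\varphi$ fixes $\mathrm{I}$ and $J$ and rotates $S$ by angle $2\varphi$. Hence
$$f(\varphi)=a+b\cos 2\varphi+c\sin 2\varphi,\qquad a=\tfrac12\operatorname{Tr}A_1\operatorname{Tr}A_3+\kappa\operatorname{Tr}(A_1J).$$
So $f$ sweeps the full interval $[a-r,a+r]$ with $r=\sqrt{b^2+c^2}$. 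It therefore suffices to show that this interval meets $(-2,2)$. Equivalently, it is \emph{not} contained in $[2,\infty)$ or in $(-\infty,-2]$.

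The key input is the sign and Euler class data. Lift everything to $\univcover$. Since $c_3,c_4$ are negative parabolic, Lemma \ref{parplp} says the preferred-lift product $\ev(\rho(c_3),\rho(c_4))$ lies in $\Par^-_0\cup\Ell_{-1}\cup\Par^+_{-1}\cup\Hyp_{-1}$. It is the lift of $E^{-1}$, which is elliptic, so it lies in $\Ell_{-1}$. Then the relation defining $e(\rho)=-1$ forces the lift of $\rho(c_1)\rho(c_2)$ (preferred lifts in $\Hyp_0$) to be the inverse of that element times $z^{-1}$. By Proposition \ref{prop_prodimage}(4) this lift lies in $\Ell_{\pm1}$, which pins down the rotation direction of $E$ relative to the lifts. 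With Lemma \ref{lem_offdiag_Ell} and Lemma \ref{lem_offdiag}, this fixes the signs of the off-diagonal entries, hence the sign of the $J$-component $\kappa$ of the chosen lift of $A_3$, and its relation to the sense of rotation of $E$.

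The main obstacle is converting this sign information into the inequality $|a|<2+r$ with the interval not pushed past $\pm2$. I would argue by contradiction. Suppose $f(\varphi)\geq 2$ for all $\varphi$ (the case $\leq -2$ is analogous after changing the lift of $A_1$). Then for every $\varphi$, the pair $(A_1,R_\varphi A_3R_{-\varphi})$ has a hyperbolic or parabolic product. So the pants with boundary $c_1$, $\gamma^nc_3\gamma^{-n}$ has a well-defined relative Euler class, and it is locally constant in $\varphi$, hence constant around the loop $\varphi\in[0,2\pi]$. Following the lift $\widetilde{R_\varphi}$ from $\mathrm{I}$ to $z^{2}$ as $\varphi$ goes once around, the lifted product $\widetilde{A_1}\,\widetilde{R_\varphi}\widetilde{A_3}\widetilde{R_\varphi}^{-1}$ is unchanged by the central factor. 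So there is no monodromy, and I would instead exploit the explicit formula.

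To do that, compute $a$ and $r$ in coordinates. Take $A_3$ to be a negative parabolic with fixed point $\xi\in\partial\mathbb{H}^2$, normalized by the sign convention of Lemma \ref{lem_offdiag}. Take $A_1$ hyperbolic with axis endpoints $u,v$. The condition that the $\theta$-rotation structure is compatible with $e=-1$ should translate into a statement of the following kind: the rotation orbit of $\xi$ (the whole circle $\partial\mathbb{H}^2$) contains a point such that the parabolic $R_\varphi A_3R_{-\varphi}$ and $A_1$ form a pair whose product is elliptic. Geometrically, the horocyclic translation direction of $A_3$ opposes the translation of $A_1$ when $R_\varphi\xi$ lies in the appropriate arc between $u$ and $v$. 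I expect the direct check to be that, when $R_\varphi\xi$ is placed at an endpoint of the axis of $A_1$ with the opposing orientation, the resulting product is elliptic or parabolic with $|f|\leq 2$. A small perturbation then gives $|f|<2$. The verification that the Euler class $-1$ (rather than $0$) selects the opposing orientation is the computation I expect to require the most care.
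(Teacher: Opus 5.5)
There is a genuine gap. Your setup matches the paper's. You normalize $\rho(\gamma)$ to a rotation about $i$ and $\rho(c_3)$ to an upper-triangular parabolic, and write $f(\varphi)=a+b\cos 2\varphi+c\sin 2\varphi$. Irrationality of $\theta/\pi$ then reduces everything to showing that the interval $[a-K,a+K]$ meets $(-2,2)$; write $K$ for your amplitude $r$, since $r$ is needed below as a matrix entry.

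That inequality is the whole content of the lemma, and the proposal does not establish it. You abandon the Euler-class contradiction argument yourself. The geometric check you offer in its place is false. If $R_\varphi\xi$ is an endpoint of the axis of $A_1$, then $A_1$ and $R_\varphi A_3R_{-\varphi}$ share a fixed point at infinity, so they are simultaneously triangularizable. Hence $f(\varphi)=\operatorname{Tr}A_1>2$ for the lifts with $\operatorname{Tr}A_1>2$ and $\operatorname{Tr}A_3=2$, whatever the orientations. This actually shows $a+K>2$, so what must be proved is $a-K<2$.

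Note also that $f$ itself does not involve $\theta$. For an arbitrary hyperbolic $A_1$ and negative parabolic $A_3$, $f$ can stay above $2$ for all $\varphi$ (e.g.\ $|t|$ small). So the off-diagonal sign information you extract from Lemmas \ref{lem_offdiag} and \ref{lem_offdiag_Ell} cannot by itself settle the question.

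The missing idea is to use the two remaining boundary conditions quantitatively. The paper takes the lift $\bar\rho$ with $\operatorname{Tr}\bar\rho(c_1)>2$, $\operatorname{Tr}\bar\rho(c_2)>2$ and $\bar\rho(c_3)=\begin{bmatrix}1&t\\0&1\end{bmatrix}$, $t<0$. Then $\operatorname{Tr}\bar\rho(c_4)=-2$; this is where $e(\rho)=-1$ enters. It follows that $\bar\rho(\gamma)=R_\phi$ with $\phi=\theta-\pi\in(-\pi,0)$.

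Parabolicity of $c_4$ means $\operatorname{Tr}(\bar\rho(\gamma)\bar\rho(c_3))=-2$, which forces $t=2\cot(\phi/2)$. The condition on $c_2$ means $\operatorname{Tr}(\bar\rho(c_1)^{-1}\bar\rho(\gamma))>2$. Writing $\bar\rho(c_1)=\begin{bmatrix}p&q\\r&s\end{bmatrix}$, this becomes $(t^2-4)(p+s)+4t(q-r)>2(t^2+4)$.

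In these coordinates $a=(p+s)+\tfrac{t(r-q)}{2}$ and $K^2=\tfrac{t^2}{4}\big((p-s)^2+(q+r)^2\big)$. Using $ps-qr=1$ and $p+s>2$,
\[
K^2-a^2=\tfrac{p+s}{4}\big[(t^2-4)(p+s)+4t(q-r)\big]-t^2>\tfrac{(p+s)(t^2+4)}{2}-t^2>0 .
\]
So $0\in[a-K,a+K]$, and some $c_1\gamma^nc_3\gamma^{-n}$ has trace in $(-2,2)$. Without this computation, or an equivalent one tying $t$ to $\phi$ and exploiting $\operatorname{Tr}\bar\rho(c_2)>2$, your argument stops short of the conclusion.
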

\begin{proof}
Up to a $\psl$-conjugation of $\rho$, we can assume that $\rho(\gamma) = 
\pm\begin{bmatrix}
    \cos(\theta) & \sin(\theta)\\
    -\sin(\theta) & \cos(\theta)
\end{bmatrix}$ for some $\theta \in (0,\pi)$. Since $\rho(c_3)\in \Par^-$, by further conjugation of $\rho$ by an element of the centralizer of $\rho(\gamma)$, we may assume $\rho(c_3)=\pm\begin{bmatrix}
    1 & t\\
    0 & 1
\end{bmatrix}$ for some $t<0$.
Since $e(\rho)=-1$, we may choose a lift $\bar{\rho}:\pi_1(\Sigma_{0,4})\rightarrow \SL$ of $\rho$ such that $\tr\big(\bar{\rho}(c_1)\big)>2, \tr\big(\bar{\rho}(c_2)\big)>2, \tr\big(\bar{\rho}(c_3)\big)=2$ and $\tr\big(\bar{\rho}(c_4)\big)=-2$. Thus, $\bar{\rho}(c_3)=\begin{bmatrix}
    1 & t\\
    0 & 1
\end{bmatrix}$. For $i\in \{1,2,3,4\}$, let $\widetilde{\rho(c_i)}$ be the lift of $\rho(c_i)$ in $\overline{\Hyp_0}$. Since $\widetilde{\rho(c_3)}^{-1}$ and $\widetilde{\rho(c_4)}^{-1}$ are positive parabolic and $\widetilde{\rho(c_1)}\widetilde{\rho(c_2)}=z^{-1}\widetilde{\rho(c_4)}^{-1}\widetilde{\rho(c_3)}^{-1}$ is elliptic,  Lemma \ref{parplp} implies 
$\widetilde{\rho(c_1)}\widetilde{\rho(c_2)}\in \Ell_{-1}$. Then by Lemma \ref{lem_offdiag_Ell}, we have $\bar{\rho}(\gamma) = \begin{bmatrix}
    \cos(\phi) & \sin(\phi)\\
    -\sin(\phi) & \cos(\phi) 
\end{bmatrix},\ \phi = \theta - \pi\in (-\pi,0).$ Let $\bar{\rho}(c_1) = \begin{bmatrix}
    p & q\\
    r & s
\end{bmatrix}$. Since $\bar{\rho}(\gamma c_3c_4) = \bar{\rho}(c_1c_2c_3c_4)=\mathrm{I}$, the trace $\tr\big(\bar{\rho}(\gamma c_3)\big)=\tr\big(\bar{\rho}(c_4)\big) = -2$; and solving the equation $\tr\big(\bar{\rho}(\gamma c_3)\big)= 2cos(\phi)-tsin(\phi) = -2$ for $t$ gives $t = 2cot(\phi/2).$
Next, since $\tr\big(\bar{\rho}(c_2)\big) = \tr\big(\bar{\rho}(c_1^{-1}\gamma)\big)>2$, we obtain the inequality $$\tr\big(\bar{\rho}(c_1)^{-1}\bar{\rho}(\gamma)\big) = (s+p)cos(\phi) + (q-r)sin(\phi)>2,$$ where we substitute $t = 2cot(\phi/2)$ and rewrite as
\begin{equation}\label{trb}
    \frac{t^2-4}{t^2+4}(p+s) + \frac{4t}{t^2+4}(q-r)>2.
\end{equation}
Further, a computation shows $$\tr\big(\bar{\rho}(c_1\gamma^nc_3\gamma^{-n})\big)=(p+s)+\frac{t(r-q)}{2}+\frac{t(p-s)}{2}sin(2n\phi) + \frac{t(r+q)}{2}cos(2n\phi).$$
Since $\phi$ is an irrational multiple of $\pi$, as $n$ varies in $\mathbb{Z}$, $\tr\big(\bar{\rho}(c_1\gamma^nc_3\gamma^{-n})\big)$ is dense in the interval $J=\big[(p+s)+\frac{t(r-q)}{2}-K, (p+s)+\frac{t(r-q)}{2}+K\big]$ where $K= \big(\sqrt{t^2(p-s)^2+t^2(r+q)^2}\big)/2$. 
Hence, to show $\tr\big(\bar{\rho}(c_1\gamma^nc_3\gamma^{-n})\big)\in (-2,2)$ for some $n\in \mathbb{Z}$, it suffices to show $0 \in J$. To this end, we show $K>|(p+s)+\frac{t(r-q)}{2}|$. Denote by 
$$\alpha :=K^2-\bigg[(p+s)+\frac{t(r-q)}{2}\bigg]^2 = \frac{t^2}{4}\big((p-s)^2+(r+q)^2-(r-q)^2\big)-(p+s)^2-t(p+s)(r-q).$$ 
We claim $\alpha>0$.
After rearranging and using $ps-rq=1$, we obtain $$\alpha=[(t^2-4)(p+s)^2-4t^2-4t(p+s)(r-q)]/4 = \frac{(p+s)}{4}[(t^2-4)(p+s)+4t(q-r)]-t^2.$$ By the inequality \ref{trb} and $p+s>2$ we have $$\alpha = \frac{(p+s)}{4}[(t^2-4)(p+s)+4t(q-r)]-t^2 > (p+s)(t^2+4)/2-t^2>0,$$ as desired.
\end{proof}

    Let $\gamma$ be a simple closed curve in $\Sigma$ that separates $\Sigma$ into a surface $\Sigma_1$ with genus at least two and a punctured sphere $\Sigma_2$. 
    We consider decompositions $\mathcal{P}$ of $\Sigma$ whose complement $\Sigma\setminus \mathcal{P}$
    consists of $g$ one-holed tori and $g + p - 2$ pairs of pants,
    such that $\gamma\in \mathcal{P}$, and every curve in $\mathcal{P}\setminus\{\gamma\}$ contained in $\Sigma_1$ separates $\Sigma$ into two subsurfaces of positive genus. See Figure \ref{fig: decomp_for_part_3_2.jpeg}. We call such decompositions \emph{positive-genus decompositions of $\Sigma$ with respect to $\gamma$}, and denote by $\pgd$ the collection of all such decompositions. 
    
    Let $[\rho]\in \NHns\cap X^s_n$ sending $\gamma$ to an elliptic element of infinite order.
    By the definition of $X^s_n$,  $\rho$ maps each decomposition curve to either a hyperbolic element or an elliptic element of infinite order. Then Lemma \ref{lem_MW_posgenus} implies $[\rho]\in \ELns$ unless every decomposition curve contained in $\Sigma_1$ is mapped to a hyperbolic element.

            \begin{figure}[h!]
            \centering
            \begin{overpic}[width=0.5\textwidth]{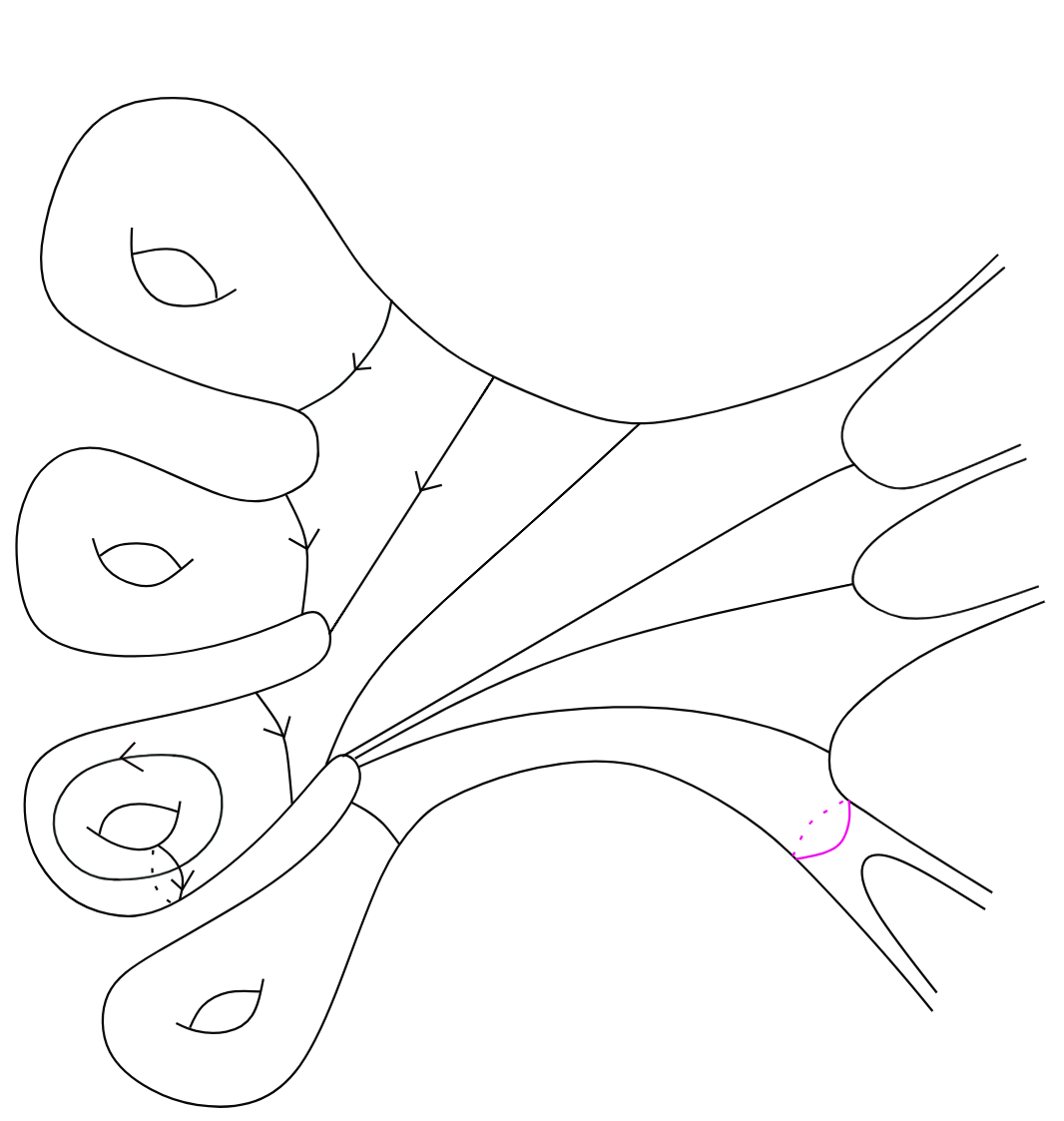}
            \put(90,78){\scalebox{0.8}{$c_5$}}   
            \put(93,60){\scalebox{0.8}{$c_4$}}
            \put(94,47){\scalebox{0.8}{$c_3$}}
            \put(84,8){\scalebox{0.8}{$c_1$}}
            \put(89,18){\scalebox{0.8}{$c_2$}}
            \put(72,27){\scalebox{0.8}{\textcolor{purple}{$\gamma$}}}
            \put(15,17){\scalebox{0.8}{$\alpha$}}
            \put(20,28){\scalebox{0.8}{$\beta$}}
            \put(17,39.5){\scalebox{0.67}{$[\alpha,\beta]$}}
            \put(22,51){\scalebox{0.8}{$\alpha'$}}
            \put(29,69){\scalebox{0.8}{$\beta'$}}
            \put(33,60){\scalebox{0.8}{$\alpha'\beta'$}}
            \put(34,28){\scalebox{0.8}{$\beta''$}}
            \put(59,38){\scalebox{0.8}{$\alpha''$}}
            \end{overpic}
            \caption{
            Positive-genus decomposition of a surface with respect to $\gamma$}\label{fig: decomp_for_part_3_2.jpeg}
            
        \end{figure}

    \begin{lemma}\label{lem_part3_pants}
        Let $\Sigma = \Sigma_{g,p}$ with $g\geqslant 2$ and $p\geqslant 1$. 
        Let 
        $s\in \{\pm 1\}^p$ and $n\in \big\{\chi(\Sigma) + p_+(s) ,\dots, -\chi(\Sigma) - p_-(s)\big\}$, and let $[\rho]\in X^s_n$. 
        Assume that there is a separating simple closed curve $\gamma$ 
        that satisfies the following conditions:
        \begin{enumerate}[(1)]

            \item $\rho(\gamma)$ is elliptic of infinite order.

            \item $\gamma$ separates $\Sigma$ into a surface $\Sigma_1$ with positive genus and a pair of pants $\Sigma_2$, where $\pi_1(\Sigma_2)$ contains peripheral elements $c_1$ and $c_2$ of $\pi_1(\Sigma)$ that map to negative parabolic elements.

        \end{enumerate}
        Assume further that there is a decomposition $\mathcal{P}\in \pgd$ whose complement $\Sigma\setminus \mathcal{P}$ contains 
        \begin{enumerate}[(a)]
            \item 
            a one-holed torus $T$ in $\Sigma_1$ where $e\big(\rho|_{\pi_1(T)}\big)\in \{-1,0\}$,

            \item a pair of pants $P$ in $\Sigma_1$ where $e\big(\rho|_{\pi_1(P)}\big) = -1$ and $s\big(\rho|_{\pi_1(P)}\big) = (0,0,0)$ or $(0,-1,0)$,

            \item a pair of pants $P$ in $\Sigma_1$ where $e\big(\rho|_{\pi_1(P)}\big) = 0$, $s\big(\rho|_{\pi_1(P)}\big) = (0,0,0)$ or $(0,+1,0)$, and $\rho|_{\pi_1(P)}$ is non-abelian, or

            \item a pair of pants $P$ in $\Sigma_1$ where $\gamma$ is represented by one of its boundary components, and $e\big(\rho|_{\pi_1(P\cup_\gamma \Sigma_2)}\big) = -1$.
        \end{enumerate}
        Then there is a non-separating simple closed curve that maps to an elliptic element.
    \end{lemma}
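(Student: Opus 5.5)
The plan is to reduce every case to Lemma \ref{lem_ellnonsep}, to Lemma \ref{lem_MW_posgenus}, or to Lemma \ref{lem_fourpuncsphere}. We may assume every curve of $\mathcal{P}$ in $\Sigma_1$ is sent to a hyperbolic element; otherwise, since $[\rho]\in X^s_n$, such a curve is elliptic of infinite order and Lemma \ref{lem_MW_posgenus}(1) finishes. Fix a base point $x$ on $\gamma$ and normalize so that $x_0$ is the fixed point of $\rho(\gamma)$. In cases (a)--(c) the aim is to produce, inside the given subsurface $T$ or $P$, a sequence of simple closed curves $\delta_k\in\pi_1(\Sigma_1,x)$ satisfying the hypotheses of Lemma \ref{lem_ellnonsep}: $\delta_kc_1$ simple, $\rho(\delta_k)$ hyperbolic, axis turning positively around $x_0$, and $d(l_k,x_0)\to\infty$. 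The $\delta_k$ should be non-separating in case (a), and separating with positive-genus complements in cases (b) and (c), which is where the positive-genus decomposition is used.

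For (a), with $T$ having generators $a,b$ and $e(\rho|_{\pi_1(T)})\in\{-1,0\}$, I would take $\delta_k=a^kb$ or the iterated Dehn twists $\tau_a^k(b)$, joined to $x$ by a fixed arc so that $\delta_kc_1$ stays simple. If $\rho(a)$ is hyperbolic, then as $k\to\pm\infty$ the axes of $\rho(a^kb)$ converge to the axis of $\rho(a)$, but translated far away, so $d(l_k,x_0)\to\infty$. The orientation condition (c) is where the Euler class enters. Using the lifted commutator (Theorem \ref{thm_liftcommu}) and Lemma \ref{lem_offdiag}, I would show that relative Euler class $0$ or $-1$ forces the axes of $\rho(a)$ and $\rho(b)$ to be disjoint or to cross in a configuration that lets one choose a sign of $k$ giving positive turning. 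If $\rho(a)$ or $\rho(b)$ is elliptic, it is already a non-separating curve with elliptic image and we are done.

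For (b) and (c), $P$ has boundary curves $\delta,\delta'$ (and possibly a peripheral $c$). I would take $\delta_k=\delta^k\delta'$-type curves, that is, Dehn twists of one boundary about a curve in $P$. These are separating with positive-genus sides because of the $\pgd$ condition. The sign data $(0,-1,0)$ or $(0,+1,0)$ and the Euler class $-1$ or $0$ are fed into Lemma \ref{parplp} and Proposition \ref{prop_evimage} to locate the lifted product, and then into Lemma \ref{lem_offdiag} to determine orientation. Non-abelianness in (c) ensures that the two axes are distinct, so the distances diverge.

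For (d), $P\cup_\gamma\Sigma_2$ is a four-holed sphere with Euler class $-1$ and sign $(0,0,-1,-1)$ (the two boundaries of $P$ other than $\gamma$ are hyperbolic). Applying Lemma \ref{lem_fourpuncsphere} gives $n$ such that $c_1\gamma^nc_3\gamma^{-n}$, which is a separating curve in $\Sigma$ whose complement consists of a pair of pants and a positive-genus surface, is sent to an elliptic element. That element has infinite order since $[\rho]\in\N$. Lemma \ref{lem_MW_posgenus}(2) then applies, since a boundary of $P$ in the pants side is hyperbolic, and yields a non-separating curve with elliptic image.

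The main obstacle is verifying the orientation condition in (a)--(c), namely translating Euler class and sign data into "turns positively around $x_0$". I would first do this in the explicit normal form $\rho(c_1)=\pm\parmp$ via the lift sign computations of Lemmas \ref{lem_offdiag} and \ref{lem_offdiag_Ell}, replacing $k$ by $-k$ or $\delta_k$ by $\delta_k^{-1}$ where needed.
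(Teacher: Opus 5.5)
\paragraph{The gap: your sequences do not diverge.}
The problem is in cases (a)--(c). The sequences you propose do not satisfy the hypothesis $d(l_k,x_0)\to\infty$ of Lemma~\ref{lem_ellnonsep}.

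In (a), if you twist $b$ about a curve $a$ that it crosses, the fixed points of $\rho(a)^k\rho(b)$ converge to the attracting fixed point $a^+$ of $\rho(a)$ and to $\rho(b)^{-1}(a^-)$. So the axes accumulate on a fixed geodesic and stay at bounded distance from $x_0$; they are not ``translated far away.''

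In (b)--(c) it is worse. The curve $\delta^k\delta'$ with $|k|\geqslant 2$ is not simple, since the only simple closed curves in a pair of pants are its boundary curves, and a pair of pants has no interior Dehn twists.

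The missing idea is to move $\delta$ by a curve \emph{disjoint} from it whose axis is ultraparallel to $l_\delta$. The paper does this as follows:
\begin{itemize}
\item It fixes $\delta$ in $T$ (resp.\ $\delta=\alpha'$, a boundary of $P$).
\item It lets $\gamma_0$ be the boundary $[\alpha,\beta]$ of $T$ (resp.\ $\gamma_0=\alpha'\beta'$).
\item It takes $\delta_k=\gamma_0^k\delta\gamma_0^{-k}$.
\end{itemize}
These are freely homotopic to $\delta$. Hence they are non-separating in (a), and separating with positive-genus complements in (b)--(c) by the $\pgd$ condition, while $\delta_kc_1$ stays simple. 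The axes $\rho(\gamma_0)^k(l_\delta)$ collapse to a fixed point of $\rho(\gamma_0)$, which gives $d(l_k,x_0)\to\infty$.

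\paragraph{The orientation condition is the real content.}
With that sequence, the turning sign for large $|k|$ depends only on which side of $l_\delta$ the axis of $\rho(\gamma_0)$ lies, and it is the same for $k\to+\infty$ and $k\to-\infty$. So choosing the sign of $k$ does not help. Replacing $\delta_k$ by $\delta_k^{-1}$ is not available either: only one of $\delta c_1$, $\delta^{-1}c_1$ is simple, so hypothesis (a) of Lemma~\ref{lem_ellnonsep} pins down the orientation of $\delta$.

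The real task is to find an admissibly oriented $\delta$ such that the axis of $\rho(\gamma_0)$ lies to the left of $l_\delta$. The paper chooses among $\alpha,\beta^{-1}$, or among $\alpha',(\beta')^{\pm1},\alpha'\beta'$.
\begin{itemize}
\item For Euler class $-1$ this is a computation. Normalize $\rho(\delta)$ to be diagonal. Use that the restriction is a holonomy representation, so the two axes are ultraparallel. Then get the sign of the sum of the endpoints of the axis of $\rho(\gamma_0)$ from Lemma~\ref{lem_offdiag_Ell}, applied at a time when the path $\widetilde A_t\widetilde B\widetilde A_t^{-1}\widetilde B^{-1}$ (resp.\ $\widetilde A_t\widetilde B$) crosses $\Ell_{-1}$.
\item For Euler class $0$ on $T$ you additionally need Goldman's theorem. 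Either some non-separating curve in $T$ is elliptic, or $\rho(\pi_1(T))$ is the holonomy of a pair of pants; in the latter case you locate the axis of $\rho(\gamma_0)$ inside the pants domain.
\item For Euler class $0$ on $P$, you use the possible non-holonomy configurations of the three axes, or of two axes and a parabolic fixed point.
\end{itemize}
None of this appears in your sketch.

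\paragraph{Case (d).}
Case (d) is essentially right, with one correction. Write $\alpha'',\beta''$ for the other two boundaries of $P$. The curve produced by Lemma~\ref{lem_fourpuncsphere} cuts $P\cup_\gamma\Sigma_2$ into pants bounded by $\{\alpha'',c_1\}$ and $\{\beta'',c_2\}$. Since $\alpha''$ and $\beta''$ are positive-genus curves of $\mathcal P$, both sides of this curve in $\Sigma$ have positive genus. So it is case (1) of Lemma~\ref{lem_MW_posgenus} that finishes the argument, not case (2).
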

    \begin{proof}
    First, we consider a one-holed torus $T\subset \Sigma_1$ whose boundary component has a hyperbolic image. 
    Let $\alpha, \beta\in \pi_1(\Sigma)$ be two generators of $\pi_1(T)$ such that $\alpha c_1$ and $\beta^{-1} c_1$ are simple.  Note that their commutator $\gamma_0:= \alpha\beta \alpha^{-1}\beta^{-1}$ is represented by the boundary component of $T$; and as we let $x\in \gamma$ as a basepoint, $\gamma_0$ is a simple closed curve in $\pi_1(\Sigma_1, x)$. 
    As $[\rho]\in \N$, $\rho(\alpha)$ and $\rho(\beta^{-1})$ are either elliptic or hyperbolic; and we assume that they are both hyperbolic, as otherwise the lemma follows. We will show that there exists $\delta$ in $\pi_1(T)$ such that $\delta c_1$ is simple in $\pi_1(\Sigma,x)$ and the axis of $\rho(\gamma_0)$ lies on the left of the oriented axis of $\rho(\delta)$. That is, as we conjugate $\rho$ by $\psl$ so that $\rho(\delta) = 
    \pm \begin{bmatrix}
        e^{\lambda} & 0 \\
        0 & e^{-\lambda}
    \end{bmatrix}$ for some $\lambda > 0$, the axis of $\rho(\gamma_0)$ lies entirely on the left half-plane with respect to the $y$-axis.
    Then there is an $M\in \mathbb{Z}$ such that, for all $k\geqslant M$, the axis of $\rho(\gamma_0)^k\rho(\delta)\rho(\gamma_0)^{-k}$ turns positively around the fixed point $x_0$ of $\rho(\gamma)$. 
    Letting $\delta_k:= \gamma_0^k \delta \gamma_0^{-k}$, we find a sequence $\{\delta_k\}_{k\geqslant M}$ of non-separating simple closed curves such that, for each $k$,  
    $\delta_kc_1$ is simple, 
    $\rho(\delta_k)$  is hyperbolic whose axis turns positively around $x_0$, and the distance $d(\delta_k, x_0)\to \infty$ as $k\to \infty$. Then the rest of the proof follows by Lemma \ref{lem_ellnonsep}. \medskip
    
    We first consider the case where $e\big(\rho|_{\pi_1(T)}\big) = -1$. Up to $\psl$-conjugation, we assume that $\rho(\alpha) = 
    \pm \begin{bmatrix}
        e^{\lambda} & 0 \\
        0 & e^{-\lambda}
    \end{bmatrix}$ for some $\lambda > 0$. 
    Letting $\rho(\beta) = \pm \begin{bmatrix}
        a & b \\
        c & d
    \end{bmatrix}$, and letting $A, B\in \mathrm{SL}(2,\mathbb{R})$ respectively be the lifts of $\rho(\alpha)$ and $\rho(\beta)$ in $\mathrm{SL}(2,\mathbb{R})$ of positive traces, we have 
    $$ABA^{-1}B^{-1} 
    = \begin{bmatrix}
        ad - bce^{2\lambda} & ab(e^{2\lambda}-1) \\
        cd(e^{-2\lambda}-1) & ad - bce^{-2\lambda}
    \end{bmatrix}.$$
    Let $x_1, x_2\in \partial \mathbb{H}^2$ be the two endpoints of the axis of $\rho(\gamma_0)$.
    As $x_1$ and $x_2$ are fixed by 
    $\rho(\gamma_0)$, we have the quadratic equation 
    $$\rho(\gamma_0)(x_i)= \dfrac{(ad - bce^{2\lambda})x_i + ab(e^{2\lambda}-1)}{cd(e^{-2\lambda}-1)x_i + ad - bce^{-2\lambda}} = x_i,\ i\in \{1,2\}.$$
    
    Since $e\big(\rho|_{\pi_1(T)}\big) = -1$, it follows from Proposition \ref{prop_holonomy} that
    $\rho|_{\pi_1(T)}$ is a holonomy representation of $\pi_1(T)$ corresponding to a hyperbolic structure on $T$ with geodesic boundary.
    Since $\alpha$ and $\gamma_0$ are disjoint, their geodesic representatives in this structure are at positive distance, and thus the axes of $\rho(\alpha)$ and $\rho(\gamma_0)$ are ultraparallel in $\mathbb{H}^2$.
    Hence, the endpoints $x_1$ and $x_2$ of the axis of $\rho(\gamma_0)$ are real numbers of the same sign. To show that the axis of $\rho(\gamma_0)$ lies on the left half-plane with respect to the $y$-axis, it suffices to show that the sum $x_1 + x_2$ is negative, which will imply that both $x_1$ and $x_2$ are negative.

    From the equation above, we obtain the sign of the sum
    $$sgn(x_1 + x_2) = sgn\bigg(\dfrac{bc(e^{-2\lambda} - e^{2\lambda})}{cd(e^{-2\lambda}-1)}\bigg)= sgn(bd).$$
    Let $\widetilde{A}$ and $\widetilde{B}$ be the lifts of $\rho(\alpha)$ and $\rho(\beta)$, respectively, in $\Hyp_0$. 
    Since $e(\rho|_{\pi_1(T)}) = -1$, we have $\widetilde{A}\widetilde{B}\widetilde{A}^{-1}\widetilde{B}^{-1}\in \Hyp_{-1}$ and $\tr\big(ABA^{-1}B^{-1}\big) = 2ad - bc\big(e^{2\lambda} + e^{-2\lambda}\big) < -2$. Comparing with $2ad - 2bc = 2$, we obtain $$bc>0.$$
    Let
    $A_t:=
    \begin{bmatrix}
        e^{\lambda t} & 0 \\
        0 & e^{-\lambda t}
    \end{bmatrix}$ for $t\in [0,1]$, 
    and let  
    $\{\widetilde{A}_t\}_{t\in [0,1]}$ be the lift of $A_t$ in $\univcover$ connecting $\rm I$ and $\widetilde{A}$. Then the path 
    $\{\widetilde{A}_t\widetilde{B}\widetilde{A}_t^{-1}\widetilde{B}^{-1}\}_{t\in [0,1]}$ connects $\rm I$ and $\widetilde{A}\widetilde{B}\widetilde{A}^{-1}\widetilde{B}^{-1}\in \Hyp_{-1}$, hence must pass $\Ell_{-1}$ at some time $t\in (0,1)$. For such $t$, Lemma \ref{lem_offdiag_Ell} implies that
    $$ab(e^{2\lambda t}- 1)<0\mbox{ and }cd(e^{-2\lambda t}-1)>0,\text{ i.e., }ab<0\mbox{ and }cd<0.$$
    Since $bd = \dfrac{(bc)(cd)}{c^2}<0$, the sum $x_1 + x_2$ is negative, and the axis of $\rho(\gamma_0)$ lies entirely on the left half-plane with respect to the $y$-axis. Therefore, setting $\delta=\alpha$ gives the desired result.
    \smallskip
    
    We now consider the case where $e\big(\rho|_{\pi_1(T)}\big) = 0$. In this case,
    either $\rho|_{\pi_1(T)}$ sends some non-separating simple closed curve to an elliptic element, or the convex core \textbf{core}$\big(\mathbb{H}^2/\rho(\pi_1(T))\big)$ is isometric to a pair of pants with three geodesic boundaries \cite[Theorem 5.2.1]{goldman_torus}. Moreover, there exist simple closed curves $\alpha',\beta',\alpha'\beta'$ in $\pi_1(T)$ such that the axes of $\rho(\alpha'),\rho(\beta')$ and $\rho(\alpha'\beta')$ are boundary components of a convex domain $\mathcal{D}$ corresponding to the pair of pants. Since the lemma directly follows in the former case, we assume the latter. Then either $\alpha'c_1$ is simple or $(\alpha')^{-1}c_1$ is simple, which leads to four cases depending on whether $\beta'c_1$ is simple or $(\beta')^{-1}c_1$ is simple. Denote by $l_{A'},l_{B'},l_{A'B'}$ and $l_{B'A'}$ the oriented axes of $\rho(\alpha'), \rho(\beta'),\rho(\alpha'\beta')$ and $\rho(\beta'\alpha')$, respectively.  The axis of $\rho(\gamma_0)$ is contained entirely in the interior of $\mathcal{D}$, as otherwise the domain cannot be invariant under $\rho(\gamma_0)$-action. See Figure \ref{fig: pants_holonomy}. First, if both $\alpha'c_1$ and $\beta'c_1$ are simple, then their Dehn twists along $\beta'$ and $\alpha'$, respectively, are simple, which implies either $\alpha'\beta'c_1$ or $\beta'\alpha'c_1$ is simple. Without loss of generality, we assume $\alpha'\beta'c_1$ is simple. Since the axis of $\rho(\gamma_0)$ lies on the left of either $l_{A'}$ or $l_{A'B'}$, setting $\delta$ as $\alpha'$ or $\alpha'\beta'$ accordingly completes the proof. Secondly, if $\alpha'c_1$ and $(\beta')^{-1}c_1$ are simple, then axis of $\rho(\gamma_0)$ lies on the left of $l_{A'}$ or right of $l_{B'}$. Then setting $\delta$ as $\alpha'$ or $(\beta')^{-1}$ accordingly completes the proof. The cases where $(\alpha')^{-1}c_1$ and $\beta'c_1$ are simple and where $(\alpha')^{-1}c_1$ and $(\beta')^{-1}c_1$ are simple follow verbatim. 

    \begin{figure}[h!]
    \centering
    \begin{overpic}[width=0.4\textwidth]{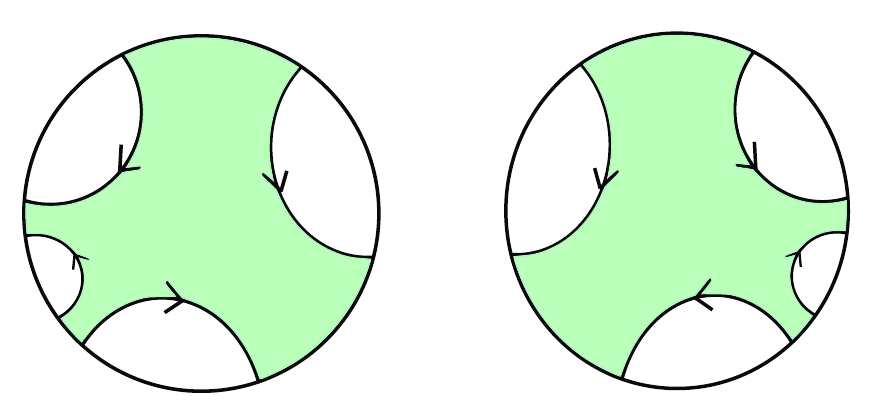}
    \put(18,7){\scalebox{0.8}{$l_{A'}$}}
    \put(8,28){\scalebox{0.8}{$l_{B'}$}}
    \put(33,26){\scalebox{0.8}{$l_{A'B'}$}}
    \put(11,17){\scalebox{0.8}{$l_{B'A'}$}}

    \put(78,7.5){\scalebox{0.8}{$l_{A'}$}}
    \put(86,29){\scalebox{0.8}{$l_{B'}$}}
    \put(59,26){\scalebox{0.8}{$l_{A'B'}$}}
    \put(80,17){\scalebox{0.8}{$l_{B'A'}$}}
    \end{overpic}
    \caption{\label{fig: pants_holonomy} Two configurations of $\mathcal{D}$}
    \end{figure}
   
    \medskip

    Next, we consider a pair of pants $P\subset \Sigma_1$.
    Let $\alpha', \beta'\in \fund$ be two generators of $\pi_1(P)$, represented by two boundary components of $P$, 
    such that $\alpha'\beta'$, $\alpha' c_1$ and $\beta' c_1$ are all simple. See Figure \ref{fig: decomp_for_part_3_2.jpeg}. We further assume that all $\rho(\alpha')$, $\rho(\beta')$ and $\rho(\alpha'\beta')$ are hyperbolic, unless $\beta'$ is a peripheral element of $\pi_1(\Sigma)$---in which case $\rho(\beta')$ is parabolic.
    We will show that, letting $\gamma_0: = \alpha'\beta'$ and after possibly exchanging $\alpha'$ and $\beta'$, the axis of $\rho(\gamma_0)$ lies on the left-hand side of the axis of $\rho(\alpha')$. 
    Then, as discussed in the beginning of the proof, letting $\alpha_k:= \gamma_0^k \alpha' \gamma_0^{-k}$, we find a sequence $\{\alpha_k\}_{k\geqslant 1}$ of separating simple closed curves 
    whose complement has components of positive genus.
    Moreover, for each $k$,  
    $\alpha_kc_1$ is simple, 
    $\rho(\alpha_k)$  is hyperbolic whose axis turns positively around $x_0$, and the distance $d(\alpha_k, x_0)\to \infty$ as $k\to \infty$. Then the rest of the proof follows by Lemma \ref{lem_ellnonsep}.
    \smallskip
    
    We first consider the case where
    $e\big(\rho|_{\pi_1(P)}\big) = -1$ and $\rho(\beta')$ is either hyperbolic or negative parabolic. Up to $\psl$-conjugation, we assume that $\rho(\alpha') = 
    \pm \begin{bmatrix}
        e^{\lambda} & 0 \\
        0 & e^{-\lambda}
    \end{bmatrix}$ for some $\lambda > 0$. Letting $\rho(\beta') = \pm \begin{bmatrix}
        a & b \\
        c & d
    \end{bmatrix}$
    and letting $A, B\in \mathrm{SL}(2,\mathbb{R})$ respectively be the lifts of $\rho(\alpha')$ and $\rho(\beta')$ in $\mathrm{SL}(2,\mathbb{R})$ of positive traces, we have 
    $$AB 
    = \begin{bmatrix}
        ae^\lambda & be^\lambda \\
        ce^{-\lambda} & de^{-\lambda}
    \end{bmatrix}.$$
    Then we have the quadratic equation 
    $$\rho(\gamma_0)(x_i)= \dfrac{ae^\lambda x_i + be^\lambda}{ce^{-\lambda} x_i + de^{-\lambda}} = x_i,\ i\in \{1,2\}.$$
    
    Since $e\big(\rho|_{\pi_1(P)}\big) = -1$, by Proposition \ref{prop_holonomy},
    $\rho|_{\pi_1(P)}$ is a holonomy representation of $\pi_1(P)$. 
    As $\alpha'$ and $\gamma_0$ are represented by two distinct boundary components of $P$, the axes of $\rho(\alpha')$ and $\rho(\gamma_0)$ are ultraparallel in $\mathbb{H}^2$.
    This implies that $x_1$ and $x_2$ are real numbers of the same sign. To show that $\rho(\gamma_0)$ lies on the left half-plane with respect to the $y$-axis, we show that both $x_1$ and $x_2$ are negative.
    From the equation above, we obtain the sum
    $$x_1 + x_2 = \dfrac{ae^{\lambda} - de^{-\lambda}}{ce^{-\lambda}}.$$
    Let $\widetilde{A}$ and $\widetilde{B}$ be the lifts of $\rho(\alpha')$ and $\rho(\beta')$, respectively, in $\Hyp_0\cup \Parm_0$.
    Since $e(\rho|_{\pi_1(P)}) = -1$, we have $\widetilde{A}\widetilde{B}\in \Hyp_{-1}$ and $\tr\big(AB\big) = ae^{\lambda} + de^{-\lambda} < -2$. If $ae^{\lambda} - de^{-\lambda} > 0$, then subtracting from
    $ae^{\lambda} + de^{-\lambda}$, we obtain $2de^{-\lambda} < 0\text{ i.e., } d < 0.$ Moreover, $a + d = \tr(B) \geqslant 2$ implies $a \geqslant |d| + 2$.
    Then we get 
    $$2 > ae^{\lambda} + de^{-\lambda} \geqslant (|d| + 2)e^{\lambda} + de^{-\lambda}
    = 2e^{\lambda} + |d|(e^{\lambda} -  e^{-\lambda}) > 2,$$
    which is a contradiction. Therefore, we obtain $$ae^{\lambda} - de^{-\lambda} < 0.$$
    Let
    $A_t : = \begin{bmatrix}
        e^{\lambda t} & 0 \\
        0 & e^{-\lambda t}
    \end{bmatrix}$ for $t\in [0,1]$, and let  
    $\{\widetilde{A}_t\}_{t\in [0,1]}$ be the lift of $A_t$ in $\univcover$ connecting $\rm I$ and $\widetilde{A}$.
    Then the path $\big\{\widetilde{A}_t\widetilde{B}\big\}_{t\in [0,1]} 
    $ connects $\widetilde{B}\in \Hyp_0\cup \Parm_0$ and 
    $\widetilde{A}\widetilde{B}\in \Hyp_{-1}$, hence passes $\Ell_{-1}$ at some time $t\in (0,1)$. For such $t$, Lemma \ref{lem_offdiag_Ell} implies that $$ be^{\lambda t} < 0 \text{ and } ce^{-\lambda t} > 0 \text{, i.e., }b < 0\text{ and }c > 0.$$ Therefore, $ae^{\lambda} - de^{-\lambda} < 0$ and $c>0$ together imply $x_1 + x_2 = \dfrac{ae^{\lambda} - de^{-\lambda}}{ce^{-\lambda}} < 0$, and hence both $x_1$ and $x_2$ are negative, as desired.
    \smallskip

    We now consider the case where
    $e\big(\rho|_{\pi_1(P)}\big) = 0$ and $\rho(\beta')$ is either hyperbolic or positive parabolic.  We first assume that $\rho(\beta')$ is hyperbolic, and $\rho|_{\pi_1(P)}$ is non-abelian.
    If the axis of $\rho(\alpha')$ or $\rho(\beta')$, say $\rho(\alpha')$, intersects the axis of $\rho(\gamma_0)$, then we can find an $M\in \mathbb{Z}$ such that, for each $k\geqslant M$,  the axis of $\rho(\gamma_0)^k\rho(\alpha')\rho(\gamma_0)^{-k}$ turns positively around the fixed point $x_0$ of $\rho(\gamma)$, with its distance from $x_0$ tending to $+\infty$ as $k\to \infty$.
    Then the rest of the proof follows by Lemma \ref{lem_ellnonsep}.
    Assume neither the axis of $\rho(\alpha')$ nor $\rho(\beta')$ intersects the axis of $\rho(\gamma_0)$. Since $\tr[\rho(\alpha'),\rho(\beta')]=\tr[\rho(\alpha'),\rho(\alpha'\beta')]=\tr[\rho(\beta'),\rho(\alpha'\beta')]$, by Lemma \ref{lem_torus}, the axis of $\rho(\alpha')$ and $\rho(\beta')$ do not intersect.
    By Proposition \ref{prop_holonomy}, $\rho|_{\pi_1(P)}$ is not a holonomy representation of $\pi_1(P)$, i.e.,
    the oriented axes $l_A,\ l_B$ and $l_{AB}$ of $\rho(\alpha'),\ \rho(\beta'),\ $and $\rho(\gamma_0)$, respectively, do not occur as the three boundary axes of the convex core of a hyperbolic pair of pants as in Figure \ref{fig: pants_holonomy}. Then up to a conjugation of $\rho$ by an orientation-reversing isometry on $\mathbb{H}^2$,
    the axes $l_A,\ l_B$ and $l_{AB}$ lie in $\mathbb{H}^2$ as in Figure \ref{fig: pants_hyp_euler_0.png}. In any of the cases, $l_{AB}$ lies on the left-hand side of either 
    $l_A$ or $l_B$, as desired.  
        \begin{figure}[H]
        \centering
        \begin{overpic}[width = 0.8\textwidth, trim = 0 180 0 30]{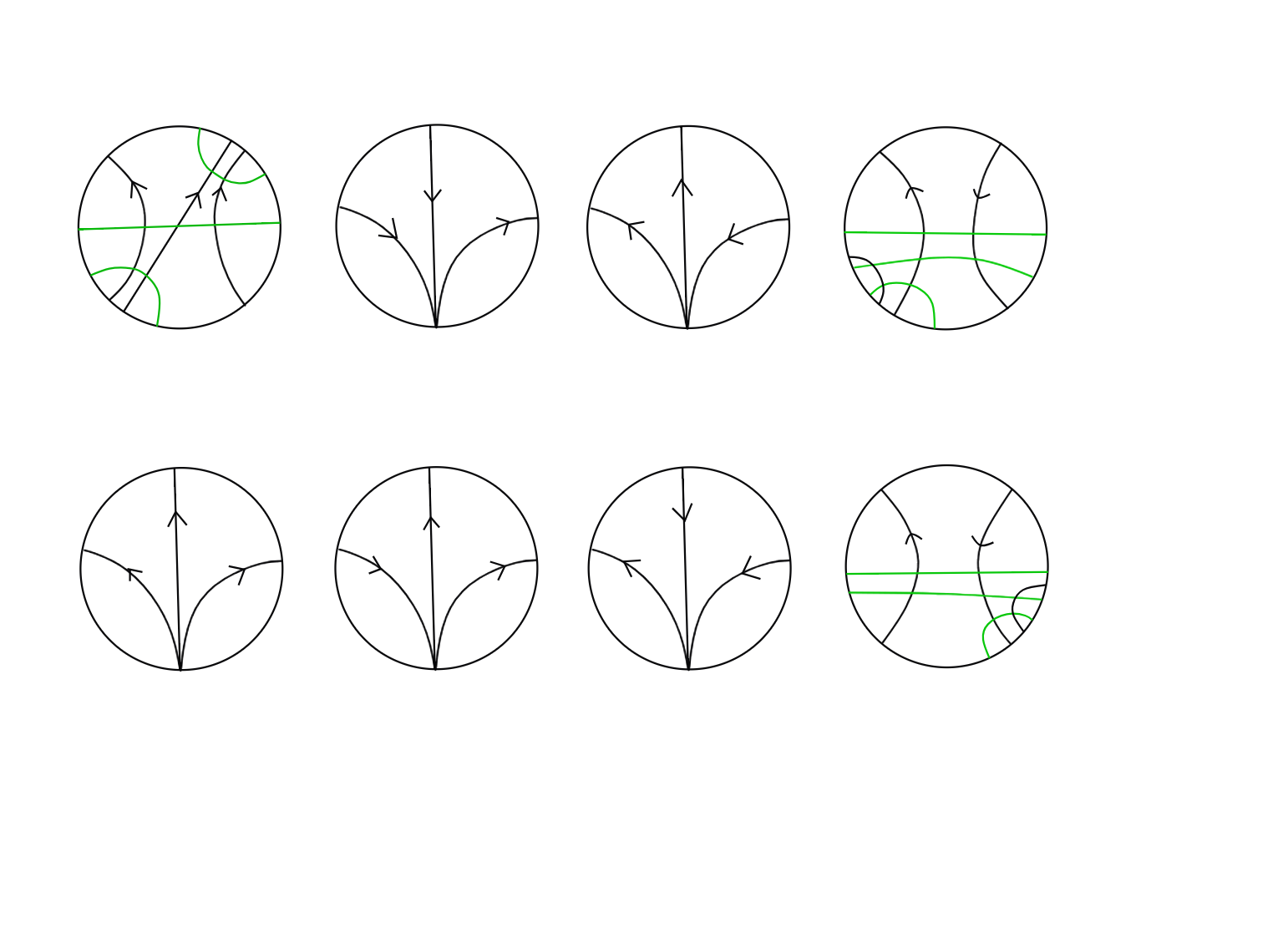}
        \put(9,44.5){\scalebox{0.6}{$l_B$}}
        \put(17.5,44.5){\scalebox{0.6}{$l_A$}}
        \put(4.5,43){\scalebox{0.6}{\textcolor{green!45!black}{$s$}}}
        \put(4,39.5){\scalebox{0.6}{\textcolor{green!45!black}{$s_B$}}}
        \put(12.5,47){\scalebox{0.6}{$l_{AB}$}}
        \put(15,52){\scalebox{0.6}{\textcolor{green!45!black}{$s_A$}}}
        \put(37,41){\scalebox{0.6}{$l_A$}}
        \put(34,47){\scalebox{0.6}{$l_B$}}
        \put(28,41){\scalebox{0.6}{$l_{AB}$}}
        \put(54,47){\scalebox{0.6}{$l_B$}}
        \put(57,40.5){\scalebox{0.6}{$l_A$}}
        \put(47,41){\scalebox{0.6}{$l_{AB}$}}
        \put(8,15){\scalebox{0.6}{$l_B$}}
        \put(10.5,18){\scalebox{0.6}{$l_{AB}$}}
        \put(18,14){\scalebox{0.6}{$l_A$}}
        \put(28,15){\scalebox{0.6}{$l_B$}}
        \put(34,18){\scalebox{0.6}{$l_A$}}
        \put(37,14){\scalebox{0.6}{$l_{AB}$}}
        \put(49,14){\scalebox{0.6}{$l_B$}}
        \put(54,17){\scalebox{0.6}{$l_A$}}
        \put(57,14){\scalebox{0.6}{$l_{AB}$}}
        \put(68.5,45){\scalebox{0.6}{$l_B$}}
        \put(77,45){\scalebox{0.6}{$l_A$}}
        \put(82,42.5){\scalebox{0.6}{\textcolor{green!45!black}{$s$}}}
        \put(81,38){\scalebox{0.6}{\textcolor{green!45!black}{$s_A$}}}
        \put(71,34){\scalebox{0.6}{\textcolor{green!45!black}{$s_B$}}}
        \put(62.5,40){\scalebox{0.6}{$l_{AB}$}}
        \put(77.5,19){\scalebox{0.6}{$l_A$}}
        \put(68,19){\scalebox{0.6}{$l_B$}}
        \put(64,17){\scalebox{0.6}{\textcolor{green!45!black}{$s$}}}
        \put(63.5,14){\scalebox{0.6}{\textcolor{green!45!black}{$s_B$}}}
        \put(76,8){\scalebox{0.6}{\textcolor{green!45!black}{$s_A$}}}
        \put(81.5,15){\scalebox{0.6}{$l_{AB}$}}
        \end{overpic}
        \caption{Configurations of $l_A,\ l_B$ and $l_{AB}$.
        The geodesics $s_A,\ s_B, s$ are the axes of reflections $R_{s_A},\ R_{s_B}, R_{s}$, respectively, where $\rho(\alpha') = R_{s_A}R_{s}$ and $\rho(\beta') = R_{s}R_{s_B}$.}
        \label{fig: pants_hyp_euler_0.png}
    \end{figure}

              \begin{figure}[H]
        \centering
        \begin{overpic}[width=0.8\textwidth, trim = 70 0 0 0]{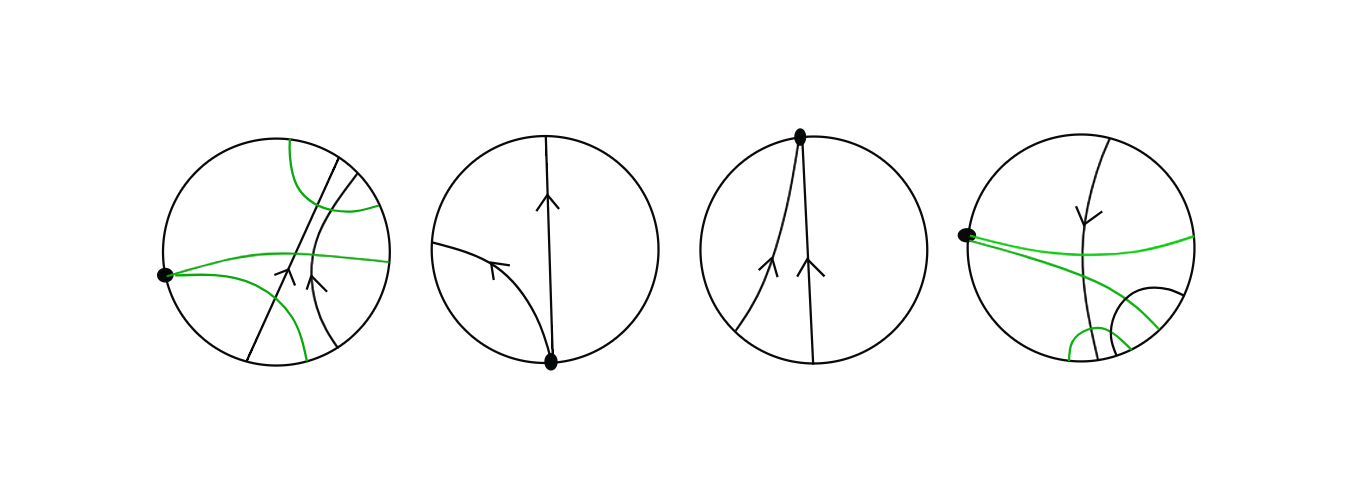}
        \put(4,17){\scalebox{0.6}{$\text{x}_B$}}
        \put(20,15){\scalebox{0.6}{$l_A$}}
        \put(13,19.5){\scalebox{0.6}{\textcolor{green!45!black}{$s$}}}
        \put(10,16){\scalebox{0.6}{\textcolor{green!45!black}{$s_B$}}}
        \put(14.5,26){\scalebox{0.6}{\textcolor{green!45!black}{$s_A$}}}
        \put(15,21.5){\scalebox{0.6}{$l_{AB}$}}
        \put(36,9){\scalebox{0.6}{$\text{x}_B$}}
        \put(38,21.5){\scalebox{0.6}{$l_A$}}
        \put(30,20){\scalebox{0.6}{$l_{AB}$}}
        \put(55,30){\scalebox{0.6}{$\text{x}_B$}}
        \put(58,20){\scalebox{0.6}{$l_A$}}
        \put(52,20){\scalebox{0.6}{$l_{AB}$}}
        \put(80,23){\scalebox{0.6}{$l_A$}}
        \put(81,19.7){\scalebox{0.6}{\textcolor{green!45!black}{$s$}}}
        \put(75,17){\scalebox{0.6}{\textcolor{green!45!black}{$s_B$}}}
        \put(75.5,13){\scalebox{0.6}{\textcolor{green!45!black}{$s_A$}}}
        \put(83,17.5){\scalebox{0.6}{$l_{AB}$}}
        \put(67.5,22){\scalebox{0.6}{$\text{x}_B$}}
        \end{overpic}
        \caption{Configurations of $l_A,\ l_{AB}$ and $\text{x}_B$.}
        \label{fig: pants_par_euler_0.png}
    \end{figure}

     If $\rho(\beta')$ is positive parabolic, then again by Proposition \ref{prop_holonomy}, the axes $l_A$ and $l_{AB}$ of $\rho(\alpha')$ and $\rho(\gamma_0)$, together with the fixed point $\text{x}_B$ of $\rho(\beta')$, fail to form the degenerate right-angled hexagon configuration associated to a one-cusped hyperbolic pair of pants.
     In this case, the axes $l_A$ and $l_{AB}$ lie in $\mathbb{H}^2$ as in Figure \ref{fig: pants_par_euler_0.png}, where $l_{AB}$ lies on the left-hand side of $l_A$, as desired.
     \smallskip

     Finally, we address the case where one of the boundary components of
     $P$ represents $\gamma$. 
    Let $\alpha'', \beta''\in \fund$ be two generators of $\pi_1(P)$, represented by two boundary components of $P$, 
    such that $\alpha''\beta'' = \gamma$, and $\alpha'' c_1$ and $\beta'' c_1$ are all simple. See Figure \ref{fig: decomp_for_part_3_2.jpeg}.
    In this case, we assume that both $\rho(\alpha'')$ and $\rho(\beta'')$ are hyperbolic, and $e\big(\rho|_{\pi_1(P\cup_\gamma \Sigma_2)}\big) = -1$. 
    Then by Lemma \ref{lem_fourpuncsphere}, for some $k\in \mathbb{Z}$, the simple closed curve $\alpha''\gamma^k c_1\gamma^{-k}$ maps to an elliptic element. Notice that the curve $\alpha''\gamma^k c_1\gamma^{-k}$ separates the four-holed sphere $P\cup_\gamma \Sigma_2$ into two pairs of pants, where one has $c_1$ and $\alpha''$ as its boundary components, and another has $c_2$ and $\beta''$ as its boundary components. Since $\alpha''$ and $\beta''$ are the decomposition curves in the positive-genus decomposition $\mathcal{P}$, the curve $\alpha''\gamma^k c_1\gamma^{-k}$ also separates $\Sigma$ into two subsurfaces of positive genus. Then the proof follows by Lemma \ref{lem_MW_posgenus}.
    This completes the proof of Lemma \ref{lem_part3_pants}.
\end{proof}

     Finally, we prove Proposition \ref{prop_ELnNHn}.

       \begin{proof}[Proof of Proposition \ref{prop_ELnNHn}]

        Since $\ELns\cap X^s_n \subset \NHns\cap X^s_n$ by definition, we show that $\NHns\cap X^s_n\subset \ELns\cap X^s_n$.
        Let $[\rho]\in \NHns\cap X^s_n$, assuming that $\rho$ maps a separating simple closed curve $\gamma$ to an elliptic element of infinite order. Let $\Sigma_1$ and $\Sigma_2$ be two subsurfaces of $\Sigma$ separated by $\gamma$. Assuming each condition in the theorem, we will show that $\rho$ also maps a non-separating simple closed curve to an elliptic element, concluding that $[\rho]\in \ELns\cap X^s_n$.
        \smallskip

        If $p = 1$, then both $\Sigma_1$ and $\Sigma_2$ have positive genus, hence the result follows from Lemma \ref{lem_MW_posgenus}.
        Similarly, if $p = 2$ and $s = (+1, -1)$ or $(-1, +1)$, then both $\Sigma_1$ and $\Sigma_2$ have positive genus. Indeed, if $\Sigma_2$ were a pair of pants whose fundamental group $\pi_1(\Sigma_2)$ contains both peripheral elements $c_1$ and $c_2$ of $\pi_1(\Sigma)$, then Lemma \ref{parplp} would imply that $\rho(\gamma)=\rho(c_1c_2)$ is not an elliptic element. Therefore, Lemma \ref{lem_MW_posgenus} again completes the proof. Otherwise, 
        we can assume that one of $\Sigma_1$ and $\Sigma_2$, say $\Sigma_2$, has genus zero, as otherwise the theorem directly follows from Lemma \ref{lem_MW_posgenus}. 
        Moreover, if there is a simple closed curve in $\Sigma_2$ whose image is hyperbolic, then the result directly follows from Lemma \ref{lem_MW_posgenus}. Hence we assume that $\rho$ maps all non-peripheral simple closed curves in $\Sigma_2$ to elliptic elements. 
        Then by replacing $\gamma$ with another simple closed curve in $\Sigma_2$, we can assume that $\Sigma_2$ is a pair of pants where $\pi_1(\Sigma_2)$ contains two preferred peripheral elements $c_1$ and $c_2$ of $\pi_1(\Sigma)$, and $\gamma = c_1c_2$. Since
        $\rho(\gamma)$ is elliptic, by Lemma \ref{parplp}, both peripheral elements map to negative parabolic elements, or both map to positive parabolic elements. 
        It suffices to show for the case where both $\rho(c_1)$ and $\rho(c_2)$ are negative parabolic. Indeed, if both $\rho(c_1)$ and $\rho(c_2)$ are positive  parabolic, the conjugation $g\rho g^{-1}$ by an orientation-reversing isometry $g\in \pgl\setminus \psl$ of $\mathbb{H}^2$ maps both peripheral elements to negative parabolic elements. By Proposition \ref{prop_pglpsl}, $g \rho g^{-1}$ has sign $-s$ and relative Euler class $-n\in \{\chi(\Sigma) +p_+(-s) + 1,\dots, -\chi(\Sigma) - p_-(-s) - 1\}$. 
        Since $[\rho]\in \NHns\cap X^s_n$ and $\rho(\gamma)$ is elliptic, we have $[g\rho g^{-1}]\in \mathcal{NH}^{-s}_{-n}\cap X^{-s}_{-n}$ and $g\rho(\gamma)g^{-1}$ is elliptic. Hence $[g\rho g^{-1}]\in \mathcal{EL}^{-s}_{-n}\cap X^{-s}_{-n}$ follows from the case where $c_1$ and $c_2$ map to negative parabolic elements, which also implies $[\rho]\in \mathcal{EL}^{s}_{n}\cap X^{s}_{n}$.

        Consider a pants decomposition $\mathcal{P}\in \mathrm{PGD}_\gamma$ of $\Sigma$ whose complement $\Sigma\setminus \mathcal{P}$ consists of $g$ number of one-holed tori and $g + p - 2$ number of pairs of pants, as described in 
        Figure \ref{fig: decomp_for_part_3_2.jpeg}. 
        Recall that $[\rho]\in \N$ maps each non-peripheral simple closed curve to either a hyperbolic element or an elliptic element of infinite order.
        By Lemma \ref{lem_MW_posgenus}, we can assume that every curve in $\mathcal{P}\setminus\{\gamma\}$ contained in $\Sigma_1$ has hyperbolic image. Hence the relative Euler class is well-defined for the restriction of $\rho$ to 
        $\pi_1(\Sigma')$, where $\Sigma'\subset \Sigma$ is
        a union of two pairs of pants
        glued along $\gamma$, as well as for the restriction of $\rho$ to
        each torus or pair of pants contained in $\Sigma\setminus \Sigma'$ that is a connected component of $\Sigma_1\setminus \mathcal{P}$. Since $\rho(\gamma)$ is elliptic, 
        $\rho|_{\pi_1(\Sigma')}$ is not a holonomy representation, and Proposition \ref{prop_holonomy} implies $e\big(\rho|_{\pi_1(\Sigma')}\big)\geqslant -1$. 
        Since $[\rho]\in X^s_n$, for each pair of pants $P$ in $\Sigma\setminus \mathcal{P}$, $\rho|_{\pi_1(P)}$ is non-abelian. Thus,
        $\mathcal{P}$ satisfies Condition \textit{(a)}, \textit{(b)}, \textit{(c)} or \textit{(d)} in Lemma \ref{lem_part3_pants},
        as otherwise Proposition \ref{prop_additivity} implies that $n\geqslant  -\chi(\Sigma) - p_-(s)$. Finally, Lemma \ref{lem_part3_pants} completes the proof.
     \end{proof}

\appendix
\section{An alternate proof of Ryu's result}
\begin{theorem}\label{thm_tothyp}
Let $\Sigma = \Sigma_{g,p}$ be a punctured surface with $g\geqslant 2,p\geqslant 1$. Let $s\in\{\pm 1\}^p$ and $n\in \mathbb{Z}$ satisfy:
\begin{enumerate}[(1)]
    \item $p_+(s)=1$ and $n=\chi(\Sigma)+1$, or
    \item $p_-(s)=1$ and $n=-\chi(\Sigma)-1$
\end{enumerate}
Then $\mathcal{NH}^s_n$ has measure zero.
\end{theorem}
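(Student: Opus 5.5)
By Proposition \ref{prop_pglpsl}, conjugating by an element of $\pgl\setminus\psl$ exchanges case (1) with case (2) and preserves measure. So I only treat case (2): $p_-(s)=1$ and $n=-\chi(\Sigma)-1=-\chi(\Sigma)-p_-(s)$, which is the upper extremal value of the generalized Milnor--Wood inequality. As in Section \ref{sec_EfullNH}, $X^s_n=\N\cap\nonabel$ has full measure. It therefore suffices to show $\NHns\cap X^s_n=\emptyset$. The strategy is Euler-class counting: any curve with elliptic image forces a subsurface whose restriction is not holonomy. That subsurface loses at least one unit of relative Euler class, and then Proposition \ref{prop_additivity} together with Theorem \ref{ryuhp} gives $e(\rho)\leqslant n-1$, a contradiction.

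\emph{Step 1 (non-separating curves).} Let $[\rho]\in\NHns\cap X^s_n$ and let $\gamma$ be a non-peripheral simple closed curve with $\rho(\gamma)$ non-hyperbolic. Since $[\rho]\in\N$, the element $\rho(\gamma)$ is elliptic of infinite order. Suppose first that $\gamma$ is non-separating.
\begin{enumerate}[(i)]
\item Choose $b$ with $i(\gamma,b)=1$, and let $T$ be the one-holed torus neighbourhood of $\gamma\cup b$.
\item Since $\rho(\gamma)$ is elliptic, Lemma \ref{lem_torus} gives $\tr[\rho(\gamma),\rho(b)]\geqslant 2$. Because $\rho$ is non-abelian on pants (this is where $[\rho]\in\nonabel$ is used), the trace is not $2$, so the boundary image $\rho([\gamma,b])$ is hyperbolic. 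Hence both relative Euler classes $e(\rho|_{\pi_1(T)})$ and $e(\rho|_{\pi_1(\Sigma\setminus T)})$ are defined.
\item The restriction to $T$ is not holonomy, since $\rho(\gamma)$ is elliptic. By Proposition \ref{prop_holonomy} and Theorem \ref{ryuhp}, $e(\rho|_{\pi_1(T)})=0$.
\item Theorem \ref{ryuhp} applied to $\Sigma\setminus T$ gives $e(\rho|_{\pi_1(\Sigma\setminus T)})\leqslant-\chi(\Sigma\setminus T)-p_-(s)=-\chi(\Sigma)-1-p_-(s)$.
\end{enumerate}
Adding via Proposition \ref{prop_additivity}, $e(\rho)\leqslant -\chi(\Sigma)-2<n$, a contradiction. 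This is the same computation as in the proof of Lemma \ref{ELnonempty}.

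\emph{Step 2 (separating $\gamma$).} Now let $\gamma$ separate $\Sigma$ into $\Sigma_1$ and $\Sigma_2$. Lemma \ref{lem_MW_posgenus} is stated for the full range of $n$, including the extremal value. So if both sides have positive genus, or if $\Sigma_2$ has genus $0$ and contains a simple curve with hyperbolic image, that lemma produces a non-separating curve with elliptic image, and Step 1 applies. Otherwise, exactly as in the proof of Proposition \ref{prop_ELnNHn}, I replace $\gamma$ by a curve inside $\Sigma_2$. This reduces to the case where $\Sigma_2$ is a pair of pants with $\gamma=c_1c_2$. Lemma \ref{parplp} forces $\rho(c_1)$ and $\rho(c_2)$ to have the same sign. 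Since $p_-(s)=1$, both must be positive parabolic.

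\emph{Step 3 (the cusped four-holed sphere).} Take $\mathcal P\in\pgd$. Again by Lemma \ref{lem_MW_posgenus}, I may assume every curve of $\mathcal P\setminus\{\gamma\}$ has hyperbolic image. Let $\Sigma'=P\cup_\gamma\Sigma_2$, where $P$ is the pair of pants of $\mathcal P$ adjacent to $\gamma$.
\begin{enumerate}[(i)]
\item $\Sigma'$ is a four-holed sphere with two hyperbolic boundaries and two positive cusps. Theorem \ref{ryuhp} gives the range $[0,2]$ for its Euler class.
\item The extreme value $2=-\chi(\Sigma')$ is attained only by holonomy (Proposition \ref{prop_holonomy}). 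Since $\rho(\gamma)$ is elliptic, $\rho|_{\pi_1(\Sigma')}$ is not holonomy, so $e(\rho|_{\pi_1(\Sigma')})\leqslant 1=-\chi(\Sigma')-1$.
\item Every other piece $Q$ of $\Sigma\setminus\mathcal P$ satisfies $e(\rho|_{\pi_1(Q)})\leqslant-\chi(Q)-p_-(s|_Q)$ by Theorem \ref{ryuhp}.
\end{enumerate}
Summing with Proposition \ref{prop_additivity} gives $e(\rho)\leqslant -\chi(\Sigma)-p_-(s)-1=n-1$, again a contradiction. Hence $\NHns\cap X^s_n=\emptyset$, and $\NHns$ has measure zero.

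The main obstacle I expect is the bookkeeping in Steps 2 and 3. One must check that Lemma \ref{lem_MW_posgenus} and the reduction to a pants $\Sigma_2$ genuinely work at the extremal value of $n$, not just in the non-extremal range of Proposition \ref{prop_ELnNHn}. One must also confirm that the holonomy characterization of Proposition \ref{prop_holonomy} applies to the mixed cusp/hyperbolic-boundary surface $\Sigma'$, so that non-holonomy there costs a full unit of Euler class.
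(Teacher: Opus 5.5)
Your proof is correct. Step 1 is the same Euler-class computation that underlies the paper's appeal to Proposition \ref{connect_el} (emptiness of $\mathcal{EL}^s_n$ at the extremal value). The separating case is where your route differs genuinely.

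The paper works in the mirror case $p_+(s)=1$, $n=\chi(\Sigma)+1$. Given a separating elliptic curve, it takes a one-holed torus $T$ from a positive-genus decomposition and uses extremality of $n$ to force $e(\rho|_{\pi_1(T)})=-1$. It then invokes Lemma \ref{lem_part3_pants}(a). That lemma's proof is the hyperbolic-geometric construction (axes turning positively around the fixed point of $\rho(\gamma)$, Lemma \ref{lem_ellnonsep}) of a non-separating elliptic curve, which contradicts Step 1.

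You instead localize the defect on the cusped four-holed sphere $\Sigma'=P\cup_\gamma\Sigma_2$. Since $p_-(s)=1$ forces both cusps of $\Sigma_2$ to be positive, the generalized Milnor--Wood upper bound on $\Sigma'$ coincides with Goldman's bound $-\chi(\Sigma')$. Non-holonomy is automatic, because an elliptic element of infinite order rules out discreteness. So $\Sigma'$ loses a full unit, and additivity gives $e(\rho)\leqslant n-1$.

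This turns the theorem into a pure Euler-class count plus Lemma \ref{lem_MW_posgenus}. It bypasses Lemmas \ref{lem_ellnonsep}, \ref{lem_fourpuncsphere} and \ref{lem_part3_pants} entirely. The paper's route instead reuses machinery it needs anyway for Proposition \ref{prop_ELnNHn}. There $n$ is non-extremal, so no such count can yield a contradiction. The paper even records $e(\rho|_{\pi_1(\Sigma')})\geqslant -1$ in that proof but does not exploit it for the extremal case.

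One small citation fix: Theorem \ref{ryuhp} is stated only for $g\geqslant 1$. For the genus-zero pieces (namely $\Sigma'$, and the pants of $\Sigma_1$, each carrying at most one cusp) you should justify the upper bounds differently. Use Goldman's $|e|\leqslant-\chi$, and for a pants with one cusp use the lifted-product images (Proposition \ref{prop_evimage}, Lemma \ref{parplp}). These are the same bounds the paper uses implicitly in the final count of Proposition \ref{prop_ELnNHn}.
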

\begin{proof}
    By Proposition \ref{prop_pglpsl}, it suffices 
    to show for the case where $p_+(s)=1$ and $n=\chi(\Sigma)+1$. 
     Let $[\rho]\in X^s_n$. 
     By the definition of $X^s_n$, either $[\rho]\in \mathcal{EL}^s_n$, or 
     $\rho$ sends every non-separating simple closed curve to a hyperbolic element.
     Then by Proposition \ref{connect_el}, $\rho$ sends every non-separating simple closed curve to a hyperbolic element. 
     
     Assume $\rho$ sends a separating simple closed curve $\alpha$ to an elliptic element. Since $[\rho]\in X^s_n$, $\rho(\alpha)$ is elliptic of infinite order. We claim that $\rho$ also sends some non-separating simple closed curve to an elliptic element, which gives a contradiction. 
     Let $\Sigma_1$ and $\Sigma_2$ be subsurfaces separated by $\alpha$. 
     If both $\Sigma_1$ and $\Sigma_2$ have positive genus, then the claim directly follows from Lemma \ref{lem_MW_posgenus}. If $\Sigma_2$ is a punctured sphere, we consider a positive-genus decomposition $\mathcal{P}$ of $\Sigma$ with respect to $\alpha$. Let $T\subset \Sigma_1$ be a one-holed torus, whose boundary component $\gamma$ is contained in $\mathcal{P}\setminus \{\alpha\}$. If $\rho(\gamma)$ is elliptic, then again Lemma \ref{lem_MW_posgenus} proves the claim. If $\rho(\gamma)$ is hyperbolic, then the relative Euler class of the restriction $\rho|_{\pi_1(T)}$ is well-defined. By Proposition \ref{prop_additivity} and Theorem \ref{ryuhp}, we have $e\big(\rho|_{\pi_1(T)}\big) = -1$. Then the claim follows by Lemma \ref{lem_part3_pants}. This completes the proof.
\end{proof}
\section{Proofs of Lemmas \ref{parplp}, \ref{edgecon1}, \ref{edgecon2} and \ref{edgecon3}}
\begin{proof}[Proof of Lemma \ref{parplp}]
    For the images of the lifted product map, see the proof of \cite[Proposition 3.4]{ryu}. In the rest of the proof, for $\pm P\in \Par$, we denote by $P$ the lift of $\pm P$ in $\SL$ with trace 2; and denote by $\widetilde{P}$ the lift of $\pm P$ in $\Par_0$.\smallskip
    
    We first show that, for $\pm P_1,\pm P_2\in \Par$, 
    $\ev(\pm{P_1},\pm{P_2})\in \Par_0\cup \{\mathrm{I}\}$ if and only if 
    $\pm{P_1}$ and $\pm{P_2}$ commute.
    Let $\ev(\pm P_1,\pm P_2)\in \Par_0\cup \{\mathrm{I}\}$.
    Then we have $\kappa(\chi(P_1,P_2))= \kappa(2,2,2)=2$. By Proposition \ref{prop_char}, $P_1$ and $P_2$ preserve the same eigenvector in $\mathbb{R}^2$. Hence, up to an $\SL$-conjugation, $P_1$ and $P_2$ are of the form $P_1 = \begin{bmatrix}
        1 & t_1\\
        0 & 1
    \end{bmatrix}, P_2 = \begin{bmatrix}
        1 & t_2\\
        0 & 1
    \end{bmatrix}, t_1, t_2\in \mathbb{R}\setminus \{0\},$ that clearly commute. Conversely, if $(\pm P_1, \pm P_2)$ commute, they lie in the common one-parameter subgroup of $\psl$ generated by $\pm P_1$, 
    which lifts to a one-parameter subgroup of $\univcover$ containing $\ev(\pm P_1, \pm P_2)$.
    Thus $\ev(\pm P_1,\pm P_2)\in \Par_0\cup \mathrm{I}$.\smallskip
    
    We now show the path-lifting property of the lifted product map on the set $S_{s_1,s_2} = \{(\pm P_1,\pm P_2) \in \Par^{s_1}\times \Par^{s_2}| \pm P_1P_2 \neq \pm P_2P_1\}$. We show this for the case when $s_1 = s_2 = -$, as the proof follows similarly for the other cases. Consider $\ev: S_{-,-} \rightarrow \Ell_{-1}\cup \Par_{-1}^+ \cup \Hyp_{-1}$. 
    By Lemma \ref{lem_plp}  and Lemma \ref{lem_evPsubm}, it suffices to show that for every element $\widetilde{C}\in \Ell_{-1}\cup \Par^+_{-1}\cup \Hyp_{-1}$, the fiber of $\widetilde{C}$ is path-connected. Let $(\pm P_1,\pm P_2),(\pm P_1',\pm P_2')\in \ev^{-1}(\widetilde{C})$.
    Then $\chi(P_1,P_2) = \chi(P_1',P_2') = (2,2,z)$, where $z = Tr(\widetilde{C})$; and since $z\neq 2$, we have $\kappa(2,2,z) \neq 2$. Thus,  by Proposition \ref{prop_char}, $(P_1,P_2)=(gP_1'g^{-1},gP_2'g^{-1})$ for some $g \in \GL$. 
    If $g\in \GL\setminus \SL$, then 
    letting $P_{i,j}$ denote the $(i,j)$-entry of $P$, $(P_1')_{12}-(P_1')_{21}$ and $ (P_1)_{12} - (P_1)_{21}$ have opposite signs. However, since $\widetilde{P_1},\widetilde{P_1'}\in \Par_0^-$,
    Lemma \ref{lem_offdiag} implies $(P_1)_{12}-(P_1)_{21} < 0$ and $(P_1')_{12} - (P_1')_{21}<0$. Thus, $g \in \SL$. Since 
    $(\pm P_1,\pm P_2) = (\pm g P_1'g^{-1},\pm gP_2'g^{-1})$, we have $ \pm C = \pm P_1  P_2 =\pm gP_1'P_2'g^{-1} = \pm gCg^{-1}$, i.e., $\pm g$ commutes with $\pm C$. 
    Let $\{\pm g_t\}\interval$ denote a path in $\psl$ from $\pm \mathrm{I}$ to $\pm g$ which is in the one-parameter subgroup generated by $\pm g$. Then for all $t\in [0,1]$, $\pm g_t$ commutes with $\pm C$. 
     Let $\{\widetilde{g_t}\}\interval \subset \univcover$ denote the lift of $\{g_t\}\interval$ starting at $\mathrm{I}$. For all $t\in [0,1]$, $\ev(\pm g_tP_1'g_t^{-1},\pm g_tP_2'g_t^{-1})=\widetilde{g_t}\widetilde{P_1'}\widetilde{P_2'}\widetilde{g_t}^{-1}$. Since $\pm g_tP_1'P_2'g_t^{-1} = \pm C$ for all $t\in [0,1]$, $\{\widetilde{g_t}\widetilde{P_1'}\widetilde{P_2'}\widetilde{g_t}^{-1}\}\interval$ is a lift of the constant path $\pm C$ starting at $\widetilde{C}$, i.e., $\{\widetilde{g_t}\widetilde{P_1'}\widetilde{P_2'}\widetilde{g_t}^{-1}\}\interval = \{\widetilde{C}\}$. Therefore, the path $\big\{(\pm g_tP_1'g_t^{-1},\pm g_tP_2'g_t^{-1})\big\}\interval$ connects $(P_1',P_2')$ to $(P_1,P_2)$ within $\ev^
     {-1}(\widetilde{C})$, as desired. 
    \end{proof}

\begin{proof}[Proof of Lemma \ref{edgecon1}]
To prove the lemma for $P_1$, we first show that 
any point $(\widetilde{A}, \widetilde{B})\in P_1^{-1}(\widetilde{C})$ with $\kappa\big(\chi(\widetilde{A}, \widetilde{B})\big) = 2$ can be connected to a point $(\widetilde{A}', \widetilde{B}')\in P_1^{-1}(\widetilde{C})$ with $\kappa\big(\chi(\widetilde{A}', \widetilde{B}')\big) \neq 2$ by a path in $P_1^{-1}(\widetilde{C})$.
Let $\chi(\widetilde{A},\widetilde{B})=(x,2,z)$ where $|x|>2,|z|>2$. Since $\kappa(x,2,z) = 2$, we have $x=z$. Choose $\epsilon>0$ such that the path $\{z+\epsilon t\}\interval$ lies in either $(-\infty,-2)$ or $(2,\infty)$. 
Let $\big\{\chi_t\big\}\interval:=\big\{(z+\epsilon t,2,z)\big\}\interval$. Let 
$(A,B)$ be the projection of $(\widetilde{A},\widetilde{B})$ to $\SL\times \SL$. 
Since $|z|>2$ and
$[A,B]\neq \mathrm{I}$, by Lemma \ref{lem_plpchar}, we may lift $\{\chi_t\}\interval$ to a path $\big\{(A^t,B^t)\big\}\interval$ of non-commuting $\SL$-pairs starting at $(A,B)$. Let $\big\{(\widetilde{A^t},\widetilde{B^t})\big\}\interval$ be the lift of $\big\{(A^t,B^t)\big\}\interval$ in $\univcover \times \univcover$ starting at $(\widetilde{A},\widetilde{B})$. 
The path $\{\widetilde{A^t}\}\interval$ lies in $\Hyp_m$, as it is a path of hyperbolic elements starting at $\widetilde{A^0} = \widetilde{A}\in \Hyp_m$; and since $Tr(\widetilde{B^t})=2$ and $[A^t,B^t]\neq \mathrm{I}$ for all $t\in[0,1]$, $\{\widetilde{B^t}\}\interval$ is a path of parabolic elements starting at  $\widetilde{B^0}= \widetilde{B}\in \Par_0^s$, hence lies in $ \Par_0^s$. Moreover, the path $\{\widetilde{A^t}\widetilde{B^t}\}\interval$ lies in a conjugacy class in $\Hyp_l$,
as it is a path of hyperbolic elements of trace $z$ starting at $\widetilde{A}\widetilde{B} = \widetilde{C}\in \Hyp_l$. 
Then by Lemma \ref{lem_conjugacypath_const}, there is a path $\{\pm g_t\}\interval \subset \psl$ starting at $\pm \mathrm{I}$ such that $\pm C = \pm g_tA^tB^t g_t^{-1}$ for all $t\in [0,1]$. 
Let $\{\widetilde{g_t}\}\interval$ be the lift of $\{\pm g_t\}\interval$ in $\univcover$ starting at $\mathrm{I}$. Then for all $t\in [0,1]$, we have
$\widetilde{g_t}(\widetilde{A_t}\widetilde{B_t})\widetilde{g_t}^{-1} = \widetilde{C}$. 
Thus, the path $\big\{(\widetilde{g_t}\widetilde{A^t}\widetilde{g_t}^{-1},\widetilde{g_t}\widetilde{B^t}\widetilde{g_t}^{-1})\big\}\interval$ lies in $P_1^{-1}(\widetilde{C})$, connecting $(\widetilde{A},\widetilde{B})$ to a point $(\widetilde{A}', \widetilde{B'}) := (\widetilde{g_1}\widetilde{A^1}\widetilde{g_1}^{-1},\widetilde{g_1}\widetilde{B^1}\widetilde{g_1}^{-1})$ with $\kappa(\chi(\widetilde{A}', \widetilde{B'}))\neq 2$. Therefore, for any two points $(\widetilde{A_1}, \widetilde{B_1})$ and $(\widetilde{A_2}, \widetilde{B_2})$ in $P^{-1}(\widetilde{C})$, we can assume without loss of generality that $\kappa\big(\chi(\widetilde{A_1}, \widetilde{B_1})\big)\neq 2$ and $\kappa\big(\chi(\widetilde{A_2}, \widetilde{B_2})\big)\neq 2$.
Then, as in the proof of Lemma \ref{Hypfiber}, we can lift the straight-line path connecting $\chi(\widetilde{A_1}, \widetilde{B_1})$ and $\chi(\widetilde{A_2}, \widetilde{B_2})$ using Lemma \ref{lem_plpchar}, and the rest of the proof follows verbatim from Lemma \ref{Hypfiber}.\smallskip

 For $P_2$, for every $(\widetilde{A}, \widetilde{B})\in P_2^{-1}(\widetilde{C})$, $|Tr(\widetilde{A})|<2$ and $|Tr(\widetilde{C})|>2$ together imply $\kappa\big(\chi(\widetilde{A}, \widetilde{B})\big)\neq 2$. Therefore, the proof of the lemma follows verbatim from Lemma \ref{Hypfiber}.
\end{proof}
\begin{proof}[Proof of Lemma \ref{edgecon2}]
For (a), we show for the case where $s'=+$, $k =  0$ and $j = 1$, as the proof follows similarly for all the other cases. Let $E_1 = \Hyp_0\times \Par_0^s$, $E_2= \Ell_1\times \Par_0^s$ and $V = \Par_0^+\times \Par_0^s$. Let $(\widetilde{A},\widetilde{B})\in V\cap P^{-1}(\widetilde{C})$. Then $\chi(\widetilde{A},\widetilde{B})=(2,2,z)$ where $|z| = |Tr(\widetilde{C})|>2$. Let $\{\chi_t\}\interval = \{(2+ t,2,z)\}\interval$. As in the proof of Lemma \ref{edgecon1}, we may use Lemma \ref{lem_plpchar} to lift $\{\chi_t\}\interval$ to a path in $\big\{(\widetilde{A^t},\widetilde{B^t})\big\}\interval$ starting at $(\widetilde{A},\widetilde{B})$ and lying in $E_1$ for all $t\in (0,1]$, and use Lemma \ref{lem_conjugacypath_const} to find $\{\widetilde{g_t}\}\interval$ in $\univcover$ with $\widetilde{g_0} = \mathrm{I}$ such that for all $t\in [0,1]$, $\widetilde{g_t}(\widetilde{A_t}\widetilde{B_t})\widetilde{g_t}^{-1} = \widetilde{C}$. Finally, letting $\delta_1^t: = (\widetilde{g_t}\widetilde{A^t}\widetilde{g_t}^{-1},\widetilde{g_t}\widetilde{B^t}\widetilde{g_t}^{-1})$ for $t\in [0,1]$ gives the desired path $\{\delta_1^t\}\interval$ in $P^{-1}(\widetilde{C})$.\medskip

For (b), assume $P^{-1}(\widetilde{C})\cap V=\emptyset$. Since $P^{-1}(\widetilde{C})$ intersects both $E_1$ and $E_2$, the image $P\big(\{\widetilde{C}\}\times \Par_0^{-s}\big)$ intersects both $\mathbb{B}_m$ and $\mathbb{B}_n$, hence intersects the two connected components of $(\univcover\setminus Z)\setminus \Par_k^{s'}$. Since $P(\widetilde{C}\times \Par_0^{-s})\subset \univcover\setminus Z$ is a connected set intersecting both components of complement of $\Par_k^{s'}$ without intersecting $\Par_k^{s'}$, we obtain a contradiction.
\end{proof}

\begin{proof}[Proof of Lemma \ref{edgecon3}]
    Let $E=\mathbb{B}_m\times \Par_0^s$ and $E'=\mathbb{B}_n\times \Par_0^s$ for some $m,n\in \mathbb{Z}$ and $s\in \{\pm\}$. Assume such a sequence does not exist. Then there is an edge $E'' = \mathbb{B}_r\times \Par_0^s \subset I\times \Par_0^s$ for some $r\in \mathbb{Z}$, such that $E''\cap P^{-1}(\widetilde{C})=\emptyset$, and $\mathbb{B}_m$ and $\mathbb{B}_n$ are contained in the distinct components of the complement of $\mathbb{B}_r$ in $\univcover\setminus Z$. 
    Since $P^{-1}(\widetilde{C})$ intersects both $E$ and $E'$, $P\big(\widetilde{C}\times \Par_0^{-s}\big)$ intersects both $\mathbb{B}_m$ and $\mathbb{B}_n$, hence intersects the two connected components of $(\univcover\setminus Z)\setminus \mathbb{B}_r$. However, $P(\widetilde{C}\times \Par_0^{-s})$ is path-connected in $\univcover\setminus Z$ and does not intersect $\mathbb{B}_r$, which leads to a contradiction.
\end{proof}

\noindent
Viraj Joshi\\
Department of Mathematics\\  Texas A\&M University\\
College Station, TX 77843, USA\\
(virajajoshi@tamu.edu)
\\
\\
\noindent
Inyoung Ryu\\
Department of Mathematics\\  Texas A\&M University\\
College Station, TX 77843, USA\\
(riy520@tamu.edu)

\end{document}